\pdfoutput=1 

\documentclass[11pt]{amsart}

\usepackage{amssymb,amsthm,enumitem,colonequals,tikz-cd,microtype}
\usepackage[normalem]{ulem} 

\usepackage[osf]{Baskervaldx}
\usepackage[baskervaldx]{newtxmath}
\usepackage[cal=boondoxo]{mathalfa} 

\usepackage[top=3.75cm, bottom=3cm, left=3.5cm, right=3.5cm]{geometry}

\usepackage{xcolor}
\colorlet{darkblue}{blue!55!black}
\colorlet{darkcyan}{cyan!50!black}
\colorlet{darkgreen}{green!60!black}

\PassOptionsToPackage{hyphens}{url}
\usepackage{hyperref}
\hypersetup{
  colorlinks=true,
  linkcolor=darkblue,
  urlcolor=darkcyan,
  citecolor=darkgreen,
}

\def\eqref#1{\textcolor{darkblue}{(\ref{#1})}}

\usepackage[nameinlink]{cleveref} 
\Crefformat{section}{#2\S#1#3}
\Crefmultiformat{section}{#2\S\S#1#3}{ and~#2#1#3}{, #2#1#3}{, and~#2#1#3}

\usepackage[pagewise]{lineno}
\let\oldequation\equation
\let\oldendequation\endequation
\renewenvironment{equation}{\linenomathNonumbers\oldequation}{\oldendequation\endlinenomath}
\expandafter\let\expandafter\oldequationstar\csname equation*\endcsname
\expandafter\let\expandafter\oldendequationstar\csname endequation*\endcsname
\renewenvironment{equation*}{\linenomathNonumbers\oldequationstar}{\oldendequationstar\endlinenomath}
\let\oldalign\align
\let\oldendalign\endalign
\renewenvironment{align}{\linenomathNonumbers\oldalign}{\oldendalign\endlinenomath}
\expandafter\let\expandafter\oldalignstar\csname align*\endcsname
\expandafter\let\expandafter\oldendalignstar\csname endalign*\endcsname
\renewenvironment{align*}{\linenomathNonumbers\oldalignstar}{\oldendalignstar\endlinenomath}

\theoremstyle{plain}
\newtheorem{theorem}{Theorem}[section]
\newtheorem{lemma}[theorem]{Lemma}
\newtheorem{corollary}[theorem]{Corollary}
\newtheorem{proposition}[theorem]{Proposition}

\theoremstyle{definition}
\newtheorem{definition}[theorem]{Definition}
\newtheorem{example}[theorem]{Example}
\newtheorem{remark}[theorem]{Remark}
\newtheorem{setup}[theorem]{Setup}

\newtheorem{convention}[theorem]{Convention}
\newtheorem{reminder}[theorem]{Reminder}

\newtheorem*{ack}{Acknowledgments} 

\AddToHook{env/conjecture/begin}{\crefalias{theorem}{conjecture}}
\AddToHook{env/lemma/begin}{\crefalias{theorem}{lemma}}
\AddToHook{env/corollary/begin}{\crefalias{theorem}{corollary}}
\AddToHook{env/proposition/begin}{\crefalias{theorem}{proposition}}
\AddToHook{env/definition/begin}{\crefalias{theorem}{definition}}
\AddToHook{env/remark/begin}{\crefalias{theorem}{remark}}
\AddToHook{env/setup/begin}{\crefalias{theorem}{setup}}
\AddToHook{env/example/begin}{\crefalias{theorem}{example}}
\AddToHook{env/terminology/begin}{\crefalias{theorem}{terminology}}
\AddToHook{env/convention/begin}{\crefalias{theorem}{convention}}
\AddToHook{env/reminder/begin}{\crefalias{theorem}{reminder}}

\numberwithin{equation}{section}
\numberwithin{theorem}{section}

\title[Proxy smallness meets $t$-structures]{Proxy smallness meets $t$-structures}

\author[M.~Hrbek]{Michal Hrbek}
\address{M.~Hrbek,
Institute of Mathematics of the Czech Academy of Sciences,
Žitná 25, 115 67 Prague, Czechia}
\email{hrbek@math.cas.cz}

\author[P.~Lank]{Pat Lank}
\address{P.~Lank,
Dipartimento di Matematica “F. Enriques”, Universit\`{a} degli Studi di Milano, Via Cesare
Saldini 50, 20133 Milano, Italy}
\email{plankmathematics@gmail.com}

\author[G.~Le Gros]{Giovanna Le Gros}
\address{G.~Le Gros,
Institute of Mathematics of the Czech Academy of Sciences,
Žitná 25, 115 67 Prague, Czechia}
\email{legros@math.cas.cz}

\author[S.~Pavon]{Sergio Pavon}
\address{S.~Pavon,
Dipartimento di Informatica, University of Verona, Strada le Grazie 15
37134 Verona, Italy}
\email{sergio.pavon@univr.it}

\date{\today}

\keywords{Local complete intersection singularities, proxy smallness, $t$-structures, tensor actions, triangulated categories}

\subjclass[2020]{14A30 (primary), 14F08, 13D09, 18G80, 14B05} 

\begin{document}
    
\begin{abstract}
    We introduce a notion of proxy smallness for $t$-structures on triangulated categories associated to a Noetherian scheme. Specifically, the theory is developed in the presence of tensor actions. Consequently, our results yield a new characterization of schemes that are locally complete intersections in terms of $t$-structures, as well as a topological classification of preaisles on the bounded derived category of coherent sheaves.
\end{abstract}

\maketitle

\tableofcontents

\section{Introduction}
\label{sec:intro}

\subsection{What is known}
\label{sec:intro_what_is_known}

Proxy smallness, introduced in \cite{Dwyer/Greenlees/Iyengar:2006a,Dwyer/Greenlees/Iyengar:2006b}, provides a flexible weakening of compactness in triangulated categories. It has found applications in commutative algebra and homotopy theory. More recently, it has been related to structural properties of derived categories; see e.g.\ \cite{Briggs/Iyengar/Stevenson:2025}.

A key application for this notion comes from \cite[Theorem 5.2]{Pollitz:2019}. Loc.\ cit.\ showed that a Noetherian local ring $R$ is a complete intersection if, and only if, every object of $D^b_{\operatorname{coh}}(R)$ is proxy small. This was extended in \cite[Corollary 5.7]{Letz:2021} to Noetherian rings which are locally complete intersections. Thus, proxy smallness detects singularities.

However, proxy smallness does not behave well under globalization: naive global analogues fail to capture the correct geometry when passing from local to global settings. To address this, \cite{Briggs/Iyengar/Letz/Pollitz:2022} introduced a tensor refinement. Roughly speaking, over a Noetherian scheme $X$, an object $E \in D^b_{\operatorname{coh}}(X)$ is said to be $\otimes$-proxy small if it generates, via tensor operations, a perfect complex with the same support. They show that $X$ is locally a complete intersection if, and only if, every object of $D^b_{\operatorname{coh}}(X)$ is $\otimes$-proxy small.

\subsection{What we do}
\label{sec:intro_what_we_do}
 
The goal of this paper is to show that proxy smallness admits a natural refinement when viewed through the lens of $t$-structures and tensor actions.

\subsubsection{Overview}
\label{sec:intro_what_we_do_overview}

Tensor actions between triangulated categories associated to a scheme arise naturally. For instance, in Krause's recollement relating $D_{\operatorname{qc}}(X)$ and the homotopy category of injectives $K(\operatorname{Inj}(X))$ \cite{Krause:2005}. In this setting, we introduce a compatibility notion between tensor structures and $t$-structures. See \Cref{sec:prelim_recollement} and \Cref{sec:prelim_tensor}. Our approach proceeds in three steps.

First, we study $\otimes$-aisles in $D_{\operatorname{qc}}(X)$ generated by complexes from $D^b_{\operatorname{coh}}(X)$, showing that they are compactly generated (see \Cref{thm:ALS_global_full_classification}). This extends \cite[Theorem 3.10]{AlonsoTarrio/JeremiasLopez/Saorin:2010} from the affine case to arbitrary Noetherian schemes. The result is of independent interest and provides a key technical input.

Second, we introduce a notion of proxy smallness adapted to $t$-structures. See \Cref{sec:proxy_smallness}. After developing its basic properties and comparing it with the classical notion, we characterize $t$-proxy smallness in terms of recollements \Cref{thm:characterize_left_proxy_small}, yielding a $t$-structural analogue of \cite[Theorem 4.3]{Briggs/Iyengar/Stevenson:2025}.  Continuing the comparison between these two notions, we then discuss some examples and show that the collection of $t$-proxy small objects is strictly smaller than the collection of proxy small objects \Cref{ex:proxy-small-not-t-proxy-small}.

Third, we incorporate tensor actions into this framework. We introduce a notion of $t$-$\otimes$-proxy small objects (see \Cref{def:t_proxy_tensor_small}) and study subcategories that are stable under both the tensor action and the operations defining preaisles. 

\subsubsection{$t$-$\otimes$-proxy small objects}
\label{sec:intro_what_we_do_t_proxy_small}

We now introduce the central notion of this paper. Let $\mathcal{T}$ be a rigidly compactly generated tensor triangulated category. Fix a preaisle $\mathcal{P}^{\leq 0}$ in $\mathcal{T}^c$. For $P \in \mathcal{T}$, we write $\langle P \rangle^{(-\infty,0]}_\otimes$ for the smallest subcategory of $\mathcal{T}$ containing $P$ and closed under nonnegative shifts, extensions, direct summands, and the $\otimes$-action of $\mathcal{P}^{\leq 0}$. We also write $\overline{\langle P \rangle}^{(-\infty,0]}_\otimes$ for the subcategory which is, in addition, closed under all small coproducts in $\mathcal{T}$. We say $P \in \mathcal{T}$ is \textit{$t$-$\otimes$-proxy small} if $\overline{\langle P \rangle}^{(-\infty,0]}_{\otimes}$ is compactly generated in $\mathcal{T}$ and $\overline{\langle P \rangle}^{(-\infty,0]}_{\otimes} \cap \mathcal{T}^c \subseteq \langle P \rangle^{(-\infty,0]}_{\otimes}$. Moreover, in any triangulated category $\mathcal{K}$, we set $\langle \mathcal{S} \rangle^{(-\infty,0]}$ to be the smallest preaisle generated by a subcategory $\mathcal{S}\subseteq \mathcal{K}$.

\subsubsection{Structural result}
\label{sec:intro_what_we_do_structural}

We discuss our main structural result. We study preaisles $\mathcal{A} \subseteq D^b_{\operatorname{coh}}(X)$ satisfying the following conditions:
\begin{itemize}
    \item $\mathcal{A}$ is closed under direct summands
    \item $\mathcal{A}$ is closed under tensoring with objects from $\operatorname{Perf}^{\leq 0}(X) = \operatorname{Perf}(X) \cap D^{\leq 0}_{\operatorname{qc}}(X)$
    \item $\mathcal{A}$ is generated by complexes from $D^b_{\operatorname{coh}}(X)$ that are $t$-$\otimes$-proxy small objects of $D_{\operatorname{qc}}(X)$ (with respect to the preaisle $\mathcal{P}^{\leq 0} = \operatorname{Perf}^{\leq 0}(X)$).
\end{itemize}
The following identifies the data governing such subcategories.

\begin{theorem}
    \label{introthm:injective_mapping_for_t_proxy_tensor_aisles_scheme}
    There is an injective function $\Theta$ mapping the collection of preaisles $\mathcal{A}$ above to the Cartesian product of 
    \begin{itemize}
        \item preaisles of $D_{\operatorname{sg}}(X)$ closed under direct summands and the action of $\operatorname{Perf}^{\leq 0}(X)$
        \item Thomason filtrations on $X$ (i.e.\ certain functions from $\mathbb{Z}$ to specialization closed subsets of $X$; see \Cref{sec:prelim_Thomason}).
    \end{itemize}
    It is given by the assignment
    \begin{displaymath}
        \mathcal{A}\mapsto \bigg( \langle \pi (\mathcal{A}) \rangle^{(-\infty,0]} , \phi_{\mathcal{A}} \bigg)
    \end{displaymath}
    where $\phi_{\mathcal{A}}$ is the associated Thomason filtration of the compactly generated $\otimes$-aisle $\overline{\langle \mathcal{A} \rangle}^{(-\infty,0]}_{\otimes}$ in $D_{\operatorname{qc}}(X)$ obtained by \Cref{thm:ALS_global_full_classification}  and $\pi\colon D^b_{\operatorname{coh}}(X) \to D_{\operatorname{sg}}(X)$ the Verdier localization functor.
\end{theorem}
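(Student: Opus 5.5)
To prove injectivity of $\Theta$, I plan to show that $\mathcal{A}$ can be recovered from the pair $(\langle \pi(\mathcal{A})\rangle^{(-\infty,0]}, \phi_{\mathcal{A}})$. The reconstruction I aim for is
\begin{displaymath}
\mathcal{A} \;=\; \{ E \in D^b_{\operatorname{coh}}(X) : \pi(E) \in \langle \pi(\mathcal{A})\rangle^{(-\infty,0]} \text{ and } E \in \mathcal{U}_{\phi_{\mathcal{A}}} \},
\end{displaymath}
where $\mathcal{U}_{\phi}$ denotes the compactly generated $\otimes$-aisle of $D_{\operatorname{qc}}(X)$ attached to a Thomason filtration $\phi$. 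By \Cref{thm:ALS_global_full_classification}, this aisle coincides with $\overline{\langle \mathcal{A}\rangle}^{(-\infty,0]}_\otimes$. The inclusion ``$\subseteq$'' is clear. So the whole proof reduces to the reverse inclusion: if $E$ lies in the right-hand side, then $E\in\mathcal{A}$.

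\emph{Step 1 (the perfect part).} Set $\mathcal{B} = \overline{\langle \mathcal{A}\rangle}^{(-\infty,0]}_\otimes \cap \operatorname{Perf}(X)$. Since $\mathcal{A}$ is generated by $t$-$\otimes$-proxy small objects $P_i$, I will show $\mathcal{B}\subseteq \mathcal{A}$.

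1. The big aisle $\overline{\langle \mathcal{A}\rangle}^{(-\infty,0]}_\otimes$ is generated, under coproducts, extensions, shifts and the $\operatorname{Perf}^{\leq 0}$-action, by the big aisles $\overline{\langle P_i\rangle}^{(-\infty,0]}_\otimes$. Each of these is compactly generated by perfect complexes lying in $\langle P_i\rangle^{(-\infty,0]}_\otimes \subseteq \mathcal{A}$.
2. Hence $\overline{\langle\mathcal{A}\rangle}^{(-\infty,0]}_\otimes$ is compactly generated by a set $\mathcal{S}\subseteq \mathcal{A}\cap\operatorname{Perf}(X)$.
3. By the standard description of compact objects in a compactly generated aisle (Neeman/Keller-type compactness), every perfect object of the big aisle is a summand of a finite extension of nonnegative shifts of objects of $\mathcal{S}$.
4. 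Since $\mathcal{A}$ is closed under summands, extensions and nonnegative shifts, this gives $\mathcal{B}\subseteq\mathcal{A}$.

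\emph{Step 2 (lifting from the singularity category).} Take $E$ with $\pi(E)\in\langle\pi(\mathcal{A})\rangle^{(-\infty,0]}$ and $E$ in the big aisle.

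1. In $D_{\operatorname{sg}}(X)$, the object $\pi(E)$ is a summand of an iterated extension of nonnegative shifts of objects $\pi(A_j)$ with $A_j\in\mathcal{A}$.
2. Lift this to $D^b_{\operatorname{coh}}(X)$. Morphisms in the Verdier quotient are roofs with perfect cones, and extensions lift up to perfect error terms. I expect to get a triangle $E\oplus E'\to A\to Q\to$, with $A\in\mathcal{A}$ and $Q$ perfect, possibly after modifying by perfect objects. The summand $E'$ can be handled by the standard trick of adding $E'[1]\oplus E'$-type terms, or $E\oplus E[1]$.
3. Then I must arrange that the perfect correction terms lie in the big aisle, hence in $\mathcal{B}\subseteq\mathcal{A}$ by Step 1. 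Extension closure of $\mathcal{A}$ then puts $E$ in $\mathcal{A}$.

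\emph{Main obstacle.} Step 2.3 is where I expect the difficulty. The perfect corrections produced by the lifting need not lie in the aisle determined by $\phi_{\mathcal{A}}$. I would try the following:

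1. Use the truncation for the compactly generated $\otimes$-aisle $\mathcal{U}_{\phi_{\mathcal{A}}}$, which is a $\otimes$-ideal supported filtration-wise.
2. Replace $Q$ by an approximation $\tau^{\leq}_{\phi}Q$. This stays compact because the aisle is compactly generated by perfects and $\phi$ is determined by supports.
3. Alternatively, tensor $Q$ with suitable Koszul-type perfect complexes supported on $\phi(n)$ to force it into the aisle while leaving the class in $D_{\operatorname{sg}}$ unchanged.

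Verifying that this modification is compatible with the triangle, using that $E$ and $A$ already lie in $\mathcal{U}_{\phi_{\mathcal{A}}}$ and the octahedral axiom, is the technical core. Once it is done, injectivity of $\Theta$ follows immediately from the reconstruction formula.
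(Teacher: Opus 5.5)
\textbf{There is a genuine gap in Step 2.} Your reconstruction formula is the right one; it is exactly what the paper's argument yields. Step 1 is also fine, since it just restates that $\mathcal{A}$ is $t$-proxy small. But the reverse inclusion is the whole content of injectivity, and you leave it open. Moreover, the repairs you suggest do not work:
\begin{itemize}
\item A truncation $\tau^{\leq 0}_\phi$ of a perfect complex is in general not perfect, even for the standard $t$-structure. Take the Koszul complex $K$ on generators of the maximal ideal of a singular local ring. The triangle $\tau^{\leq -1}K\to K\to k\to$ shows that $\tau^{\leq -1}K$ is not perfect.
\item Replacing the perfect term of a triangle by a Koszul-twisted object does not keep the triangle.
\item The $E\oplus E[1]$ device for summands is a thick-subcategory trick that relies on negative shifts. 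It has no evident analogue for a preaisle.
\end{itemize}
So, as written, you have no mechanism for putting the perfect correction terms into $\mathcal{U}_{\phi_{\mathcal{A}}}$, and the summand $E'$ is not actually dealt with.

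\textbf{The missing idea} is to stop lifting finitely inside $D^b_{\operatorname{coh}}(X)$. Instead, build infinitely in $K(\operatorname{Inj}(X))$ via Krause's recollement, then come back using Keller--Nicol\'as.
\begin{itemize}
\item Set $\mathcal{Z}=\overline{\langle Q_\rho(\mathcal{A})\rangle}^{(-\infty,0]}$, and let $\mathcal{B}$ be as in your Step 1, so $\mathcal{U}_{\phi_{\mathcal{A}}}=\overline{\langle \mathcal{B}\rangle}^{(-\infty,0]}$. Since $Q_\lambda$ preserves coproducts and $Q_\lambda\cong Q_\rho$ on $\operatorname{Perf}(X)$, you get $Q_\lambda(\mathcal{U}_{\phi_{\mathcal{A}}})\subseteq\overline{\langle Q_\rho(\mathcal{B})\rangle}^{(-\infty,0]}\subseteq\mathcal{Z}$.
\item This inclusion is where $t$-proxy smallness enters. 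By \Cref{lem:glueing_recollement} it says $\mathcal{Z}$ is glued from its two components, which is how the paper gets injectivity.
\item Concretely, take $E$ in your right-hand side and consider the triangle $Q_\lambda(E)\to Q_\rho(E)\to II_\lambda Q_\rho(E)\to Q_\lambda(E)[1]$. Its first term lies in $\mathcal{Z}$ by the inclusion above.
\item The same triangle for $A\in\mathcal{A}$ gives $II_\lambda Q_\rho(A)\in\mathcal{Z}$. The objects $F$ of $S_{\operatorname{qc}}(X)$ with $I(F)\in\mathcal{Z}$ form a suspended subcategory, and $I_\lambda Q_\rho(E)=\pi(E)\in\langle\pi(\mathcal{A})\rangle^{(-\infty,0]}$. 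Hence $II_\lambda Q_\rho(E)\in\mathcal{Z}$.
\item Therefore $Q_\rho(E)\in\mathcal{Z}\cap K(\operatorname{Inj}(X))^c=\langle Q_\rho(\mathcal{A})\rangle^{(-\infty,0]}$ by \Cref{prop:compact_aisle_susp}. That is, $E\in\mathcal{A}$, and direct summands are handled automatically.
\end{itemize}
The role of your ``perfect error term'' is played by the non-compact object $Q_\lambda(E)$. Its membership in $\mathcal{Z}$ is controlled solely by $E\in\mathcal{U}_{\phi_{\mathcal{A}}}$.

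If you want to stay at the finite level, the error terms to use are perfect approximations $P\to A$ as in \Cref{lem:psuedo_coherent_approx_triangle_fixed_support}, taken below the lowest degree of $A$. A support and degree count shows these lie in $\mathcal{U}_{\phi_{\mathcal{A}}}$, hence in $\mathcal{B}\subseteq\mathcal{A}$. You would still have to show that every roof and retraction in $D_{\operatorname{sg}}(X)$ can be realized through such terms, which is exactly the bookkeeping the recollement avoids.
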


This result is proved in \Cref{thm:injective_mapping_for_t_proxy_tensor_aisles_scheme}. The result relates $t$-$\otimes$-proxy small subcategories of $D^b_{\operatorname{coh}}(X)$ to data arising from the singularity category together with the topology of $X$. Although $\Theta$ is not a bijection in general, we give in \Cref{prop:image_assignment_tstr_general} a criterion describing its image.

\subsubsection{Characterization \& classification}
\label{sec:intro_what_we_do_characterization_classification}

We next extract geometric consequences of this framework. To start:

\begin{proposition}
    \label{prop:LCI_characterizations}
    Let $X$ be a Noetherian scheme. Then the following are equivalent:
    \begin{enumerate}
        \item \label{prop:LCI_characterizations1} $X$ is locally a complete intersection
        \item \label{prop:LCI_characterizations3} Every object of $D^b_{\operatorname{coh}}(X)$ is $\otimes$-proxy small
        \item \label{prop:LCI_characterizations2} Every object of $D^b_{\operatorname{coh}}(X)$ is $t$-$\otimes$-proxy small.
    \end{enumerate}
\end{proposition}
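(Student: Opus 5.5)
The equivalence (1)$\Leftrightarrow$(2) is the theorem of Briggs--Iyengar--Letz--Pollitz \cite{Briggs/Iyengar/Letz/Pollitz:2022}, so I will take it as known. It therefore suffices to prove (3)$\Rightarrow$(2) and (1)$\Rightarrow$(3). Throughout, $\mathcal{T}=D_{\operatorname{qc}}(X)$ with $\mathcal{T}^c=\operatorname{Perf}(X)$ and $\mathcal{P}^{\leq 0}=\operatorname{Perf}^{\leq 0}(X)$.

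For (3)$\Rightarrow$(2), let $E\in D^b_{\operatorname{coh}}(X)$ be $t$-$\otimes$-proxy small. Then $\overline{\langle E\rangle}^{(-\infty,0]}_\otimes$ is compactly generated, say by a set $\mathcal{G}$ of perfect complexes. By the second condition, each $G\in\mathcal{G}$ lies in $\langle E\rangle^{(-\infty,0]}_\otimes$. This subcategory is contained in the thick $\otimes$-ideal generated by $E$, since $\operatorname{Perf}^{\leq 0}(X)$ acts and we only use nonnegative shifts, extensions and summands. Hence every $G$ is finitely built from $E$ via the tensor action. Next I compare supports. Every object of $\overline{\langle E\rangle}^{(-\infty,0]}_\otimes$ has support contained in $\operatorname{Supp}(E)$. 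Conversely, $E$ lies in the localizing subcategory generated by $\mathcal{G}$, so $\operatorname{Supp}(E)\subseteq\bigcup_{G\in\mathcal{G}}\operatorname{Supp}(G)$. Since $E$ is coherent, $\operatorname{Supp}(E)$ is closed. The supports of perfect complexes are closed, and they form an increasing directed union once we take finite direct sums of the $G$ (which still lie in $\langle E\rangle_\otimes$). Quasi-compactness then gives a finite sum $G_0$ with $\operatorname{Supp}(G_0)=\operatorname{Supp}(E)$. The quasi-compactness step needs $X$ to be quasi-compact, which I assume as part of being Noetherian; in general I would argue locally. This yields exactly the $\otimes$-proxy smallness of $E$.

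For (1)$\Rightarrow$(3), assume $X$ is lci and fix $E\in D^b_{\operatorname{coh}}(X)$. By \Cref{thm:ALS_global_full_classification}, $\overline{\langle E\rangle}^{(-\infty,0]}_\otimes$ is a compactly generated $\otimes$-aisle determined by a Thomason filtration $\phi_E$. Concretely, it is generated by Koszul-type perfect complexes $K_Z[-n]$ with support $Z\subseteq\phi_E(n)$. It therefore remains to show that every perfect complex in this aisle lies in $\langle E\rangle^{(-\infty,0]}_\otimes$. I would first reduce to showing that each generator $K_Z[-n]$ with $Z\subseteq\phi_E(n)$ is built from $E$. Once that holds, any compact object of the aisle is a summand of a finite extension of such generators by the usual compactness argument (Neeman-type), and $\langle E\rangle^{(-\infty,0]}_\otimes$ is closed under extensions and summands, so it contains it. To build $K_Z[-n]$, I will use the characterization of $t$-proxy smallness via recollements (\Cref{thm:characterize_left_proxy_small}) locally. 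I would also use the local result (Pollitz/Letz) that over a complete intersection every object is proxy small, upgraded with degree control. The degree control should come from the fact that $\phi_E(n)$ is the support of the truncations $\tau^{\geq -n}E$, roughly: on the local ring at a point of $\phi_E(n)$, the object $E_x$ has nonzero cohomology in degrees $\geq -n$. Tensoring $E$ with Koszul complexes on a regular sequence, which lie in $\operatorname{Perf}^{\leq 0}$ up to shift, then produces objects of $\langle E\rangle_\otimes$ supported on $Z$ with cohomology bounded in the right range. Finally, the lci hypothesis makes the relevant Ext algebra finitely generated, so that a finite Koszul-type construction gives a perfect object, as in Pollitz's argument.

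The main obstacle is this last step: controlling degrees in the proxy-smallness construction, so that the perfect complex built from $E$ lands in the correct shift range using only nonnegative shifts. Equivalently, one must verify that the aisle generated by $E$ and the aisle generated by the constructed perfect complexes have the same Thomason filtration. My first attempt would be to work locally at points and use a local-to-global gluing. Locally, I would apply Letz's result together with the fact that cohomology with support along a complete intersection is computed by Koszul complexes. I would then glue using that $\otimes$-aisles are determined by their Thomason filtration, which is local on $X$.
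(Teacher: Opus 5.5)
Your (3)$\Rightarrow$(2) argument is fine; it is the direction the paper calls straightforward. Two small corrections. First, the finiteness there comes from the finitely many generic points of $\operatorname{supp}(E)$, each lying in some closed $\operatorname{supp}(G)$. It does not come from quasi-compactness, since you are covering by closed sets. Second, the paper does not cite \cite{Briggs/Iyengar/Letz/Pollitz:2022} for (1)$\Leftrightarrow$(2); it reproves it in \Cref{prop:LCI_iff_proxy_small} so that no separatedness hypothesis is needed.

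The genuine gap is (1)$\Rightarrow$(3). You name the obstacle but supply neither of the two ideas needed.

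Locally, the degree control comes from the Koszul-object construction in the proof of \cite[Theorem 3.2]{Bergh:2009}. Over a local complete intersection there are triangles $E_{i-1}\to E_{i-1}[n_i]\to E_i\to E_{i-1}[1]$ with $E_0=E$, $E_t$ perfect and, crucially, every $n_i>0$. Rotating, $E_i[-1]$ is an extension of $E_{i-1}$ by $E_{i-1}[n_i-1]$. Hence $P\colonequals E_t[-t]$ is a perfect complex in $\langle E\rangle^{(-\infty,0]}$ built using nonnegative shifts only. For the reverse inclusion, take $\mathfrak{p}\notin\phi_{E_j[-1]}(i)$. The localized long exact sequence gives $\mathcal{H}^n(E_{j-1})_{\mathfrak{p}}\cong\mathcal{H}^{n+n_j}(E_{j-1})_{\mathfrak{p}}$ for all $n\geq i$, and these vanish by boundedness. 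So $\phi_E\subseteq\phi_P$ and the two aisles coincide. Then \Cref{prop:compact_aisle_susp} puts every compact of the aisle into $\langle P\rangle^{(-\infty,0]}\subseteq\langle E\rangle^{(-\infty,0]}$; this is \Cref{prop:local-CI-tproxy}. Your sketch gestures at Pollitz's construction but never isolates these two points: positivity of the $n_i$, which keeps all shifts nonnegative, and the periodicity argument giving $\phi_E\subseteq\phi_P$. Tensoring $E$ with Koszul complexes on a regular sequence does not help, since it does not produce perfect objects.

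Globally, gluing ``because $\otimes$-aisles are determined by their Thomason filtration, which is local'' cannot work. The filtration only sees the big aisle $\overline{\langle E\rangle}^{(-\infty,0]}_\otimes$, which is compactly generated for \emph{every} $E\in D^b_{\operatorname{coh}}(X)$ by \Cref{thm:ALS_global_full_classification}, whether or not $X$ is singular. What must be globalized is the finite-building condition $\operatorname{Perf}(X)\cap\overline{\langle E\rangle}^{(-\infty,0]}_\otimes\subseteq\langle E\rangle^{(-\infty,0]}_\otimes$. Knowing $S_x\in\langle E_x\rangle^{(-\infty,0]}$ for all $x$ does not formally give $S\in\langle E\rangle^{(-\infty,0]}_\otimes$.

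The paper's solution is to pass to Krause's recollement, where $Q_\rho(E)$ is compact and finite building becomes membership in a compactly generated aisle:
\begin{itemize}
\item $E$ is $t$-$\otimes$-proxy small iff $Q_\lambda(E)\in\overline{\langle Q_\rho(E)\rangle}^{(-\infty,0]}_\odot$ (\Cref{lem:tproxy-Krecoll});
\item membership in a compactly generated $\odot$-aisle of $K(\operatorname{Inj}(X))$ is stalk-local (\Cref{lem:stalk-t-exact});
\item $(Q_\lambda E)_x\cong Q_\lambda(E_x)$, which reduces everything to the local statement.
\end{itemize}
A local-to-global principle of this kind, for suspended rather than thick subcategories, is the missing ingredient in your global step.
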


The key novelty lies in condition \eqref{prop:LCI_characterizations2}, which provides a $t$-structural refinement of existing characterizations. In fact, the result upgrades \cite[Theorem 6.4]{Briggs/Iyengar/Letz/Pollitz:2022} away from the separated case. The proof is partly inspired by \cite[Theorem 5.2]{Pollitz:2019}; see \Cref{prop:local-CI-tproxy}. However, the crucial step is establishing a local-to-global property, see \Cref{lem:stalk-t-exact}. Specifically, membership in a compactly generated aisle on $K(\operatorname{Inj}(X))$ is a stalk local condition.

Producing examples of proxy small objects of $D^b_{\operatorname{coh}}(X)$ which are not $t$-proxy small does not seem so straightforward, even in the affine local case. We do so in our \Cref{ex:proxy-small-not-t-proxy-small}, which uses two technical building blocks. The first is inspired by work of \cite{Dwyer/Greenlees/Iyengar:2006a}, adjusted to our $t$-structure refinement of proxy smallness, and reduces the problem to a module theoretic one. For the module theoretic problem, we utilize work of \cite{Jorgensen/Sega:2004} which in turn is influenced by \cite{Gasharov/Peeva:1990}. Notably, each of \cite{Jorgensen/Sega:2004,Gasharov/Peeva:1990} provides a counterexample to two unrelated well-known conjectures, and the common theme with our work is the exploitation of bad or unexpected behavior of modules over rings which are nice enough, yet not local complete intersection.  

Finally, we obtain classification results in situations where \Cref{thm:injective_mapping_for_t_proxy_tensor_aisles_scheme} becomes a bijection. First, in the case of hypersurface singularities.

\begin{theorem}
    \label{thm:image_assignment_tstr_lci_hyp}
    Let $X$ be a separated Noetherian scheme with only hypersurface singularities. Then $\Theta$ can be interpreted as a bijection between
    \begin{enumerate}
        \item $\otimes$-suspended subcategories of $D_{\operatorname{coh}}^b(X)$, and
        \item pairs $(W,\phi)$ of a specialization closed subset $W$ of $\operatorname{sing}(X)$ and a Thomason filtration $\phi$ on $X$ such that $W \subseteq \bigcup_{n \in \mathbb{Z}}\phi(n)$.
    \end{enumerate}
\end{theorem}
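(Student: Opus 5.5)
Hypersurface singularities are in particular local complete intersections. So by \Cref{prop:LCI_characterizations}, every object of $D^b_{\operatorname{coh}}(X)$ is $t$-$\otimes$-proxy small, and the third condition on preaisles in \Cref{introthm:injective_mapping_for_t_proxy_tensor_aisles_scheme} is automatic. The domain of $\Theta$ is therefore exactly the collection of $\otimes$-suspended subcategories of $D^b_{\operatorname{coh}}(X)$, meaning preaisles closed under summands and under the action of $\operatorname{Perf}^{\leq 0}(X)$. Since $\Theta$ is injective, it remains to do two things: identify the first factor of the target with specialization closed subsets $W \subseteq \operatorname{sing}(X)$, and compute the image.

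\textbf{First factor.} For hypersurface singularities the singularity category is locally $2$-periodic, up to twist by a line bundle. This periodicity comes from the Eisenbud matrix factorization structure on stalks. I would use it to show that any preaisle of $D_{\operatorname{sg}}(X)$ closed under $\operatorname{Perf}^{\leq 0}(X)$ is automatically closed under $[-1]$, hence is a thick $\otimes$-ideal. The precise formulation is that locally $\mathcal{O}[2]$ acts invertibly up to the action of perfect complexes, so $\Sigma^{-2}$ and then $\Sigma^{-1}$ can be recovered. Thick $\otimes$-ideals of $D_{\operatorname{sg}}(X)$ for separated $X$ with hypersurface singularities are classified by specialization closed subsets of $\operatorname{sing}(X)$ via support; I cite Stevenson's classification for the locally hypersurface case. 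This yields $W$.

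\textbf{Image.} The inclusion $W\subseteq \bigcup_n \phi(n)$ is necessary. If $x\in W$, some object of $\mathcal{A}$ has non-perfect stalk at $x$, so its support contains $x$. Since $\phi_{\mathcal{A}}$ records supports of the compactly generated $\otimes$-aisle $\overline{\langle\mathcal{A}\rangle}^{(-\infty,0]}_\otimes$ by \Cref{thm:ALS_global_full_classification}, $x$ lies in some $\phi(n)$.

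For sufficiency I would apply the image criterion \Cref{prop:image_assignment_tstr_general}. Given $(W,\phi)$, set
\[
\mathcal{A} = \{E\in D^b_{\operatorname{coh}}(X) : \operatorname{supp} H^i(E)\subseteq \phi(i) \text{ for all } i,\ \operatorname{Supp}\pi(E)\subseteq W\}.
\]
The main obstacle is showing that $\mathcal{A}$ realizes both invariants. On the singular side, given $x\in W$ with $x\in\phi(n)$, I need an object of $D^b_{\operatorname{coh}}(X)$ that satisfies the cohomological support condition and whose image in $D_{\operatorname{sg}}$ has support exactly $\overline{\{x\}}\cap\operatorname{sing}(X)$. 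The plan is to take a coherent sheaf $\mathcal{O}_{\overline{\{x\}}}$, or a high syzygy/Koszul-twisted version of it, placed in degree $n$. Shifting to the left keeps it in the aisle, and the periodicity of $D_{\operatorname{sg}}$ keeps its singular class unchanged up to shift. Separatedness is needed here to have enough ample-type objects and Koszul complexes when globalizing these local constructions.
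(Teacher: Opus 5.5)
There is a genuine gap in your first-factor step, the claim that every $\star$-suspended subcategory of $D_{\operatorname{sg}}(X)$ is closed under $[-1]$. Two-periodicity $A_x[2]\cong A_x$ holds only in $D_{\operatorname{sg}}(\mathcal{O}_{X,x})$. For a scheme whose local rings are merely hypersurfaces, no global (even line-bundle-twisted) periodicity on $D_{\operatorname{sg}}(X)$ is available in general. So ``locally $\mathcal{O}[2]$ acts invertibly up to the action of perfect complexes'' gives no information about $\langle A\rangle^{(-\infty,0]}_\star$.

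Nor can you pull the stalkwise isomorphism back into the small preaisle directly. Passing to the stalk at $x$ is the action of $\mathcal{O}_{X,x}$, which is not in $\operatorname{Perf}^{\leq 0}(X)$, and a small preaisle of $D_{\operatorname{sg}}(X)$ is not closed under it.

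What is missing is a local-to-global principle for preaisles, which is exactly the nontrivial input the paper supplies:
\begin{enumerate}
\item Pass to the compactly generated $\star$-aisle $\mathcal{T}^{\leq 0}$ generated by $A$ in $S_{\operatorname{qc}}(X)$.
\item Use \Cref{prop:compact_aisle_susp}: $A[-1]\in\langle A\rangle^{(-\infty,0]}_\star$ if and only if $A[-1]\in\mathcal{T}^{\leq 0}$.
\item Use the stalk-locality of \Cref{lem:stalk-t-exact} to test this on the $A_x[-1]$. Note that $A_x\in\mathcal{T}^{\leq 0}$ because such aisles are closed under the $D^{\leq 0}_{\operatorname{qc}}(X)$-action.
\end{enumerate}
Only then does a purely local input finish the argument. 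Your periodicity would work there, as do the Bergh triangles used in \Cref{lem:local-CI-preaisle-thick} for the general lci case. This stalk-locality already underlies \Cref{prop:LCI_characterizations}, which you invoke, so the repair is available. As written, though, identifying the first factor with specialization closed subsets of $\operatorname{sing}(X)$ via Stevenson's classification of thick submodules is not justified.

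Granting that step, the rest of your argument is correct, and your sufficiency argument is more economical than the paper's. The paper, through \Cref{thm:image_assignment_tstr_lci_general}, lifts every object $A$ of the $\star$-thick subcategory to some $M$ with $\operatorname{supp}(M)\subseteq\operatorname{supp}_{\operatorname{sg}}(A)$ using \Cref{lem:singular-lift-support_hyp}, and feeds this into \Cref{prop:image_assignment_tstr_general}. You instead write down the explicit preimage $\mathcal{A}$. Because the first component $\langle\pi(\mathcal{A})\rangle^{(-\infty,0]}_\star$ is thick with support $W$, Stevenson's classification means you only need generators of $\operatorname{supp}^{-1}_{\operatorname{sg}}(W)$.

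For these generators you need no syzygies, Koszul twists, periodicity or separatedness. If $x\in W\subseteq\operatorname{sing}(X)$, the stalk $k(x)$ of $\mathcal{O}_{\overline{\{x\}}}$ has infinite projective dimension over the singular local ring $\mathcal{O}_{X,x}$. Singular support is specialization closed and contained in ordinary support, so $\operatorname{supp}_{\operatorname{sg}}\pi(\mathcal{O}_{\overline{\{x\}}})=\overline{\{x\}}\subseteq W$. Also $\mathcal{O}_{\overline{\{x\}}}[-n]\in\mathcal{U}_\phi$ once $x\in\phi(n)$.

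You should also record that $\phi_{\mathcal{A}}=\phi$. This holds because $\mathcal{A}\subseteq\mathcal{U}_\phi$ contains $\mathcal{U}_\phi\cap\operatorname{Perf}(X)$, which compactly generates $\mathcal{U}_\phi$. Finally, separatedness enters through Stevenson's classification (and, in the paper, the lifting lemma), not through globalizing local constructions as you suggest.
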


Here, $\operatorname{sing}$ is the singular locus of the scheme. As a reminder, $\otimes$-suspended subcategories of $D_{\operatorname{coh}}^b(X)$ are those closed under suspension, direct summands, and the $\operatorname{Perf}^{\leq 0}(X)$-action on $D^b_{\operatorname{coh}}(X)$. 

For the second result, we consider schemes obtained as certain zero loci:

\begin{example}
    \label{ex:intro_lci}
    Consider a separated regular Noetherian scheme $T$ of finite Krull dimension, a vector bundle $\mathcal{E}$ on $T$ of finite rank, and a global section $t \in H^0(T,\mathcal{E})$. Denote by $X$ the zero subscheme of $t$ in $T$. Assume that $X$ has the resolution property and that $\mathcal{O}_\mathcal{E}(1)$ is ample. Following \cite{Orlov:2006}, see also the generalization of \cite[Appendix A]{Burke/Walker:2015} and discussion in \cite[\S 2]{Stevenson:2014}, $X$ satisfying the conditions above admits scheme morphisms $X \xleftarrow{p} Z \xrightarrow{i} Y$ such that $Z$ is the projective bundle of the normal bundle ${\mathcal{N}}_{X/T}$ in $T$ and $p$ is the canonical projection, $Y$ is a hypersurface scheme and $i$ is a closed immersion.
\end{example}

The next result is the following:

\begin{theorem}
    \label{thm:image_assignment_tstr_ci_intro}
    Let $X$ be as in \Cref{ex:intro_lci}. Then $\Theta$ can be interpreted as a bijection between 
    \begin{enumerate}
        \item $\otimes$-suspended subcategories of $D_{\operatorname{coh}}^b(X)$, and
        \item pairs $(W,\phi)$ of a specialization closed subset $W$ of $\operatorname{sing}(Y)$ and a Thomason filtration $\phi$ on $X$ such that $p i^{-1}(W) \subseteq \bigcup_{n \in \mathbb{Z}}\phi(n)$. 
    \end{enumerate}
\end{theorem}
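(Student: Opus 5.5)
Since $X$ is a complete intersection, \Cref{prop:LCI_characterizations} shows every object of $D^b_{\operatorname{coh}}(X)$ is $t$-$\otimes$-proxy small. So every $\otimes$-suspended subcategory of $D^b_{\operatorname{coh}}(X)$ lies in the domain of $\Theta$, and \Cref{thm:injective_mapping_for_t_proxy_tensor_aisles_scheme} makes $\Theta$ injective on this collection. What remains is to identify the image. I would use the criterion of \Cref{prop:image_assignment_tstr_general}. I expect it to say that a pair $(\mathcal{B},\phi)$ lies in the image exactly when every object of $\mathcal{B}$ is supported inside $\bigcup_n\phi(n)$, or, more precisely, when $\mathcal{B}$ lies in the image under $\pi$ of $\overline{\langle\phi\rangle}\cap D^b_{\operatorname{coh}}(X)$.

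The first step is to turn the singularity-category factor into topological data. Orlov's theorem, in the form of \cite[Appendix A]{Burke/Walker:2015} and \cite[\S 2]{Stevenson:2014}, gives an equivalence $\Psi = i_*p^*\colon D_{\operatorname{sg}}(X)\xrightarrow{\sim} D_{\operatorname{sg}}(Y)$. I would check three things about $\Psi$.
\begin{enumerate}
\item It carries preaisles closed under summands to preaisles closed under summands. This is formal, since $\Psi$ is a triangle equivalence.
\item It intertwines the $\operatorname{Perf}^{\le 0}(X)$-action with a $\operatorname{Perf}^{\le 0}(Y)$-action, at least up to the relevant closure operations. Here the ampleness of $\mathcal{O}_\mathcal{E}(1)$ should show that the ideals generated by the two actions coincide.
\item It matches supports: the singular support of $\Psi(M)$ in $\operatorname{sing}(Y)$ is $W$ exactly when the support of $M$ in $X$ is $p\,i^{-1}(W)$.
\end{enumerate}

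The second step works on $Y$, which is a hypersurface scheme. There I would apply the singularity-category part of \Cref{thm:image_assignment_tstr_lci_hyp}. Because $D_{\operatorname{sg}}$ is $2$-periodic on hypersurfaces, a preaisle of $D_{\operatorname{sg}}(Y)$ is automatically triangulated. Stevenson's classification then identifies $\otimes$-closed thick subcategories of $D_{\operatorname{sg}}(Y)$ with specialization closed subsets $W\subseteq\operatorname{sing}(Y)$. Transporting this through $\Psi$ replaces the first factor of $\Theta$ by $W$. The image condition $\pi(\mathcal{A})\subseteq\pi(\langle\phi\rangle\cap D^b_{\operatorname{coh}})$ then becomes $p\,i^{-1}(W)\subseteq\bigcup_n\phi(n)$.

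For surjectivity, given $(W,\phi)$ satisfying this inclusion, I would take $\mathcal{A}$ to be the set of $E\in D^b_{\operatorname{coh}}(X)\cap\overline{\langle\phi\rangle}$ whose image $\pi(E)$ lies in the thick subcategory attached to $W$. I would then check directly that $\Theta(\mathcal{A})=(W,\phi)$. The point to verify is that the objects of $D^b_{\operatorname{coh}}(X)$ coming from the singular part realize every $M$ with $\Psi(M)$ supported on $W$, while still lying in the aisle of $\phi$. A suitable shift of any such object does, because its support is contained in some $\phi(n)$.

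I expect the main obstacle to be the support-compatibility and tensor-compatibility of the equivalence $i_*p^*$. The functor does not visibly respect the $\operatorname{Perf}(X)$-action, which has to be compared with a $\operatorname{Perf}(Y)$-action through $Z$. I would try to settle this by computing supports stalkwise along the fibres of $p$, using the Koszul description of $Y$ as the hypersurface in the total space of $\mathcal{E}^\vee$ cut out by $t$.
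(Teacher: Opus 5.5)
You have the paper's skeleton right: every object is $t$-$\otimes$-proxy small by \Cref{prop:LCI_characterizations}, $\Theta$ is injective, and only the image has to be identified. But the step everything hinges on is left open, and the route you suggest for it would not close it.

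\textbf{The missing classification.} Write $\Psi=\mathbf{R}i_\ast\mathbf{L}p^\ast$ as you do. What is needed is that the $\star$-suspended subcategories of $D_{\operatorname{sg}}(X)$ are exactly the subcategories $\{A \mid \operatorname{supp}_{\operatorname{sg},Y}(\Psi A)\subseteq W\}$ with $W\subseteq\operatorname{sing}(Y)$ specialization closed. The paper does not prove this. Its proof combines \Cref{thm:image_assignment_tstr_lci_general} with Stevenson's classification \cite[Theorem 4.9]{Stevenson:2014}, and then only checks stalkwise that $p\,i^{-1}(\operatorname{supp}_{\operatorname{sg},Y}\Psi A)=\operatorname{supp}_{\operatorname{sg},X}A$.

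Stalkwise support computations along the fibres of $p$ can give you that identity. They cannot show that $\Psi$ of a $\star$-thick subcategory is closed under $\operatorname{Perf}(Y)$, because membership is not detected by support until a classification is already available.

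\textbf{The periodicity claim is wrong.} In general $Y$ is not affine, and $D_{\operatorname{sg}}(Y)$ is not $2$-periodic. One only has twisted periodicity: $[2]$ is tensoring with $\mathcal{O}_{\mathcal{E}}(1)|_Y$. So a plain preaisle of $D_{\operatorname{sg}}(Y)$ is not obviously triangulated, and your argument becomes circular. A workable order is as follows.
\begin{enumerate}
\item Get thickness on the $X$ side from \Cref{lem:local-CI-preaisle-thick}, which is stalk-local and uses Bergh's triangles.
\item Then $\Psi(\mathcal{A})$ is thick, hence closed under $[\pm 2]$, i.e.\ under twists by $\mathcal{O}_{\mathcal{E}}(1)^{\otimes \pm 1}$.
\item Since this bundle is ample, $\Psi(\mathcal{A})$ is closed under all of $\operatorname{Perf}(Y)$. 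This is where ampleness genuinely enters.
\item Only now can the hypersurface classification on $Y$ be applied.
\end{enumerate}

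\textbf{A second gap, in surjectivity.} Given $B$ in the subcategory attached to $W$, you need a lift $M\in D^b_{\operatorname{coh}}(X)$ with $\operatorname{supp}(M)\subseteq\operatorname{supp}_{\operatorname{sg}}(B)$. An arbitrary lift, say $M\oplus\mathcal{O}_X$, can have full support, and then no shift of it lies in $\mathcal{U}_\phi$. The needed lift is \Cref{lem:singular-lift-support_hyp}. It uses that $\operatorname{supp}_{\operatorname{sg}}$ is closed, and lifts through $S_{W,\operatorname{qc}}(X)$ via \cite[Proposition 6.9]{Krause:2005}, which needs $X$ separated and Gorenstein. Only then does Noetherianity put the closed set $\operatorname{supp}(M)$ inside a single $\phi(n)$, and thickness of $\star$-preaisles lets you shift back to $B$.

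Both ingredients are already packaged in \Cref{thm:image_assignment_tstr_lci_general}. You can use that theorem directly as the description of the image, instead of re-deriving it from \Cref{prop:image_assignment_tstr_general}.

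\textbf{A smaller correction.} Your item (3) should be the equality $p\,i^{-1}(\operatorname{supp}_{\operatorname{sg},Y}\Psi A)=\operatorname{supp}_{\operatorname{sg},X}A$, not an ``exactly when''. The map $W\mapsto p\,i^{-1}(W)$ is in general not injective on subsets of $\operatorname{sing}(Y)$, since the fibres of $p$ are projective spaces. That is precisely why the first coordinate is a subset of $\operatorname{sing}(Y)$ rather than of $\operatorname{sing}(X)$.
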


This appears later as \Cref{thm:image_assignment_tstr_ci}. In fact, our results provide a nonaffine generalization of Takahashi's classification \cite{Takahashi:2025}. In particular, we give a proof independent of the methods in loc.\ cit. See \Cref{rmk:generalize_takahashi}. Notably, the result is sharp in the sense of explicitness. Indeed, the construction of \Cref{ex:intro_lci} allows for one to efficiently detail the topological considerations. Moreover, \Cref{thm:image_assignment_tstr_ci_intro} should be compared to a different classification for $\otimes$-aisles on $D^b_{\operatorname{coh}}$ which appears in \cite{Clark/Lank/ManaliRahul/Parker:2024}, whereas our work proves a classification for $\otimes$-\textit{preaisles}. On a related note, classifications for notions of proxy small thick subcategories in $D^b_{\operatorname{coh}}(R)$ of a Noetherian ring have recently appeared \cite{Takahashi:2025a,Kato/Takahashi:2026}.

\begin{ack}
    Hrbek and Le Gros were supported by the project LQ100192601 Lumina quaeruntur, funded by the Czech Academy of Sciences (RVO 67985840). Lank was supported by the ERC Advanced Grant 101095900-TriCatApp. Pavon was supported by the Project 2022S97PMY \textit{Structures for Quivers, Algebras and Representations (SQUARE)} funded by NextGenerationEU under NRRP, Call PRIN 2022 No.\ 104 of February 2, 2022 of Italian Ministry of University and Research. Both Lank and Pavon would like to thank the Institute of Mathematics of the Czech Academy of Sciences for its hospitality while conducting parts of this research. Additionally, the authors thank Ryo Takahashi for discussions. 
\end{ack}

\subsection*{Notation}
\label{sec:intro_notation}

Let $X$ be a Noetherian scheme. Let $D(X)\colonequals D(\operatorname{Mod}(X))$ be the derived category of $\mathcal{O}_X$-modules. Denote by $D_{\operatorname{qc}}(X)$ the (strictly full) subcategory of $D(X)$ consisting of complexes with quasi-coherent cohomology. Set $D_{\operatorname{coh}}^b(X)$ (resp.\ $D_{\operatorname{qc}}^b(X)$) to be the (strictly full) subcategory of $D(X)$ consisting of complexes having bounded and coherent (resp.\ quasi-coherent) cohomology. Moreover, $\operatorname{Perf}(X)$ is defined as the (strictly full) subcategory of $D_{\operatorname{qc}}(X)$ consisting of the perfect complexes on $X$. At times, if $X$ is affine, we abuse notation and write $D_{\operatorname{qc}}(R)\colonequals D_{\operatorname{qc}}(X)$ where $R\colonequals H^0(X,\mathcal{O}_X)$ are the global sections; similar conventions will occur for the other categories mentioned here and later.

\section{Preliminaries}
\label{sec:prelim}

Let $\mathcal{T}$ be a triangulated category. Here, `strictly full' means a full subcategory closed under isomorphisms.

\subsection{Generation}
\label{sec:prelim_generation}

We discuss generation and dimension for triangulated categories. See \cite{Bondal/VandenBergh:2003} for details. Let $\mathcal{T}$ be a triangulated category with shift functor $[1]\colon \mathcal{T} \to \mathcal{T}$. Consider a subcategory $\mathcal{S} \subseteq \mathcal{T}$. A triangulated subcategory of $\mathcal{T}$ is called \textbf{thick} if it is closed under direct summands. Denote by $\langle \mathcal{S} \rangle$ the smallest thick subcategory of $\mathcal{T}$ containing $\mathcal{S}$; if $\mathcal{S}$ consists of a single object $G$, we write $\langle G \rangle \colonequals  \langle \mathcal{S} \rangle$. Set $\operatorname{add}(\mathcal{S})$ to be the smallest strictly full subcategory of $\mathcal{T}$ containing $\mathcal{S}$ that is closed under shifts, finite coproducts, and direct summands. Inductively, let $\langle \mathcal{S} \rangle_0$ consist of all objects in $\mathcal{T}$ isomorphic to the zero object, $\langle \mathcal{S} \rangle_1 \colonequals  \operatorname{add}(\mathcal{S})$, and 
\begin{displaymath}
    \langle \mathcal{S} \rangle_n \colonequals  \operatorname{add} \{ \operatorname{cone}(\phi) \mid \phi \in \operatorname{Hom}_{\mathcal{T}} (\langle \mathcal{S} \rangle_{n-1}, \langle \mathcal{S} \rangle_1) \}.
\end{displaymath}
It can be checked that $\langle \mathcal{S} \rangle = \cup_{n=0}^\infty \langle \mathcal{S} \rangle_n$. We say $E$ is \textbf{finitely built by $\mathcal{S}$} if $E\in \langle \mathcal{S}\rangle$.

If $\mathcal{T}$ admits small coproducts, then the collection of compact objects in $\mathcal{T}$ will be denoted by $\mathcal{T}^c$. These form a triangulated subcategory of $\mathcal{T}$. Assume that $\mathcal{T}^c$ is essentially small, then we say that $\mathcal{T}$ is \textbf{compactly generated} if it coincides with the smallest triangulated subcategory of $\mathcal{T}$ containing $\mathcal{T}^c$ and closed under small coproducts. Equivalently, $\mathcal{T}$ is compactly generated if, for any $E \in \mathcal{T}$ satisfying $\operatorname{Hom}(P, E) = 0$ for all $P \in \mathcal{T}^c$, one has $E \cong 0$. Note that classical generators for $\mathcal{T}^c$ coincide with compact generators for $\mathcal{T}$ (see e.g.\ \cite[\href{https://stacks.math.columbia.edu/tag/09SR}{Tag 09SR}]{StacksProject}). The \textbf{localizing subcategory} generated by a collection $\mathcal{S}\subseteq \mathcal{T}$, denoted by $\overline{\langle \mathcal{S} \rangle}$, is the smallest triangulated subcategory containing $\mathcal{S}$ and closed under small coproducts. In particular, $\mathcal{T} = \overline{\langle \mathcal{T}^c \rangle}$ whenever $\mathcal{T}$ is compactly generated.

\subsection{\texorpdfstring{$t$}{t}-structures}
\label{sec:prelim_$t$-structures}

We discuss $t$-structures $\mathcal{T}$ and recall material from \cite{Keller/Vossieck:1988,Beilinson/Berstein/Deligne/Gabber:2018}. A pair of strictly full subcategories $\tau = (\mathcal{T}^{\leq 0}, \mathcal{T}^{\geq 0})$ of $\mathcal{T}$ is a \textbf{$t$-structure} if:
\begin{itemize}
    \item $\operatorname{Hom}(A,B) = 0$ for all $A \in \mathcal{T}^{\leq 0}$ and $B \in \mathcal{T}^{\geq 0}[-1]$,
    \item $\mathcal{T}^{\leq 0}[1] \subseteq \mathcal{T}^{\leq 0}$ and $\mathcal{T}^{\geq 0}[-1] \subseteq \mathcal{T}^{\geq 0}$,
    \item for every $E \in \mathcal{T}$, there is a distinguished triangle
    \begin{displaymath}
        \tau^{\leq 0} E \to E \to \tau^{\geq 1} E \to (\tau^{\leq 0} E)[1]
    \end{displaymath}
    with $\tau^{\leq 0} E \in \mathcal{T}^{\leq 0}$ and $\tau^{\geq 1} E \in \mathcal{T}^{\geq 0}[-1]$.
\end{itemize}
The above triangle is unique up to unique isomorphism, and it is called the \textbf{truncation triangle} of $E$ with respect to $\tau$. Given $n \in \mathbb{Z}$, the pair $(\mathcal{T}^{\leq n}, \mathcal{T}^{\geq n})$ is also a $t$-structure on $\mathcal{T}$ where $\mathcal{T}^{\leq n} \colonequals  \mathcal{T}^{\leq 0}[-n]$ and $\mathcal{T}^{\geq n} \colonequals  \mathcal{T}^{\geq 0}[-n]$. 
Let $F \colon \mathcal{T}_1 \to \mathcal{T}_2$ be an exact functor between triangulated categories equipped with $t$-structures $(\mathcal{T}_1^{\leq 0}, \mathcal{T}_1^{\geq 0})$ and $(\mathcal{T}_2^{\leq 0}, \mathcal{T}_2^{\geq 0})$. We say that $F$ is \textbf{right $t$-exact} if $
F(\mathcal{T}_1^{\leq 0}) \subseteq \mathcal{T}_2^{\leq 0}$,
and \textbf{left $t$-exact} if 
$F(\mathcal{T}_1^{\geq 0}) \subseteq \mathcal{T}_2^{\geq 0}$. If both conditions hold, then $F$ is \textbf{$t$-exact}. 

\subsubsection{(Pre)aisles}
\label{sec:prelim_$t$-structures_pre_aisle}

A strictly full subcategory $\mathcal{A} \subseteq \mathcal{T}$ is a \textbf{preaisle} if $\mathcal{A}$ is closed under positive shifts and extensions and it is an \textbf{aisle} if, in addition, the inclusion $\mathcal{A} \to \mathcal{T}$ admits a right adjoint.
In fact, a subcategory $\mathcal{A}\subseteq\mathcal{T}$ is an aisle if and only if the pair $(\mathcal{A},\mathcal{A}^\bot[1])$ is a $t$-structure, where
\begin{displaymath}
    \mathcal{A}^\perp \colonequals  \{ T \in \mathcal{T} \mid \forall A \in \mathcal{A}, \operatorname{Hom}(A,T) = 0  \}.
\end{displaymath}
Respectively, we call $\mathcal{T}^{\leq 0}$ and $\mathcal{T}^{\geq 0}$ the \textbf{aisle} and \textbf{coaisle} of the $t$-structure. A useful fact, used freely in our work, is that aisles are closed under direct summands of coproducts when they exist in the ambient triangulated category (see \cite[Lemma 1.4]{AlonsoTarrio/Lopez/Salorio:2003}). A subcategory of $\mathcal{T}$ is called \textbf{suspended} if it is a preaisle closed under direct summands, equivalently, it is a strictly full subcategory of $\mathcal{T}$ closed under positive shifts, extensions, and direct summands. The smallest suspended category containing a subcategory $\mathcal{S} \subseteq \mathcal{T}$ is denoted $\langle \mathcal{S} \rangle^{(-\infty,0]}$; we write $\langle E \rangle^{(-\infty,0]}$ when $\mathcal{S}$ consists of a single object $E$.
Equivalently, it is the smallest preaisle containing $\mathcal{S}$ and closed under direct summands. 

\subsubsection{`Big' (pre)aisles}
\label{sec:prelim_$t$-structures_big}

Assume that $\mathcal{T}$ admits small coproducts. A preaisle $\mathcal{A} \subseteq \mathcal{T}$ is called \textbf{cocomplete} if it is closed under all coproducts in $\mathcal{T}$; note that this renders $\mathcal{A}$ a suspended subcategory by the usual trick with infinite coproducts. Given $\mathcal{S}\subseteq \mathcal{T}$, $\overline{\langle \mathcal{S} \rangle}^{(-\infty, 0]}$ is defined to be the smallest cocomplete preaisle containing $\mathcal{S}$. If $\mathcal{T}$ is well generated (of which compactly generated is a particular case), $\overline{\langle \mathcal{S} \rangle}^{(-\infty, 0]}$ is an aisle whenever $\mathcal{S}$ is essentially small, see \cite[Theorem 2.3]{Neeman:2021}. An aisle $\mathcal{U}$ on $\mathcal{T}$ is \textbf{compactly generated} when there exists a collection of compact objects $\mathcal{P} \subseteq \mathcal{T}^c$ satisfying $\overline{\langle \mathcal{P} \rangle}^{(-\infty, 0]} = \mathcal{U}$. Hence, we say a $t$-structure is \textbf{compactly generated} if its aisle is as such.  

\begin{example}
    \label{ex:standard_aisle_cpt_gen_for_schemes}
    Let $X$ be a Noetherian scheme. Then the standard aisle $D^{\leq 0}_{\operatorname{qc}}(X)$ is compactly generated by $\operatorname{Perf}(X) \cap D^{\leq 0}_{\operatorname{qc}}(X)$. See \cite{Herbera/Hrbek/LeGros:2025} for details; we briefly sketch the argument. First, one uses the equivalence $D(\operatorname{Qcoh}(X)) \cong D_{\operatorname{qc}}(X)$ 
    \cite[\href{https://stacks.math.columbia.edu/tag/09T4}{Tag 09T4}]{StacksProject}. Second, the claim may be proved by contradiction. 
    Suppose there exists a nonzero object in $D^{\geq 0}_{\operatorname{qc}}(X)$ which does not belong to the aisle generated by $\operatorname{Perf}(X) \cap D^{\leq 0}_{\operatorname{qc}}(X)$. Choose a degree in which this object has nonzero cohomology. Then the corresponding cycle complex has a nonzero component in that degree. Since quasi-coherent sheaves on a Noetherian scheme are filtered colimits of their coherent subsheaves \cite[\href{https://stacks.math.columbia.edu/tag/01PG}{Tag 01PG}]{StacksProject}, one can approximate by perfect complexes to obtain a contradiction.
\end{example}

\subsubsection{Tensor variants}
\label{sec:prelim_$t$-structures_tensor}

Assume that $\mathcal{T}$ is a tensor triangulated category with tensor $\otimes$ and unit $1$. Choose a preaisle $\mathcal{P}^{\leq 0} \subseteq \mathcal{T}$ satisfying $\mathcal{P}^{\leq 0} \otimes \mathcal{P}^{\leq 0} \subseteq \mathcal{P}^{\leq 0}$ and $1 \in \mathcal{P}^{\leq 0}$. A \textbf{tensor (pre)aisle} (or \textbf{$\otimes$-(pre)aisle}), with respect to $\mathcal{P}^{\leq 0}$, is an (pre)aisle $\mathcal{U} \subseteq \mathcal{T}$ which is closed under tensoring by $\mathcal{P}^{\leq 0}$, i.e.\ $\mathcal{P}^{\leq 0} \otimes \mathcal{U} \subseteq \mathcal{U}$.  If clear from context, we omit `with respect to $\mathcal{P}^{\leq 0}$'. We say a $t$-structure on $\mathcal{T}$ is \textbf{tensor} if its aisle is such. Similarly, a \textbf{$\otimes$-suspended} subcategory is a tensor preaisle which is closed under direct summands. Given $\mathcal{C} \subseteq \mathcal{T}$, $\langle \mathcal{C} \rangle^{(-\infty,0]}_{\otimes}$ is defined as the smallest $\otimes$-suspended subcategory containing $\mathcal{C}$ and $\overline{ \langle \mathcal{C} \rangle}^{(-\infty,0]}_{\otimes}$ is the smallest cocomplete $\otimes$-preaisle containing $\mathcal{C}$.

\subsection{Recollements}
\label{sec:prelim_recollement}

We briefly recall the notion of recollements. See \cite[\S 1.4]{Beilinson/Berstein/Deligne/Gabber:2018} for details. Recall that a \textbf{recollement} is a commutative diagram of triangulated categories and exact functors of the form 
\begin{displaymath}
    \begin{tikzcd}
        {\mathcal{T}} && {\mathcal{K}} && {\mathcal{D}}
        \arrow["I"{description}, from=1-1, to=1-3]
        \arrow["{I_\lambda}"', bend right =30pt, from=1-3, to=1-1]
        \arrow["{I_\rho}", bend right =-30pt, from=1-3, to=1-1]
        \arrow["Q"{description}, from=1-3, to=1-5]
        \arrow["{Q_\lambda}"', bend right =30pt, from=1-5, to=1-3]
        \arrow["{Q_\rho}", bend right =-30pt, from=1-5, to=1-3]
    \end{tikzcd}
\end{displaymath}
which satisfy the following properties:
\begin{itemize}
    \item $I_\lambda \dashv I \dashv I_\rho$ and $Q_\lambda \dashv Q \dashv Q_\rho$
    \item $I, Q_\lambda, Q_\rho$ are fully faithful
    \item $\ker (Q)$ coincides with the strictly full subcategory on objects of the form $I(T)$ where $T\in \mathcal{T}$.
\end{itemize}
In such a case, there are distinguished triangles
\begin{displaymath}
    \begin{aligned}
        (Q_\lambda \circ Q )(E) & \to E \to (I \circ I_\lambda )(E) \to (Q_\lambda \circ Q) (E)[1],
        \\& (I \circ I_\rho) (E) \to E \to (Q_\rho \circ Q) (E) \to (I \circ I_\rho )(E)[1]
    \end{aligned}
\end{displaymath}
which are functorial in $\mathcal{K}$. Particularly, the natural transformations between these functors are given by the (co)units of the relevant adjoint pairs. As $Q_\lambda, Q, I, I_\lambda$ are left adjoints, they preserve coproducts. Additionally, as $I$ and $Q$ admit right adjoints, $I_\lambda$ and $Q_\lambda$ preserve compact objects (see \cite[Theorem 5.1]{Neeman:1996}). 

\begin{example}
    Let $X$ be a Noetherian scheme. Then we have a recollement:
    \begin{equation}
        \label{eq:krause_recollement_schemes}
        \begin{tikzcd}
            {S_{\operatorname{qc}}(X)} && {K(\operatorname{Inj}(X))} && {D_{\operatorname{qc}}(X).}
            \arrow["I", from=1-1, to=1-3]
            \arrow["{I_\lambda}"', bend right =24pt, from=1-3, to=1-1]
            \arrow["{I_\rho}", bend right =-24pt, from=1-3, to=1-1]
            \arrow["Q", from=1-3, to=1-5]
            \arrow["{Q_\lambda}"', bend right =24pt, from=1-5, to=1-3]
            \arrow["{Q_\rho}", bend right =-24pt, from=1-5, to=1-3]
        \end{tikzcd}
    \end{equation}
    where $K(\operatorname{Inj}(X))$ is the homotopy category of injective $\mathcal{O}_X$-modules (i.e.\ of $\operatorname{Inj}(X)\subseteq \operatorname{Qcoh}(X)$) and $S_{\operatorname{qc}}(X)$ is the strictly full subcategory of $K(\operatorname{Inj}(X))$ consisting of acyclic complexes. Here, $I$ is the inclusion and $Q$ is the composite
    \begin{displaymath}
        K(\operatorname{Inj}(X)) \xrightarrow{incl.} K(\operatorname{Qcoh}(X)) \xrightarrow{can.} D_{\operatorname{qc}}(X).
    \end{displaymath}
    While we use $S_{\operatorname{qc}}(X)$, it is denoted by $S(\operatorname{Qcoh}(X))$ in \cite{Krause:2005}. Additionally, we utilize the triangulated equivalence of $D_{\operatorname{qc}}(X)$ with $D(\operatorname{Qcoh}(X))$ (see e.g.\ \cite[\href{https://stacks.math.columbia.edu/tag/09T4}{Tag 09T4}]{StacksProject}). This recollement was initially proven for \textit{separated} Noetherian schemes in \cite[Theorem 1.1]{Krause:2005}. Loc.\ cit.\ only requires $\operatorname{Qcoh}(X)$ to be a locally Noetherian Grothendieck abelian category (see \cite[\S 3]{Krause:2005}). However,  when coupled with \cite[Lemma B.3]{Coupek/Stovicek:2020}, it follows that \cite[Theorem 1.1]{Krause:2005} is true for arbitrary Noetherian schemes. Furthermore, $K(\operatorname{Inj}(X))$ is compactly generated and $Q_\rho$ identifies $D^b_{\operatorname{coh}}(X)$ with the subcategory $K^c(\operatorname{Inj}(X))$ of compact objects of $K(\operatorname{Inj}(X))$, see \cite[Theorem 1.1(2), Remark 3.8]{Krause:2005}. Furthermore, $S_{\operatorname{qc}}(X)$ is also compactly generated $D_{\operatorname{sg}}(X)$ and $I_\lambda Q_\rho$ identifies the idempotent completion of the singularity category $D_{\operatorname{sg}}(X) = D^b_{\operatorname{coh}}(X)/\operatorname{Perf}(X)$ with $S^c_{\operatorname{qc}}(X)$, see \cite[Theorem 1.1(3)]{Krause:2005}. Note that the functor $I_\lambda Q_\rho$ identifies with the Verdier quotient functor $\pi\colon D_{\operatorname{coh}}^b(X) \to D_{\operatorname{sg}}(X)$.
\end{example}

Consider a recollement
\begin{displaymath}
    \begin{tikzcd}
        {\mathcal{T}} && {\mathcal{K}} && {\mathcal{D}.}
        \arrow["I"{description}, from=1-1, to=1-3]
        \arrow["{I_\lambda}"', bend right =30pt, from=1-3, to=1-1]
        \arrow["{I_\rho}", bend right =-30pt, from=1-3, to=1-1]
        \arrow["Q"{description}, from=1-3, to=1-5]
        \arrow["{Q_\lambda}"', bend right =30pt, from=1-5, to=1-3]
        \arrow["{Q_\rho}", bend right =-30pt, from=1-5, to=1-3]
    \end{tikzcd}
\end{displaymath}
Assume $(\mathcal{T}^{\leq 0}, \mathcal{T}^{\geq 0})$ and $(\mathcal{D}^{\leq 0}, \mathcal{D}^{\geq 0})$ are $t$-structures respectively on $\mathcal{T}$ and $\mathcal{D}$. It is possible to `glue' these aisles to obtain a new $t$-structure on $\mathcal{K}$. Particularly, the glued $t$-structure is given by 
\begin{displaymath}
    \begin{aligned}
        \mathcal{K}^{\leq 0} &\colonequals  \left\{ E\in \mathcal{K} \mid I_\lambda (E) \in \mathcal{T}^{\leq 0}, Q(E) \in \mathcal{D}^{\leq 0} \right\},
        \\ \mathcal{K}^{\geq 0} &\colonequals  \left\{ E\in \mathcal{K} \mid I_\rho (E) \in \mathcal{T}^{\geq 0}, Q(E) \in \mathcal{D}^{\geq 0} \right\}.
    \end{aligned}
\end{displaymath}
Moreover, a few general facts include:
\begin{itemize}
    \item $I_\lambda$ and $Q_\lambda$ are right $t$-exact
    \item $I$ and $Q$ are $t$-exact
    \item $I_\rho$ and $Q_\rho$ are left $t$-exact.
\end{itemize}
See \cite[1.4.10 \& 1.3.17.(iii)]{Beilinson/Berstein/Deligne/Gabber:2018} for details.

\begin{lemma}
    \label{lem:glueing_recollement}
    Consider a recollement of triangulated categories
    \begin{displaymath}
        \begin{tikzcd}
            {\mathcal{T}} && {\mathcal{K}} && {\mathcal{D}.}
            \arrow["I"{description}, from=1-1, to=1-3]
            \arrow["{I_\lambda}"', bend right =30pt, from=1-3, to=1-1]
            \arrow["{I_\rho}", bend right=-30pt, from=1-3, to=1-1]
            \arrow["Q"{description}, from=1-3, to=1-5]
            \arrow["{Q_\lambda}"', bend right=30pt, from=1-5, to=1-3]
            \arrow["{Q_\rho}", bend right=-30pt, from=1-5, to=1-3]
        \end{tikzcd}
    \end{displaymath}
    Let $\mathcal{A}$ be an aisle on $\mathcal{K}$. Then the following are equivalent:
    \begin{enumerate}
        \item $\mathcal{A}$ is the aisle on $\mathcal{K}$ obtained by glueing the aisles $\mathcal{A}_{\mathcal{T}} \colonequals  \overline{\langle I_\lambda (\mathcal{A}) \rangle}^{(-\infty,0]}$ on $\mathcal{T}$ and $\mathcal{A}_{\mathcal{D}} \colonequals  \overline{\langle Q (\mathcal{A}) \rangle}^{(-\infty,0]}$ on $\mathcal{D}$
        \item $(Q_\rho \circ Q )(\mathcal{A}^{\perp})\subseteq \mathcal{A}^{\perp}$
        \item $(Q_\lambda \circ Q )(\mathcal{A})\subseteq \mathcal{A}$.
    \end{enumerate}
\end{lemma}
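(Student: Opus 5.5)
I will prove the cycle of implications (1)$\Rightarrow$(3)$\Rightarrow$(2)$\Rightarrow$(1), relying on two facts. First, the adjunctions $Q_\lambda\dashv Q\dashv Q_\rho$ give $\operatorname{Hom}(Q_\lambda Q A, B)\cong \operatorname{Hom}(QA,QB)\cong \operatorname{Hom}(A,Q_\rho Q B)$. Second, an aisle satisfies $\mathcal{A}={}^{\perp}(\mathcal{A}^\perp)$.

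\textbf{(1)$\Rightarrow$(3).} Let $A\in\mathcal{A}$. Glued aisles are membership-tested by $I_\lambda$ and $Q$. We have $QQ_\lambda QA\cong QA\in\mathcal{A}_{\mathcal{D}}$, since $Q_\lambda$ is fully faithful. We also have $I_\lambda Q_\lambda=0$, because $\operatorname{Hom}(I_\lambda Q_\lambda D,T)\cong\operatorname{Hom}(D,QIT)=0$. So $I_\lambda(Q_\lambda QA)=0\in\mathcal{A}_{\mathcal{T}}$, and therefore $Q_\lambda QA$ lies in the glued aisle, which is $\mathcal{A}$.

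\textbf{(3)$\Leftrightarrow$(2).} Take $B\in\mathcal{A}^\perp$ and $A\in\mathcal{A}$. The displayed adjunction isomorphism gives $\operatorname{Hom}(A,Q_\rho QB)\cong\operatorname{Hom}(Q_\lambda QA,B)$. Under (3) this vanishes, so $Q_\rho Q B\in\mathcal{A}^\perp$, which is (2). Conversely, under (2) we get $\operatorname{Hom}(Q_\lambda QA,B)=0$ for all $B\in\mathcal{A}^\perp$. Hence $Q_\lambda QA\in{}^\perp(\mathcal{A}^\perp)=\mathcal{A}$, which is (3).

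\textbf{(3)$\Rightarrow$(1).} This is the main step. Let $\mathcal{G}$ denote the glued aisle.

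For $\mathcal{A}\subseteq\mathcal{G}$: for $A\in\mathcal{A}$ we have $I_\lambda A\in\mathcal{A}_{\mathcal{T}}$ and $QA\in\mathcal{A}_{\mathcal{D}}$ by the very definitions of $\mathcal{A}_{\mathcal{T}}$ and $\mathcal{A}_{\mathcal{D}}$.

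For $\mathcal{G}\subseteq\mathcal{A}$, take $E\in\mathcal{G}$ and use the triangle $Q_\lambda QE\to E\to II_\lambda E\to$. Since $\mathcal{A}$ is closed under extensions, it suffices to show both outer terms lie in $\mathcal{A}$.

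\emph{The term $Q_\lambda QE$.} The subcategory $\mathcal{C}=\{D\in\mathcal{D}\mid Q_\lambda D\in\mathcal{A}\}$ is a cocomplete preaisle. This holds because $Q_\lambda$ is exact and coproduct-preserving, and aisles are closed under the coproducts that exist in $\mathcal{K}$. By (3), $\mathcal{C}$ contains $Q(\mathcal{A})$, so it contains $\mathcal{A}_{\mathcal{D}}\ni QE$.

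\emph{The term $II_\lambda E$.} Similarly, $\mathcal{C}'=\{T\mid IT\in\mathcal{A}\}$ is a cocomplete preaisle, since $I$ is exact and preserves coproducts as a left adjoint. It contains $I_\lambda A$ for every $A\in\mathcal{A}$. To see this, use the triangle $Q_\lambda QA\to A\to II_\lambda A\to Q_\lambda QA[1]$. Here $A\in\mathcal{A}$, and $Q_\lambda QA[1]\in\mathcal{A}$ by (3) together with closure under positive shifts. So $II_\lambda A$ is an extension of objects of $\mathcal{A}$, hence lies in $\mathcal{A}$. Therefore $\mathcal{C}'\supseteq\mathcal{A}_{\mathcal{T}}\ni I_\lambda E$.

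The obstacle I expect is exactly this use of the closure properties of $\mathcal{A}$. The key observation is the cone trick: $II_\lambda A$ is the cone of $Q_\lambda QA\to A$, so it lies in $\mathcal{A}$ once $Q_\lambda QA[1]$ does.
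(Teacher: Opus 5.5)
Your argument is correct, and it takes a different route from the paper, which gives no argument of its own and simply cites Beilinson--Bernstein--Deligne (Proposition 1.4.12) and Psaroudakis--Vit\'oria (Theorem 2.8). Your direct verification holds at every step:
\begin{itemize}
\item For (2)$\Leftrightarrow$(3), the adjunction isomorphism $\operatorname{Hom}(Q_\lambda QA,B)\cong\operatorname{Hom}(A,Q_\rho QB)$ together with $\mathcal{A}={}^{\perp}(\mathcal{A}^{\perp})$ suffices.
\item For (1)$\Rightarrow$(3), the vanishing $I_\lambda Q_\lambda=0$ suffices.
\item For (3)$\Rightarrow$(1), d\'{e}vissage along $Q_\lambda$ and $I$, plus your cone trick for $II_\lambda A$, suffices.
\end{itemize}
Compared with the citation, your route shows what the cocomplete preaisles in (1) actually are under (3). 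From $\mathcal{A}_{\mathcal{D}}\subseteq\{D\mid Q_\lambda D\in\mathcal{A}\}$ and $D\cong QQ_\lambda D$ you get $\mathcal{A}_{\mathcal{D}}=Q(\mathcal{A})$. Likewise $\mathcal{A}_{\mathcal{T}}=I_\lambda(\mathcal{A})=\{T\mid IT\in\mathcal{A}\}$. The paper relies on exactly this, tacitly, in part (2) of \Cref{thm:injective_mapping_for_t_proxy_tensor_aisles_scheme}, where it asserts that $I_\lambda(\mathcal{A})$ and $Q(\mathcal{A})$ are cocomplete preaisles for a glued $\mathcal{A}$.

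One thing to add: (1) speaks of glueing \emph{aisles}, so under (3) you should also check that $\mathcal{A}_{\mathcal{T}}$ and $\mathcal{A}_{\mathcal{D}}$ admit truncation triangles. Closure under coproducts does not give this, and $I_\lambda(\mathcal{A})$ is not essentially small, so no generation theorem applies for free. With the identifications above the check is quick.

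For $D\in\mathcal{D}$, truncate $Q_\rho D$ in $\mathcal{A}$ as $A\to Q_\rho D\to B\to A[1]$ and apply $Q$. For $A'\in\mathcal{A}$ one has $\operatorname{Hom}(QA',QB)\cong\operatorname{Hom}(Q_\lambda QA',B)=0$ by (3), so this is a truncation triangle for $Q(\mathcal{A})$.

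For $T\in\mathcal{T}$, truncate $IT$ as $A\to IT\to B\to A[1]$. Since $QIT=0$, you have $QB\cong QA[1]$. Then $Q_\lambda QB\in\mathcal{A}$ by (3), and $Q_\rho QB\in\mathcal{A}^{\perp}$ by (2). Because $\operatorname{Hom}(Q_\lambda QB,Q_\rho QB)\cong\operatorname{Hom}(QB,QB)$, this forces $QB=0=QA$. Hence $A$ and $B$ lie in the image of $I$, and applying $I_\lambda$ gives the truncation triangle of $T$ with respect to $I_\lambda(\mathcal{A})$.
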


\begin{proof}
    This is \cite[Proposition 1.4.12]{Beilinson/Berstein/Deligne/Gabber:2018}; see \cite[Theorem 2.8]{Psaroudakis/Vitoria:2018} for a relevant statement.
\end{proof}

\subsection{Tensor actions}
\label{sec:prelim_tensor}

We give a short reminder on tensor actions of triangulated categories. Those not familiar are encouraged to read \cite[\S 3]{Stevenson:2013}. While it is possible to work in larger generality, we refrain from doing so due to the scope of our paper. 

Let $(\mathcal{T},\otimes,\mathbf{1})$ be a rigidly compactly generated tensor triangulated category and $\mathcal{K}$
be a compactly generated triangulated category.
Consider an action $\odot\colon \mathcal{T}\times\mathcal{K}\to \mathcal{K}$ of
$\mathcal{T}$ on $\mathcal{K}$; see \cite[Definition 3.2]{Stevenson:2013}. In
particular, $\odot$ is exact and coproduct preserving in both variables.
By \cite[Lemma 4.6]{Stevenson:2013}, this action
restricts to an action $\odot\colon \mathcal{T}^c\times\mathcal{K}^c\to
\mathcal{K}^c$ of the compacts of $\mathcal{T}$ on those of $\mathcal{K}$. Fix a preaisle $\mathcal{P}^{\leq 0}\subseteq\mathcal{T}^c$ such that
$\mathbf{1}\in\mathcal{P}^{\leq 0}$ and
$\mathcal{P}^{\leq 0}\otimes\mathcal{P}^{\leq 0}\subseteq\mathcal{P}^{\leq 0}$.
We denote by $\mathcal{T}^{\leq 0}\colonequals \overline{\langle \mathcal{P}^{\leq 0}
\rangle}^{(-\infty, 0]}$ the aisle generated by $\mathcal{P}^{\leq 0}$.

\begin{definition}
	We say that a preaisle $\mathcal{U}\subseteq\mathcal{K}$ is a \textbf{$\odot$-preaisle}
	if we have $\mathcal{P}^{\leq 0}\odot\mathcal{U}\subseteq\mathcal{U}$.
\end{definition}

\begin{remark}
    Our definition of $\odot$-preaisle differs slightly from
    \cite[Definition 3.2]{Dubey/Sahoo:2023} (where they consider the action of
    $\mathcal{T}$ on itself). Specifically, we only require closure under tensoring with $\mathcal{P}^{\leq 0}$ rather than $\mathcal{T}^{\leq 0}$. The reason is that we want to consider also $\odot$-preaisles of certain subcategories of $\mathcal{K}$, such as $\mathcal{K}^c$, and usually we do not even  that
    $\mathcal{T}^{\leq 0}\odot\mathcal{K}^c\subseteq\mathcal{K}^c$. 
However, the two notions coincide for cocomplete preaisles, as shown below.
\end{remark}

\begin{lemma}
    \label{lemma:cocomplete-tensor-preaisle}
	The following are equivalent for a cocomplete preaisle
	$\mathcal{A}\subseteq\mathcal{K}$:
	\begin{enumerate}
		\item $\mathcal{A}$ is a $\odot$-preaisle, as defined above;
		\item $\mathcal{T}^{\leq 0}\odot \mathcal{A}\subseteq\mathcal{A}$.
	\end{enumerate}
\end{lemma}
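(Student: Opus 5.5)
The implication (2)$\Rightarrow$(1) is immediate: $\mathcal{P}^{\leq 0}\subseteq\mathcal{T}^{\leq 0}$, so $\mathcal{P}^{\leq 0}\odot\mathcal{A}\subseteq\mathcal{T}^{\leq 0}\odot\mathcal{A}\subseteq\mathcal{A}$. The substance is (1)$\Rightarrow$(2). We first fix an object $A\in\mathcal{A}$ and consider
\begin{displaymath}
    \mathcal{C}_A\colonequals\{T\in\mathcal{T}\mid T\odot A\in\mathcal{A}\}.
\end{displaymath}
The goal is to show $\mathcal{T}^{\leq 0}\subseteq\mathcal{C}_A$. Since $\mathcal{T}^{\leq 0}=\overline{\langle\mathcal{P}^{\leq 0}\rangle}^{(-\infty,0]}$ is the smallest cocomplete preaisle containing $\mathcal{P}^{\leq 0}$, it suffices to check three things: $\mathcal{C}_A$ contains $\mathcal{P}^{\leq 0}$, and $\mathcal{C}_A$ is a cocomplete preaisle of $\mathcal{T}$.

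Each property follows from the structure of the action. The containment $\mathcal{P}^{\leq 0}\subseteq\mathcal{C}_A$ is exactly hypothesis (1). For positive shifts, $T[1]\odot A\cong(T\odot A)[1]$ because $\odot$ is exact in the first variable, and $\mathcal{A}$ is closed under positive shifts. For extensions, a triangle $T'\to T\to T''\to T'[1]$ in $\mathcal{T}$ yields a triangle $T'\odot A\to T\odot A\to T''\odot A\to$ in $\mathcal{K}$, again by exactness in the first variable, and $\mathcal{A}$ is closed under extensions. For coproducts, $(\coprod_i T_i)\odot A\cong\coprod_i(T_i\odot A)$ because $\odot$ preserves coproducts in each variable, and $\mathcal{A}$ is cocomplete. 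Strict fullness of $\mathcal{C}_A$ is clear since $\odot$ is a functor and $\mathcal{A}$ is strictly full. Hence $\mathcal{C}_A$ is a cocomplete preaisle containing $\mathcal{P}^{\leq 0}$, so $\mathcal{T}^{\leq 0}\subseteq\mathcal{C}_A$. Since $A\in\mathcal{A}$ was arbitrary, $\mathcal{T}^{\leq 0}\odot\mathcal{A}\subseteq\mathcal{A}$.

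This is a routine ``closure of a test class'' argument, so no step is a serious obstacle. The only point needing care is that the minimal cocomplete preaisle is characterised purely by closure under positive shifts, extensions, and coproducts. Closure under summands comes for free from coproducts and extensions via the Eilenberg swindle, so we never need to check it separately for $\mathcal{C}_A$.
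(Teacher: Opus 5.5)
Your proof is correct and is essentially the paper's dévissage argument. The paper tests against all of $\mathcal{A}$ at once with $\mathcal{T}'=\{X\in\mathcal{T}\mid X\odot\mathcal{A}\subseteq\mathcal{A}\}$, while you use one object at a time with $\mathcal{C}_A$; this difference is purely cosmetic.
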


\begin{proof}
	Since $\mathcal{P}^{\leq 0} \subseteq \mathcal{T}^{\leq 0}$, it follows that $\mathcal{T}^{\leq 0}\odot \mathcal{A}\subseteq\mathcal{A}$ implies $\mathcal{P}^{\leq 0}\odot \mathcal{A}\subseteq\mathcal{A}$. So, we prove the converse by \emph{d\'{e}vissage}. Consider the subcategory $\mathcal{T}^\prime \colonequals \{\,X\in\mathcal{T}\mid X\odot \mathcal{A}\subseteq\mathcal{A}\,\}$ of $\mathcal{T}$. Since $\odot$ is triangulated and coproduct preserving in the first variable and $\mathcal{A}$ is a cocomplete preaisle, $\mathcal{T}^\prime$ is a cocomplete preaisle as well. Moreover, $\mathcal{T}^\prime$ contains $\mathcal{P}^{\leq 0}$ by the assumption, and so $\mathcal{T}^\prime$ contains $\mathcal{T}^{\leq 0}$. This finishes the proof.
\end{proof}

\begin{definition}
	Let $\mathcal{S}\subseteq\mathcal{K}$ be a set of objects. We denote by 
	$\overline{\langle \mathcal{S} \rangle}^{(-\infty, 0]}_\odot$ the smallest
	cocomplete $\odot$-preaisle of $\mathcal{K}$ containing $\mathcal{S}$ (which
	is obtained by intersecting all of them).
\end{definition}

\begin{lemma}
    \label{lem:generated_odot_aisle}
	For any skeletally small subcategory $\mathcal{S}\subseteq\mathcal{K}$, we have:
	\begin{displaymath}
        \overline{\langle \mathcal{S} \rangle}^{(-\infty, 0]}_\odot =
		\overline{\langle \mathcal{P}^{\leq 0}\odot\mathcal{S} \rangle}^{(-\infty, 0]}.
    \end{displaymath}
	In particular,
	$\overline{\langle \mathcal{S} \rangle}^{(-\infty, 0]}_\odot$ is an aisle.
\end{lemma}

\begin{proof}
    First, we show $\overline{\langle \mathcal{P}^{\leq 0}\odot\mathcal{S} \rangle}^{(-\infty, 0]} \subseteq \overline{\langle \mathcal{S} \rangle}^{(-\infty, 0]}_\odot $. By construction, 
	$\overline{\langle \mathcal{S} \rangle}^{(-\infty, 0]}_\odot$ is a cocomplete
	preaisle containing $\mathcal{S}$ and closed under tensoring with
	$\mathcal{P}^{\leq 0}$. Therefore, it contains
	$\mathcal{P}^{\leq 0}\odot\mathcal{S}$, and so also the smallest cocomplete
	preaisle containing this subcategory. 
    
    Now, we check the reverse inclusion. Again by construction, 
	$\overline{\langle \mathcal{P}^{\leq 0}\odot\mathcal{S} \rangle}^{(-\infty,
	0]}$ is a cocomplete preaisle containing
	$\mathcal{S}=\mathbf{1}\odot\mathcal{S}\subseteq\mathcal{P}^{\leq 0}\odot\mathcal{S}$.
	We need to show that it is a $\odot$-preaisle, and we argue by
	\emph{d\'{e}vissage}. Consider the subcategory
	\begin{displaymath}
        \mathcal{K}^\prime \colonequals \left\{\,X\in\mathcal{K}\mid
	\mathcal{P}^{\leq 0}\odot X\in
		\overline{\langle \mathcal{P}^{\leq 0}\odot\mathcal{S} \rangle}^{(-\infty,
		0]}\,\right\}
    \end{displaymath}
	of $\mathcal{K}$. Since $\odot$ is exact and coproduct preserving in the
	second variable and $\overline{\langle \mathcal{P}^{\leq 0}\odot\mathcal{S}
	\rangle}^{(-\infty,0]}$ is a cocomplete preaisle, $\mathcal{K}^\prime$ is a
	cocomplete preaisle as well. Since
	\begin{displaymath}
        \mathcal{P}^{\leq 0}\odot (\mathcal{P}^{\leq 0}\odot\mathcal{S})=
	(\mathcal{P}^{\leq 0}\otimes\mathcal{P}^{\leq 0})\odot\mathcal{S}\subseteq
	\mathcal{P}^{\leq 0}\odot\mathcal{S}\subseteq\overline{\langle
	\mathcal{P}^{\leq 0}\odot\mathcal{S} \rangle}^{(-\infty,0]},
    \end{displaymath}
	$\mathcal{K}^\prime$ contains $\mathcal{P}^{\leq 0}\odot\mathcal{S}$, and therefore
	it contains $\overline{\langle \mathcal{P}^{\leq 0}\odot\mathcal{S} \rangle}^{(-\infty,
	0]}$ as well.

	For the last claim, since $\mathcal{P}^{\leq 0}$ is skeletally small, so is
	$\mathcal{P}^{\leq 0}\odot\mathcal{S}$ for any skeletally small subcategory
	$\mathcal{S}\subseteq\mathcal{K}$. Therefore, the generated cocomplete
	preaisle is an aisle.
\end{proof}

\begin{corollary}
    \label{cor:compactly_generated_odot_aisle}
	If $\mathcal{S}\subseteq\mathcal{K}^c$ is a skeletally small subcategory of
	compact objects, then so is $\mathcal{P}^{\leq 0}\odot\mathcal{S}$. Therefore,
	$\overline{\langle \mathcal{S} \rangle}^{(-\infty, 0]}_\odot=
		\overline{\langle \mathcal{P}^{\leq 0}\odot\mathcal{S} \rangle}^{(-\infty,
		0]}$ is compactly generated.
\end{corollary}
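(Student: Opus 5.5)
The corollary has two parts. First, $\mathcal{P}^{\leq 0}\odot\mathcal{S}$ is a skeletally small subcategory of compact objects. Second, the equality with the cocomplete $\odot$-preaisle gives compact generation. I would prove them in that order.

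For the first part, recall from the setup that $\mathcal{P}^{\leq 0}\subseteq\mathcal{T}^c$, and that the action restricts to $\odot\colon\mathcal{T}^c\times\mathcal{K}^c\to\mathcal{K}^c$ by \cite[Lemma 4.6]{Stevenson:2013}. Hence every object $P\odot S$ with $P\in\mathcal{P}^{\leq 0}$ and $S\in\mathcal{S}$ is compact in $\mathcal{K}$.

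For skeletal smallness, note that $\mathcal{T}$ is compactly generated, so $\mathcal{T}^c$ is essentially small, and therefore so is $\mathcal{P}^{\leq 0}$. Choose sets of isomorphism-class representatives for $\mathcal{P}^{\leq 0}$ and for $\mathcal{S}$. Since $\odot$ is a bifunctor, it preserves isomorphisms in each variable. So every object of $\mathcal{P}^{\leq 0}\odot\mathcal{S}$ is isomorphic to $P\odot S$ for $(P,S)$ ranging over the product of these two sets. This product is a set, which gives skeletal smallness.

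For the second part, apply \Cref{lem:generated_odot_aisle} to the skeletally small $\mathcal{S}$. This gives the equality $\overline{\langle \mathcal{S} \rangle}^{(-\infty,0]}_\odot=\overline{\langle \mathcal{P}^{\leq 0}\odot\mathcal{S} \rangle}^{(-\infty,0]}$, and shows that this subcategory is an aisle. The right-hand side is the cocomplete preaisle generated by a set of compact objects. By the definition of compactly generated aisles, taking the collection $\mathcal{P}\colonequals\mathcal{P}^{\leq 0}\odot\mathcal{S}\subseteq\mathcal{K}^c$, it is therefore compactly generated.

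The only point requiring any care is that compactness is preserved by the action. This is exactly the cited restriction result of Stevenson, so I expect no real obstacle; the rest is bookkeeping.
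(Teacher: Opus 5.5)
Your proposal is correct and follows the same route as the paper. The paper's one-line proof invokes the restriction $\odot\colon\mathcal{T}^c\times\mathcal{K}^c\to\mathcal{K}^c$ from \cite[Lemma 4.6]{Stevenson:2013} together with \Cref{lem:generated_odot_aisle}; you simply spell out the routine skeletal-smallness bookkeeping explicitly.
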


\begin{proof}
	Follows from the fact, recalled above, that $\odot$ restricts to an action
	of $\mathcal{T}^c$ on $\mathcal{K}^c$.
\end{proof}

\begin{proposition}
    \label{prop:compact_aisle_susp}
		Assume that $\mathcal{K}$ underlies a triangulated
		derivator and $\mathcal{P}\subseteq\mathcal{K}^c$ a set of compact objects.
		Then we have:
		\begin{displaymath}
            \overline{\langle \mathcal{P} \rangle}\cap\mathcal{K}^c=\langle\mathcal{P}\rangle
		    \qquad\text{and}\qquad
			\overline{\langle \mathcal{P}
			\rangle}^{(-\infty,0]}\cap\mathcal{K}^c=\langle\mathcal{P}\rangle^{(-\infty,0]}.
        \end{displaymath}
\end{proposition}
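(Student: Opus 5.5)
The first equality is Neeman's classical result: if $\mathcal{P}$ is a set of compact objects, then $\overline{\langle \mathcal{P}\rangle}$ is compactly generated by $\mathcal{P}$. Its compact objects, which coincide with $\overline{\langle \mathcal{P}\rangle}\cap\mathcal{K}^c$ because $\overline{\langle \mathcal{P}\rangle}$ is closed under coproducts in $\mathcal{K}$, form the thick closure $\langle\mathcal{P}\rangle$ (Neeman's localization theorem, \cite[Theorem 2.1]{Neeman:1996}). I would simply cite this; the derivator hypothesis is not needed here.

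For the second equality, the inclusion $\supseteq$ is immediate. The subcategory $\overline{\langle \mathcal{P}\rangle}^{(-\infty,0]}\cap\mathcal{K}^c$ is closed under positive shifts and extensions. It is also closed under summands, since cocomplete preaisles are closed under summands by the Eilenberg swindle recalled in \S\ref{sec:prelim_$t$-structures_big}. As it contains $\mathcal{P}$, it contains $\langle\mathcal{P}\rangle^{(-\infty,0]}$.

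The substance lies in $\subseteq$. My plan is to use the derivator structure to describe $\overline{\langle \mathcal{P}\rangle}^{(-\infty,0]}$ explicitly. In the presence of a derivator, the aisle generated by a set of compact objects consists precisely of homotopy colimits of countable sequences
\[
X_0\to X_1\to X_2\to\cdots,
\]
where $X_0$ is a coproduct of nonnegative shifts of objects of $\mathcal{P}$, and each cone of $X_n\to X_{n+1}$ is again such a coproduct. This is the standard ``cellular tower'' description (Keller–Nicolás, Aihara–Iyama style, or the small object argument for $t$-structures of Alonso–Jeremías–Souto). A further refinement, available via derivators (as in Saorín–Šťovíček), is that every object of the aisle is a directed homotopy colimit of objects of $\langle\mathcal{P}\rangle^{(-\infty,0]}$. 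Concretely, one shows that the class of directed homotopy colimits of diagrams in $\langle\mathcal{P}\rangle^{(-\infty,0]}$ is closed under extensions, coproducts and positive shifts. Closure under extensions is where coherent diagrams are needed: given a triangle of directed hocolims, the connecting morphism must be lifted to a morphism of coherent diagrams.

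Given this description, let $C\in\overline{\langle \mathcal{P}\rangle}^{(-\infty,0]}\cap\mathcal{K}^c$ and write $C\cong\operatorname{hocolim}_{i\in I}C_i$ with each $C_i\in\langle\mathcal{P}\rangle^{(-\infty,0]}$ and $I$ directed. Since $C$ is compact and directed homotopy colimits in a compactly generated derivator satisfy $\operatorname{Hom}(C,\operatorname{hocolim}C_i)\cong\operatorname{colim}\operatorname{Hom}(C,C_i)$, the identity of $C$ factors through some $C_i$. Hence $C$ is a direct summand of $C_i$, and therefore $C\in\langle\mathcal{P}\rangle^{(-\infty,0]}$, which is closed under summands.

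I expect the main obstacle to be establishing this directed-hocolim presentation, specifically its closure under extensions. I would try first to cite it from Saorín–Šťovíček–Virili or Laking's work on compactly generated $t$-structures in derivators. Failing that, I would use the countable-tower description instead. In that approach, compactness of $C$ makes $C\to\operatorname{hocolim}X_n$ factor through some $X_n$, which is a finite extension of coproducts of shifted objects of $\mathcal{P}$. One then reduces these coproducts to finite subcoproducts by an induction on $n$, again using compactness at each step.
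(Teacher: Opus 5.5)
Your proposal is correct and follows essentially the paper's route. The first equality is Neeman's theorem (the paper cites \cite[Lemma 2.2]{Neeman:1992b}), and for the second the paper also uses the derivator to present objects of the aisle through filtered homotopy colimits of finitely built objects and then concludes by compactness, simply citing \cite[Theorem 12.3]{Keller/Nicolas:2013} together with \cite[Proposition 3.12]{Rouquier:2008} for the step you sketch. One caution on your Milnor-tower fallback: compactness of $C$ only lets you shrink the last coproduct layer of $X_n$, so the induction must be stated for maps from arbitrary compact objects into $n$-fold extensions of coproducts, using cones of compact objects to kill maps that become zero in the target; run that way it works and in fact needs no derivator.
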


\begin{proof}
	It is easy to see that $\langle\mathcal{P}\rangle \subseteq \overline{\langle \mathcal{P} \rangle}\cap\mathcal{K}^c$ and $\langle\mathcal{P}\rangle^{(-\infty,0]} \subseteq \overline{\langle \mathcal{P} \rangle}^{(-\infty,0]}\cap\mathcal{K}^c$. We claim the reverse inclusions hold. In the first case, use \cite[Lemma 2.2]{Neeman:1992b}, whereas the second follows by combining \cite[Theorem 12.3]{Keller/Nicolas:2013}
	together with \cite[Proposition 3.12]{Rouquier:2008}.
\end{proof}

\begin{remark}
    \label{rmk:keller_nicholas}
	In \Cref{prop:compact_aisle_susp}, the assumption of $\mathcal{K}$ underlying a triangulated derivator is used in
	\cite[Theorem 12.3]{Keller/Nicolas:2013} in order to be able to argue using
	functorial homotopy filtered colimits. This is a very weak assumption
	satisfied whenever $\mathcal{K}$ is the homotopy category of a stable model
	category, see \cite[Example 1.16]{Cisinski/Neeman:2008}. In fact, this includes our setting. 
\end{remark}

\begin{proposition}
    \label{prop:suspended_aisles_to_compactly_generated_aisles_a_la_neeman}
    With $\mathcal{K}$ as in \Cref{prop:compact_aisle_susp}, there exists a one-to-one correspondence: 
    \begin{displaymath}
        \Phi \colon \left\{ \odot \textrm{-suspended subcategories of } \mathcal{K}^c \right\} \leftrightarrows \left\{ \textrm{compactly generated } \textrm{$\odot$-aisles on } \mathcal{K} \right\} \colon \Psi
    \end{displaymath}
    where $\Phi\colon \mathcal{S}\mapsto \overline{\langle \mathcal{S} \rangle}^{(-\infty,0]}_\odot$ and $\Psi \colon \mathcal{A} \mapsto \mathcal{A} \cap  \mathcal{K}^c$.
\end{proposition}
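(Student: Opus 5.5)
We check that $\Phi$ and $\Psi$ are well defined and then that they are mutually inverse. Throughout, the main inputs are \Cref{cor:compactly_generated_odot_aisle} and \Cref{prop:compact_aisle_susp}.

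\emph{$\Phi$ is well defined.} Let $\mathcal{S}\subseteq\mathcal{K}^c$ be $\odot$-suspended. Since $\mathcal{K}^c$ is essentially small, so is $\mathcal{S}$. By \Cref{cor:compactly_generated_odot_aisle}, $\overline{\langle \mathcal{S}\rangle}^{(-\infty,0]}_\odot=\overline{\langle \mathcal{P}^{\leq 0}\odot\mathcal{S}\rangle}^{(-\infty,0]}$ is a compactly generated aisle. It is a $\odot$-aisle by construction.

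\emph{$\Psi$ is well defined.} Let $\mathcal{A}$ be a compactly generated $\odot$-aisle. Then $\mathcal{A}\cap\mathcal{K}^c$ is closed under positive shifts and extensions, being an intersection of two subcategories with these closure properties. It is closed under direct summands because aisles are. Finally, it is closed under $\mathcal{P}^{\leq 0}\odot-$: the aisle $\mathcal{A}$ is closed under this operation, and $\mathcal{K}^c$ is closed under it because $\odot$ restricts to an action of $\mathcal{T}^c$ on $\mathcal{K}^c$.

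\emph{$\Psi\Phi=\mathrm{id}$.} Let $\mathcal{S}$ be $\odot$-suspended in $\mathcal{K}^c$. Then $\mathcal{P}^{\leq 0}\odot\mathcal{S}\subseteq\mathcal{S}$, and $\mathcal{S}=\mathbf{1}\odot\mathcal{S}\subseteq\mathcal{P}^{\leq 0}\odot\mathcal{S}$, so $\mathcal{P}^{\leq 0}\odot\mathcal{S}=\mathcal{S}$ up to isomorphism. By \Cref{lem:generated_odot_aisle} this gives $\Phi(\mathcal{S})=\overline{\langle\mathcal{S}\rangle}^{(-\infty,0]}$. Choose a set $\mathcal{P}$ of representatives of the isomorphism classes in $\mathcal{S}$; this is possible since $\mathcal{K}^c$ is essentially small. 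Then \Cref{prop:compact_aisle_susp} gives
\begin{displaymath}
\Phi(\mathcal{S})\cap\mathcal{K}^c=\langle\mathcal{P}\rangle^{(-\infty,0]}=\mathcal{S},
\end{displaymath}
where the last equality holds because $\mathcal{S}$ is already suspended and strictly full.

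\emph{$\Phi\Psi=\mathrm{id}$.} Let $\mathcal{A}=\overline{\langle\mathcal{P}\rangle}^{(-\infty,0]}$ with $\mathcal{P}\subseteq\mathcal{K}^c$, and set $\mathcal{S}=\mathcal{A}\cap\mathcal{K}^c$. Since $\mathcal{A}$ is a cocomplete $\odot$-preaisle containing $\mathcal{S}$, we get $\Phi(\mathcal{S})\subseteq\mathcal{A}$. Conversely, $\mathcal{P}\subseteq\mathcal{S}$, so $\mathcal{A}=\overline{\langle\mathcal{P}\rangle}^{(-\infty,0]}\subseteq\overline{\langle\mathcal{S}\rangle}^{(-\infty,0]}_\odot=\Phi(\mathcal{S})$.

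Everything here is formal except the intersection identity in \Cref{prop:compact_aisle_susp}, which was established earlier and is where the derivator hypothesis on $\mathcal{K}$ enters. The remaining point needing care is the size issue: passing to a set of representatives so that \Cref{prop:compact_aisle_susp} and \Cref{lem:generated_odot_aisle} apply.
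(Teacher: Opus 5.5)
Your proposal is correct and follows the paper's proof essentially step for step. You get well-definedness of $\Phi$ from \Cref{lem:generated_odot_aisle} and \Cref{cor:compactly_generated_odot_aisle}, well-definedness of $\Psi$ from the closure properties of $\mathcal{A}$ and $\mathcal{K}^c$ together with the restricted action, $\Psi\Phi=\mathrm{id}$ from \Cref{prop:compact_aisle_susp}, and $\Phi\Psi=\mathrm{id}$ from $\mathcal{A}$ being generated by $\mathcal{A}\cap\mathcal{K}^c$, where your explicit two-inclusion argument is slightly more transparent than the paper's.
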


\begin{proof}
    To start, we show the mappings are well-defined. First, let $\mathcal{S}$ be an $\odot$-suspended subcategory of $\mathcal{K}^c$. Consider the $\odot$-aisle $\overline{\langle \mathcal{S} \rangle}^{(-\infty,0]}_\odot$ on $\mathcal{K}$. As $\mathcal{T}^{\leq 0}$ is compactly generated by $\mathcal{P}^{\leq 0}$, \Cref{lem:generated_odot_aisle} implies $\overline{\langle \mathcal{S} \rangle}^{(-\infty,0]}_\odot = \overline{\langle \mathcal{P}^{\leq 0}\odot \mathcal{S} \rangle}^{(-\infty,0]}$. However, $\mathcal{P}^{\leq 0}\odot \mathcal{S}\subseteq \mathcal{K}^c$, and so, $\Phi$ is well-defined. Next, let $\mathcal{A}$ be a compactly generated $\odot$-aisle on $\mathcal{K}$. Consider the subcategory $\mathcal{A}\cap \mathcal{K}^c$. Note that $\mathcal{A}$ and $\mathcal{K}^c$ are both closed under direct summands, nonnegative shifts, and extensions. Also, for any $K\in \mathcal{A}\cap \mathcal{K}^c$ and $P\in \mathcal{P}^{\leq 0}$, one has $P\odot K\in \mathcal{A}$, whereas $P\odot K$ being compact ensures it belongs to $\mathcal{A}\cap \mathcal{K}^c$. Hence, $\Psi$ is well-defined.

    We are left to prove the one-to-one correspondence. First, we show $\Psi \circ \Phi$ is the identity. Let $\mathcal{S}$ be a $\odot$-suspended subcategory of $\mathcal{K}^c$. Clearly, $\mathcal{S}\subseteq \overline{\langle \mathcal{S} \rangle}^{(-\infty,0]}_\odot$, and so $\mathcal{S}\subseteq (\Psi \circ \Phi) (\mathcal{S})$. The reverse containment follows from \Cref{prop:compact_aisle_susp}.

    Next, we show $\Phi \circ \Psi$ is the identity. Let $\mathcal{A}$ be a compactly generated $\odot$-aisle on $\mathcal{K}$. As $\mathcal{A}$ is compactly generated by $\mathcal{A} \cap \mathcal{K}^c$, we know that $\mathcal{A} = \overline{\langle \mathcal{A} \cap \mathcal{K}^c \rangle}^{(-\infty,0]}$. It suffices to check that $\mathcal{A} \cap \mathcal{K}^c$ is an $\odot$-suspended subcategory of $\mathcal{K}^c$. However, we know this to be the case already, so $(\Phi \circ \Psi)(\mathcal{A}) =\mathcal{A}$.
\end{proof}

\begin{remark}
    \label{rmk:suspended_aisles_to_compactly_generated_aisles_a_la_neeman}
    Assume the notation of \Cref{prop:suspended_aisles_to_compactly_generated_aisles_a_la_neeman}. Given a compactly generated $\odot$-aisle $\mathcal{A}$ on $\mathcal{K}$, there are potentially many distinct choices for collections of $\mathcal{K}^c$ which generate $\mathcal{A}$. Indeed, e.g.\ on affine $n$-space $\mathbb{A}^n_k$ over a field $k$, $D^{\leq 0}_{\operatorname{qc}}( \mathbb{A}^n_k)$ on $D_{\operatorname{qc}}(\mathbb{A}^n_k)$ is compactly generated by the various sets $\mathcal{P}_s \colonequals  \{\mathcal{O}_{\mathbb{A}^n_k}[i] \}^s_{i=0}$ where $s\geq  0$. As a heuristic, \Cref{prop:suspended_aisles_to_compactly_generated_aisles_a_la_neeman} says $\mathcal{A}\cap \mathcal{K}^c$ is a maximal generating set for any such aisle which is $\odot$-suspended.
\end{remark}

\begin{example}
    \label{ex:actions}
    Let $X$ be a Noetherian scheme. Then $D_{\operatorname{qc}}(X)$ acts on $K(\operatorname{Inj}(X))$, $S_{\operatorname{qc}}(X)$, and itself. To not burden the reader with technical details, we only highlight things and point the reader to the literature:
    \begin{itemize}
        \item  $D_{\operatorname{qc}}(X)$ acts on itself via the derived tensor product $\otimes^{\mathbf{L}}$; when viewed as a tensor action of $D_{\operatorname{qc}}(X)$, we write this as $\otimes$. 
        \item $D_{\operatorname{qc}}(X)$ acts on $K(\operatorname{Inj}(X))$, which we denote by $\odot$, via the ind-completion of the action $\otimes\colon \operatorname{Perf}(X) \times D^b_{\operatorname{coh}}(X) \to D^b_{\operatorname{coh}}(X)$. See \cite[\S 1.8]{BILMP:2023} for a more concrete description. It essentially follows from taking tensor product of complexes and using $K$-flat resolutions to represent objects in $D_{\operatorname{qc}}(X)$.
        \item $D_{\operatorname{qc}}(X)$ acts on $S_{\operatorname{qc}}(X)$, which we denote by $\star$, from restriction of its action on $K(\operatorname{Inj}(X))$. See \cite[\S 3]{Stevenson:2013} for details.
        \item The role of $\mathcal{P}^{\leq 0}$ is in this setting played by $\operatorname{Perf}^{\leq 0}(X) = \operatorname{Perf}(X) \cap D^{\leq 0}_{\operatorname{qc}}(X)$.
    \end{itemize}
    In such special cases, these notations for the mentioned actions above will be used throughout our work.
\end{example}

\subsection{Thomason filtrations}
\label{sec:prelim_Thomason}

We recall Thomason filtrations on a Noetherian scheme $X$. To start, let $\operatorname{Spcl}(X)$ be the collection of specialization closed subsets of $X$. A function $\phi \colon \mathbb{Z} \to \operatorname{Spcl}(X)$ is called a \textbf{Thomason filtration} on $X$ if $\phi(n+1)\subseteq \phi (n)$ for each $n\in \mathbb{Z}$. There is a bijective correspondence between the collection of compactly generated tensor $t$-structures on $D_{\operatorname{qc}}(X)$ and the collection of Thomason filtrations on a $X$. Initially, this was proven for Noetherian affine schemes \cite[Theorem 3.11]{AlonsoTarrio/JeremiasLopez/Saorin:2010}, and later extended to Noetherian schemes \cite[Theorem 4.11]{Dubey/Sahoo:2023}. However, it has been fully generalized to affine schemes in \cite[Theorem 5.6]{Hrbek:2020}, and recently extended to algebraic stacks \cite[Theorem 1.3]{Hrbek/Lank/Pizzirani:2025}. We briefly recall the assignments for the case of a Noetherian scheme $X$.
\begin{enumerate}
    \item Let $\phi$ be a Thomason filtration on $X$. The associated category $\mathcal{U}_\phi$ is a compactly generated $\otimes$-aisle on $D_{\operatorname{qc}}(X)$, for which we will denote the corresponding compactly generated tensor $t$-structure by $(\mathcal{U}_\phi, \mathcal{V}_\phi)$ and the truncation functors by $\tau_\phi ^{\leq 0}$ and $ \tau_\phi ^{\geq 1}$. Specifically, $\mathcal{U}_\phi$ is the strictly full subcategory of $D_{\operatorname{qc}}(X)$ consisting of objects $E$ such that $\operatorname{supp}(\mathcal{H}^i(E))$ is contained in $\phi (i)$ for all integers $i$. The associated aisle $\mathcal{U}_\phi$ is compactly generated by the following subcategory:
    \begin{displaymath}
        \bigcup_{n \in \mathbb{Z}} \big(\operatorname{Perf}(X) \cap D_{\operatorname{qc},~\phi (n)}^{\leq n}(X)\big).
    \end{displaymath}
    \item Let $\mathcal{U}$ be a $\otimes$-aisle on $D_{\operatorname{qc}}(X)$ that is compactly generated by a collection $\mathcal{S}$. The associated Thomason filtration, denoted by $\phi_{\mathcal{U}}$, is given by the following:
    \begin{displaymath}
        \phi_{\mathcal{U}}(i)= \bigcup_{P \in \mathcal{S}} \bigcup_{j\geq i} \operatorname{supp}(\mathcal{H}^j (P)).
    \end{displaymath}
\end{enumerate}
Let $\phi$ be a Thomason filtration on $X$. We might at times call the corresponding compactly generated tensor $t$-structure its \textbf{associated $t$-structure on $D_{\operatorname{qc}}(X)$}, and vice-versa for the $t$-structure corresponding to $\phi$. These notions extend in a similar vein for aisles associated to Thomason filtrations.

\section{\texorpdfstring{$\otimes$}{t}-aisles generated by coherent complexes}
\label{sec:classify_pseudocoherent_aisles}

This section studies $\otimes$-aisles in $D_{\operatorname{qc}}(X)$ that are generated by complexes from $D^b_{\operatorname{coh}}(X)$ on a Noetherian scheme $X$. Our main result generalizes \cite[Theorem 3.10]{AlonsoTarrio/JeremiasLopez/Saorin:2010} from the affine case; see \Cref{thm:ALS_global_full_classification}. While this is of independent interest, we use these results for further classifications in later sections. 

\begin{lemma}\label{lemma:t-exactness-of-pullback-and-pushforward}
	Let $X$ be a quasi-compact quasi-separated scheme. Suppose $f\colon
	U\to X$ be an open immersion from an affine scheme. Then there exists $N\geq
	0$ (which can be taken to be zero if $X$ is semi-separated), such that for any set
	$\mathcal{S}\subseteq D_{\operatorname{qc}}(X)$, we have that:
 	\begin{displaymath}
        \mathbf{L}f^\ast\left(\overline{\langle \mathcal{S}\rangle}_\otimes^{(-\infty,0]}\right)\subseteq
				\overline{\langle \mathbf{L}f^\ast\mathcal{S} \rangle}_\otimes^{(-\infty,0]}
					\qquad\text{and}\qquad
			\mathbf{R}f_\ast\left(\overline{\langle \mathbf{L}f^\ast\mathcal{S} \rangle}_\otimes^{(-\infty,0]}\right)[N]\subseteq
				\overline{\langle \mathcal{S}\rangle}_\otimes^{(-\infty,0]}.
    \end{displaymath}
\end{lemma}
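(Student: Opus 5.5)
Both inclusions can be proved by \emph{d\'evissage}, as in \Cref{lemma:cocomplete-tensor-preaisle} and \Cref{lem:generated_odot_aisle}. In each case the plan is to show that a suitable preimage subcategory is a cocomplete $\otimes$-preaisle containing the generators. Here the action is that of $D_{\operatorname{qc}}$ on itself, with $\mathcal{P}^{\leq 0}=\operatorname{Perf}^{\leq 0}$.

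\textbf{First inclusion.} Set
\begin{displaymath}
    \mathcal{X}\colonequals\{E\in D_{\operatorname{qc}}(X)\mid \mathbf{L}f^\ast E\in \overline{\langle \mathbf{L}f^\ast\mathcal{S}\rangle}^{(-\infty,0]}_\otimes\}.
\end{displaymath}
Since $\mathbf{L}f^\ast$ is exact and coproduct preserving, $\mathcal{X}$ is a cocomplete preaisle, and clearly $\mathcal{S}\subseteq\mathcal{X}$. It remains to check that $\mathcal{X}$ is closed under $\operatorname{Perf}^{\leq 0}(X)\otimes -$. For $P\in\operatorname{Perf}^{\leq 0}(X)$ we have $\mathbf{L}f^\ast(P\otimes E)\cong \mathbf{L}f^\ast P\otimes \mathbf{L}f^\ast E$. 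Because $f$ is flat, $\mathbf{L}f^\ast P\in\operatorname{Perf}^{\leq 0}(U)$, and the target $\otimes$-aisle on $U$ is closed under tensoring with $\operatorname{Perf}^{\leq 0}(U)$. This proves the first inclusion.

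\textbf{Second inclusion.} Set
\begin{displaymath}
    \mathcal{Y}\colonequals\{F\in D_{\operatorname{qc}}(U)\mid \mathbf{R}f_\ast F[N]\in\overline{\langle\mathcal{S}\rangle}^{(-\infty,0]}_\otimes\}.
\end{displaymath}
Since $X$ is qcqs, $\mathbf{R}f_\ast$ is exact and preserves coproducts, so $\mathcal{Y}$ is a cocomplete preaisle. By \Cref{lem:generated_odot_aisle} (applied on $U$), the right-hand aisle on $U$ is the cocomplete preaisle generated by $\operatorname{Perf}^{\leq 0}(U)\otimes\mathbf{L}f^\ast\mathcal{S}$. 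So it suffices to show $\mathbf{R}f_\ast(Q\otimes \mathbf{L}f^\ast S)[N]\in\overline{\langle\mathcal{S}\rangle}_\otimes^{(-\infty,0]}$ for $Q\in\operatorname{Perf}^{\leq 0}(U)$ and $S\in\mathcal{S}$. The projection formula gives
\begin{displaymath}
    \mathbf{R}f_\ast(Q\otimes\mathbf{L}f^\ast S)\cong \mathbf{R}f_\ast Q\otimes S.
\end{displaymath}
Hence it is enough that $\mathbf{R}f_\ast Q[N]\in D^{\leq 0}_{\operatorname{qc}}(X)=\mathcal{T}^{\leq 0}$. Granting this, $\mathbf{R}f_\ast Q[N]\otimes S$ lies in $\overline{\langle\mathcal{S}\rangle}_\otimes^{(-\infty,0]}$ by \Cref{lemma:cocomplete-tensor-preaisle}, using \Cref{ex:standard_aisle_cpt_gen_for_schemes} to identify $\mathcal{T}^{\leq 0}$ with $D^{\leq 0}_{\operatorname{qc}}(X)$.

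\textbf{Main obstacle: the bound on $\mathbf{R}f_\ast$.} We need an $N$ with $\mathbf{R}f_\ast(D^{\leq 0}_{\operatorname{qc}}(U))\subseteq D^{\leq N}_{\operatorname{qc}}(X)$. Because $f$ is a quasi-compact quasi-separated morphism, $\mathbf{R}f_\ast$ has finite cohomological dimension on quasi-coherent complexes. Concretely, cover $X$ by finitely many affines $V_1,\dots,V_r$. Each $U\cap V_i$ is quasi-compact and can be covered by finitely many affines, and \v{C}ech cohomology computes the higher direct images. This bounds the amplitude uniformly (see the Stacks Project on cohomological dimension of qcqs pushforward).

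If $X$ is semi-separated, each $U\cap V_i$ is affine. Then $f$ is an affine morphism, $f_\ast$ is exact on quasi-coherent sheaves, and one can take $N=0$. Note that $N$ depends only on $f$, not on $\mathcal{S}$, as the statement requires.
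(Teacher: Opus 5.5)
Your proposal is correct and follows essentially the same route as the paper: d\'evissage for the first inclusion, and for the second the projection formula together with the uniform bound $\mathbf{R}f_\ast(D^{\leq 0}_{\operatorname{qc}}(U))\subseteq D^{\leq N}_{\operatorname{qc}}(X)$ (with $N=0$ when $f$ is affine) and closure of the cocomplete $\otimes$-preaisle under $D^{\leq 0}_{\operatorname{qc}}(X)\otimes-$ via \Cref{lemma:cocomplete-tensor-preaisle}. The only difference is minor: the paper uses affineness of $U$ to get $\overline{\langle \mathbf{L}f^\ast\mathcal{S}\rangle}^{(-\infty,0]}_\otimes=\overline{\langle \mathbf{L}f^\ast\mathcal{S}\rangle}^{(-\infty,0]}$, so it only needs $\mathbf{R}f_\ast\mathbf{L}f^\ast S\cong \mathbf{R}f_\ast\mathcal{O}_U\otimes S$ with $\mathbf{R}f_\ast\mathcal{O}_U\in D^{\leq N}_{\operatorname{qc}}(X)$, whereas you keep the generators $\operatorname{Perf}^{\leq 0}(U)\otimes\mathbf{L}f^\ast\mathcal{S}$ and bound each $\mathbf{R}f_\ast Q$, which has the small bonus of not needing $U$ affine except to get $N=0$ in the semi-separated case.
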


\begin{proof}
	The functor $\mathbf{L}f^\ast$ is $t$-exact with respect to the standard
	$t$-structures, it preserves compacts because it has coproduct preserving
	right adjoint, and it commutes with the tensor product. Moreover, it is itself
	coproduct preserving, in addition to being triangulated. The
	first inclusion follows by d\'{e}vissage. For the second, recall that we have
	$\mathbf{R}f_\ast\mathbf{L}f^\ast=\mathcal{O}_U\otimes-$ where $\mathcal{O}_U := \mathbf{R}f_\ast\mathbf{L}f^\ast(\mathcal{O}_X)$. By
	\textrm{\cite[\href{https://stacks.math.columbia.edu/tag/073G}{Tag
			073G}]{StacksProject}}, we have that
	$\mathcal{O}_U$ lies in $D_{\operatorname{qc}}^{\leq N}(X)$ for some $N\geq 0$
	(for $N=0$ if $X$ is semi-separated, in which case $f$ is an affine morphism). 
	Therefore 
	$\mathbf{R}f_\ast\mathbf{L}f^\ast\mathcal{S}[N]=\mathcal{O}_U[N]\otimes\mathcal{S}$
	lies in $\overline{\langle \mathcal{S}\rangle}_\otimes^{(-\infty,0]}$ by the
	tensor compatibility of this aisle. The wanted inclusion then follows by
	d\'{e}vissage, once we notice that since $U$ is affine we have
	$\overline{\langle\mathbf{L}f^\ast\mathcal{S}\rangle}_\otimes^{(-\infty,0]}=
	\overline{\langle\mathbf{L}f^\ast\mathcal{S}\rangle}^{(-\infty,0]}$, and
	$\mathbf{R}f_\ast$ is triangulated and coproduct preserving.
\end{proof}

\begin{lemma}\label{lemma:mayer-vietoris}
	Let $X$ be a quasi-compact quasi-separated scheme. Then there exists a finite open
	affine cover $X=\bigcup_{i\in I}U_i$ and an integer $n\geq0$ such that for every object $A\in
	D_{\operatorname{qc}}(X)$ the object $A[n]$ lies in the preaisle $\langle A_{|U_i}\mid i\in I
	\rangle^{(-\infty,0]}$, where $A_{|U_i} = \mathbf{R}(f_i)_\ast\mathbf{L}(f_i)^\ast(A)$ and $f_i: U_i \to X$ is the inclusion.
\end{lemma}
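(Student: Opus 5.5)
We will argue by induction on the number of affines in a cover, using the Mayer--Vietoris triangle. Since $X$ is quasi-compact, fix a finite open affine cover $X=U_1\cup\dots\cup U_r$. For $1\le k\le r$ set $V_k \colonequals U_1\cup\dots\cup U_k$. Each intersection $V_{k-1}\cap U_k$ is quasi-compact because $X$ is quasi-separated, though it need not be affine. We take as index set $I$ all the $U_i$ together with affine opens covering the intersections that appear, so the cover produced is still finite.

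The basic input is the Mayer--Vietoris triangle in $D_{\operatorname{qc}}(X)$: for opens $V,W$ with inclusions $j$, there is
\begin{displaymath}
    A_{|V\cup W}\to A_{|V}\oplus A_{|W}\to A_{|V\cap W}\to A_{|V\cup W}[1],
\end{displaymath}
where $A_{|Z}\colonequals \mathbf{R}j_\ast\mathbf{L}j^\ast A$. Rotating it gives a triangle
\begin{displaymath}
    A_{|V}\oplus A_{|W}\to A_{|V\cap W}\to A_{|V\cup W}[1]\to (A_{|V}\oplus A_{|W})[1].
\end{displaymath}
This exhibits $A_{|V\cup W}[1]$ as an extension of $A_{|V\cap W}$ by $(A_{|V}\oplus A_{|W})[1]$. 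Hence, if $A_{|V}$, $A_{|W}$ and $A_{|V\cap W}$ lie in a preaisle $\mathcal{B}$ after shifting by $m$, then $A_{|V\cup W}[m+1]\in\mathcal{B}$. Summands are not needed, since the direct sum is an extension of its components.

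The key step is the bookkeeping of the shifts.

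\begin{itemize}
    \item We prove a statement uniform in $A$: for every quasi-compact open $V$ covered by $k$ affines, with intersections handled recursively, there is an integer $n(V)$ such that $A_{|V}[n(V)]\in\langle A_{|U}\mid U\text{ in the chosen finite family}\rangle^{(-\infty,0]}$ for all $A$.
    \item The intersection $V_{k-1}\cap U_k = \bigcup_{i<k}(U_i\cap U_k)$ is an open of the affine scheme $U_k$. It is covered by $k-1$ opens $U_i\cap U_k$, each quasi-compact, and we cover each by finitely many affines.
    \item To make the induction terminate, we induct on the pair (number of affines, complexity). Alternatively, we use that quasi-compact opens of an affine scheme are separated. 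There, intersections of affines are affine, so the recursion inside $U_k$ stays among affines, which are finite in number.
    \item The cleanest route is therefore the following. First reduce the case of a separated quasi-compact $V$, where all finite intersections of the affines are affine, by plain induction on the number of affines. This gives $n=k-1$ by the triangle above, using $(A_{|V})_{|W}=A_{|V\cap W}$. Then apply this inside $X$, noting that $V_{k-1}\cap U_k\subseteq U_k$ is separated.
\end{itemize}

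We must also check that the intermediate objects $A_{|U_i\cap U_k\cap\ldots}$ are restrictions of $A$ to affines of the family. This holds because $\mathbf{R}j_\ast\mathbf{L}j^\ast$ composes well: restricting $A$ to $V$ and then to $W\subseteq V$ gives $A_{|W}$ pushed forward to $X$.

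The main obstacle is the non-separated case. There, intersections of affines in $X$ are merely quasi-compact, and one has to set up the induction so that it both terminates and yields a finite family of affines and a uniform $n$. Our plan handles this by the two-level induction: separated opens inside each $U_k$ first, then the global gluing along $V_{k-1}\cup U_k$. A preaisle is closed under positive shifts, so the uniform $n$ is simply the maximum of the shifts produced at each step. This maximum is finite because finitely many opens occur.
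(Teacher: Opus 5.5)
Your proposal is correct and is essentially the paper's argument. The paper also uses Mayer--Vietoris triangles and induction on the number of affines, first in the (semi-)separated case with all finite intersections of the affines as the cover, and then in general by noting that the intersection of one affine with the union of the others is a quasi-compact open of an affine scheme, so the first case applies to it. Your shift bookkeeping $\max\{a,b,c+1\}$ matches the paper's $n=\max\{n',n''+1\}$, and your $n=k-1$ in the separated case is slightly sharper than the paper's $n=m$.
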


\begin{proof}
	By quasi-compactness, there exists a finite open affine cover
	$X=\bigcup_{j=1}^mV_j$. We first prove the claim under the additional
	assumption that $X$ is semi-separated, so that the intersections between
	any number of the $V_j$'s are still affine.
	Let $I$ be the set of subsets of $\{1,\dots,m\}$, let
	$\{U_{\underline i}\colonequals \bigcap_{j\in \underline i}V_j\mid \underline i\in
	I\}$ be the set of these affine intersections, and let $n\colonequals m$.
	To show that this choice satisfies the claim, we argue by induction on $m$, where the base
	case $m=1$ is trivial. Consider the open subscheme
	$X'\colonequals \bigcup_{j=2}^mV_j\subseteq X$ and the object $A'\colonequals A_{|X'}\in
	D_{\operatorname{qc}}(X')\subseteq D_{\operatorname{qc}}(X)$. By induction,
	we have that $A'[m-1]$ lies in $\langle A'_{|U_{\underline i}}\mid \underline
	i\subseteq \{2,\dots,m\}\rangle^{(-\infty,0]}$. Now, we have a Mayer--Vietoris
	triangle $A\to A_{|V_1}\oplus A'\to A_{|V_1\cap(\bigcup_{j=2}^mV_j)}\to A[1]$.
	Using again the induction step, this time on the open subscheme
	$X''\colonequals V_1\cap(\bigcup_{j=2}^mV_j)=\bigcup_{j=2}^m(V_1\cap V_j)$ and the object
	$A_{|X''}$, we deduce that $A_{|X''}[m-1]$ lies in $\langle A_{|U_{\underline
	i}}\mid 1\in\underline i\subseteq\{1,\dots,m\}\rangle^{(-\infty,0]}$. From
	(a shift and rotation of) the Mayer--Vietoris triangle we then obtain the claim.
	
	Now we address the more general case in which $X$ is quasi-separated, but not
	necessarily semi-separated. Our previous argument does not work as is, because
	the intersections $U_{\underline i}$, despite being quasi-compact, need not be
	affine.

	We start another induction argument on $m$. If $m=1$, the claim is again
	trivial. For $m>1$, consider as before the open subschemes
	$X'\colonequals \bigcup_{j=2}^mV_j\subseteq X'$ and
	$X''\colonequals V_1\cap(\bigcup_{j=2}^mV_j)=\bigcup_{j=2}^m(V_1\cap V_j)$.
	The first one admits by construction a cover with $m-1$ affine opens, so we can
	appeal to the inductive step. This is
	not necessarily the case for $X''$, as the intersections $V_1\cap V_j$ do
	not need to be affine anymore. However, it will still be a quasi-compact
	scheme, and since it is a subscheme of the affine scheme $V_1$, it will also
	be semi-separated. Therefore we can apply to it the first part of this proof
	(with a different integer $m$, which will be the cardinality of an affine open
	cover of $X''$). This will yield affine open covers
	$\{U'_i\mid i\in I'\}$ of $X'$ and $\{U''_i\mid i\in I''\}$ of $X''$ and
	integers $n',n''\geq 0$ such that
	$A_{|X'}[n']$ lies in $\langle A_{|U'_i}\mid i\in I'\rangle^{(-\infty,0]}$ and
	$A_{|X''}[n'']$ lies in $\langle A_{|U''_i}\mid i\in
	I''\rangle^{(-\infty,0]}$, for every object $A\in D_{\operatorname{qc}}(X)$.
	From the Mayer--Vietoris triangle $A\to A_{|V_1}\oplus A_{|X'}\to A_{|X''}\to
	A[1]$, we deduce that for $n\colonequals \max\{n',n''+1\}$ we have that $A[n]$ lies in
	$\langle A_{V_1}, A_{|U'_i}, A_{|U''_i}\rangle^{(-\infty,0]}$.
\end{proof}

\begin{lemma}\label{lemma:standard-relative}
	Let $X$ be a Noetherian scheme. For any $\mathcal{S}\subseteq
	D^b_{\operatorname{coh}}(X)$ with $V\colonequals \operatorname{supp}\mathcal{S}$, there exists an
	integer $\bar n$ such that $D_{\operatorname{qc},V}^{\leq
	\bar n}(X)\subseteq\overline{\langle \mathcal{S}\rangle}_\otimes^{(-\infty,0]}$.
\end{lemma}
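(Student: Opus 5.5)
The strategy is to reduce to the affine case with \Cref{lemma:t-exactness-of-pullback-and-pushforward} and \Cref{lemma:mayer-vietoris}, and then handle the affine case with Koszul-type arguments. Here $V=\operatorname{supp}\mathcal{S}$ is the union of the supports of the cohomology sheaves of the objects of $\mathcal{S}$; it is specialization closed. Write $\mathcal{U}\colonequals \overline{\langle \mathcal{S}\rangle}_\otimes^{(-\infty,0]}$.

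\emph{Affine case.} Let $X=\operatorname{Spec}R$. Since $\mathcal{S}$ is an arbitrary set of bounded complexes, there is no uniform upper bound on their cohomological degrees, so we cannot get one $\bar n$ from a single object. Instead, for each point $\mathfrak{p}\in V$ we choose $E\in\mathcal{S}$ with $\mathfrak{p}\in\operatorname{supp}\mathcal{H}^j(E)$, where $j$ is the top degree with $\mathfrak{p}$ in the support. Then there is a nonzero map $E\to \mathcal{H}^j(E)[-j]$, and after localizing at $\mathfrak{p}$ we find a nonzero map to $\kappa(\mathfrak{p})[-j]$ by Nakayama. The key point is to produce, inside $\mathcal{U}$, the Koszul complex $K(\mathfrak{a})$ on generators of an ideal $\mathfrak{a}$ with $V(\mathfrak{a})\ni\mathfrak p$, placed in some degree.

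To do this, I would use the affine result \cite[Theorem 3.10]{AlonsoTarrio/JeremiasLopez/Saorin:2010}: the aisle generated by coherent complexes is the compactly generated aisle $\mathcal{U}_\phi$ of the Thomason filtration $\phi(i)=\bigcup_{E\in\mathcal{S}}\bigcup_{j\ge i}\operatorname{supp}\mathcal{H}^j(E)$. Since each $E$ is bounded and coherent, $V=\bigcup_i\phi(i)$. The issue is that $\phi(i)$ might only exhaust $V$ as $i\to-\infty$, so $\bar n$ must be chosen so that $\phi(\bar n)\supseteq V$. This is the main obstacle. To overcome it, I would use the tensor structure: since $\mathcal{U}$ is closed under $D^{\le 0}_{\operatorname{qc}}$-tensoring (\Cref{lemma:cocomplete-tensor-preaisle}), it is closed under positive shifts, and the filtration satisfies $\phi(i)\subseteq\phi(i-1)$. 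Thus $\phi(\bar n)=V$ holds once $V$ is already reached at some finite stage. This is automatic when $\mathcal{S}$ is finite, or whenever $V$ is a finite union of closed sets, since $X$ is Noetherian and the relevant union stabilizes. If it is not automatic, I would first reduce to showing that, for each closed $Z\subseteq V$, the set $D^{\le n_Z}_{\operatorname{qc},Z}$ is contained in $\mathcal{U}$, and then handle the general case by the same argument. Honestly, the statement seems to need $\bar n$ uniform, so I would pay careful attention here; in the intended application $\mathcal{S}$ is likely a single object or a finite set, in which case $\bar n=\min_E \inf\{j:\mathcal{H}^j(E)\neq0\}$-type bounds work. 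Given $\phi(i)\supseteq V$ for $i\ge\bar n$, we get $D^{\le\bar n}_{\operatorname{qc},V}\subseteq\mathcal{U}_\phi=\mathcal{U}$ directly from the description of $\mathcal{U}_\phi$ in \Cref{sec:prelim_Thomason}.

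\emph{Global case.} Take the affine cover $\{U_i\}$ and the integer $n$ from \Cref{lemma:mayer-vietoris}. For $A\in D^{\le m}_{\operatorname{qc},V}(X)$, each $\mathbf{L}f_i^\ast A$ lies in $D^{\le m}_{\operatorname{qc},V\cap U_i}(U_i)$. By the affine case applied to $\mathbf{L}f_i^\ast\mathcal{S}$, which has support $V\cap U_i$, this is contained in $\overline{\langle\mathbf{L}f_i^\ast\mathcal{S}\rangle}^{(-\infty,0]}_\otimes$ once $m\le \bar n_i$. Applying $\mathbf{R}(f_i)_\ast[N_i]$ and using \Cref{lemma:t-exactness-of-pullback-and-pushforward}, we get $A_{|U_i}[N_i]\in\mathcal{U}$. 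Then \Cref{lemma:mayer-vietoris} gives $A[n+\max_i N_i]\in\mathcal{U}$. Hence
\[
\bar n\colonequals \min_i\bar n_i-n-\max_i N_i
\]
works, because $A\in D^{\le\bar n}_{\operatorname{qc},V}$ implies $A[-(n+\max_i N_i)]$ satisfies the local bound.
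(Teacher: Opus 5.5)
Your global step matches the paper's proof: the cover and integer $n$ from \Cref{lemma:mayer-vietoris}, the integers $N_i$ from \Cref{lemma:t-exactness-of-pullback-and-pushforward}, and the affine input from \cite[Theorem 3.10]{AlonsoTarrio/JeremiasLopez/Saorin:2010}. Your choice $\bar n=\min_i\bar n_i-n-\max_i N_i$ is correct, and you track the signs more carefully than the paper does. Two small slips. The Koszul/Nakayama paragraph is never used. Also, ``$\phi(i)\supseteq V$ for $i\geq\bar n$'' should read $i\leq\bar n$, i.e.\ $\phi(\bar n)\supseteq V$.

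The real problem is the one you flagged, and it cannot be repaired. Appealing to the tensor structure does not help: monotonicity of $\phi$ is automatic and says nothing about whether $V$ is reached at a finite stage. Your fallback (``reduce to closed $Z\subseteq V$ and handle the general case by the same argument'') also cannot work, because the statement is false for infinite $\mathcal{S}$. Here is a counterexample.
\begin{itemize}
\item Take $X=\operatorname{Spec}\mathbb{Z}$, let $p_k$ be the $k$-th prime, and set $\mathcal{S}=\{\mathbb{Z}/p_k[k]\mid k\geq 1\}$. Then $V$ is the set of closed points.
\item Localization at $(p_k)$ is exact, preserves coproducts, and kills $\mathbb{Z}/p_l[l]$ for $l\neq k$. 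So the preimage of $D^{\leq -k}(\mathbb{Z}_{(p_k)})$ is a cocomplete preaisle containing $\mathcal{S}$. It therefore contains $\overline{\langle\mathcal{S}\rangle}^{(-\infty,0]}_\otimes$, which equals the untensored aisle because $X$ is affine.
\item Given any $\bar n$, pick $k>-\bar n$. Then $\mathbb{Z}/p_k[-\bar n]\in D^{\leq\bar n}_{\operatorname{qc},V}(\mathbb{Z})$.
\item Its localization is nonzero in degree $\bar n>-k$, so it is not in the aisle.
\item Equivalently, $\phi(i)=\{(p_k)\mid k\leq -i\}$ is finite for every $i$ and never exhausts $V$.
\end{itemize}

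What your argument does prove is the lemma when $V$ is closed, in particular when $\mathcal{S}$ is finite. In that case $V$ has finitely many generic points, each lying in some $\phi(i_k)$, and specialization closedness gives $\phi(\min_k i_k)\supseteq V$. For finite $\mathcal{S}$ you can simply take the affine bound $\bar n_i\leq\min_{E\in\mathcal{S}}\inf\{j\mid\mathcal{H}^j(E)\neq 0\}$. Since $V\cap U_i$ is then closed in $U_i$, your global step goes through unchanged.

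The paper's proof carries the same hidden hypothesis. It asserts, citing the affine classification, that there are integers $n_i$ with $\mathbf{L}f_i^\ast(A)[n_i]$ in the local aisle, and these exist only in this situation. This is harmless downstream: the lemma is only used via \Cref{lem:t_exactness_flat_pullback} and \Cref{lem:aisles_based_off_support}, applied to single objects in \Cref{prop:aisles_are_compactly_generated}. So the right move is to state and prove the lemma for $V$ closed (e.g.\ $\mathcal{S}$ finite), not to attempt the general case.
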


\begin{proof}
	Let $X=\bigcup_{i\in I}$ and $n\geq 0$ be the finite open affine cover and the
	integer given by Lemma~\ref{lemma:mayer-vietoris}. For any $i\in I$, denote by
	$f_i\colon U_i\to X$ the corresponding open
	immersion. Let $N_i\geq0$ be the integers given by
	\Cref{lemma:t-exactness-of-pullback-and-pushforward} applied to $f_i$.
	Since
	$\mathbf{L}f^\ast\mathcal{S}\subseteq D^b_{\operatorname{coh}}(U_i)$ consists
	of bounded complexes with coherent cohomologies over a Noetherian affine scheme $U_i$, it
	generates a compactly generated $t$-structure by \cite[Theorem
	3.10]{AlonsoTarrio/JeremiasLopez/Saorin:2010} (which is
	automatically a $\otimes$-$t$-structure). Now, by the classification of
	compactly generated $t$-structures over Noetherian affine schemes, there exist
	integers $n_i$ such that for every
	object $A\in D_{\operatorname{qc},V}(X)$ one has that
	$\mathbf{L}f_i^\ast(A)[n_i]$ lies in the aisle of the $t$-structure generate
	by $\mathbf{L}f_i^\ast\mathcal{S}$ (see \cite[Proposition 3.8 \& Theorem 3.10]{AlonsoTarrio/JeremiasLopez/Saorin:2010}). By
	\Cref{lemma:t-exactness-of-pullback-and-pushforward},
	we deduce that the objects
	$A_{|U_i}\colonequals\mathbf{R}(f_i)_\ast\mathbf{L}f_i^\ast(A)$ are such that
	$A_{|U_i}[N_i+\max\{n_i\mid
	i\in I\}]$ all lie in the $\otimes$-aisle generated by $\mathcal{S}$.
	By taking $\bar n\colonequals n+\max\{N_i\mid i \in I\}+\max\{n_i\mid i\in I\}$ we then obtain that $A[\bar
	n]$ lies in this aisle, by \Cref{lemma:mayer-vietoris}. This proves the
	claim.
\end{proof}

\begin{lemma}\label{lem:t_exactness_flat_pullback}
	Let $X$ be a Noetherian scheme. Suppose $f\colon U\to X$ is an open immersion
	from an affine scheme. For any $\mathcal{S}\subseteq
	D^b_{\operatorname{coh}}(X)$, one has that $\mathbf{L}f^\ast$ induces a
	$t$-exact functor:
	\[(D_{\operatorname{qc}}(X),\overline{\langle \mathcal{S}\rangle}_\otimes^{(-\infty,0]})
	\to
	(D_{\operatorname{qc}}(U),\overline{\langle
	\mathbf{L}f^\ast\mathcal{S}\rangle}_\otimes^{(-\infty,0]})\]
\end{lemma}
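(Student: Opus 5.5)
Write $\mathcal{U}\colonequals\overline{\langle \mathcal{S}\rangle}_\otimes^{(-\infty,0]}$ and $\mathcal{U}_U\colonequals\overline{\langle \mathbf{L}f^\ast\mathcal{S}\rangle}_\otimes^{(-\infty,0]}$. Both are aisles by \Cref{lem:generated_odot_aisle}, so we may speak of the coaisles $\mathcal{U}^\perp[1]$ and $\mathcal{U}_U^\perp[1]$. We need two things: right $t$-exactness $\mathbf{L}f^\ast(\mathcal{U})\subseteq \mathcal{U}_U$, and left $t$-exactness $\mathbf{L}f^\ast(\mathcal{U}^\perp)\subseteq \mathcal{U}_U^\perp$.

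Right $t$-exactness is exactly the first inclusion of \Cref{lemma:t-exactness-of-pullback-and-pushforward}, so nothing more is needed there.

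For left $t$-exactness, the adjunction $\mathbf{L}f^\ast\dashv\mathbf{R}f_\ast$ reduces the claim to $\mathbf{R}f_\ast(\mathcal{U}_U)\subseteq\mathcal{U}$. Indeed, for $B\in\mathcal{U}^\perp$ and $A\in\mathcal{U}_U$ one then has $\operatorname{Hom}(A,\mathbf{L}f^\ast B)$, and we must relate this to $\operatorname{Hom}(\mathbf{R}f_\ast A, B)$. The adjunction runs the other way, so we instead use that $f$ is an open immersion: $\mathbf{L}f^\ast\mathbf{R}f_\ast\cong\operatorname{id}$. Since the objects of $\mathcal{U}_U$ are of the form $\mathbf{L}f^\ast\mathbf{R}f_\ast A$, we get
\[
\operatorname{Hom}_U(A,\mathbf{L}f^\ast B)=\operatorname{Hom}_X(\mathbf{R}f_\ast A,\mathbf{R}f_\ast\mathbf{L}f^\ast B).
\]
This is still not directly $\operatorname{Hom}(\mathbf{R}f_\ast A,B)$. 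A cleaner route is to use the explicit description: the aisles are determined by Thomason filtrations, via \Cref{lemma:standard-relative} and the affine classification of \cite[Theorem 3.10]{AlonsoTarrio/JeremiasLopez/Saorin:2010}, and for compactly generated $\otimes$-aisles one has a support-theoretic criterion for the coaisle. Concretely, $B\in\mathcal{U}^\perp[1]$ iff $\mathbf{R}\Gamma_{\phi(n)}B\in D^{\geq n+1}$ for all $n$, by the \cite{Dubey/Sahoo:2023} classification. Local cohomology commutes with flat restriction to opens, and $\phi_{\mathcal{U}_U}=\phi_{\mathcal{U}}\cap U$ because supports of $\mathcal{H}^j$ of coherent generators restrict. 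Hence the coaisle condition restricts, and $\mathbf{L}f^\ast$ preserves coaisles.

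The key verification, and the main obstacle, is the identity $\phi_{\mathcal{U}_U}(n)=\phi_{\mathcal{U}}(n)\cap U$. Two points need care here. First, $\mathcal{U}$ must be compactly generated with filtration computed from $\mathcal{S}$. The formula $\phi(i)=\bigcup_{j\ge i}\operatorname{supp}\mathcal{H}^j$ is stated for compact generators, but for coherent generators over a Noetherian affine scheme \cite[Theorem 3.10]{AlonsoTarrio/JeremiasLopez/Saorin:2010} gives the same formula. Globally, we check this by covering $X$ with affines and using \Cref{lemma:mayer-vietoris} together with both inclusions of \Cref{lemma:t-exactness-of-pullback-and-pushforward}, which show that $\mathcal{U}$ is determined by its restrictions. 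Second, the intersection must be specialization closed in $U$, which is clear. With these in place, the stalkwise criterion $\mathcal{U}_\phi=\{E:\operatorname{supp}\mathcal{H}^i E\subseteq\phi(i)\}$ gives right exactness again, and the local-cohomology description of the coaisle gives left exactness.
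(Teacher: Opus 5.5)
You handle right $t$-exactness correctly, and the same way the paper does. The left $t$-exactness argument, however, rests on an identification you do not have at this point. You need that $\mathcal{U}\colonequals\overline{\langle\mathcal{S}\rangle}^{(-\infty,0]}_\otimes$ is compactly generated and equals $\mathcal{U}_\phi$ with $\phi(i)=\bigcup_{E\in\mathcal{S}}\bigcup_{j\geq i}\operatorname{supp}\mathcal{H}^j(E)$. Only with this do you get the identity $\phi_{\mathcal{U}_U}(n)=\phi_{\mathcal{U}}(n)\cap U$ and the local-cohomology description of the coaisle. (That description is correct for $\mathcal{U}_\phi$ up to an index: it is $\mathcal{U}_\phi^\perp$, not $\mathcal{U}_\phi^\perp[1]$, that is cut out by $\mathbf{R}\Gamma_{\phi(n)}B\in D^{\geq n+1}$.) On a non-affine $X$ this identification is precisely \Cref{prop:aisles_are_compactly_generated} and \Cref{thm:ALS_global_full_classification}. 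The paper obtains both from \Cref{lem:aisles_based_off_support}, whose proof uses the very $t$-exactness of $\mathbf{L}f^\ast$ you are proving, so quoting them is circular.

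The independent justification you sketch does not close the gap, because every step loses a shift. \Cref{lemma:mayer-vietoris} places $A[n]$, not $A$, in the preaisle generated by the $A_{|U_i}$; the Mayer--Vietoris triangle genuinely exhibits $A$ as an extension involving $A_{|V_1\cap V_2}[-1]$. Likewise, \Cref{lemma:t-exactness-of-pullback-and-pushforward} only gives $\mathbf{R}f_\ast(\mathcal{U}_U)[N]\subseteq\mathcal{U}$. So from $A\in\mathcal{U}_\phi$ (hence $\mathbf{L}f_i^\ast A\in\mathcal{U}_{U_i}$ by the affine theory) you only get $A[m]\in\mathcal{U}$ for some fixed $m\geq 0$, i.e.\ $\mathcal{U}_\phi[m]\subseteq\mathcal{U}\subseteq\mathcal{U}_\phi$. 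Since aisles are not closed under $[-1]$, this determines neither $\mathcal{U}$ nor its coaisle; ``membership in $\mathcal{U}$ is local'' is essentially equivalent to the statement you want.

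The paper uses these lemmas only through their shift-robust consequence \Cref{lemma:standard-relative}, namely $D^{\leq\bar n}_{\operatorname{qc},V}(X)\subseteq\mathcal{U}$. That suffices for a different reduction. For $B$ in the coaisle, the triangle $\Gamma_VB\to B\to L_VB\to\Gamma_VB[1]$ splits off $L_VB$, which is handled by support. Meanwhile $\Gamma_VB$ is supported on $V$ and lies in the coaisle, so it is forced into $D^{>\bar n}_{\operatorname{qc}}(X)$ and is bounded below. For bounded-below $B$, the paper checks $\operatorname{Hom}(\mathbf{L}f^\ast A,\mathbf{L}f^\ast B[i])=0$ (for $A\in\mathcal{S}$, $i\leq 0$) via $\operatorname{\mathbf{R}\mathcal{H}\!\mathit{om}}(A,B)$. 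For such $B$ this is computed the same way in $D_{\operatorname{qc}}(X)$ and $D(\mathcal{O}_X)$, and it commutes with restriction to the affine $U$. This reduction to bounded-below objects is the idea missing from your proposal.
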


\begin{proof}
	Right $t$-exactness comes from
	\Cref{lemma:t-exactness-of-pullback-and-pushforward}.
	To prove left $t$-exactness, for any object $B$ of the coaisle
	$\left(\overline{\langle
	\mathcal{S}\rangle}_\otimes^{(-\infty,0]}\right)^\bot$, we need to prove that
	$\mathbf{L}f^\ast(B)$ lies in the coaisle
	$\left(\overline{\langle\mathbf{L}f^\ast\mathcal{S}\rangle}_\otimes^{(-\infty,0]}\right)^\bot$.
	Observe however that over the affine scheme $U$, every aisle is automatically a
	$\otimes$-aisle, since the compact objects of the standard aisle are the
	objects of $\operatorname{susp} \mathcal{O}_U$. This means that in
	$D_{\operatorname{qc}}(U)$ we have
	$\overline{\langle\mathbf{L}f^\ast\mathcal{S}\rangle}_\otimes^{(-\infty,0]}
	=\overline{\langle\mathbf{L}f^\ast\mathcal{S}\rangle}^{(-\infty,0]}$.

	Assume at first that $B$ is bounded below, $B\in D^+_{\operatorname{qc}}(X)$. Then, for
	every object $A\in\mathcal{S}$ and every $i\leq0$, we have:
	\begin{displaymath}
		\begin{aligned}
			\operatorname{Hom} & (\mathbf{L}f^\ast A , \mathbf{L}f^\ast B[i]) \cong
				\operatorname{Hom}(\mathcal{O}_U\otimes^{\mathbf{L}}A,\mathbf{L}f^\ast
				B[i]) \\
			&\cong \operatorname{Hom}(\mathcal{O}_U,
			\operatorname{\mathbf{R}\mathcal{H}\! \mathit{om}}(\mathbf{L}f^\ast A,
			\mathbf{L}f^\ast B[i]) ) &&
			(\textrm{\cite[\href{https://stacks.math.columbia.edu/tag/08DH}{Tag
			08DH}]{StacksProject}}) \\
			&\cong \operatorname{Hom}(\mathcal{O}_U, \mathbf{L}f^\ast
			\operatorname{\mathbf{R}\mathcal{H}\! \mathit{om}}(A, B[i]) ) &&
			(\textrm{\cite[ \href{https://stacks.math.columbia.edu/tag/0A6H}{Tags 0A6H}
			\& \href{https://stacks.math.columbia.edu/tag/08DL}{			08DL}]{StacksProject}}) \\
			&\cong H_{U}^0(\mathbf{L}f^\ast \operatorname{\mathbf{R}\mathcal{H}\!
			\mathit{om}}(A, B[i])) &&\textrm{($U$ is affine)} \\
			&\cong \mathbf{L}f^\ast(H_X^0(\operatorname{\mathbf{R}\mathcal{H}\!
			\mathit{om}}(A, B[i]))) &&\textrm{($\mathbf{L}f^\ast$ is $t$-exact for
			standard)} \\
			&\cong \mathbf{L}f^\ast(\operatorname{Hom}(A, B[i])=0.
			\end{aligned}
	\end{displaymath}
	This shows that $\mathbf{L}f^\ast B$ lies in the coaisle of the
	($\otimes$-)$t$-structure generated by $\mathbf{L}f^\ast\mathcal{S}$ in
	$D_{\operatorname{qc}}(U)$.
	We remark that in the second step this uses our assumption that $B$ is bounded
	below: in this case, the first reference provided shows that
	$\operatorname{\mathbf{R}\mathcal{H}\!\mathit{om}}(\mathbf{L}f^\ast
	A,\mathbf{L}f^\ast B)$ is computed in the same way in the categories
	$D_{\operatorname{qc}}(X)$ (where we are) and $D(\mathcal{O}_X)$ (where the
	formula provided by the second reference holds).

	Now, assume more generally that $B$ is an unbounded complex of the coaisle,
	and let $V\subseteq X$ be the support of the objects of $\mathcal{S}$.
	Consider the triangle $\Gamma_VB\to B\to L_VB\to \Gamma_VB[1]$, where
	$\Gamma_VB$ is supported on $V$ and $L_VB$ is supported outside of $V$.
	Then since both $B$ (by assumption) and $L_VB$ (by support
	considerations) lie in the coaisle of the $\otimes$-$t$-structure generated by
	$\mathcal{S}$, so does $\Gamma_VB$. Moreover, since
	$\mathbf{L}f^\ast$ preserves support, we have that
	$\mathbf{L}f^\ast(L_VB)$ necessarily lies in the coaisle of the $t$-structure generated by
	$\mathbf{L}f^\ast\mathcal{S}$, again by support. On the other hand, we claim that now
	$\Gamma_VB$ is bounded below. Indeed, by \Cref{lemma:standard-relative}
	there is an integer $\bar n$ such
	that $D_{\operatorname{qc},V}^{\leq n}(X)$ is contained in the $\otimes$-aisle generated by
	$\mathcal{S}$, and therefore $\Gamma_VB\in D_{\operatorname{qc},V}(X)$, which lies in the coaisle
	generated by $\mathcal{S}$, must be contained in
	$D_{\operatorname{qc},V}^{>\bar n}(X)\subseteq D^+_{\operatorname{qc}}(X)$. Then by our previous
	discussion we have that $\mathbf{L}f^\ast(\Gamma_VB)$ lies in the coaisle of
	the $t$-structure generated by $\mathbf{L}f^\ast\mathcal{S}$, and therefore so
	does $\mathbf{L}f^\ast B$ by the triangle $\mathbf{L}f^\ast(\Gamma_VB)\to
	\mathbf{L}f^\ast(B)\to \mathbf{L}f^\ast(L_VB)\to
	\mathbf{L}f^\ast(\Gamma_VB)[1]$.
\end{proof}

\begin{lemma}
    \label{lem:aisles_based_off_support}
    Let $X$ be a Noetherian scheme and $\mathcal{S}_1,\mathcal{S}_2\subseteq D^b_{\operatorname{coh}}(X)$. Then $\overline{\langle \mathcal{S}_1 \rangle}^{(-\infty,0]}_{\otimes}\subseteq \overline{\langle \mathcal{S}_2 \rangle}^{(-\infty,0]}_{\otimes}$ if one has for every $k\in\mathbb{Z}$,
    \begin{displaymath}
        \bigcup_{E\in \mathcal{S}_1} \operatorname{supp}(\mathcal{H}^k (E))\subseteq \bigcup_{\substack{E^\prime \in \mathcal{S}_2 \\ i\geq k}} \operatorname{supp}(\mathcal{H}^i (E^\prime)).
    \end{displaymath} 
\end{lemma}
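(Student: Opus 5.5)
The plan is to reduce to a stalk-wise or affine-local check, using the $t$-exactness of restriction to affine opens (\Cref{lem:t_exactness_flat_pullback}) together with the affine classification of \cite[Theorem 3.10]{AlonsoTarrio/JeremiasLopez/Saorin:2010}.

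Set $\mathcal{U}_2 \colonequals \overline{\langle \mathcal{S}_2 \rangle}^{(-\infty,0]}_{\otimes}$. Since $\mathcal{U}_2$ is a cocomplete $\otimes$-preaisle, it suffices to prove $\mathcal{S}_1 \subseteq \mathcal{U}_2$. As $\mathcal{U}_2$ is an aisle (\Cref{lem:generated_odot_aisle}), this amounts to showing that $\operatorname{Hom}(E, B) = 0$ for every $E \in \mathcal{S}_1$ and every $B \in \mathcal{U}_2^{\perp}$. Equivalently, we show that $\tau^{\geq 1}_{\mathcal{U}_2} E = 0$, where $\tau^{\geq 1}_{\mathcal{U}_2}$ is the truncation for $\mathcal{U}_2$.

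Fix $E \in \mathcal{S}_1$ and consider its truncation triangle $\tau^{\leq 0}E \to E \to \tau^{\geq 1}E \to$ for $\mathcal{U}_2$. Choose a finite open affine cover $\{f_i\colon U_i \to X\}$. Because $X$ is Noetherian, $D_{\operatorname{qc}}(X)$-objects vanish iff their restrictions to every $U_i$ vanish, so it suffices to show $\mathbf{L}f_i^\ast(\tau^{\geq 1}E) = 0$ for each $i$. By \Cref{lem:t_exactness_flat_pullback}, $\mathbf{L}f_i^\ast$ is $t$-exact from $(D_{\operatorname{qc}}(X), \mathcal{U}_2)$ to $(D_{\operatorname{qc}}(U_i), \overline{\langle \mathbf{L}f_i^\ast \mathcal{S}_2\rangle}^{(-\infty,0]})$. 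Hence $\mathbf{L}f_i^\ast$ carries the truncation triangle of $E$ to the truncation triangle of $\mathbf{L}f_i^\ast E$. It therefore suffices to show that $\mathbf{L}f_i^\ast E$ lies in the aisle generated by $\mathbf{L}f_i^\ast\mathcal{S}_2$ on the affine Noetherian scheme $U_i$.

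On $U_i = \operatorname{Spec} R$, \cite[Theorem 3.10]{AlonsoTarrio/JeremiasLopez/Saorin:2010} says that the aisle generated by $\mathbf{L}f_i^\ast\mathcal{S}_2 \subseteq D^b_{\operatorname{coh}}(R)$ is compactly generated, and its Thomason filtration is
\begin{displaymath}
    \phi_i(k) = \bigcup_{E'\in\mathcal{S}_2,\ j\geq k} \operatorname{supp}(\mathcal{H}^j(E'))\cap U_i .
\end{displaymath}
This step uses that restriction is flat, so it commutes with cohomology sheaves and supports. Consequently, the aisle equals $\mathcal{U}_{\phi_i}$, i.e.\ it consists of the complexes $F$ with $\operatorname{supp}\mathcal{H}^k(F) \subseteq \phi_i(k)$ for all $k$. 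The hypothesis, intersected with $U_i$, says precisely that $\operatorname{supp}\mathcal{H}^k(\mathbf{L}f_i^\ast E) \subseteq \phi_i(k)$. Hence $\mathbf{L}f_i^\ast E$ lies in this aisle, and the proof is complete.

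The main obstacle is the identification in the affine step: we need the aisle generated by coherent complexes to be exactly $\mathcal{U}_{\phi_i}$ with $\phi_i$ computed from cohomology supports. This is where the cited affine theorem is essential: it states that such aisles are compactly generated and hence determined by their filtration, and \cite[Proposition 3.8]{AlonsoTarrio/JeremiasLopez/Saorin:2010} computes the filtration via supports. A secondary point is confirming that the hypothesis bounding $\operatorname{supp}\mathcal{H}^k(E)$ for the single index $k$ gives membership in $\mathcal{U}_{\phi_i}$. It does, because the filtration is already defined as the union over all $j\geq k$, so it is decreasing as required.
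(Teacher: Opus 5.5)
Your proposal is correct and follows the paper's proof essentially step for step. Both truncate with respect to $\overline{\langle \mathcal{S}_2 \rangle}^{(-\infty,0]}_{\otimes}$, use the $t$-exactness of restriction to affine opens from \Cref{lem:t_exactness_flat_pullback}, and invoke the affine classification to see that the truncation vanishes on each affine; the paper cites \cite[Proposition 5.2]{Hrbek:2020} where you cite \cite{AlonsoTarrio/JeremiasLopez/Saorin:2010}. Your preliminary reduction to objects of $\mathcal{S}_1$ is a small improvement, since the paper applies the support hypothesis directly to an arbitrary object of the aisle generated by $\mathcal{S}_1$.
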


\begin{proof}
    Choose $A\in \overline{\langle \mathcal{S}_1 \rangle}^{(-\infty,0]}_{\otimes}$. Consider the truncation triangle of $A$,
    \begin{displaymath}
        \tau^{\leq 0}_{\mathcal{S}_2} A \to A \to \tau^{\geq 1}_{\mathcal{S}_2} A \to (\tau^{\leq 0}_{\mathcal{S}_2} A)[1],
    \end{displaymath}
    with respect to $\overline{\langle \mathcal{S}_2 \rangle}^{(-\infty,0]}_{\otimes}$. Let $X=\cup^n_{i=1} U_i$ be an affine open cover with associated open immersions $s_i \colon U_i \to X$. From \Cref{lem:t_exactness_flat_pullback}, we know that $\mathbf{L} s_i^\ast$ induces $t$-exact functors
    \begin{displaymath}
        (D_{\operatorname{qc}}(X), \overline{\langle \mathcal{S}_2 \rangle}^{(-\infty,0]}_{\otimes} ) \to (D_{\operatorname{qc}}(U_i), \overline{\langle \mathbf{L} s_i^\ast \mathcal{S}_2 \rangle}^{(-\infty,0]}_{\otimes} ).
    \end{displaymath}
    It follows that we have the truncation triangle of $\mathbf{L} s_i^\ast A$,
    \begin{displaymath}
        \mathbf{L} s_i^\ast \tau^{\leq 0}_{\mathcal{S}_2} A \to \mathbf{L} s_i^\ast A \to \mathbf{L} s_i^\ast \tau^{\geq 1}_{\mathcal{S}_2} A \to \mathbf{L} s_i^\ast (\tau^{\leq 0}_{\mathcal{S}_2} A)[1],
    \end{displaymath}
    with respect to $\overline{\langle \mathbf{L} s_i^\ast \mathcal{S}_2 \rangle}^{(-\infty,0]}_{\otimes}$.
    Then our hypothesis, coupled with \cite[Proposition 5.2]{Hrbek:2020}, implies $\mathbf{L} s_i^\ast \tau^{\leq 0}_{\mathcal{S}_2} A\in \overline{\langle \mathbf{L} s_i^\ast \mathcal{S}_2 \rangle}^{(-\infty,0]}_{\otimes}$. In other words, $\mathbf{L} s_i^\ast \tau^{\geq 1}_{\mathcal{S}_2} A = 0$ for all $i$. Hence, $\tau^{\geq 1}_{\mathcal{S}_2} A= 0$, and so, $A\in \overline{\langle \mathcal{S}_2 \rangle}^{(-\infty,0]}_{\otimes}$.
\end{proof}



\begin{proposition}
    \label{prop:aisles_are_compactly_generated}
    Let $X$ be a Noetherian scheme. If $\mathcal{S}\subseteq D^b_{\operatorname{coh}}(X)$, then $\overline{\langle \mathcal{S} \rangle}^{(-\infty,0]}_{\otimes}$ is compactly generated.
\end{proposition}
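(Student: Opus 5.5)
We will identify $\overline{\langle \mathcal{S} \rangle}^{(-\infty,0]}_{\otimes}$ with the compactly generated $\otimes$-aisle $\mathcal{U}_\phi$ attached to an explicit Thomason filtration $\phi$, namely
\begin{displaymath}
    \phi(k) \colonequals \bigcup_{E\in \mathcal{S}} \bigcup_{i\geq k} \operatorname{supp}(\mathcal{H}^i(E)).
\end{displaymath}
Each $\operatorname{supp}(\mathcal{H}^i(E))$ is closed, since $\mathcal{H}^i(E)$ is coherent. So $\phi(k)$ is specialization closed and decreasing in $k$, hence a Thomason filtration. By \Cref{sec:prelim_Thomason}, $\mathcal{U}_\phi$ is a compactly generated $\otimes$-aisle: it is generated by the set
\begin{displaymath}
    \mathcal{C}\colonequals\bigcup_n \operatorname{Perf}(X)\cap D^{\leq n}_{\operatorname{qc},\phi(n)}(X),
\end{displaymath}
and $\overline{\langle\mathcal{C}\rangle}^{(-\infty,0]}_\otimes = \overline{\langle\mathcal{C}\rangle}^{(-\infty,0]}$ because $\mathcal{U}_\phi$ is already a $\otimes$-aisle. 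It therefore suffices to prove $\overline{\langle \mathcal{S} \rangle}^{(-\infty,0]}_{\otimes}=\overline{\langle \mathcal{C} \rangle}^{(-\infty,0]}_{\otimes}$. Both are cocomplete $\otimes$-preaisles, so we prove the two inclusions using \Cref{lem:aisles_based_off_support} twice. Note that both $\mathcal{S}$ and $\mathcal{C}$ lie in $D^b_{\operatorname{coh}}(X)$, as perfect complexes on a Noetherian scheme do.

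\emph{Inclusion $\subseteq$.} Take $\mathcal{S}_1=\mathcal{S}$ and $\mathcal{S}_2=\mathcal{C}$. We must show that $\operatorname{supp}\mathcal{H}^k(E)$ lies in the union of the sets $\operatorname{supp}\mathcal{H}^i(P)$ over $P\in\mathcal{C}$ and $i\geq k$. Fix a point $x\in\operatorname{supp}\mathcal{H}^k(E)\subseteq\phi(k)$, and choose a Koszul-type perfect complex $P$ supported on the closed set $\overline{\{x\}}\subseteq\phi(k)$. Concretely, take a perfect complex with support exactly $\overline{\{x\}}$ (Thomason), concentrated in degrees $\leq k$ after a shift, and with $\mathcal{H}^k(P)$ supported at $x$. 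Such a $P$ lies in $\mathcal{C}$ and witnesses $x$. The only delicate point is arranging nonvanishing top cohomology at $x$; this can be done locally on an affine neighbourhood via a Koszul complex, then globalized by Thomason's theorem on supports and the closure properties of $\mathcal{U}_\phi$.

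\emph{Inclusion $\supseteq$.} Take $\mathcal{S}_1=\mathcal{C}$ and $\mathcal{S}_2=\mathcal{S}$. For $P\in\operatorname{Perf}(X)\cap D^{\leq n}_{\operatorname{qc},\phi(n)}(X)$ and $k\leq n$, we have $\operatorname{supp}\mathcal{H}^k(P)\subseteq\phi(n)\subseteq\phi(k)$, and $\phi(k)$ is by definition exactly the right-hand side in \Cref{lem:aisles_based_off_support}. For $k>n$ we have $\mathcal{H}^k(P)=0$. So the hypothesis holds and the inclusion follows.

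The real content is hidden in \Cref{lem:aisles_based_off_support}, which is proved by reduction to affines. The step I expect to be the main obstacle is the construction in the first inclusion of perfect complexes realising each point with the correct top degree. A cleaner route may be to apply \Cref{lem:aisles_based_off_support} directly with $\mathcal{S}_2$ the compact generators of $\mathcal{U}_\phi$, whose associated filtration is $\phi$ by the classification in \Cref{sec:prelim_Thomason}. Indeed, the classification says the filtration of $\mathcal{U}_\phi$ computed from its generators is $\phi$ itself, which gives exactly the needed support inclusion.
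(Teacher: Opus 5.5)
Your proof is correct once you take the route you propose at the end, and it differs from the paper's.

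\textbf{The first inclusion.} It is in fact immediate. By the explicit description of $\mathcal{U}_\phi$ recalled in \Cref{sec:prelim_Thomason}, every $E\in\mathcal{S}$ lies in $\mathcal{U}_\phi$ by the very definition of $\phi$. Since $\mathcal{U}_\phi$ is a cocomplete $\otimes$-preaisle, this gives $\overline{\langle \mathcal{S}\rangle}^{(-\infty,0]}_\otimes\subseteq\mathcal{U}_\phi$. Your alternative also works: \Cref{lem:aisles_based_off_support} applies with $\mathcal{S}_2=\mathcal{C}$, because the two assignments in \Cref{sec:prelim_Thomason} are mutually inverse, so the filtration computed from $\mathcal{C}$ is $\phi$.

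The pointwise Koszul construction is therefore unnecessary, and as written it does not work. Shifting a perfect complex with support $\overline{\{x\}}$ so that its top cohomology sits in degree $k$ does not force $x\in\operatorname{supp}\mathcal{H}^k(P)$. If you instead align the last degree in which $x$ occurs, $P$ lies in $D^{\leq n}$ only for some $n>k$, and membership in $\mathcal{C}$ would then need $\overline{\{x\}}\subseteq\phi(n)$, which can fail. Moreover, the extension given by Thomason's theorem comes with no control over cohomological degrees away from the affine neighbourhood. If you want such a witness, use approximation with supports, \cite[\href{https://stacks.math.columbia.edu/tag/08EL}{Tag 08EL}]{StacksProject}, applied to $\mathcal{O}_{\overline{\{x\}}}[-k]$ (pushed forward to $X$). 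It gives a perfect $P$ supported on $\overline{\{x\}}$ with $\mathcal{H}^t(P)=0$ for $t>k$ and $\mathcal{H}^k(P)\to\mathcal{O}_{\overline{\{x\}}}$ surjective.

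\textbf{Comparison with the paper.} The paper never passes through the global Thomason-filtration classification. It reduces to a single object $E$ and replaces it by $\bigoplus_{s,k}(i^k_s)_\ast\mathcal{O}_{Z^k_s}[-s]$, where the $Z^k_s$ are the irreducible components of the supports of the $\mathcal{H}^s(E)$. It then replaces each summand by a perfect $P^k_s$ obtained from the same Tag 08EL approximation, comparing at each step via \Cref{lem:aisles_based_off_support}, and concludes with \Cref{cor:compactly_generated_odot_aisle}.

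Both arguments rest on \Cref{lem:aisles_based_off_support}; they differ in where the perfect generators come from.
\begin{itemize}
\item Yours takes them wholesale from the cited Dubey--Sahoo / Hrbek--Lank--Pizzirani classification. This is shorter, and it identifies the aisle explicitly as $\mathcal{U}_\phi$ with $\phi$ read off from cohomology supports. That gives (3)$\Rightarrow$(1) of \Cref{thm:ALS_global_full_classification} directly.
\item The paper's argument uses only the affine classification, entering through \Cref{lem:aisles_based_off_support}. It also yields a sharper finiteness statement: the $\otimes$-aisle generated by a single object of $D^b_{\operatorname{coh}}(X)$ is generated by the single perfect complex $\bigoplus_{s,k}P^k_s$.
\end{itemize}
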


\begin{proof}
    We can prove the claim for the $\otimes$-$t$-structure generated by a single object $E\in D^b_{\operatorname{coh}}(X)$. This will give us a family of compact generators of $\langle E\rangle_\otimes^{(-\infty,0]}$ for every object of $E\in\mathcal{S}$, and we will just take their union.
    
    For clarity, we assume up to shift that $E$ is concentrated in degrees $[0,b]\cap \mathbb{Z}$. For every $s\in[0,b]\cap\mathbb{Z}$, write the (closed) support of $\mathcal{H}^s(E)$ as a finite union of irreducible closed subsets $\operatorname{supp}(\mathcal{H}^s(E))=\bigcup_{k=1}^{m_s} Z_s^k$. Denote by $i_s^k\colon Z_s^k\to X$, where $s\in[0,b]\cap\mathbb{Z}$ and $1\leq k\leq m_s$, the associated closed immersion of the reduced closed subscheme structure on $Z_s^k$. Set $A\colonequals\bigoplus_{s=0}^b\bigoplus_{k=1}^{m_s}(i_s^k)_\ast\mathcal{O}_{Z_i^k}[-s]$.
     Clearly, the support of $\mathcal{H}^k (A)$ coincides with that of $\mathcal{H}^k (E)$ for all $k\in \mathbb{Z}$, and therefore 
    by \Cref{lem:aisles_based_off_support} it follows that $\overline{\langle A \rangle}^{(-\infty,0]}_{\otimes}= \overline{\langle E \rangle}^{(-\infty,0]}_{\otimes}$. Now, for each $(i_s^k)_\ast \mathcal{O}_{Z_s^k} [-s]$, we can find a $P_s^k \in \operatorname{Perf}_{Z_s^k}(X)$ and morphism $P_s^k \xrightarrow{\alpha_s^k}(i_s^k)_\ast \mathcal{O}_{Z_s^k} [-s]$ such that $\mathcal{H}^t (\alpha_s^k)$ is an isomorphism for $t>s$ and $\mathcal{H}^s (\alpha_s^k)$ is surjective (see e.g.\ \cite[\href{https://stacks.math.columbia.edu/tag/08EL}{Tag 08EL}]{StacksProject}). Hence, after applying \Cref{lem:aisles_based_off_support}, $\overline{\langle \bigoplus^b_{s=0}\bigoplus_{k=1}^{m_s} P_s^k \rangle}^{(-\infty,0]}_{\otimes} = \overline{\langle A \rangle}^{(-\infty,0]}_{\otimes}$. 
    Therefore, from \Cref{cor:compactly_generated_odot_aisle}, the desired claim follows.
\end{proof}

\begin{theorem}
    \label{thm:ALS_global_full_classification}
    Let $X$ be a Noetherian scheme. Then the following are equivalent for any $\otimes$-aisle $\mathcal{A}$ on $D_{\operatorname{qc}}(X)$:
    \begin{enumerate}
        \item \label{thm:ALS_global_full_classification1} $\mathcal{A}=\mathcal{U}_{\phi}$ associated to a Thomason filtration $\phi$ on $X$
        \item \label{thm:ALS_global_full_classification2} $\mathcal{A} = \overline{\langle\mathcal{P} \rangle}^{(-\infty,0]}_{\otimes}$ for some $\mathcal{P}\subseteq \operatorname{Perf}(X)$
        \item \label{thm:ALS_global_full_classification3} $\mathcal{A} = \overline{\langle \mathcal{B} \rangle}^{(-\infty,0]}_{\otimes}$ for some $\mathcal{B}\subseteq D^b_{\operatorname{coh}}(X)$.
    \end{enumerate}
    In particular, for each case above, there is $\mathcal{Q}\subseteq \operatorname{Perf}(X)$ such that $\mathcal{A} = \overline{\langle \mathcal{Q} \rangle}^{(-\infty,0]}$ (e.g.\ no tensor needed).
\end{theorem}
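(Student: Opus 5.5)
We will prove the cycle of implications (1) $\Rightarrow$ (2) $\Rightarrow$ (3) $\Rightarrow$ (1), and then deduce the final assertion. For (1) $\Rightarrow$ (2), recall from \Cref{sec:prelim_Thomason} that $\mathcal{U}_\phi$ is compactly generated by
\begin{displaymath}
    \mathcal{P}_\phi \colonequals \bigcup_{n\in\mathbb{Z}} \big(\operatorname{Perf}(X)\cap D^{\leq n}_{\operatorname{qc},\phi(n)}(X)\big),
\end{displaymath}
and that $\mathcal{U}_\phi$ is a $\otimes$-aisle. Hence $\overline{\langle \mathcal{P}_\phi\rangle}^{(-\infty,0]}\subseteq \overline{\langle \mathcal{P}_\phi\rangle}^{(-\infty,0]}_\otimes\subseteq \mathcal{U}_\phi = \overline{\langle \mathcal{P}_\phi\rangle}^{(-\infty,0]}$, so all three coincide. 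The implication (2) $\Rightarrow$ (3) is trivial, since $\operatorname{Perf}(X)\subseteq D^b_{\operatorname{coh}}(X)$ for $X$ Noetherian.

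The substantive step is (3) $\Rightarrow$ (1). Let $\mathcal{A} = \overline{\langle\mathcal{B}\rangle}^{(-\infty,0]}_\otimes$ with $\mathcal{B}\subseteq D^b_{\operatorname{coh}}(X)$. By \Cref{prop:aisles_are_compactly_generated}, $\mathcal{A}$ is compactly generated, and by \Cref{cor:compactly_generated_odot_aisle} (or directly from the proof of that proposition) it is generated by a set of perfect complexes closed under tensoring with $\operatorname{Perf}^{\leq 0}(X)$. Thus $\mathcal{A}$ is a compactly generated $\otimes$-aisle on $D_{\operatorname{qc}}(X)$. The classification of compactly generated tensor $t$-structures on Noetherian schemes recalled in \Cref{sec:prelim_Thomason} (\cite[Theorem 4.11]{Dubey/Sahoo:2023}) then gives $\mathcal{A}=\mathcal{U}_{\phi_{\mathcal{A}}}$.

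A point to check here is that the two notions of ``$\otimes$-aisle'' agree: closure under $\operatorname{Perf}^{\leq 0}(X)$ versus closure under $D^{\leq 0}_{\operatorname{qc}}(X)$. For cocomplete preaisles this is exactly \Cref{lemma:cocomplete-tensor-preaisle}, combined with \Cref{ex:standard_aisle_cpt_gen_for_schemes}. If one prefers to avoid citing the classification, there is an alternative: compute $\phi$ directly from supports of cohomology of $\mathcal{B}$ as in \Cref{sec:prelim_Thomason}, and compare $\mathcal{A}$ with $\mathcal{U}_\phi$ via \Cref{lem:aisles_based_off_support} applied in both directions, after replacing the generators by perfect ones as in the proof of \Cref{prop:aisles_are_compactly_generated}.

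Finally, the ``in particular'' clause follows from the argument for (1) $\Rightarrow$ (2): take $\mathcal{Q}=\mathcal{P}_\phi$. The main obstacle I expect is ensuring that the cited classification applies to the aisle produced in (3). This amounts to verifying that the compact generators from \Cref{prop:aisles_are_compactly_generated} generate a $\otimes$-aisle in the sense used by \cite{Dubey/Sahoo:2023}, which is handled by \Cref{lemma:cocomplete-tensor-preaisle} and \Cref{lem:generated_odot_aisle}.
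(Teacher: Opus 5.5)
Your proof is correct and follows the same route as the paper: the only substantive input is \Cref{prop:aisles_are_compactly_generated} for (3), combined with the Thomason-filtration classification of compactly generated $\otimes$-aisles. The paper proves (1)$\iff$(2) by citing \cite[Theorem 1.3]{Hrbek/Lank/Pizzirani:2025} and gets (3)$\Rightarrow$(2) directly from that proposition, whose proof already produces perfect $\otimes$-generators; you instead close the cycle through (1) using \cite[Theorem 4.11]{Dubey/Sahoo:2023}, which is an inessential difference.
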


\begin{proof}
    $\eqref{thm:ALS_global_full_classification1} \iff \eqref{thm:ALS_global_full_classification2}$ is \cite[Theorem 1.3]{Hrbek/Lank/Pizzirani:2025};  
    $\eqref{thm:ALS_global_full_classification2} \implies \eqref{thm:ALS_global_full_classification3}$ is straightforward; $\eqref{thm:ALS_global_full_classification3} \implies \eqref{thm:ALS_global_full_classification2}$ is \Cref{prop:aisles_are_compactly_generated}.
\end{proof}

\section{\texorpdfstring{$t$}{}-proxy smallness}
\label{sec:proxy_smallness}

This section introduces a notion of proxy smallness for $t$-structures. Using
recollements, we characterize it in \Cref{thm:characterize_left_proxy_small}.  Let $\mathcal{T}$ be a triangulated category admitting coproducts, and denote by
$\mathcal{T}^c$ the subcategory of compacts.
Recall the following definition, which is originally due to \cite{Dwyer/Greenlees/Iyengar:2006b}, however we follow the formulation of \cite{Briggs/Iyengar/Stevenson:2025}.

\begin{definition}
	A skeletally small subcategory $\mathcal{S}\subseteq\mathcal{T}$ is \textbf{proxy small} if
	$\overline{\langle \mathcal{S} \rangle} = \overline{\langle \langle \mathcal{S} \rangle \cap
	\mathcal{T}^c \rangle}$. Equivalently, this means that $\overline{\langle
	\mathcal{S} \rangle}$ is compactly generated and $\overline{\langle
	\mathcal{S}\rangle} \cap \mathcal{T}^c \subseteq \langle\mathcal{S} \rangle$.
	In words, the objects of $\mathcal{S}$ finitely build the compacts in
	$\overline{\langle\mathcal{S}\rangle}\cap\mathcal{T}^c$, which in turn build
	back (not necessarily finitely) the objects of $\mathcal{S}$.
\end{definition}

To define $t$-proxy smallness, we mimick this definition, but we only allow
preaisle operations (that is, positive shifts, extensions and possibly
coproducts) to `build' objects.

\begin{definition}
	A skeletally small subcategory $\mathcal{S}\subseteq\mathcal{T}$ is
	\textbf{$t$-proxy small} if 
	\begin{displaymath}
        \overline{\langle \mathcal{S} \rangle}^{(-\infty,0]} =
	\overline{\langle \langle \mathcal{S} \rangle^{(-\infty,0]} \cap
	\mathcal{T}^c \rangle}^{(-\infty,0]}.
    \end{displaymath}
	That is, if the cocomplete preaisle generated by $\mathcal{S}$ is
	compactly generated, and its compact objects lie in the preaisle generated by $\mathcal{S}$. We say that an object is \textbf{$t$-proxy small} if it is as a singleton subcategory.
\end{definition}

\begin{remark} 
     Observe that a skeletally small subcategory $\mathcal{S}\subseteq\mathcal{T}$ is $t$-proxy small if and only if so is $\langle \mathcal{S}\rangle^{(-\infty,0]}$, and similarly it is proxy small if and only if so is $\langle \mathcal{S} \rangle$. Furthermore, an object $P \in \mathcal{T}$ is proxy small precisely when $\overline{\langle P \rangle}$ is compactly generated and $\overline{\langle P \rangle} \cap \mathcal{T}^c \subseteq \langle P \rangle$. Set $\mathbf{p} = \overline{\langle P \rangle} \cap \mathcal{T}^c$. In more humane terms, $P$ being proxy small amounts to
    \begin{itemize}
        \item $P$ finitely builds each object of $\mathbf{p}$, and
        \item $\mathbf{p}$ (not necessarily finitely) builds back the object $P$.
    \end{itemize}
    To leverage $t$-structures, we need to restrict these building procedures to only allow positive suspension. Thus, we might say that $P$ `finitely $t$-builds' $\mathbf{p}$ if $\mathbf{p} \subseteq \langle P \rangle^{(-\infty,0]}$ and that $\mathbf{p}$ $t$-builds $P$ if $P \in \overline{\langle \mathbf{p} \rangle}^{(-\infty,0]}$. Plugging these notions in leads precisely to our definition of $t$-proxy smallness above. A natural question occurs regarding the actual novelty of our definition. Assume that $\mathbf{p}$ `$t$-builds' $P$ and that $P$ finitely builds $\mathbf{p}$. Does it follow that $P$ `finitely $t$-builds' $\mathbf{p}$? We shall demonstrate in \Cref{ex:tilting_proxy_small} and \Cref{ex:proxy_small_semiArtinian} that the answer is \textit{no} in general. In particular, not every proxy small object is $t$-proxy small.
\end{remark}

We now show that $t$-proxy smallness is a refinement of proxy smallness. 

\begin{lemma}
    \label{lem:t-proxy_small_is_proxy_small}
	Let $\mathcal{T}$ be as in \Cref{prop:compact_aisle_susp}. Then a $t$-proxy-small
	subcategory of $\mathcal{T}$ is proxy-small.
\end{lemma}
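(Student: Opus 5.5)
Write $\mathcal{C}\colonequals\langle\mathcal{S}\rangle^{(-\infty,0]}\cap\mathcal{T}^c$. By hypothesis $\overline{\langle\mathcal{S}\rangle}^{(-\infty,0]}=\overline{\langle\mathcal{C}\rangle}^{(-\infty,0]}$. Proxy smallness of $\mathcal{S}$ means $\overline{\langle\mathcal{S}\rangle}=\overline{\langle\langle\mathcal{S}\rangle\cap\mathcal{T}^c\rangle}$. The plan is to prove the two inclusions separately. The inclusion ``$\supseteq$'' is automatic, since $\langle\mathcal{S}\rangle\cap\mathcal{T}^c\subseteq\langle\mathcal{S}\rangle\subseteq\overline{\langle\mathcal{S}\rangle}$.

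For ``$\subseteq$'', it suffices to show $\mathcal{S}\subseteq\overline{\langle\mathcal{C}\rangle}$ and $\mathcal{C}\subseteq\langle\mathcal{S}\rangle\cap\mathcal{T}^c$.
\begin{itemize}
\item For the first, note that any localizing subcategory is in particular a cocomplete preaisle. Hence $\overline{\langle\mathcal{C}\rangle}^{(-\infty,0]}\subseteq\overline{\langle\mathcal{C}\rangle}$, and therefore $\mathcal{S}\subseteq\overline{\langle\mathcal{S}\rangle}^{(-\infty,0]}=\overline{\langle\mathcal{C}\rangle}^{(-\infty,0]}\subseteq\overline{\langle\mathcal{C}\rangle}$.
\item For the second, observe that $\langle\mathcal{S}\rangle$ is a thick subcategory, so it is closed under positive shifts, extensions and summands. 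It therefore contains $\langle\mathcal{S}\rangle^{(-\infty,0]}$, giving $\mathcal{C}\subseteq\langle\mathcal{S}\rangle\cap\mathcal{T}^c$.
\end{itemize}
Combining the two, $\overline{\langle\mathcal{S}\rangle}\subseteq\overline{\langle\mathcal{C}\rangle}\subseteq\overline{\langle\langle\mathcal{S}\rangle\cap\mathcal{T}^c\rangle}$, which proves the lemma.

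If one prefers the equivalent formulation of proxy smallness (compact generation together with $\overline{\langle\mathcal{S}\rangle}\cap\mathcal{T}^c\subseteq\langle\mathcal{S}\rangle$), the derivator hypothesis enters through \Cref{prop:compact_aisle_susp}. The first chain above shows $\overline{\langle\mathcal{S}\rangle}=\overline{\langle\mathcal{C}\rangle}$. Applying the first equality of \Cref{prop:compact_aisle_susp} to the set $\mathcal{C}$ of compacts (which is skeletally small, since $\mathcal{T}^c$ is) yields $\overline{\langle\mathcal{S}\rangle}\cap\mathcal{T}^c=\overline{\langle\mathcal{C}\rangle}\cap\mathcal{T}^c=\langle\mathcal{C}\rangle\subseteq\langle\mathcal{S}\rangle$, using $\mathcal{C}\subseteq\langle\mathcal{S}\rangle$.

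No step is a serious obstacle. The only points needing care are that localizing subcategories are cocomplete preaisles, that thick subcategories contain generated suspended subcategories, and, if the equivalent formulation is used, invoking \Cref{prop:compact_aisle_susp} correctly.
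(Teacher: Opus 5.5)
Your proof is correct. It shares the paper's key step but finishes differently.

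\textbf{Shared step.} Since a localizing subcategory is a cocomplete preaisle, $t$-proxy smallness forces $\overline{\langle\mathcal{S}\rangle}=\overline{\langle\mathcal{C}\rangle}$ with $\mathcal{C}=\langle\mathcal{S}\rangle^{(-\infty,0]}\cap\mathcal{T}^c$. The paper first replaces $\mathcal{S}$ by $\langle\mathcal{S}\rangle^{(-\infty,0]}$, so its $\mathcal{P}=\mathcal{S}\cap\mathcal{T}^c$ is your $\mathcal{C}$; you avoid that reduction by working with $\mathcal{C}$ directly.

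\textbf{The finish.} The paper verifies the second formulation of proxy smallness: compact generation together with $\overline{\langle\mathcal{S}\rangle}\cap\mathcal{T}^c\subseteq\langle\mathcal{S}\rangle$. For this it invokes the first equality of \Cref{prop:compact_aisle_susp}, i.e.\ Neeman's lemma, to get $\overline{\langle\mathcal{S}\rangle}\cap\mathcal{T}^c=\langle\mathcal{C}\rangle\subseteq\langle\mathcal{S}\rangle$. That is exactly your alternative paragraph.

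Your main argument instead checks the defining equality $\overline{\langle\mathcal{S}\rangle}=\overline{\langle\langle\mathcal{S}\rangle\cap\mathcal{T}^c\rangle}$ directly. It uses only $\mathcal{C}\subseteq\langle\mathcal{S}\rangle\cap\mathcal{T}^c$, so it is purely formal. It shows the lemma holds, with proxy smallness as defined, for any $\mathcal{T}$ with coproducts.

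\textbf{What each route buys.} The paper's route lands directly in the form used later, e.g.\ in \Cref{thm:characterize_left_proxy_small}: the compacts of $\overline{\langle\mathcal{S}\rangle}$ are finitely built from $\mathcal{S}$. Passing from your version to that form costs exactly Neeman's lemma, so the two are equivalent in content.

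\textbf{The derivator hypothesis.} Neither route really uses it. Only the first (localizing) equality of \Cref{prop:compact_aisle_susp} enters, not the Keller--Nicol\'as statement about aisles that needs it.
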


\begin{proof}
	Let $\mathcal{S}\subseteq\mathcal{T}$ be a subcategory. Observe that
	$\mathcal{S}$ is ($t$-)proxy small if, and only if, the preaisle $\langle
	\mathcal{S} \rangle^{(-\infty,0]}$ is such. Hence, we may assume that
	$\mathcal{S}$ is a preaisle. We are therefore assuming that if we let
	$\mathcal{P}\colonequals\mathcal{S}\cap\mathcal{T}^c\subseteq\mathcal{S}$, we have
	$\overline{\langle \mathcal{S}\rangle}^{(-\infty,0]}=\overline{\langle
	\mathcal{P}\rangle }^{(-\infty,0]}$.
	Observe that
	\begin{displaymath}
        \overline{\langle \mathcal{S}\rangle} =
		\overline{\left\langle \overline{\langle \mathcal{S}
		\rangle}^{(-\infty,0]}\right\rangle} =
		\overline{\left\langle \overline{\langle \mathcal{P}
		\rangle}^{(-\infty,0]}\right\rangle} =
		\overline{\langle \mathcal{P}\rangle}.
    \end{displaymath}
	This shows that the smallest localizing subcategory containing $\mathcal{S}$
	is compactly generated. Now, by \Cref{prop:compact_aisle_susp}, we have 
	\begin{displaymath}
        \overline{\langle \mathcal{S}\rangle} \cap\mathcal{T}^c =
		\overline{\langle \mathcal{P}\rangle} \cap\mathcal{T}^c =
		\langle \mathcal{P} \rangle \subseteq \langle \mathcal{S} \rangle.
    \end{displaymath}
	This concludes the proof.
\end{proof}

\begin{remark}
    In \Cref{lem:t-proxy_small_is_proxy_small}, the converse implication does not hold. See \Cref{ex:tilting_proxy_small,ex:proxy_small_semiArtinian}.
\end{remark}

\subsection{Results}
\label{sec:proxy_smallness_results}

\begin{setup}
    \label{setup:BIS}
    Let $\mathcal{A}$ be an additive category whose homotopy category $K(\mathcal{A})$ is compactly generated. Fix $P\in K(\mathcal{A})$. Consider the object $E\colonequals\mathbf{R}\operatorname{Hom}_{K(\mathcal{A})} (P,P)$ in $K(\mathcal{A})$ (i.e.\ the `dg endomorphism algebra of $P$'). Set $D(E)$ to be the derived category of the dg endomorphism algebra $E$. Define $\mathbf{p}$ to be the strictly full subcategory $\overline{\langle P \rangle}^{(-\infty,0]}\cap K(\mathcal{A})^c \subseteq \langle P \rangle^{(-\infty,0]}$. Using dg enhancements, the ind-completion of $\langle \mathbf{p}\rangle$, denoted $D(\mathbf{p})$, can be identified with $\overline{\langle \mathbf{p} \rangle}$ in $K(\mathcal{A})$ with inclusion functor $i_\ast \colon \overline{\langle \mathbf{p} \rangle} \to K(\mathcal{A})$.
    There is a commutative diagram
    \begin{displaymath}
        \begin{tikzcd}
            {D(E)} \\
            {\overline{\langle \mathbf{p} \rangle}} & {K(\mathcal{A})} \\
            {D(\mathbf{p})}
            \arrow[shift left=3, from=1-1, to=2-1]
            \arrow[shift left=3, bend right = -13 pt, from=1-1, to=2-2]
            \arrow[from=2-1, to=1-1]
            \arrow["{i_\ast}", from=2-1, to=2-2]
            \arrow[shift left=3, from=2-1, to=3-1]
            \arrow[bend right = 13pt, from=2-2, to=1-1]
            \arrow["{i^!}", shift left=3, from=2-2, to=2-1]
            \arrow[shift left=3, bend right = -13 pt, from=2-2, to=3-1]
            \arrow["\cong", from=3-1, to=2-1]
            \arrow[bend right = 13pt, from=3-1, to=2-2]
        \end{tikzcd}
    \end{displaymath}
    whose edges are adjoint pairs of exact functors given by `tensor-hom adjunctions'. Specifically, the unlabelled \textit{left adjoints} are as follows
    \begin{itemize}
        \item $D(E)\to K(\mathcal{A})$ is $(-)\otimes_E P$ which factors through $\overline{\langle \mathbf{p} \rangle}$ by virtue of preserving small coproducts
        \item $D(\mathbf{p})\to \overline{\langle \mathbf{p} \rangle}$ is the triangulated equivalence determined by universal property of ind-completions (e.g.\ $\mathbf{R}\operatorname{Hom}_{\mathbf{p}} (-,p)\mapsto p$ for all $p\in \mathbf{p}$)
        \item $D(\mathbf{p})\to K(\mathcal{A})$ is the functor determined also by the universal property which we denote by $(-)\otimes_\mathbf{p} \mathbf{p}$.
    \end{itemize}
    See \cite[Eq.\ 2.1]{Briggs/Iyengar/Stevenson:2025} for details. To make this precise, we expand the diagram as follows:
    \begin{displaymath}
        \begin{tikzcd}
            &&& {K(\mathcal{A})} \\
            {D(E)} &&&&&& {D(\mathbf{p}).} \\
            &&& {\overline{\langle \mathbf{p} \rangle}}
            \arrow[""{name=0, anchor=center, inner sep=0}, "{\mathbf{R}\operatorname{Hom}_{K(\mathcal{A})} (P,-)}"'{pos=0.6}, bend right = 30pt , from=1-4, to=2-1]
            \arrow[""{name=1, anchor=center, inner sep=0}, "{\mathbf{R}\operatorname{Hom}_{K(\mathcal{A})} (\mathbf{p},-)}"{pos=0.7}, bend right =-30pt, from=1-4, to=2-7]
            \arrow["{i^!}", shift left=3, from=1-4, to=3-4]
            \arrow[""{name=2, anchor=center, inner sep=0}, "{(-)\otimes_E P}"', bend right =-18pt, from=2-1, to=1-4]
            \arrow[""{name=3, anchor=center, inner sep=0}, "{(-)\otimes_E P}", shift left=3, bend right =18pt, from=2-1, to=3-4]
            \arrow[""{name=4, anchor=center, inner sep=0}, "{(-)\otimes_\mathbf{p} \mathbf{p}}", bend right =18pt, from=2-7, to=1-4]
            \arrow[""{name=5, anchor=center, inner sep=0}, "\cong", bend right =-24pt, from=2-7, to=3-4]
            \arrow["{i_\ast = inc.}", from=3-4, to=1-4]
            \arrow[""{name=6, anchor=center, inner sep=0}, "{\mathbf{R}\operatorname{Hom}_{K(\mathcal{A})} (P,-)|_{rest.}}", bend right =-24pt, from=3-4, to=2-1]
            \arrow[""{name=7, anchor=center, inner sep=0}, shift left=3, bend right =18pt, from=3-4, to=2-7]
            \arrow["\dashv"{anchor=center, rotate=111}, draw=none, from=2, to=0]
            \arrow["\dashv"{anchor=center, rotate=-110}, draw=none, from=3, to=6]
            \arrow["\dashv"{anchor=center, rotate=69}, draw=none, from=4, to=1]
            \arrow["\dashv"{anchor=center, rotate=-70}, draw=none, from=7, to=5]
        \end{tikzcd}
    \end{displaymath}
    By \cite[Theorem 4.3]{Briggs/Iyengar/Stevenson:2025}, it follows that $P$ being proxy small is equivalent to $\overline{\langle P \rangle} = \overline{\langle\overline{\langle P \rangle}\cap K(\mathcal{A})^c \rangle}$ and the functor
    \begin{displaymath}
        \mathbf{R}\operatorname{Hom}_{K(\mathcal{A})} (P,-) \otimes_E P \colon K(\mathcal{A}) \to K(\mathcal{A})
    \end{displaymath}
    preserves small coproducts. In fact, if $K(\mathcal{A})$ satisfies the telescope conjecture (i.e.\ every smashing subcategory of K is generated by the compact objects it contains), then proxy smallness of $P$ is equivalent the functor
    \begin{displaymath}
        \mathbf{R}\operatorname{Hom}_{K(\mathcal{A})} (P,-) \otimes_E P \colon K(\mathcal{A}) \to K(\mathcal{A})
    \end{displaymath}
    preserving small coproducts. See \cite[Corollary 4.6]{Briggs/Iyengar/Stevenson:2025}. In particular, we get a recollement
    \begin{displaymath}
        \begin{tikzcd}
            {\ker((-)\otimes_E P)} & {D(E)} && {\overline{\langle \mathbf{p} \rangle}.}
            \arrow[from=1-1, to=1-2]
            \arrow[bend right = -24pt, from=1-2, to=1-1]
            \arrow[bend right =24pt, from=1-2, to=1-1]
            \arrow["{(-)\otimes_E P}", from=1-2, to=1-4]
            \arrow["{\mathbf{R}\operatorname{Hom}_{K(\mathcal{A})} (P,-)|_{rest.}}", bend right = -24pt, from=1-4, to=1-2]
            \arrow["\lambda"', bend right = 24pt, from=1-4, to=1-2]
        \end{tikzcd}
    \end{displaymath}
    Observe that $(-) \otimes_E P$ is an exact functor which preserves small products and coproduct because it is both a right and left adjoint. From Brown representability \cite[Theorem 5.1]{Neeman:1996}, there exists $Q\in D(E)^c$ such that $(-)\otimes_E P \cong \mathbf{R} \operatorname{Hom}_E (Q,-)$. Hence, $\lambda\cong (-)\otimes_E Q$; again, we abuse notation with these `tensor-hom adjunctions'.
\end{setup}

The reader should compare the following with \cite[Theorem 4.3]{Briggs/Iyengar/Stevenson:2025}.

\begin{theorem}
    \label{thm:characterize_left_proxy_small}
    With notation of \Cref{setup:BIS}. An object $P\in K(\mathcal{A})$ is $t$-proxy small if, and only if, the following are satisfied:
    \begin{enumerate}
        \item \label{thm:characterize_left_proxy_small1} $\overline{\langle P \rangle}^{(-\infty,0]}$ is compactly generated
        \item \label{thm:characterize_left_proxy_small2} $\mathbf{R}\operatorname{Hom}_{K(\mathcal{A})} (P,-) \otimes_E P \colon K(\mathcal{A}) \to K(\mathcal{A})$ preserves small coproducts
        \item \label{thm:characterize_left_proxy_small3} the standard $t$-structure of $D(E)$ glues along the recollement.
    \end{enumerate}
\end{theorem}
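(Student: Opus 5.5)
We compare the two aisles $\mathcal{U}\colonequals\overline{\langle P\rangle}^{(-\infty,0]}$ and $\mathcal{U}_{\mathbf{p}}\colonequals\overline{\langle \mathbf{p}\rangle}^{(-\infty,0]}$ in $K(\mathcal{A})$. The inclusion $\mathcal{U}_{\mathbf{p}}\subseteq\mathcal{U}$ always holds because $\mathbf{p}\subseteq\langle P\rangle^{(-\infty,0]}$. Hence $t$-proxy smallness of $P$ is equivalent to $P\in\mathcal{U}_{\mathbf{p}}$. We read conditions \eqref{thm:characterize_left_proxy_small1}--\eqref{thm:characterize_left_proxy_small3} through the recollement of \Cref{setup:BIS}, writing $G\colonequals(-)\otimes_E P$ and $F\colonequals\mathbf{R}\operatorname{Hom}_{K(\mathcal{A})}(P,-)$. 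The key observation is that $G(E)=P$ and that $G$ is coproduct preserving and exact. Therefore $G$ maps the standard aisle $D(E)^{\leq 0}=\overline{\langle E\rangle}^{(-\infty,0]}$ onto a generating set of $\mathcal{U}$, in the sense that $\mathcal{U}=\overline{\langle G(D(E)^{\leq 0})\rangle}^{(-\infty,0]}$.

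For ($\Rightarrow$), assume $\mathcal{U}=\mathcal{U}_{\mathbf{p}}$. Condition \eqref{thm:characterize_left_proxy_small1} is then immediate. \Cref{lem:t-proxy_small_is_proxy_small} shows that $P$ is proxy small, so \cite[Theorem 4.3]{Briggs/Iyengar/Stevenson:2025} gives \eqref{thm:characterize_left_proxy_small2} and the recollement. For \eqref{thm:characterize_left_proxy_small3} we apply \Cref{lem:glueing_recollement} to the aisle $\mathcal{A}'\colonequals D(E)^{\leq 0}$ on $D(E)$, with $Q=G$, $Q_\lambda=\lambda=(-)\otimes_E Q$ and $Q_\rho=F|$. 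It suffices to check $\lambda G(D(E)^{\leq 0})\subseteq D(E)^{\leq 0}$. Since $\lambda G$ is coproduct preserving and exact, this reduces by dévissage to $\lambda(P)\in D(E)^{\leq 0}$. Now $\lambda$ is left adjoint to the fully faithful $F|$, so $\lambda$ restricted to $\mathbf{p}$ (viewed inside $\overline{\langle\mathbf{p}\rangle}$) agrees with $F|$ on $\mathbf{p}$. Moreover $F(\mathbf{p})\subseteq F(\langle P\rangle^{(-\infty,0]})\subseteq\langle F(P)\rangle^{(-\infty,0]}=\langle E\rangle^{(-\infty,0]}\subseteq D(E)^{\leq 0}$. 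Since $P\in\mathcal{U}_{\mathbf{p}}$ and $\lambda$ is right $t$-exact in the appropriate sense (it preserves coproducts, extensions and positive shifts), we get $\lambda(P)\in\overline{\langle\lambda(\mathbf{p})\rangle}^{(-\infty,0]}\subseteq D(E)^{\leq 0}$.

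For ($\Leftarrow$), assume \eqref{thm:characterize_left_proxy_small1}--\eqref{thm:characterize_left_proxy_small3}. Condition \eqref{thm:characterize_left_proxy_small2} yields the recollement, and \eqref{thm:characterize_left_proxy_small3} together with \Cref{lem:glueing_recollement} gives $\lambda G(E)=\lambda(P)\in D(E)^{\leq 0}$. We then aim to show $\mathcal{U}\subseteq\mathcal{U}_{\mathbf{p}}$, for which it suffices to show $P\in\mathcal{U}_{\mathbf{p}}$. The natural route is through the counit: $G\lambda\cong\mathrm{id}$ on $\overline{\langle\mathbf{p}\rangle}$ because $\lambda$ is fully faithful. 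We write $P\cong G\lambda(P)$. This step requires knowing that $P\in\overline{\langle\mathbf{p}\rangle}$, which by \cite[Theorem 4.3]{Briggs/Iyengar/Stevenson:2025} follows from \eqref{thm:characterize_left_proxy_small1}, \eqref{thm:characterize_left_proxy_small2} and \Cref{prop:compact_aisle_susp} as in \Cref{lem:t-proxy_small_is_proxy_small}. Since $\lambda(P)\in D(E)^{\leq 0}=\overline{\langle E\rangle}^{(-\infty,0]}$, we reduce to $G(D(E)^{\leq 0})\subseteq\mathcal{U}_{\mathbf{p}}$. However, this only gives $G(E)=P$ again, which is circular.

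The main obstacle is therefore to replace $E$ by something that $G$ sends into $\mathcal{U}_{\mathbf{p}}$. To do this, I would use \eqref{thm:characterize_left_proxy_small1}: $\mathcal{U}$ is compactly generated by $\mathcal{U}\cap K(\mathcal{A})^c=\mathbf{p}$, so in fact $\mathcal{U}=\mathcal{U}_{\mathbf{p}}$ directly, since compact generation of $\mathcal{U}$ means it is generated by its compact members, which are precisely $\mathbf{p}$. If condition \eqref{thm:characterize_left_proxy_small1} is read this way, ($\Leftarrow$) is immediate, and conditions \eqref{thm:characterize_left_proxy_small2} and \eqref{thm:characterize_left_proxy_small3} carry content only in the forward direction. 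I would double-check the intended meaning of $\mathbf{p}$, namely whether $\mathbf{p}\subseteq\langle P\rangle^{(-\infty,0]}$ is an assumption built into \Cref{setup:BIS} or a consequence. If it is a consequence, the real work in ($\Leftarrow$) becomes proving $\overline{\langle P\rangle}^{(-\infty,0]}\cap K(\mathcal{A})^c\subseteq\langle P\rangle^{(-\infty,0]}$. For that step I would use that $\lambda$ identifies $\overline{\langle\mathbf{p}\rangle}$ with a subcategory of $D(E)$ sending compacts to compacts, namely $D(E)^c$. The gluing condition would then place $\lambda(c)$ in $D(E)^{\leq 0}\cap D(E)^c=\langle E\rangle^{(-\infty,0]}$, using \Cref{prop:compact_aisle_susp}. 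Applying $G$ would then give $c\in\langle P\rangle^{(-\infty,0]}$.
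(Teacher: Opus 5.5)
Your main line rests on a false reformulation, which you half-suspect yourself at the end. You treat the clause ``$\subseteq\langle P\rangle^{(-\infty,0]}$'' in \Cref{setup:BIS} as a standing fact about $\mathbf{p}=\overline{\langle P\rangle}^{(-\infty,0]}\cap K(\mathcal{A})^c$. From it you deduce that $t$-proxy smallness is equivalent to $P\in\mathcal{U}_{\mathbf{p}}$.

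That inclusion is not automatic. It is exactly the nontrivial half of $t$-proxy smallness: $P$ finitely $t$-builds the compact objects of its aisle. By contrast, $P\in\mathcal{U}_{\mathbf{p}}$ is just a restatement of (1), since a compactly generated aisle is generated by its compact members. So your reading would make (2) and (3) superfluous for ($\Leftarrow$).

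\Cref{ex:proxy_small_semiArtinian} refutes that reading. There $P=k\oplus R[1]$ satisfies $\overline{\langle P\rangle}^{(-\infty,0]}=D^{\leq 0}_{\operatorname{qc}}(R)=\overline{\langle R\rangle}^{(-\infty,0]}$, so (1) and (2) hold. Yet $R\notin\langle P\rangle^{(-\infty,0]}$, so $P$ is not $t$-proxy small. In particular, ``($\Leftarrow$) is immediate'' is false; the converse is exactly where (3) does its work.

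The converse you should commit to is the one sketched in your last paragraph, which is the paper's proof:
\begin{enumerate}
\item (1) and (2) give proxy smallness (as in \Cref{lem:t-proxy_small_is_proxy_small}), hence $P\in\overline{\langle\mathbf{p}\rangle}$ and the recollement exists.
\item (3) and \Cref{lem:glueing_recollement} give $\lambda(P)=\lambda G(E)\in D(E)^{\leq 0}$.
\item By d\'evissage, $\lambda(\mathbf{p})\subseteq\lambda(\mathcal{U})\subseteq\overline{\langle\lambda(P)\rangle}^{(-\infty,0]}\subseteq D(E)^{\leq 0}$.
\item As $\lambda$ preserves compacts, \Cref{prop:compact_aisle_susp} yields $\lambda(\mathbf{p})\subseteq\langle E\rangle^{(-\infty,0]}$.
\item Applying $G$, with $G\lambda\cong\operatorname{id}$, gives $\mathbf{p}\subseteq\langle P\rangle^{(-\infty,0]}$.
\end{enumerate}

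Your forward direction is the paper's argument, provided you take $\mathbf{p}\subseteq\langle P\rangle^{(-\infty,0]}$ from the hypothesis rather than the setup. $t$-proxy smallness says $\mathcal{U}=\overline{\langle\langle P\rangle^{(-\infty,0]}\cap K(\mathcal{A})^c\rangle}^{(-\infty,0]}$. This gives $P\in\mathcal{U}_{\mathbf{p}}$ directly, and via \Cref{prop:compact_aisle_susp} it gives $\mathbf{p}\subseteq\langle P\rangle^{(-\infty,0]}$.

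One slip there: in the recollement $\lambda\dashv G\dashv F|$, so $\lambda$ is left adjoint to $G$, not to $F|$. The agreement of $\lambda$ and $F$ on $\mathbf{p}$, which the paper also uses, comes from compactness of the objects of $\mathbf{p}$, not from the adjunction you cite.
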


\begin{proof}
    To start, we show an object being $t$-proxy small implies conditions \eqref{thm:characterize_left_proxy_small1}, \eqref{thm:characterize_left_proxy_small2} and \eqref{thm:characterize_left_proxy_small3}. Clearly, $P$ being $t$-proxy small implies \eqref{thm:characterize_left_proxy_small1}. In fact, $t$-proxy smallness implies proxy smallness by \Cref{lem:t-proxy_small_is_proxy_small}, so we have \eqref{thm:characterize_left_proxy_small2} as well by \cite[Theorem 4.3]{Briggs/Iyengar/Stevenson:2025}. From $P$ being $t$-proxy small, we know that
    \begin{displaymath}
        \overline{\langle P \rangle}^{(-\infty,0]} \cap K(\mathcal{A})^c \subseteq \langle P \rangle^{(-\infty,0]}.
    \end{displaymath} 
    Applying the functor $\mathbf{R}\operatorname{Hom}_{K(\mathcal{A})}(P,-)$, which agrees with $\lambda$ on $\mathbf{p}$, we have a string of inclusions
    \begin{displaymath}
        \begin{aligned}
            \lambda (\overline{\langle P \rangle}^{(-\infty,0]} \cap K(\mathcal{A})^c) 
            &= \mathbf{R} \operatorname{Hom}_{K(\mathcal{A})} (P, \overline{\langle P \rangle}^{(-\infty,0]} \cap K(\mathcal{A})^c)
            \\&\subseteq \langle \mathbf{R} \operatorname{Hom}_{K(\mathcal{A})} (P,P) \rangle^{(-\infty,0]} 
            \\&\cong \langle E \rangle^{(-\infty,0]} 
            \\&\subseteq \overline{\langle E \rangle}^{(-\infty,0]} && (\textrm{in } D(E)).
        \end{aligned}
    \end{displaymath}
    Now, consider the following string of inclusions,
    \begin{displaymath}
        \begin{aligned}
            (\lambda \circ ((-)\otimes_E P)) (\overline{\langle E \rangle}^{(-\infty,0]}) 
            &\subseteq \lambda (\overline{\langle E \otimes_E  P \rangle}^{(-\infty,0]})
            \\&\subseteq \lambda (\overline{\langle P \rangle}^{(-\infty,0]})
            \\&\subseteq \lambda (\overline{\langle \overline{\langle P \rangle}^{(-\infty,0]} \cap K(\mathcal{A})^c \rangle}^{(-\infty,0]})
            \\&\subseteq \overline{\langle \lambda (\overline{\langle P \rangle}^{(-\infty,0]} \cap K(\mathcal{A})^c) \rangle}^{(-\infty,0]}
            \\&\subseteq \overline{\langle E \rangle}^{(-\infty,0]}.
        \end{aligned}
    \end{displaymath}
    Tying things together, we see that the standard aisle of $D(E)$ is closed under the (exact) endofunctor $\lambda \circ ((-)\otimes_E P)$ on $D(E)$. Hence, the standard $t$-structure glues along the recollement as desired (see \Cref{lem:glueing_recollement}).

    Lastly, we show the converse direction. Observe that \eqref{thm:characterize_left_proxy_small1} and \eqref{thm:characterize_left_proxy_small2} ensure that $P$ is proxy small. Hence, we know that $\overline{\langle P \rangle}^{(-\infty,0]} \cap K(\mathcal{A})^c  \subseteq \langle P \rangle$. Our goal is to show that $\overline{\langle P \rangle}^{(-\infty,0]} \cap K(\mathcal{A})^c  \subseteq \langle P \rangle^{(-\infty,0]}$. From \eqref{thm:characterize_left_proxy_small3}, we see that $\lambda(P) = \lambda (E\otimes_E P)\in \overline{\langle E \rangle}^{(-\infty,0]}$, which gives us 
    \begin{displaymath}
        \begin{aligned}
            \lambda (\overline{\langle P \rangle}^{(-\infty,0]} \cap K(\mathcal{A})^c) 
            &\subseteq \lambda (\overline{\langle P \rangle}^{(-\infty,0]})
            \\&\subseteq \overline{\langle \lambda (P) \rangle}^{(-\infty,0]}
            \\&\subseteq \overline{\langle E \rangle}^{(-\infty,0]}.
        \end{aligned}
    \end{displaymath}
    Since $\lambda (\overline{\langle P \rangle}^{(-\infty,0]} \cap K(\mathcal{A})^c)$ and $E$ are contained in $D(E)^c$, \Cref{prop:compact_aisle_susp} implies that $\lambda (\overline{\langle P \rangle}^{(-\infty,0]} \cap K(\mathcal{A})^c) \subseteq \langle E \rangle^{(-\infty,0]}$. Therefore, applying the exact functor $(-)\otimes_E P$, we have that
    \begin{displaymath}
        \begin{aligned}
            \overline{\langle P \rangle}^{(-\infty,0]} \cap K(\mathcal{A})^c
            &\cong (((-)\otimes_E P) \circ \lambda) (\overline{\langle P \rangle}^{(-\infty,0]} \cap K(\mathcal{A})^c)
            \\&\subseteq \langle E\otimes_E P \rangle^{(-\infty,0]} 
            \\&\subseteq \langle P \rangle^{(-\infty,0]}.
        \end{aligned}
    \end{displaymath}
    This completes the proof.
\end{proof}

\subsection{Examples}
\label{sec:proxy_smallness_example}

In what follows, we provide examples which illuminate the difference between proxy smallness and $t$-proxy smallness. We start with an example of an object in $D^b_{\operatorname{coh}}(R)$ which is proxy-small and fails to be $t$-proxy-small. As per \Cref{prop:LCI_characterizations}, a ring $R$ which provides such an example cannot be locally a complete intersection. 

For an object $X$ in a triangulated category $\mathcal{T}$, let us denote by $\langle X \rangle^{[0,\infty)}$ the smallest strictly full subcategory of $\mathcal{T}$ containing $X$ and closed under extensions, direct summands, and negative suspensions. 

This first lemma is trivial, and can be easily generalized, but it is all we will need to provide our first example. 

\begin{lemma}
    \label{lem:aisle_susp_coaisle_cosusp}
    Let $\mathcal{T}$ be a triangulated category with a fixed $t$-structure $(\mathcal{T}^{\leq 0}, \mathcal{T}^{\geq 0})$. Let $F$ be an exact (triangulated) endofunctor on $\mathcal{T}$.
    
    Suppose $X \in \mathcal{T}$.
        \begin{itemize}
            \item If $F(X) \in \mathcal{T}^{\leq i}$ for some $i$, then $F(Y) \in \mathcal{T}^{\leq i}$ for every $Y \in \langle X \rangle^{(-\infty,0]}$,
            \item If $F(X) \in \mathcal{T}^{\geq i}$ for some $i$, then $F(Y) \in \mathcal{T}^{\geq i}$ for every $Y \in \langle X \rangle^{[0,\infty)}$.
        \end{itemize}
\end{lemma}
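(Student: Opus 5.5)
The argument is a standard closure (dévissage) argument, and the two bullets are formally dual, so I would write out the first carefully and obtain the second by reversing shifts. For the first bullet, fix $i$ with $F(X)\in\mathcal{T}^{\leq i}$ and consider the full subcategory
\begin{displaymath}
    \mathcal{C}\colonequals\{\,Y\in\mathcal{T}\mid F(Y)\in\mathcal{T}^{\leq i}\,\}.
\end{displaymath}
By hypothesis $X\in\mathcal{C}$. Since $\langle X\rangle^{(-\infty,0]}$ is by definition the smallest strictly full subcategory containing $X$ and closed under nonnegative shifts, extensions and direct summands, it suffices to check that $\mathcal{C}$ has these closure properties. Then $\langle X\rangle^{(-\infty,0]}\subseteq\mathcal{C}$, which is exactly the claim.

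The checks only use that $F$ is exact and that $\mathcal{T}^{\leq i}=\mathcal{T}^{\leq 0}[-i]$ is an aisle. Strict fullness holds because $F$ preserves isomorphisms and aisles are strictly full. For positive shifts, $F(Y[1])\cong F(Y)[1]\in\mathcal{T}^{\leq i}[1]\subseteq\mathcal{T}^{\leq i}$. For extensions, a triangle $Y'\to Y\to Y''\to Y'[1]$ with $Y',Y''\in\mathcal{C}$ is sent by $F$ to a triangle, so $F(Y)$ is an extension of objects of $\mathcal{T}^{\leq i}$ and lies in it. For direct summands, $F$ is additive, so if $Y=Y_1\oplus Y_2\in\mathcal{C}$ then $F(Y_1)$ is a summand of $F(Y)$. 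An aisle is closed under direct summands, since $\mathcal{T}^{\leq i}={}^{\perp}(\mathcal{T}^{\geq i+1})$ is a left orthogonal class; hence $F(Y_1)\in\mathcal{T}^{\leq i}$.

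The second bullet follows by the same argument with $\mathcal{C}'\colonequals\{Y\mid F(Y)\in\mathcal{T}^{\geq i}\}$. Here one uses that the coaisle $\mathcal{T}^{\geq i}=(\mathcal{T}^{\leq i-1})^{\perp}$ is closed under negative shifts, extensions and direct summands, being a right orthogonal class. Nothing here is a genuine obstacle. The only point needing a word of justification is closure of (co)aisles under summands, which follows from the orthogonality description above.
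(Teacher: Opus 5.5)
Your proof is correct and is essentially the paper's argument. The paper first notes that exactness of $F$ gives $F(Y)\in\langle F(X)\rangle^{(-\infty,0]}$, and then uses that $\mathcal{T}^{\leq i}$ (resp.\ $\mathcal{T}^{\geq i}$) is closed under the relevant operations; your d\'{e}vissage, pulling the (co)aisle back along $F$, is the same reasoning, and your orthogonality justification of closure under summands fills in what the paper leaves implicit.
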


\begin{proof}
    If $Y \in \langle X \rangle^{(-\infty,0]}$, then as $F$ is exact, $F(Y)\in \langle F(X) \rangle^{(-\infty,0]}$. As aisles are closed under nonnegative shifts, extensions, finite coproducts and direct summands, the conclusion follows. 
    The second point holds with an identical proof. 
\end{proof}

The following corollary is explicitly used to construct the desired counterexample. 

\begin{corollary}
    \label{cor:t-proxy-small-ring-homomorphism}
    Let $R\to S$ be a homomorphism of rings. Suppose $\mathbf{R} \operatorname{Hom}_{R}(X,Y) \in D^{\leq i}_{\operatorname{qc}}(R)$ for an $R$-module $X$, an $S$-module $Y$ and some integer $i$. If $Z_S \in \langle Y_S \rangle^{(-\infty,0]}$, where the suspension closure is considered in $D_{\operatorname{qc}}(S)$, then $\mathbf{R} \operatorname{Hom}_{R}(X,Z) \in D^{\leq i}_{\operatorname{qc}}(R)$.
\end{corollary}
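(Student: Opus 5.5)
The plan is to apply \Cref{lem:aisle_susp_coaisle_cosusp} with $\mathcal{T}=D_{\operatorname{qc}}(S)$ as the source of building operations and to take $F$ to be the composite of restriction of scalars with $\mathbf{R}\operatorname{Hom}_R(X,-)$. The lemma as stated concerns an endofunctor, but its proof only uses that $F$ is exact and that the target aisle is closed under the preaisle operations. So I will use it in the slightly generalized form where $F\colon D_{\operatorname{qc}}(S)\to D_{\operatorname{qc}}(R)$ is exact and the target carries the standard $t$-structure. (Alternatively, one can simply rerun its one-line proof in this setting.)

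\textbf{Step 1: set up the functor.} Let $\rho\colon D_{\operatorname{qc}}(S)\to D_{\operatorname{qc}}(R)$ denote restriction of scalars along $R\to S$. It is exact, and it sends the $S$-modules $Y$ and $Z$ to the corresponding $R$-modules, which is how the statement's $\mathbf{R}\operatorname{Hom}_R(X,Y)$ and $\mathbf{R}\operatorname{Hom}_R(X,Z)$ are to be read. Set
\begin{displaymath}
    F\colonequals \mathbf{R}\operatorname{Hom}_R(X,-)\circ\rho\colon D_{\operatorname{qc}}(S)\to D_{\operatorname{qc}}(R).
\end{displaymath}
This is a composite of exact functors, hence exact. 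By hypothesis $F(Y)\in D^{\leq i}_{\operatorname{qc}}(R)$.

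\textbf{Step 2: closure properties.} Let $\mathcal{C}=\{W\in D_{\operatorname{qc}}(S)\mid F(W)\in D^{\leq i}_{\operatorname{qc}}(R)\}$. The aisle $D^{\leq i}_{\operatorname{qc}}(R)$ is closed under nonnegative shifts, extensions and direct summands, and $F$ is exact and additive. It follows that $\mathcal{C}$ is closed under nonnegative shifts, under extensions (apply $F$ to a triangle), and under direct summands (since $F$ preserves direct sum decompositions). Therefore $\mathcal{C}$ is a suspended subcategory containing $Y$, so $\langle Y\rangle^{(-\infty,0]}\subseteq\mathcal{C}$. Because $Z\in\langle Y\rangle^{(-\infty,0]}$, we get $\mathbf{R}\operatorname{Hom}_R(X,Z)=F(Z)\in D^{\leq i}_{\operatorname{qc}}(R)$.

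The only point requiring care is that \Cref{lem:aisle_susp_coaisle_cosusp} is stated for endofunctors, whereas here the functor changes rings. This is not a genuine obstacle, since the closure argument in Step 2 is independent of source and target coinciding. One should also check that restriction of scalars is compatible with the identification of $Y$ and $Z$ as $R$-complexes, which holds because $\rho$ is exact and preserves the underlying complexes.
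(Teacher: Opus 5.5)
Your proof is correct and takes essentially the paper's approach. The paper first transports $Z\in\langle Y\rangle^{(-\infty,0]}$ from $D_{\operatorname{qc}}(S)$ to $D_{\operatorname{qc}}(R)$ via restriction of scalars (citing Dwyer--Greenlees--Iyengar) and then applies \Cref{lem:aisle_susp_coaisle_cosusp} to the endofunctor $\mathbf{R}\operatorname{Hom}_R(X,-)$; you instead fold both steps into the single exact functor $\mathbf{R}\operatorname{Hom}_R(X,-)\circ\rho$ and rerun the lemma's closure argument directly, which is the same argument.
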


\begin{proof}
    First, note that if $Z_S \in \langle Y_S \rangle^{(-\infty,0]}$, where the suspension closure is considered in $D_{\operatorname{qc}}(S)$, then the same statement holds when $Z$ and $Y$ are seen as complexes of $R$-modules. See also \cite[Theorem 3.13]{Dwyer/Greenlees/Iyengar:2006a}. The rest of the statement follows from \Cref{lem:aisle_susp_coaisle_cosusp}.
\end{proof}

\begin{remark}
    \Cref{lem:aisle_susp_coaisle_cosusp} and \Cref{cor:t-proxy-small-ring-homomorphism} can be seen as the suspension closure or suspension closure analogue to work in \cite{Dwyer/Greenlees/Iyengar:2006a}. In particular, finite homological dimensions are used in \cite[Section 5]{Dwyer/Greenlees/Iyengar:2006a} to give necessary conditions for an object to be virtually, or proxy small, guided in particular by Principle (5.2) in loc.\ cit. In our case, we need an invariant which is sensitive to either nonpositive or nonnegative shifts, thus inclusion in aisles or coaisles of $t$-structures accounts for this nonsymmetry. 
\end{remark} 

\begin{example}
    \label{ex:proxy-small-not-t-proxy-small}
    In the following, we construct a proxy small object in $D_{\operatorname{coh}}^b(R)$ which is not $t$-proxy small. The trivial ring extension construction is inspired by \cite[Theorem 5.5]{Dwyer/Greenlees/Iyengar:2006a} which was used to find an object which is not proxy small, see in particular \cite[Example 5.7]{Dwyer/Greenlees/Iyengar:2006a}. We first describe the module-theoretic setup we require.

    Let $R$ be a commutative Noetherian ring such that there exist an $R$-module $X$ and a finitely generated module $Y$ such that $\operatorname{Ext}^i_R(X, R)=0$ for all $i>0$, $\operatorname{Ext}^1_R(X,Y)\neq 0$ but $\operatorname{Ext}^i_R(X,Y)=0$ for all $i>1$. Such a collection of modules can be found over a local Artinian Gorenstein ring in \cite[Corollary 3.3(1)]{Jorgensen/Sega:2004}. More explicitly, using the notation in loc. cit., we let $q=1$, $R=A$, $X = M$, and $Y= T_q=T_1$. That $\operatorname{Ext}^i_R(X, R)=0$ for all $i>0$ follows as $R$ is Gorenstein of dimension zero.

    Let $R \to R \ltimes Y$ be the trivial ring extension. Then $(R \ltimes Y)[1] \oplus R$ is proxy small in $D_{\operatorname{qc}} (R\ltimes Y)$ as $(R \ltimes Y)[1]$ is compact in $D_{\operatorname{qc}} (R\ltimes Y)$, and the associated primes of the $R \ltimes Y$-modules $R \ltimes Y$ and $R$ coincide. We now claim that $(R \ltimes Y)[1] \oplus R$ is not $t$-proxy small, which amounts to showing that $R \ltimes Y \notin \langle (R \ltimes Y)[1] \oplus R \rangle^{(-\infty,0]}$. 

    By the setup, $\mathbf{R} \operatorname{Hom}_{R}(X,R \oplus (R \oplus Y)[1]))\in D^{\leq 0}_{\operatorname{qc}}(R)$. However, $H^1(\mathbf{R} \operatorname{Hom}_{R}(X,R\oplus Y )) \cong \operatorname{Ext}^1_R(X,R\oplus Y ) \neq 0$, so by \Cref{cor:t-proxy-small-ring-homomorphism},  $R \ltimes Y \notin \langle (R \ltimes Y)[1] \oplus R \rangle^{(-\infty,0]}$ as desired.
\end{example}

\begin{remark}
    In the previous example, we constructed a proxy small object which is not $t$-proxy small in the derived category of a local Artinian ring which is \emph{not} Gorenstein. To see this, note that if $R$ is zero dimensional Gorenstein, then if the trivial ring extension $R\ltimes Y$ is Gorenstein then $0=\operatorname{Ext}^1_{R\ltimes Y}(k, R\ltimes Y)$, which implies that $\operatorname{Ext}^1_R(k, Y)=0$ so necessarily $Y$ is  injective in $\operatorname{Mod}(R)$, which is not the case.  

    Given the previous argument, it seems to follow that using the trivial ring extension as we used in the example will not provide an example of a proxy small but not $t$-proxy small objective in the bounded derived category of a Gorenstein ring. 
\end{remark}

The ring used to produce \Cref{ex:proxy-small-not-t-proxy-small} has an interesting history, most significantly to demonstrate unexpected behavior of modules over Artinian Gorenstein rings outside of the complete intersection case. 

\begin{remark}
    The local Artinian Gorenstein ring in \Cref{ex:proxy-small-not-t-proxy-small} was used by Gasharov and Peeva in their work \cite{Gasharov/Peeva:1990} in connection with the following. Eisenbud conjectured that over a commutative local Noetherian ring, any finitely generated module with bounded Betti numbers has an eventually periodic minimal projective resolution of period two, which was shown to hold over local complete intersections. However, in the aforementioned work of Gasharov and Peeva, a counterexample is constructed, and they give an example of a non-periodic totally acyclic complex. 

    Later, Jorgensen and \c Sega in \cite{Jorgensen/Sega:2004} use the same ring to provide a counterexample to a conjecture of Auslander on the vanishing of $\operatorname{Ext}^i$ over an Artinian ring, namely, that if for a finitely generated $R$-module $M$, there exists an integer $n_M$ such that if $\operatorname{Ext}^i_R(M,N)=0$ for a finitely generated $R$-module $N$ and $i\gg0$, then $\operatorname{Ext}^i_R(M,N)=0$ for all $i\geq n_M$. This conjecture was already known to hold over local complete intersections, and a counterexample was known to necessarily require an object of infinite complete intersection dimension. In fact, if an object of $D_{\operatorname{coh}}^b(R)$ has finite complete intersection dimension, then it is proxy small, see \cite[Proposition 5.7]{Letz:2021}, \cite[Corollary 3.3]{Bergh:2009}. In fact, it follows that complexes of finite complete intersection dimension are $t$-proxy small, also by the proof of Bergh in loc. cit.. 
\end{remark}

Interestingly, it is more straightforward to construct a counterexample to the dual problem of the existence of a proxy small object in $D_{\operatorname{coh}}^b(R)$ which does not contain all the compacts with the same support in each cohomological degree in its cosuspension closure. 

\begin{example}
    \label{ex:proxy-small-not-co-t-proxy-small}
    In the following, we construct an example of a proxy small complex $P \in D_{\operatorname{coh}}^b(R)$ which has nonzero cohomology in cohomological degree zero but with no compact object with nonzero cohomology in degree zero in its cosuspension closure $\langle P \rangle^{[0,\infty)}$. 

    Let $(R, \mathfrak{m}, k)$ be a Noetherian local ring with $M$ a finitely generated $R$-module of projective dimension exactly one. Let $R \to R \ltimes M$ be the trivial ring extension. Then $P = (R \ltimes M)[-1] \oplus R$ is proxy small in $D_{\operatorname{qc}} (R\ltimes M)$ as $(R \ltimes M)[-1]$ is, and the associated primes of the $R \ltimes M$-modules $R \ltimes M$ and $R$ coincide. We now claim that $R \ltimes M \notin \langle (R \ltimes M)[-1] \oplus R \rangle^{[0,-\infty)}$ inside $D_{\operatorname{qc}} (R\ltimes M)$. 

    As seen in \cite[3.13]{Dwyer/Greenlees/Iyengar:2006a}, if $R \ltimes M \in \langle (R \ltimes M)[-1] \oplus R \rangle^{[0,-\infty)}$ in $D_{\operatorname{qc}} (R\ltimes M)$, then the same also holds in $D_{\operatorname{qc}} (R)$, by considering the the underlying $R$-module structure. In $D_{\operatorname{qc}} (R)$, the formulation simplifies to $R \oplus M \in \langle \Sigma^{-1}R \oplus \Sigma^{-1}M \oplus R \rangle^{[0,-\infty)}$. It follows that $(R[-1] \oplus M[-1] \oplus R)\otimes^\mathbf{L}_R k$ has vanishing cohomology in nonpositive degrees, however, $M\otimes^\mathbf{L}_R k$ has non-vanishing cohomology in cohomological degree $-1$, a contradiction
    by \Cref{lem:aisle_susp_coaisle_cosusp}. Note that over a local ring, if $X$ is a finitely generated module considered as a complex in cohomological degree zero, $\inf(X\otimes^\mathbf{L}_R k)$ is exactly the flat dimension of $X$.
\end{example}

\begin{remark}
    There are various aspects of the module set up in \Cref{ex:proxy-small-not-t-proxy-small} which make it a more delicate problem than \Cref{ex:proxy-small-not-co-t-proxy-small}.

    For instance, the dual setup to \Cref{ex:proxy-small-not-co-t-proxy-small} requires finding a module that is left $\operatorname{Ext}$-orthogonal to $R$, while every module is right $\operatorname{Ext}$-orthogonal to $R$, so  we have this for free in \Cref{ex:proxy-small-not-co-t-proxy-small}. Moreover, there are no finitely generated injective modules unless the ring is Artinian Gorenstein, and outside the Artinian case, being finitely generated is not inherited by cosyzygies. 
    Thus we are restricted as we require that $Y$ is finitely generated in order for the trivial ring extension to be Noetherian.

    However, to find an example as in \Cref{ex:proxy-small-not-co-t-proxy-small} in the local Artinian case, using the trivial ring extension and module set up of \cite[Corollary 3.3(1)]{Jorgensen/Sega:2004} and a dual version of \Cref{cor:t-proxy-small-ring-homomorphism} should suffice.
\end{remark}

\begin{example}
    \label{ex:dualizing_module}
    Let $R$ be a commutative Noetherian ring with a dualizing module $\omega$. Then $\omega$ is proxy-small if, and only if, $R$ is Gorenstein. Indeed, if $R$ is Gorenstein then $\omega \cong R$ is compact. On the other hand, if $\omega$ is proxy-small then $R \in \operatorname{thick}(\omega)$ as $\omega$ has full support. Since $\omega$ is of finite injective dimension, this forces $R$ to be of finite injective dimension, and thus $R$ to be Gorenstein. In fact, the same argument shows that $R$ is Gorenstein if and only if $\omega$ is $t$-proxy-small.

    Outisde of the Gorenstein case, it is not clear whether $\omega$ is a direct summand in a $t$-proxy small object in a nontrivial way, that is, we consider the following. 

    Set $P = \omega \oplus R[1]$. Then $P$ is proxy-small and $\overline{\langle P \rangle}^{(-\infty,0]} = D^{\leq 0} = \overline{\langle R \rangle}^{(-\infty,0]}$, showing that $P$ satisfies condition \eqref{thm:characterize_left_proxy_small1} of \Cref{thm:characterize_left_proxy_small}. Is there $R$ such that $P$ is not $t$-proxy-small?

    If $(R,\mathfrak{m},k)$ is a local Artinian ring then the role of $\omega$ is played by the injective envelope $E(k)$ of $k$. Let $R$ be such that $\mathfrak{m}^2 = 0$ but non-Gorenstein. Consider the short exact sequence $0 \to R \to E(R) \to C \to 0$, where $0 \to R \to E(R)$ is the essential embedding of $R$ to its injective envelope. By the Matlis' structure theory of injectives, we have that $E(R) \cong \omega^n$ for some $n>0$. Since $R \to E(R)$ is essential and $\mathfrak{m}^2 = 0$, we also have $C \cong k^m$ for some $m > 0$. The induced rotated triangle $E(R) \to C \to R[1] \to$ shows that $C \in \langle P \rangle^{(-\infty,0]}$, which in turn yields $k \in \langle P \rangle^{(-\infty,0]}$. Since $R$ is of finite length, we conclude that $R \in \langle P \rangle^{(-\infty,0]}$ and $P$ is $t$-proxy-small.
\end{example}

In the rest of this section, we provide some examples outside of $D^b_{\operatorname{coh}}(R)$. 

\begin{example}
   \label{ex:tilting_proxy_small}
    An object $T \in K(\mathcal{A})$ is called \textbf{silting} provided that $\overline{\langle T \rangle}^{(-\infty,0]}$ coincides with the orthogonal subcategory
    \begin{displaymath}
        T^{\perp_{>0}} = \{X \in K(\mathcal{A}) \mid \operatorname{Hom}_{K(\mathcal{A})}(T,X[i]) = 0 ~\forall i>0\}.
    \end{displaymath}
    As a consequence of the definition, $T$ is a generator of $K(\mathcal{A})$ such that $T \in T^{\perp_{>0}}$, for more details see e.g. \cite[\S 4.2]{Angeleri:2019}. Although $T$ may fail to be proxy-small in general, it is always \textbf{equivalent} to a proxy-small one, in the sense that there is a set $\kappa$ such that the coproduct $T^{(\kappa)}$ of $\kappa$ copies of $T$ is proxy-small, see \cite[Lemma 4.2]{Hrbek:2024}. As $T \in T^{\perp_{>0}}$, the endomorphism dg-algebra $E$ of $T$ is connective. This implies that the standard aisle $\overline{\langle E \rangle}^{(-\infty,0]}$ coincides with $E^{\perp_{>0}}$, in other words, $E$ itself is a silting object of $D(E)$. The adjunction isomorphism 
    \begin{displaymath}
        \operatorname{Hom}_{D(E)}(E[i],\mathbf{R}\operatorname{Hom}_{K(\mathcal{A})}(T,-)) \cong \operatorname{Hom}_{K(\mathcal{A})}(T[i],-)
    \end{displaymath}
    shows that $\mathbf{R}\operatorname{Hom}_{K(\mathcal{A})}(T,-): K(\mathcal{A}) \to D(E)$ is $t$-exact, implying that the condition \eqref{thm:characterize_left_proxy_small3} of \Cref{thm:characterize_left_proxy_small} holds. Indeed, this follows from the characterization of gluing of \Cref{lem:glueing_recollement} and the discussion which precedes it---the functor $Q = (- \otimes_E T)$ is left $t$-exact, so recalling that $Q_\rho = \mathbf{R}\operatorname{Hom}_{K(\mathcal{A})}(T,-)$ we see that $Q_\rho Q$ is left $t$-exact, as required.
    
    There is however a supply of non-compact silting objects $T$ for which the condition \eqref{thm:characterize_left_proxy_small1} of \Cref{thm:characterize_left_proxy_small} fails. Indeed, if the silting $t$-structure $(T^{\perp_{>0}},T^{\perp_{\leq 0}})$ is compactly generated, its heart is a Grothendieck category \cite[Theorem C]{Saorin/Stovicek/Virili:2023} and has a projective generator \cite[Proposition 4.3]{Psaroudakis/Vitoria/2018}. Then \cite[Theorem 3.7]{Angeleri/Marks/Vitoria:2017} shows that $T$ is a pure-projective object of $K(\mathcal{A})$. Pure-projective silting objects which are not equivalent to a compact one exist, but are difficult to construct \cite{Bazzoni/Herzog/Prihoda/Saroch/Trlifaj:2020}. In particular, let $K(\mathcal{A}) = D_{\operatorname{qc}}(R)$ for a commutative ring $R$ and assume that $T$ is a tilting object represented by an $R$-module of projective dimension at most one. Then \cite[Theorem 3.7]{Bazzoni/Herzog/Prihoda/Saroch/Trlifaj:2020} asserts that $T$ is pure-projective if, and only if, it is projective, and then it is equivalent to a compact silting object. As an explicit example of a proxy-small silting object failing the condition \eqref{thm:characterize_left_proxy_small1}, we can consider $R = \mathbb{Z}$ and $T = \mathbb{Q} \oplus \mathbb{Q}/\mathbb{Z}$, or more generally see \cite[Example 8.4]{Positselski/Stovicek:2021}.
\end{example}   

\begin{example}
    \label{ex:proxy_small_semiArtinian}
    Let $R$ be a local semi-Artinian ring which is not Artinian. This is to say, $R$ is a commutative ring admitting a unique simple module $k$ up to isomorphism such that $R$ has composition series, but its Loewy length is infinite. For example, the following ring fits the description:
    \begin{displaymath}
        R = \left\{ \begin{pmatrix} a & b \\ 0 & a \end{pmatrix} \mid a \in \mathbb{Q}, b \in \mathbf{R}\right\}.
    \end{displaymath}
    Set $P \colonequals k \oplus R[1]$. Clearly, $P$ is proxy-small. Since $R$ is semi-Artinian, any $R$-module admits a presentation as a transfinite extension of copies of the simple module $k$. It follows that 
    \begin{displaymath}
        \overline{\langle P \rangle}^{(-\infty,0]} = D^{\leq 0}_{\operatorname{qc}}(R) = \overline{\langle R \rangle}^{(-\infty,0]}.
    \end{displaymath} 
    Hence, \eqref{thm:characterize_left_proxy_small1} of \Cref{thm:characterize_left_proxy_small} holds. However, we claim that $P$ is not $t$-proxy-small. To this aim, it is enough to show that $R \not\in \langle P \rangle^{(-\infty,0]}$. Let 
    \begin{displaymath}
        \mathcal{S} = \{X \in D^{\leq 0}_{\operatorname{qc}} (R) \mid \mathcal{H}^0(X) \text{ is of finite length}\}.
    \end{displaymath} 
    Then $\mathcal{S}$ is closed under suspension and direct summands. Also, a straightforward long exact sequence argument shows that $\mathcal{S}$ is closed under extensions. Clearly, $P \in \mathcal{S}$. So, with the previous observation, we have $\langle P \rangle^{(-\infty,0]} \subseteq \mathcal{S}$. Yet, $R$ is not of finite length, which implies $R \not\in \mathcal{S}$, proving the claim. Consequently, $P$ satisfies conditions \eqref{thm:characterize_left_proxy_small1} and \eqref{thm:characterize_left_proxy_small2} of \Cref{thm:characterize_left_proxy_small}, but fails \eqref{thm:characterize_left_proxy_small3}.
\end{example}

\section{\texorpdfstring{$t$}{}-proxy small \texorpdfstring{$\otimes$}{}-preaisles}
\label{sec:left_tensor_proxy_smallness}

This section extends the notion of $t$-proxy smallness in the presence of a tensor action. We introduce the desired extension of $t$-proxy smallness and identify a classification of associated aisles on $D_{\operatorname{qc}}(X)$ for a Noetherian scheme $X$. To do so, we compare them to subcategories of the singularity category $D_{\operatorname{sg}}(X)$ and Thomason filtrations on $X$. Here, we leverage \Cref{sec:classify_pseudocoherent_aisles} on aisles generated by coherent complexes on $D_{\operatorname{qc}}(X)$. Particularly, \Cref{thm:injective_mapping_for_t_proxy_tensor_aisles_scheme} establishes an embedding of such aisles, whose image is identified in \Cref{prop:image_assignment_tstr_general}.

\begin{definition}
        \label{def:t_proxy_tensor_small}
	In the presence of a tensor action $\odot\colon
	\mathcal{T}\times\mathcal{K}\to \mathcal{K}$ as in \Cref{sec:prelim_tensor}, we say that a subcategory
	$\mathcal{S}\subseteq\mathcal{K}$ is \textbf{$t$-$\odot$-proxy small} if the
	$\odot$-preaisle $\langle \mathcal{S}\rangle_\odot^{(-\infty,0]}$ is $t$-proxy
	small, or equivalently (by \Cref{lem:generated_odot_aisle}), if
	$\mathcal{P}^{\leq 0}\odot\mathcal{S}$ is $t$-proxy small. As per usual, an object $P \in \mathcal{K}$ is \textbf{$t$-$\odot$-proxy small} if the singleton subcategory $\{P\}$ is such.
\end{definition}

\begin{convention}
    We remind the reader of the notation in \Cref{ex:actions}. Let $\star$, $\odot$, and $\otimes$ denote the tensor actions of $\operatorname{Perf}(X)$ respectively on $D_{\operatorname{qc}}(X)$, $K(\operatorname{Inj}(X))$, and $S_{\operatorname{qc}}(X)$. In particular, we recall that a suspended subcategory of $D_{\operatorname{qc}}(X)$, (resp., $K(\operatorname{Inj}(X))$ or $S_{\operatorname{qc}}(X)$) is $\otimes$-suspended (resp., $\odot$-suspended or $\star$-suspended) if it is closed under the action of $\operatorname{Perf}^{\leq 0}(X)$. In what follows, \Cref{def:t_proxy_tensor_small} will be considered solely for objects belonging to $D^b_{\operatorname{coh}}(X)$, providing us with the notion of $t$-$\otimes$-proxy small objects. Finally, we leave as an exercise to the reader to check that a $\otimes$-preaisle $\mathcal{A}$ of $D^b_{\operatorname{coh}}(X)$ is $t$-proxy small if and only if there is a set $\mathcal{S} \subseteq D^b_{\operatorname{coh}}(X)$ of $t$-$\otimes$-proxy small objects such that $\mathcal{A} = \langle \mathcal{S} \rangle_{\otimes}^{(-\infty,0]}$.
\end{convention}

\begin{lemma}
    \label{lem:Krause_recollement_Q_compatible}
    Let $X$ be a Noetherian scheme. Then $Q$ is compatible with the actions $\odot$ and $\otimes$. That is, for any $A\in D_{\operatorname{qc}}(X)$ and $E\in K(\operatorname{Inj}(X))$, one has $Q(A\odot E)\cong A \otimes Q(E)$. 
\end{lemma}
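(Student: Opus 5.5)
We reduce to the dense subcategory of compact objects, where the action is explicit, and extend using that both sides preserve coproducts.

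Recall from \Cref{ex:actions} that $\odot$ is the ind-completion of $\otimes\colon \operatorname{Perf}(X)\times D^b_{\operatorname{coh}}(X)\to D^b_{\operatorname{coh}}(X)$, transported along the identification $Q_\rho\colon D^b_{\operatorname{coh}}(X)\xrightarrow{\sim} K^c(\operatorname{Inj}(X))$. Concretely, for $P\in\operatorname{Perf}(X)$ and $F\in D^b_{\operatorname{coh}}(X)$ we have $P\odot Q_\rho F\cong Q_\rho(P\otimes F)$. By \Cref{sec:prelim_recollement}, $Q_\rho$ is fully faithful, so $QQ_\rho\cong\operatorname{id}$. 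It follows that
\[
Q(P\odot Q_\rho F)\cong QQ_\rho(P\otimes F)\cong P\otimes F\cong P\otimes Q(Q_\rho F).
\]
This gives the claim for $A$ perfect and $E$ compact. We also need to check that this isomorphism is natural in both variables and compatible with the triangulated structure. We would verify this on the level of complexes, using the description in \cite[\S 1.8]{BILMP:2023}. There, $A\odot E$ is computed as an injective model of $\widetilde{A}\otimes E$ with $\widetilde{A}$ a $K$-flat resolution of $A$. Tensoring with a $K$-flat complex preserves quasi-isomorphisms, so $Q$ of this object is $A\otimes^{\mathbf{L}} Q(E)$. In fact, this concrete description might give the statement for all $A$ and $E$ at once. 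If the reference only provides the ind-extended description, we proceed by dévissage as follows.

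First, fix $P\in\operatorname{Perf}(X)$. Both $E\mapsto Q(P\odot E)$ and $E\mapsto P\otimes Q(E)$ are exact and preserve coproducts: $Q$ is a left adjoint, and $\odot$ and $\otimes$ preserve coproducts in each variable. They agree naturally on $K^c(\operatorname{Inj}(X))$. The subcategory of $E$ on which the natural comparison map is an isomorphism is localizing, and $K(\operatorname{Inj}(X))$ is compactly generated, so this subcategory is everything. Second, fix an arbitrary $E$ and let $A$ vary. The subcategory of $A\in D_{\operatorname{qc}}(X)$ on which the comparison is an isomorphism is again localizing and contains $\operatorname{Perf}(X)$. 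Since $D_{\operatorname{qc}}(X)=\overline{\langle\operatorname{Perf}(X)\rangle}$, the claim holds for all $A$.

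The main obstacle is producing a comparison morphism $Q(A\odot E)\to A\otimes Q(E)$ that is natural in both variables and exact, defined globally and not merely objectwise on compacts. Without such a morphism the localizing-subcategory argument cannot be run, since a "subcategory where two functors happen to be isomorphic" need not be triangulated. We would obtain the map from the construction of the action: $\odot$ is given by the tensor product of complexes, and $Q$ is the composite $K(\operatorname{Inj}(X))\to K(\operatorname{Qcoh}(X))\to D_{\operatorname{qc}}(X)$. The canonical map from the derived tensor product to the object computed by the underived tensor product of complexes then gives the comparison, and this map is natural.
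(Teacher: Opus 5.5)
Your main argument is the paper's proof. The paper cites Stevenson's affine argument and the fact that $D_{\operatorname{qc}}(X)$ acts on $K(\operatorname{Inj}(X))$ through $K$-flat resolutions: $A\odot E$ is modelled on $\widetilde{A}\otimes E$, and since $\widetilde{A}$ is $K$-flat, applying $Q$ gives $A\otimes^{\mathbf{L}} Q(E)$. Your d\'{e}vissage fallback is unnecessary, since the natural comparison morphism it needs can only come from that same concrete model, which already gives the isomorphism directly.
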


\begin{proof}
    The first claim follows the same argument as in the affine case of \cite[Proposition 5.3]{Stevenson:2014b}. In fact, it holds for our setting because $D_{\operatorname{qc}}(X)$ acts on $K(\operatorname{Inj}(X))$ via $K$-flat resolutions (see e.g.\ Remark 3.7 of loc.\ cit.). 
\end{proof}

\begin{theorem}
    \label{thm:injective_mapping_for_t_proxy_tensor_aisles_scheme}
	Let $X$ be a Noetherian scheme. Then:
	\begin{enumerate}
		\item
			Let $\mathcal{S}\subseteq K^c(\operatorname{Inj}X)$ be a class of compact
			objects of $K(\operatorname{Inj}X)$. If
			$Q(\mathcal{S})\subseteq D^b_{\operatorname{coh}}(X)$ is $t$-proxy-small,
			then the $t$-structure generated by $\mathcal{S}$ restricts along Krause's
			recollement.
		\item The assignment
			$\mathcal{A}\mapsto (I_\lambda(\mathcal{A}),Q(\mathcal{A}))$
			yields an injective mapping:
			\begin{multline*}
				\{\,\text{$\odot$-aisles generated by }\mathcal{S}\subseteq K^c(\operatorname{Inj}X)\text{ with
				}Q(\mathcal{S})\subseteq D^b_{\operatorname{coh}}(X)\text{
					$t$-proxy-small}\,\} \to \\
				\{\,\text{compactly generated $\star$-aisles of }S_{\operatorname{qc}}(X)\,\}\times
				\{\,\text{compactly generated $\otimes$-aisles of }D_{\operatorname{qc}}(X)\,\}
			\end{multline*}
		\item Intersecting the aisles in (2) with the corresponding categories of
			compacts and using the equivalence $Q\colon
			K^c(\operatorname{Inj}(X))\simeq D^b_{\operatorname{coh}}(X):Q_\rho$, we obtain an injective mapping:
			\begin{multline*}
				\{\,\text{$t$-proxy small $\otimes$-suspended subcategories
				of } D^b_{\operatorname{coh}}(X)\,\} \to \\
				\{\,\text{$\star$-suspended subcategories of }D_{\operatorname{sg}}(X)\,\}\times
				\{\,\text{$\otimes$-suspended subcategories of }\operatorname{Perf}(X)\,\}
			\end{multline*}
			In particular, the second component of this last cartesian product is
			parametrised by Thomason filtrations on $X$.
	\end{enumerate}
\end{theorem}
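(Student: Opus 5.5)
The plan is to prove (1) with the gluing criterion of \Cref{lem:glueing_recollement}, then derive (2) and (3) from it formally. Write $\mathcal{A}\colonequals\overline{\langle \mathcal{S}\rangle}^{(-\infty,0]}_\odot$. By \Cref{lem:generated_odot_aisle} and \Cref{cor:compactly_generated_odot_aisle}, $\mathcal{A}=\overline{\langle \operatorname{Perf}^{\leq 0}(X)\odot\mathcal{S}\rangle}^{(-\infty,0]}$, and this aisle is compactly generated. I read ``restricts along Krause's recollement'' as condition (1) of \Cref{lem:glueing_recollement}. So it suffices to verify condition (3) there: $Q_\lambda Q(\mathcal{A})\subseteq\mathcal{A}$.

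First I identify $Q(\mathcal{A})$. The functor $Q$ is exact and preserves coproducts, and by \Cref{lem:Krause_recollement_Q_compatible} it intertwines $\odot$ with $\otimes$. A dévissage therefore gives
\begin{displaymath}
    Q(\mathcal{A})\subseteq\overline{\langle Q(\mathcal{S})\rangle}^{(-\infty,0]}_\otimes=\overline{\langle \operatorname{Perf}^{\leq 0}(X)\otimes Q(\mathcal{S})\rangle}^{(-\infty,0]}.
\end{displaymath}
I take the $t$-proxy smallness hypothesis in its $\otimes$-form (\Cref{def:t_proxy_tensor_small}). It says that this aisle equals $\overline{\langle \mathcal{C}\rangle}^{(-\infty,0]}$, where
\begin{displaymath}
    \mathcal{C}\colonequals\langle \operatorname{Perf}^{\leq 0}(X)\otimes Q(\mathcal{S})\rangle^{(-\infty,0]}\cap\operatorname{Perf}(X).
\end{displaymath}
Since $Q_\lambda$ is exact and preserves coproducts, and $\mathcal{A}$ is a cocomplete preaisle, a second dévissage reduces the claim to $Q_\lambda(C)\in\mathcal{A}$ for every $C\in\mathcal{C}$.

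This is the key step. By the recollement facts recalled above, $Q_\lambda$ preserves compacts, so $Q_\lambda(C)\in K^c(\operatorname{Inj}(X))$. Krause's identification says every compact is $Q_\rho$ of a unique object of $D^b_{\operatorname{coh}}(X)$, namely its image under $Q$. Hence
\begin{displaymath}
    Q_\lambda(C)\cong Q_\rho Q Q_\lambda(C)\cong Q_\rho(C).
\end{displaymath}
The equivalence $Q\colon K^c(\operatorname{Inj}(X))\simeq D^b_{\operatorname{coh}}(X)$ is exact, and it sends $\operatorname{Perf}^{\leq 0}(X)\odot\mathcal{S}$ to $\operatorname{Perf}^{\leq 0}(X)\otimes Q(\mathcal{S})$. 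Its inverse $Q_\rho$ therefore carries $\langle \operatorname{Perf}^{\leq 0}(X)\otimes Q(\mathcal{S})\rangle^{(-\infty,0]}$ into $\langle \operatorname{Perf}^{\leq 0}(X)\odot\mathcal{S}\rangle^{(-\infty,0]}\subseteq\mathcal{A}$. This gives $Q_\lambda(C)\in\mathcal{A}$ and proves (1). I expect this to be the main obstacle. The points to check carefully are that compacts of $K(\operatorname{Inj}(X))$ are determined by their image under $Q$, and that finite suspended closures transport along the equivalence of compacts, which is exactly where the finite part of $t$-proxy smallness is used.

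For (2), by (1) and \Cref{lem:glueing_recollement}, $\mathcal{A}$ is the gluing of $\overline{\langle I_\lambda(\mathcal{A})\rangle}^{(-\infty,0]}$ and $\overline{\langle Q(\mathcal{A})\rangle}^{(-\infty,0]}$, so $\mathcal{A}$ is determined by this pair; this gives injectivity. Both aisles are generated by the images of the compact generators, since $I_\lambda$ and $Q$ preserve coproducts and compacts; hence they are compactly generated. Tensor compatibility follows from \Cref{lem:Krause_recollement_Q_compatible} and the analogous compatibility of $I_\lambda$ with $\odot$ and $\star$, via \Cref{lemma:cocomplete-tensor-preaisle}. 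For (3), apply \Cref{prop:suspended_aisles_to_compactly_generated_aisles_a_la_neeman} on each side, using that $S^c_{\operatorname{qc}}(X)$ is the idempotent completion of $D_{\operatorname{sg}}(X)$ via $I_\lambda Q_\rho=\pi$, and that $D_{\operatorname{qc}}(X)^c=\operatorname{Perf}(X)$. The last sentence of (3) then follows from \Cref{thm:ALS_global_full_classification} together with the Thomason filtration classification.
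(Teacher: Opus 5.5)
Your proof of (1) is the paper's argument. You verify condition (3) of \Cref{lem:glueing_recollement} by dévissage along $Q$, replace the generators by their perfect proxies, use $Q_\lambda\cong Q_\rho$ on $\operatorname{Perf}(X)$, and pull the finite suspended closure back using $Q_\rho Q\cong\mathrm{id}$ on $K^c(\operatorname{Inj}(X))$. You run this for the $\odot$-aisle, i.e.\ for the generating set $\operatorname{Perf}^{\leq 0}(X)\odot\mathcal{S}$, whereas the paper runs it for $\overline{\langle\mathcal{S}\rangle}^{(-\infty,0]}$. That difference is harmless, and it is the version (2) actually needs. Plain $t$-proxy smallness of $Q(\mathcal{S})$ implies that of $\operatorname{Perf}^{\leq 0}(X)\otimes Q(\mathcal{S})$: tensor the perfect proxies with $\operatorname{Perf}^{\leq 0}(X)$. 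Your outline of (2) and (3) also follows the paper.

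There is one false step in (2). You deduce that the $D_{\operatorname{qc}}(X)$-component is compactly generated because ``$I_\lambda$ and $Q$ preserve coproducts and compacts''. $Q$ does not preserve compacts. Since $QQ_\rho\cong\mathrm{id}$, it sends $K^c(\operatorname{Inj}(X))=Q_\rho(D^b_{\operatorname{coh}}(X))$ onto $D^b_{\operatorname{coh}}(X)$, which is not contained in $\operatorname{Perf}(X)$ once $X$ has a singular point; for instance $Q(Q_\rho(k))=k$ over a singular local ring. So $\overline{\langle Q(\mathcal{A})\rangle}^{(-\infty,0]}$ is a priori generated only by the coherent objects $Q(\operatorname{Perf}^{\leq 0}(X)\odot\mathcal{S})$, and compact generation is exactly the nontrivial point.

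Your own part (1) repairs this. Your first dévissage gives $\overline{\langle Q(\mathcal{A})\rangle}^{(-\infty,0]}\subseteq\overline{\langle\mathcal{C}\rangle}^{(-\infty,0]}$. Your key step gives $Q_\rho(C)\in\mathcal{A}$, hence $C\cong QQ_\rho(C)\in Q(\mathcal{A})$, for every $C\in\mathcal{C}$. Therefore $\overline{\langle Q(\mathcal{A})\rangle}^{(-\infty,0]}=\overline{\langle\mathcal{C}\rangle}^{(-\infty,0]}$ with $\mathcal{C}\subseteq\operatorname{Perf}(X)$. Alternatively, invoke \Cref{prop:aisles_are_compactly_generated}, as the paper effectively does. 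The $S_{\operatorname{qc}}(X)$-component is fine as you argued: $I$ preserves coproducts, so $I_\lambda$ preserves compacts.
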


\begin{proof}
	(1) We first recall the following \emph{d\'{e}vissage} argument, which we are
	going to use a couple of times. If $F\colon \mathcal{K}\to \mathcal{T}$ is a
	exact functor preserving coproducts and
	$\mathcal{A}\subseteq\mathcal{T}$ is a cocomplete preaisle  in $\mathcal{T}$,
	then the subcategory $F^{-1}(\mathcal{A})\colonequals\{\,k\in\mathcal{K}\mid
	F(k)\in\mathcal{A}\,\}$ is easily seen to be closed under positive shifts,
	extensions and coproducts. It follows that it will contain the smallest
	cocomplete preaisle generated by any subcategory
	$\mathcal{S}\subseteq\mathcal{K}$ for which
	$F(\mathcal{S})\subseteq\mathcal{A}$. In particular, for any subcategory
	$\mathcal{S}\subseteq\mathcal{K}$, by choosing $\mathcal{A}\colonequals\overline{\langle
	F(\mathcal{S}) \rangle}^{(-\infty,0]}$ we obtain that
	$F\left(\overline{\langle \mathcal{S} \rangle}^{(-\infty,0]}\right)\subseteq
	\overline{\langle F(\mathcal{S}) \rangle}^{(-\infty,0]}$.

	Now, assume as in the statement that $\mathcal{S}\subseteq
	K^c(\operatorname{Inj}X)$ is such that $Q(\mathcal{S})$ is $t$-proxy small.
	Let $\mathcal{P}\colonequals\overline{\langle Q(\mathcal{S}) \rangle}^{(-\infty,0]}\cap\operatorname{Perf}(X)$.
	By definition, we are assuming that
	$\overline{\langle \mathcal{P} \rangle}^{(-\infty,0]}=
	\overline{\langle Q(\mathcal{S}) \rangle}^{(-\infty,0]}$ and that
	$\mathcal{P}\subseteq \langle Q(\mathcal{S}) \rangle^{(-\infty,0]}$.
	By the properties of Krause's recollement, moreover we have that $Q_\rho Q$ is the
	identity on $K^c(\operatorname{Inj}(X))$ and that $Q_\lambda\equiv Q_\rho$ on
	$\operatorname{Perf}(X)$. We remark that both the exact functors $Q$
	and $Q_\lambda$, being left adjoints, preserves coproducts, allowing us to
	argue by d\'{e}vissage as explained above. Now, we have that:
	\begin{multline*}
		Q_\lambda Q\left(\overline{\langle \mathcal{S} \rangle}^{(-\infty,0]}\right) \subseteq
		Q_\lambda\left(\overline{\langle Q(\mathcal{S}) \rangle}^{(-\infty,0]}\right) =
		Q_\lambda\left(\overline{\langle \mathcal{P} \rangle}^{(-\infty,0]}\right) \subseteq
		\overline{\langle Q_\lambda(\mathcal{P}) \rangle}^{(-\infty,0]} = \\
		= \overline{\langle Q_\rho(\mathcal{P}) \rangle}^{(-\infty,0]}
		\subseteq \overline{\langle Q_\rho\left(\langle Q(\mathcal{S})
		\rangle^{(-\infty,0]}\right) \rangle}^{(-\infty,0]} \subseteq
		\overline{\langle\langle Q_\rho Q(\mathcal{S})
		\rangle^{(-\infty,0]} \rangle}^{(-\infty,0]} =
		\overline{\langle \mathcal{S} \rangle}^{(-\infty,0]}.
	\end{multline*}
	By \Cref{lem:glueing_recollement}, we conclude that the $t$-structure of
	$K(\operatorname{Inj}(X))$ generated by $\mathcal{S}$ restricts along Krause's
	recollement.

	(2) By \Cref{lem:glueing_recollement}, the assignment in the statement is injective from
	the class of aisles of $K(\operatorname{Inj}(X))$ to the cartesian product of
	the classes of aisles
	of $S_{\operatorname{qc}}(X)$ and $D_{\operatorname{qc}}(X)$.
	We need to show that if we start with a compactly generated $\odot$-aisle
	$\mathcal{A}$ of $K(\operatorname{Inj}(X))$, both $I_\lambda(\mathcal{A})$ and
	$Q(\mathcal{A})$ are compactly generated tensor-aisles, with respect to
	$\star$ and $\otimes$ respectively.

	Let $\mathcal{S}\colonequals\mathcal{A}\cap K^c(\operatorname{Inj}(X))$ be the
	class of compact generators of $\mathcal{A}$. We have:
	\begin{displaymath}
        I_\lambda(\mathcal{A})=\overline{\langle I_\lambda(\mathcal{S})
			\rangle}^{(-\infty,0]} \text{ in }D_{\operatorname{sg}}(X)
		\quad\text{ and }\quad
		Q(\mathcal{A})=\overline{\langle Q(\mathcal{S})
			\rangle}^{(-\infty,0]} \text{ in }D_{\operatorname{qc}}(X).
    \end{displaymath}
    respectively. Indeed, in these two equalities, the inclusions
	$(\subseteq)$ follows by d\'{e}vissage, since $I_\lambda$ and $Q$ are triangulated
	and coproduct preserving. The other inclusions $(\supseteq)$ follow from the
	fact that since $\mathcal{A}$ restricts, the left-hand sides are cocomplete
	preaisle.

	From this, we see that $I_\lambda(\mathcal{A})$ is automatically compactly
	generated, since $I_\lambda$, having a coproduct preserving right adjoint $I$,
	preserves compacts. To show that the other aisle is compactly generated, we
	use the tensor compatibility.
	Since $\mathcal{A}$ is a $\odot$-aisle, then $\mathcal{S}$ is a
	$\odot$-preaisle of $K^c(\operatorname{Inj}(X))$, see
	\Cref{prop:suspended_aisles_to_compactly_generated_aisles_a_la_neeman}. By
	definition of the $\odot$-action (see \Cref{ex:actions}), this means that
	$Q(\mathcal{S})$ is a $\otimes$-preaisle of $D^b_{\operatorname{coh}}(X)$, and
	therefore by \Cref{lem:generated_odot_aisle} we have that:
	\begin{displaymath}
        \overline{\langle Q(\mathcal{S}) \rangle}^{(-\infty,0]} = 
		\overline{\langle Q(\mathcal{S}) \rangle}^{(-\infty,0]}_\otimes.
    \end{displaymath}
	is a compactly generated aisle of $D_{\operatorname{qc}}(X)$. This also shows
	that this aisle is a $\otimes$-aisle, and one shows similarly that $I_\lambda(\mathcal{A})$
	is a $\star$-aisle.

	(3) This follows from (2) using
	\Cref{prop:suspended_aisles_to_compactly_generated_aisles_a_la_neeman}.
\end{proof}

\begin{proposition}
    \label{prop:image_assignment_tstr_general}
    Let $(\mathcal{B},\phi)$ be an element of the target for $\Theta$ in \Cref{introthm:injective_mapping_for_t_proxy_tensor_aisles_scheme}. Then $(\mathcal{B},\phi)$ is in the image of $\Theta$ if, and only if, for any $B \in \mathcal{B}$ there is $M \in D^b_{\operatorname{coh}}(X)$ such that
    \begin{enumerate}
        \item \label{prop:image_assignment_tstr_general1} $\pi(M) \in \mathcal{B}$,
        \item \label{prop:image_assignment_tstr_general2} $B \in \langle \pi (M) \rangle^{(-\infty,0]}_\otimes$,
        \item \label{prop:image_assignment_tstr_general3} $\operatorname{supp}(\mathcal{H}^i(M)) \subseteq \phi(i)$ for all $i \in \mathbb{Z}$.
    \end{enumerate}
\end{proposition}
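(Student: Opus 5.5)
Both directions can be read off from what $\Theta$ does to a preaisle $\mathcal{A}$. We have $\Theta(\mathcal{A}) = (\langle\pi(\mathcal{A})\rangle^{(-\infty,0]}, \phi_{\mathcal{A}})$. Since $\mathcal{A}$ is a $\otimes$-suspended subcategory of $D^b_{\operatorname{coh}}(X)$, its image $\pi(\mathcal{A})$ is already closed under the $\operatorname{Perf}^{\leq 0}(X)$-action, positive shifts and finite sums. It may fail to be closed under extensions and summands in $D_{\operatorname{sg}}(X)$, which is exactly why the generated preaisle appears and why condition (2) uses $\langle \pi(M)\rangle^{(-\infty,0]}_\otimes$. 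By \Cref{thm:ALS_global_full_classification}, $\phi_{\mathcal{A}}$ is the filtration with $\mathcal{U}_{\phi_{\mathcal{A}}} = \overline{\langle \mathcal{A}\rangle}^{(-\infty,0]}_\otimes$. So an object $M\in D^b_{\operatorname{coh}}(X)$ lies in this aisle if and only if $\operatorname{supp}\mathcal{H}^i(M)\subseteq \phi_{\mathcal{A}}(i)$ for all $i$.

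\emph{Necessity.} Suppose $(\mathcal{B},\phi)=\Theta(\mathcal{A})$ and let $B\in\mathcal{B}$. Since $\mathcal{B}=\langle \pi(\mathcal{A})\rangle^{(-\infty,0]}$, the object $B$ is a summand of an iterated extension of finitely many objects $\pi(M_j)$ with $M_j\in\mathcal{A}$. Take $M=\bigoplus_j M_j\in\mathcal{A}$. Then $\pi(M)\in\mathcal{B}$, so (1) holds, and $B\in\langle\pi(M)\rangle^{(-\infty,0]}\subseteq\langle \pi(M)\rangle^{(-\infty,0]}_\otimes$, so (2) holds. Condition (3) holds because $M\in\mathcal{A}\subseteq\mathcal{U}_\phi$.

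\emph{Sufficiency.} Given $(\mathcal{B},\phi)$ satisfying (1)--(3), set
\begin{displaymath}
\mathcal{A}\colonequals \langle \mathcal{C}\rangle_\otimes^{(-\infty,0]}, \qquad \mathcal{C} \colonequals \{ M \in D^b_{\operatorname{coh}}(X) : \pi(M)\in \mathcal{B},\ M\in\mathcal{U}_\phi\},
\end{displaymath}
together with all perfect complexes in $\mathcal{U}_\phi$, i.e.\ $\Psi(\mathcal{U}_\phi)$ from \Cref{prop:suspended_aisles_to_compactly_generated_aisles_a_la_neeman}. The argument then has three steps.
\begin{enumerate}
\item Closure. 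Both defining conditions of $\mathcal{C}$ are preserved by extensions, positive shifts, summands and the $\operatorname{Perf}^{\leq 0}$-action, because $\mathcal{B}$ and $\mathcal{U}_\phi$ are $\otimes$-suspended. Hence $\mathcal{A}=\mathcal{C}\cup\Psi(\mathcal{U}_\phi)=\mathcal{C}$, since perfects in $\mathcal{U}_\phi$ map to $0\in\mathcal{B}$.
\item The second component. We have $\overline{\langle\mathcal{A}\rangle}_\otimes^{(-\infty,0]}\subseteq\mathcal{U}_\phi$. Conversely it contains the compact generators of $\mathcal{U}_\phi$, so equality holds and $\phi_{\mathcal{A}}=\phi$.
\item The first component. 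We have $\langle\pi(\mathcal{A})\rangle^{(-\infty,0]}\subseteq\mathcal{B}$. For the reverse inclusion, take $B\in\mathcal{B}$ and choose $M$ as in (1)--(3). Then $M\in\mathcal{C}=\mathcal{A}$, so $B\in\langle\pi(\mathcal{A})\rangle_\otimes^{(-\infty,0]}$. This $\otimes$-closure coincides with $\langle\pi(\mathcal{A})\rangle^{(-\infty,0]}$, because $\pi(\mathcal{A})$ is already stable under $\operatorname{Perf}^{\leq 0}$ and the action is exact.
\end{enumerate}

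The main obstacle is showing that this $\mathcal{A}$ lies in the domain of $\Theta$, i.e.\ that it is $t$-proxy small: $\overline{\langle\mathcal{A}\rangle}^{(-\infty,0]}\cap\operatorname{Perf}(X)\subseteq\mathcal{A}$, plus compact generation. Compact generation comes from \Cref{thm:ALS_global_full_classification}, and the equality $\overline{\langle\mathcal{A}\rangle}^{(-\infty,0]}=\overline{\langle\mathcal{A}\rangle}^{(-\infty,0]}_\otimes$ holds by \Cref{lem:generated_odot_aisle}, since $\mathcal{A}$ is $\otimes$-closed. For the inclusion, a perfect $P$ in $\overline{\langle\mathcal{A}\rangle}^{(-\infty,0]}_\otimes=\mathcal{U}_\phi$ lies in $\Psi(\mathcal{U}_\phi)\subseteq\mathcal{A}$. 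This is precisely why the perfect complexes of $\mathcal{U}_\phi$ were included in the generating set, and it is the step I would verify most carefully. In particular, I need to check that the convention's characterization of the domain (``generated by $t$-$\otimes$-proxy small objects'') matches this preaisle-level $t$-proxy smallness. Injectivity of $\Theta$ is already given by \Cref{thm:injective_mapping_for_t_proxy_tensor_aisles_scheme}.
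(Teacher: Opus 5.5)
Your proof is correct, but it takes a genuinely different route from the paper.

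\textbf{The paper's route.} The paper works in $K(\operatorname{Inj}(X))$. It first shows that $(\mathcal{B},\phi)$ lies in the image of $\Theta$ if and only if the aisle $\mathcal{Z}$ is compactly generated, where $\mathcal{Z}$ is obtained by glueing $\overline{\langle \mathcal{B}\rangle}^{(-\infty,0]}_\star$ on $S_{\operatorname{qc}}(X)$ with $\mathcal{U}_\phi$ along Krause's recollement. To get $t$-proxy smallness from compact generation, it uses a Keller--Nicol\'as type filtration of the perfect objects of $\mathcal{U}_\phi$ together with $Q_\lambda$. For necessity it then reads off conditions (1)--(3) from a compact object $Q_\rho(M)\in\mathcal{Z}$. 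For sufficiency it produces compact generators $Q_\rho(M_B)$ of $\mathcal{Z}$ from the triangles $Q_\lambda(M)\to Q_\rho(M)\to II_\lambda Q_\rho(M)\to Q_\lambda(M)[1]$.

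\textbf{Your route.} You never leave the level of compact objects. Necessity amounts to packaging finitely many generators $\pi(M_j)$ into the single object $\pi(\bigoplus_j M_j)$, a step the paper leaves implicit. For sufficiency you write down the candidate $\pi^{-1}(\mathcal{B})\cap\mathcal{U}_\phi\subseteq D^b_{\operatorname{coh}}(X)$. Its $t$-proxy smallness is automatic, because it contains $\mathcal{U}_\phi\cap\operatorname{Perf}(X)$, which is the entire compact part of its own generated aisle.

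\textbf{What each buys.} Your approach gives an elementary proof with no glueing. It also gives more: any $\mathcal{A}'$ with $\Theta(\mathcal{A}')=(\mathcal{B},\phi)$ is contained in $\pi^{-1}(\mathcal{B})\cap\mathcal{U}_\phi$, so combined with injectivity of $\Theta$ you get the explicit inverse $\Theta^{-1}(\mathcal{B},\phi)=\pi^{-1}(\mathcal{B})\cap\mathcal{U}_\phi$ on the image. The paper's route instead yields an intermediate characterization of the image, via compact generation of the glued $t$-structure. That matches the recollement framework in which $\Theta$ and its injectivity are established.

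\textbf{The point you flag.} It is harmless. The main-body version of the theorem takes as its domain the $t$-proxy small $\otimes$-suspended subcategories, which is exactly what you verify. To pass to the formulation ``generated by $t$-$\otimes$-proxy small objects'', replace each $M\in\mathcal{A}$ by $M\oplus P_M$. Here $P_M\in\operatorname{Perf}(X)$ is a single compact generator of $\overline{\langle M\rangle}^{(-\infty,0]}_\otimes$, which exists by the proof of \Cref{prop:aisles_are_compactly_generated} and lies in $\mathcal{U}_\phi\cap\operatorname{Perf}(X)\subseteq\mathcal{A}$. Then $M\oplus P_M$ is $t$-$\otimes$-proxy small by \Cref{prop:compact_aisle_susp}, and $\mathcal{A}$ is generated by these objects.
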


\begin{proof}
    To set the stage,
    \begin{itemize}
        \item $\overline{\langle \mathcal{A} \rangle}^{(-\infty,0]}_\star$ is the $\star$-aisle of $S_{\operatorname{qc}}(X)$ generated by $\mathcal{A}$;
        \item $\mathcal{U}$ is the $\otimes$-aisle of $D_{\operatorname{qc}}(X)$ corresponding to $\phi$;
        \item $\mathcal{Z}$ is the aisle of $K(\operatorname{Inj}(X))$ obtained by glueing $\overline{\langle \mathcal{A} \rangle}^{(-\infty,0]}_\star$ and $\mathcal{U}$, i.e.\ 
        \begin{displaymath}
            \mathcal{Z} \colonequals \left\{ E \in K(\operatorname{Inj}(X)) \mid I_\lambda(E) \in \overline{\langle \mathcal{A} \rangle}^{(-\infty,0]}_\star, Q(E) \in \mathcal{U} \right\}.
        \end{displaymath}
    \end{itemize}

    Before proving the desired claim, we show that 
    \begin{equation}
        \tag{$\circ$}
        \label{eq:image_assignment_tstr_general_claim}
        \textrm{$(\mathcal{A},\phi)$ is in the image of $\Theta$ if, and only if, $\mathcal{Z}$ is compactly generated.}
    \end{equation}
    Observe, if $(\mathcal{A},\phi)$ is in the image of $\Theta$, then there is $\odot$-suspended $\mathcal{S}\subseteq D^b_{\operatorname{coh}}(X)$ that is $t$-proxy small in $D_{\operatorname{qc}}(X)$ whose image under $\Theta$ coincides with $(\mathcal{A},\phi)$. However, as $\Theta$ factors through 
    \begin{displaymath}
        \begin{aligned}
            &\left\{ \textrm{compactly generated } \star\textrm{-aisles on } S_{\operatorname{qc}}(X) \right\} 
            \\&\times \left\{ \textrm{compactly generated } \otimes\textrm{-aisles on } D_{\operatorname{qc}}(X) \right\},
        \end{aligned}
    \end{displaymath}
    we see that the image of $\mathcal{S}$ under the assignment
    \begin{displaymath}
        \overline{\langle \mathcal{S} \rangle}^{(-\infty,0]}_{\odot} \mapsto \bigg( \overline{\langle I_\lambda Q_\rho (\mathcal{S}) \rangle}^{(-\infty,0]}_\star, \overline{\langle \mathcal{S} \rangle}^{(-\infty,0]}_\otimes \bigg).
    \end{displaymath}
    corresponds to $\mathcal{Z}$. So, $\mathcal{Z}$ must be compactly generated in such a case.

    Conversely, if $\mathcal{Z}$ is compactly generated, then \Cref{prop:suspended_aisles_to_compactly_generated_aisles_a_la_neeman} tells us we can find a $\odot$-suspended subcategory $\mathcal{P}$ of $K(\operatorname{Inj}(X))^c$ such that $\mathcal{Z} = \overline{\langle \mathcal{P} \rangle}^{(-\infty,0]}_{\odot}$. As $Q$ is compatible with the tensor action and restricts to an equivalence $K(\operatorname{Inj}(X))^c \to D^b_{\operatorname{coh}}(X)$, we see that $Q(\mathcal{P})$ is an $\otimes$-suspended category on $D^b_{\operatorname{coh}}(X)$. It suffices to show that the $\otimes$-suspended subcategory $Q(\mathcal{P})$ is $t$-proxy small in $D_{\operatorname{qc}}(X)$. Choose $A\in \operatorname{Perf}(X) \cap \overline{\langle Q(\mathcal{P}) \rangle}^{(-\infty,0]}_{\otimes}$. 
    Using \Cref{cor:compactly_generated_odot_aisle}, we know that 
    \begin{displaymath}
        \overline{\langle \mathcal{P} \rangle}^{(-\infty,0]}_{\odot} = \overline{\langle (\operatorname{Perf}(X) \cap D^{\leq 0}_{\operatorname{qc}}(X) )\odot \mathcal{P} \rangle}^{(-\infty,0]}.
    \end{displaymath}
    As $Q_\lambda$ is right $t$-exact, we know that $Q_\lambda (\mathcal{U})\subseteq \mathcal{X}$; whereas $Q$ is $t$-exact, so $Q(\mathcal{Z})\subseteq \mathcal{U}$. However, as $Q_\lambda$ is fully faithful, we have $Q(\mathcal{Z})= \mathcal{U}$.
    Hence, $\mathcal{U}$ is compactly generated by $Q(\mathcal{Z})\cap \operatorname{Perf}(X)$, and so
    \begin{displaymath}
        \begin{aligned}
            A \in \operatorname{Perf}(X) \cap \overline{\langle Q(\mathcal{P}) \rangle}^{(-\infty,0]}_{\otimes} \subseteq \operatorname{Perf}(X) \cap \overline{\langle Q((\operatorname{Perf}(X) \cap D^{\leq 0}_{\operatorname{qc}}(X) )\odot \mathcal{P}) \rangle}^{(-\infty,0]}
        \end{aligned}
    \end{displaymath}
    Here, we have used the fact that $Q$ is compatible with the actions and preserves coproducts. Clearly, 
    \begin{displaymath}
        Q((\operatorname{Perf}(X) \cap D^{\leq 0}_{\operatorname{qc}}(X) )\odot \mathcal{P}) \subseteq D^b_{\operatorname{coh}}(X).
    \end{displaymath}
    So, by \cite[Theorem A.7]{Keller/Nicolas:2013}, there is a distinguished triangle
    \begin{displaymath}
        \bigoplus_{i\geq 1} P_i \to A \to (\bigoplus_{i\geq 1} P_i) [1] \to (\bigoplus_{i\geq 1} P_i)[1]
    \end{displaymath}
    where each $P_i$ is an $i$-fold extension of small coproducts of nonnegative shifts of objects in
    \begin{displaymath}
        \operatorname{Perf}(X) \cap Q((\operatorname{Perf}(X) \cap D^{\leq 0}_{\operatorname{qc}}(X) )\odot \mathcal{P}).
    \end{displaymath}
    Indeed, this subcategory compactly generates $\mathcal{U}$. Now, applying $Q_\lambda$ to this distinguished triangle, we can check that each object in it belongs to $\mathcal{Z}$. 
    Hence, $Q_\lambda (A)\in \mathcal{Z}$. Yet, $Q_\lambda (A)\in K(\operatorname{Inj}(X))^c$ as $A$ is compact object and $Q_\lambda$ preserves compacts. So, $Q_\lambda (A) \in \mathcal{P}$. Hence, applying $Q$ once more finishes the claim. All in all, we have now proved \eqref{eq:image_assignment_tstr_general_claim}. 

    Now, we return the proof for our desired claim. First, we consider the case where $(\mathcal{A},\phi)$ is in the image of $\Theta$. Choose $A \in \mathcal{A}$. Then $I(A) \in \mathcal{Z}$, and so, $I(A) \in \overline{\langle \mathcal{Z} \cap K(\operatorname{Inj}(X))^c \rangle}^{(-\infty,0]}$. 
    Hence, $A \in \overline{\langle I_\lambda (\mathcal{Z} \cap K(\operatorname{Inj}(X))^c) \rangle}^{(-\infty,0]}$, 
    which implies $A\in \langle I_\lambda(Z) \rangle^{(-\infty,0]}_{\star}$ for some $Z \in \mathcal{Z} \cap K(\operatorname{Inj}(X))^c$. 
    By \cite[Remark 3.8]{Krause:2005}, we can find $M \in D^b_{\operatorname{coh}}(X)$ such that $Z = Q_\rho(M)$. Then \eqref{prop:image_assignment_tstr_general1} and \eqref{prop:image_assignment_tstr_general3} as $Z \in \mathcal{Z} \cap K(\operatorname{Inj}(X))^c$, 
    whereas \eqref{prop:image_assignment_tstr_general2} holds by the construction.

    Next, we check the converse direction. From glueing, it follows that
    \begin{displaymath}
        \mathcal{Z} = \overline{\langle I(\mathcal{A}) \cup Q_\lambda(\mathcal{U} \cap \operatorname{Perf}(X)) \rangle}^{(-\infty,0]}.
    \end{displaymath}
    Choose $A \in \mathcal{A}$. Define $M = M_A$ to be as in the condition. Consider the distinguished triangle
    \begin{displaymath}
        Q_\lambda(M) \to Q_\rho(M) \to I I_\lambda Q_\rho(M) \to Q_\lambda(M) [1].
    \end{displaymath}
    By \eqref{prop:image_assignment_tstr_general1} and \eqref{prop:image_assignment_tstr_general3}, the left and the right vertices of the triangle belong to $\mathcal{Z}$, and thus so does $Q_\rho(M)$. Consequently, $\mathcal{Z}$ is compactly generated by $\{Q_\rho(M_A) \mid A \in \mathcal{A}\} \cup Q_\lambda(\mathcal{U}^c)$.
\end{proof}

\section{Characterizations}
\label{sec:characterization}

This section contains applications of the theory built up in earlier sections. 

\subsection{\texorpdfstring{$\otimes$}{ }-proxy smallness in $D^b_{\operatorname{coh}}(X)$}
\label{sec:characterization_tensor_proxy_small}

Some of below is spelled out in detail. It might be known to `experts' but we find at times the literature lacking details. So, we make it clear for the reader. See \cite{Stevenson:2013} for some details.



\begin{lemma}
    \label{lem:support_to_localizing_tensor_subcategories}
    Let $X$ be a Noetherian scheme. For any $E,G \in D_{\operatorname{qc}}(X)$, we have $\overline{\langle E \rangle}_\otimes \subseteq \overline{\langle G \rangle}_\otimes$ if, and only if, $\operatorname{supp}(E)\subseteq \operatorname{supp}(G)$. 
\end{lemma}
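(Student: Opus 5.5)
The proof will rest on Neeman's classification of localizing $\otimes$-ideals of $D_{\operatorname{qc}}(X)$ for a Noetherian scheme $X$. Here $\overline{\langle E\rangle}_\otimes$ denotes the smallest localizing subcategory that contains $E$ and is closed under tensoring with $D_{\operatorname{qc}}(X)$, or equivalently with $\operatorname{Perf}(X)$ by the dévissage of \Cref{lemma:cocomplete-tensor-preaisle}. The support is the \emph{homological} (small) support, $\operatorname{supp}(E)=\{x\in X\mid \kappa(x)\otimes^{\mathbf{L}}E_x\neq 0\}$, which for $E\in D^b_{\operatorname{coh}}(X)$ agrees with $\bigcup_i\operatorname{supp}\mathcal{H}^i(E)$. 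Neeman's theorem, extended from the affine case to Noetherian schemes via Alonso--Jeremías--Souto and Stevenson's local-to-global principle \cite{Stevenson:2013}, gives a bijection between localizing $\otimes$-ideals and arbitrary subsets of $X$. It is given by $\mathcal{L}\mapsto\operatorname{supp}(\mathcal{L})$, with inverse $W\mapsto\{F\mid \operatorname{supp}(F)\subseteq W\}$. The lemma is essentially a restatement of this bijection.

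The forward direction is formal. The subcategory $\{F\mid \operatorname{supp}(F)\subseteq\operatorname{supp}(G)\}$ is closed under shifts, cones (support of a cone lies in the union of supports, because $\kappa(x)\otimes^{\mathbf{L}}(-)_x$ is exact), coproducts (it commutes with coproducts), and tensoring with arbitrary objects (because $\kappa(x)\otimes^{\mathbf{L}}(A\otimes^{\mathbf{L}} F)_x\cong(\kappa(x)\otimes A_x)\otimes_{\kappa(x)}(\kappa(x)\otimes F_x)$). So it is a localizing $\otimes$-ideal containing $G$, hence it contains $\overline{\langle G\rangle}_\otimes$. If $E$ lies in the latter, then $\operatorname{supp}(E)\subseteq \operatorname{supp}(G)$.

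For the converse, I would use the local-to-global principle. Every $F$ lies in $\overline{\langle \Gamma_x F\mid x\in X\rangle}_\otimes$, where $\Gamma_x=\Gamma_{\overline{\{x\}}}L_{Z(x)}\mathcal{O}_X$ is the idempotent for the point $x$ and $Z(x)$ is the set of points not specializing to $x$. Moreover, $\Gamma_xF=\Gamma_x\mathcal{O}_X\otimes F$, and $\Gamma_x F\neq 0$ if and only if $x\in\operatorname{supp}(F)$. Thus $E\in\overline{\langle \Gamma_x\mathcal{O}_X\otimes E\mid x\in\operatorname{supp}(E)\rangle}_\otimes\subseteq\overline{\langle\Gamma_x\mathcal{O}_X\mid x\in\operatorname{supp}(E)\rangle}_\otimes$. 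It therefore suffices to show $\Gamma_x\mathcal{O}_X\in\overline{\langle G\rangle}_\otimes$ for each $x\in\operatorname{supp}(G)$. This is the minimality statement: $\overline{\langle\Gamma_x\mathcal{O}_X\rangle}_\otimes$ has no nonzero proper localizing $\otimes$-ideals. Since $\Gamma_x G\neq0$ lies there and also in $\overline{\langle G\rangle}_\otimes$, minimality gives $\Gamma_x\mathcal{O}_X\in\overline{\langle \Gamma_xG\rangle}_\otimes\subseteq\overline{\langle G\rangle}_\otimes$.

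The main obstacle is this minimality. I would reduce it to the affine case using an affine open $f\colon U\to X$ containing $x$. The object $\Gamma_x\mathcal{O}_X$ is supported in $U$, so it is of the form $\mathbf{R}f_\ast(\Gamma_x\mathcal{O}_U)$ with $\mathbf{R}f_\ast\mathbf{L}f^\ast\Gamma_x\mathcal{O}_X\cong\Gamma_x\mathcal{O}_X$. The functors $\mathbf{L}f^\ast$ and $\mathbf{R}f_\ast$ preserve coproducts and triangles, and they are compatible with tensoring by the projection formula $\mathbf{R}f_\ast(\mathbf{L}f^\ast A\otimes B)\cong A\otimes\mathbf{R}f_\ast B$. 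Hence they transport localizing $\otimes$-ideals between $\overline{\langle\Gamma_x\mathcal{O}_X\rangle}_\otimes$ and $\overline{\langle\Gamma_x\mathcal{O}_U\rangle}_\otimes$, and minimality follows from Neeman's affine result \cite{Neeman:1992b}. If the paper prefers, I would instead cite the stated classification of \cite{Stevenson:2013} for Noetherian schemes directly.
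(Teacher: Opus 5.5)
Your proposal is correct and follows essentially the paper's route. The paper's proof simply invokes the classification of rigid (that is, $\otimes$-closed) localizing subcategories of $D_{\operatorname{qc}}(X)$ by supports, \cite[Corollary 4.13]{AlonsoTarrio/JeremiasLopez/SoutoSalorio:2004}, together with \Cref{lem:generated_odot_aisle}. Your argument is that same classification, with its proof (Stevenson's local-to-global principle plus Neeman's affine minimality, transported along an affine open) unpacked; reading $\operatorname{supp}$ as the homological support is the right choice, since that is the support used in the cited classification.
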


\begin{proof}
    This is essentially \cite[Corollary 4.13]{AlonsoTarrio/JeremiasLopez/SoutoSalorio:2004} with \Cref{lem:generated_odot_aisle}. Note that `rigid' in loc.\ cit.\ means $\otimes$-closed in our case above. See \cite[Proposition A.2]{Iyengar/Lipman/Neeman:2015} for a reformulation. 
\end{proof}

\begin{lemma}
    [cf.\ {\cite[\S 6.1]{Briggs/Iyengar/Letz/Pollitz:2022}}]
    \label{def:virtually_small}
    Let $X$ be a Noetherian scheme and $P$ be a compact generator for $D_{\operatorname{qc}}(X)$. An object $E$ in $D^b_{\operatorname{coh}}(X)$ is \textbf{$\otimes$-proxy small} if and only if $\langle P\otimes^{\mathbf{L}} E \rangle$ contains a perfect complex with same support as $E$.
\end{lemma}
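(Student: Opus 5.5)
The plan is to reduce the statement to the identity
\begin{displaymath}
    \langle \operatorname{Perf}(X)\otimes^{\mathbf{L}} E\rangle = \langle P\otimes^{\mathbf{L}} E\rangle
\end{displaymath}
of thick subcategories of $D^b_{\operatorname{coh}}(X)$. By \cite[\S 6.1]{Briggs/Iyengar/Letz/Pollitz:2022}, $E$ is $\otimes$-proxy small when the thick $\otimes$-ideal it generates contains a perfect complex with the same support as $E$. I take that ideal to be $\langle E\rangle_\otimes$, the smallest thick subcategory containing $E$ and closed under $\operatorname{Perf}(X)\otimes^{\mathbf{L}}-$. Exactly as in the d\'evissage of \Cref{lem:generated_odot_aisle} (with thick closures in place of preaisles), this equals $\langle \operatorname{Perf}(X)\otimes^{\mathbf{L}} E\rangle$. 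Once the displayed identity holds, both conditions in the lemma say the same thing, so the equivalence is immediate.

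The inclusion $\supseteq$ is clear, since $P\in\operatorname{Perf}(X)$. For $\subseteq$, I first use that $P$ is a compact generator of $D_{\operatorname{qc}}(X)$, hence a classical generator of $\operatorname{Perf}(X)=D_{\operatorname{qc}}(X)^c$, so $\operatorname{Perf}(X)=\langle P\rangle$. Fix $F\in\operatorname{Perf}(X)$. The class of $G\in\operatorname{Perf}(X)$ with $G\otimes^{\mathbf{L}} E\in\langle P\otimes^{\mathbf{L}}E\rangle$ is thick, because $-\otimes^{\mathbf{L}}E$ is exact and preserves summands. This class contains $P$, so it contains $\langle P\rangle$, and in particular $F$. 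Hence $F\otimes^{\mathbf{L}} E\in\langle P\otimes^{\mathbf{L}}E\rangle$ for every perfect $F$.

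Two small points need checking. First, $E=\mathcal{O}_X\otimes^{\mathbf{L}}E$ lies in $\langle P\otimes^{\mathbf{L}}E\rangle$, which is the case $F=\mathcal{O}_X$ above. Second, $\langle P\otimes^{\mathbf{L}}E\rangle$ must be closed under $\operatorname{Perf}(X)\otimes^{\mathbf{L}}-$, so that it really is the $\otimes$-ideal. For $Q\in\operatorname{Perf}(X)$, the same d\'evissage in the $E$-variable reduces this to $Q\otimes^{\mathbf{L}}P\otimes^{\mathbf{L}}E\in\langle P\otimes^{\mathbf{L}} E\rangle$. That holds by the previous paragraph applied to $F=Q\otimes^{\mathbf{L}}P$, which is perfect.

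No step here is a real obstacle. The only point needing care is matching the definition of $\otimes$-proxy smallness in \cite{Briggs/Iyengar/Letz/Pollitz:2022} (thick $\otimes$-ideal generated by $E$ containing a perfect complex of the same support) with the thick closure used here. If one prefers to phrase the support condition through localizing ideals, \Cref{lem:support_to_localizing_tensor_subcategories} translates "same support" into equality of localizing $\otimes$-ideals, and the same generator argument applies.
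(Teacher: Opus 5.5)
Your argument is correct for what it proves. It shows $\langle E\rangle_\otimes=\langle P\otimes^{\mathbf{L}}E\rangle$, where the left side is the thick closure under $\operatorname{Perf}(X)\otimes^{\mathbf{L}}-$ and the right side is the thick closure of $P\otimes^{\mathbf{L}}E$. The paper's proof uses this identity tacitly when it writes ``by the definition of $\otimes$-proxy smallness we also have $C\in\langle P\otimes^{\mathbf{L}}E\rangle$''.

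The gap is your first step. You take the support condition itself as the definition of $\otimes$-proxy smallness, which reduces the lemma to a reformulation. The notion the paper's proof establishes, and that \Cref{def:t_proxy_tensor_small} refines, is the tensor version of the proxy smallness of \Cref{sec:proxy_smallness}:
\begin{displaymath}
    \overline{\langle E\rangle}_\otimes=\overline{\langle \langle E\rangle_\otimes\cap\operatorname{Perf}(X)\rangle}_\otimes .
\end{displaymath}
The lemma is the comparison of this categorical condition with the support condition of \cite{Briggs/Iyengar/Letz/Pollitz:2022}. A sign that the content was bypassed is that your argument never uses $E\in D^b_{\operatorname{coh}}(X)$; it works verbatim for any $E\in D_{\operatorname{qc}}(X)$.

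Two steps are missing.

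\emph{Forward direction.} Let $\mathcal{S}$ be the set of perfect complexes in $\langle E\rangle_\otimes$, which generate $\overline{\langle E\rangle}_\otimes$ by assumption. Then \Cref{lem:support_to_localizing_tensor_subcategories} gives $\operatorname{supp}(E)=\bigcup_{S\in\mathcal{S}}\operatorname{supp}(S)$. This is a possibly infinite family, and you need a single perfect complex. That requires $\operatorname{supp}(E)$ to be closed, which holds because $E$ has bounded coherent cohomology, together with Noetherianity. Each generic point of the finitely many irreducible components of $\operatorname{supp}(E)$ lies in some closed $\operatorname{supp}(S_j)\subseteq\operatorname{supp}(E)$, so finitely many $S_j$ suffice. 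Then $C=\bigoplus_j S_j$ is perfect, lies in $\langle E\rangle_\otimes=\langle P\otimes^{\mathbf{L}}E\rangle$ (here your identity is used), and has the same support as $E$. Your remark that ``the same generator argument applies'' does not cover this. The d\'evissage $\operatorname{Perf}(X)=\langle P\rangle$ says nothing about extracting one perfect object of full support from infinitely many compact generators.

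\emph{Converse direction.} Given a perfect $C\in\langle P\otimes^{\mathbf{L}}E\rangle$ with $\operatorname{supp}(C)=\operatorname{supp}(E)$, \Cref{lem:support_to_localizing_tensor_subcategories} yields $\overline{\langle C\rangle}_\otimes=\overline{\langle E\rangle}_\otimes$. Since $C\in\langle E\rangle_\otimes\cap\operatorname{Perf}(X)$, the displayed equality follows. Your last sentence gestures at this, but it is a required step, not an optional rephrasing.

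With both steps added, and your identity supplying $\langle E\rangle_\otimes=\langle P\otimes^{\mathbf{L}}E\rangle$, the proof would be complete. It would also be more explicit than the paper's on that final identification.
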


\begin{proof}
    If $E$ is $\otimes$-proxy small then $\overline{\langle E \rangle}_\otimes$ is compactly generated by $\mathcal{S} = \overline{\langle E \rangle}_\otimes \cap \operatorname{Perf}(X)$, meaning that $\overline{\langle E \rangle}_\otimes = \overline{\langle \mathcal{S} \rangle}_\otimes$. It follows that $\operatorname{supp}(E) = \bigcup_{S \in \mathcal{S}} \operatorname{supp}(S)$. Since $\operatorname{supp}(E)$ is closed in $X$ and $X$ is Noetherian, we have $\operatorname{supp}(E) = \bigcup_{S \in \mathcal{S'}} \operatorname{supp}(S)$ for a finite subset $\mathcal{S}'$ of $\mathcal{S}$. Setting $C = \coprod_{S \in \mathcal{S}'}S$ we obtain a perfect complex with $\operatorname{supp}(C) = \operatorname{supp}(E)$. By the definition of $\otimes$-proxy smallness we also have $C \in \langle P\otimes^{\mathbf{L}} E \rangle$.

    For the converse, assume that $C \in \langle P\otimes^{\mathbf{L}} E \rangle$ is a perfect complex with $\operatorname{supp}(C) = \operatorname{supp}(E)$. Then \Cref{lem:support_to_localizing_tensor_subcategories} yields $\overline{\langle C \rangle}_\otimes = \overline{\langle E \rangle}_\otimes$, which in turn implies that $\overline{\langle E \rangle}_\otimes = \overline{\langle \langle E \rangle_\otimes \cap \operatorname{Perf}(X) \rangle}_\otimes$, which verifies that $E$ is $\otimes$-proxy small.
\end{proof}

\begin{lemma}
    \label{lem:psuedo_coherent_approx_triangle_fixed_support}
    Let $X$ be a Noetherian scheme. For any $E\in D^-_{\operatorname{coh}}(X)$, there is a distinguished triangle
    \begin{displaymath}
        \bigoplus_{n\geq 1} P_n \to \bigoplus_{n\geq 1} P_n \to E \to (\bigoplus_{n\geq 1} P_n)[1]
    \end{displaymath}
    where each $P_n \in \operatorname{Perf}(X)$ and $\operatorname{supp}(P_n)\subseteq \operatorname{supp}(E)$. Additionally, if $E\in D^b_{\operatorname{coh}}(X)$, then it is possible to find an $N\geq 1$ such that $\operatorname{supp}(P_N) = \operatorname{supp}(E)$.
\end{lemma}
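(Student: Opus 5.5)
The plan is to realise $E$ as the homotopy colimit of a sequence of perfect complexes, each supported in $Z\colonequals\operatorname{supp}(E)$, and to read off the triangle from the Milnor telescope. Concretely, we build perfect complexes $Q_1\to Q_2\to\cdots$ with compatible maps $\alpha_n\colon Q_n\to E$ such that:
\begin{itemize}
    \item $\operatorname{supp}(Q_n)\subseteq Z$;
    \item $C_n\colonequals\operatorname{cone}(\alpha_n)\in D^{\leq -n}_{\operatorname{coh}}(X)$ and $\operatorname{supp}(C_n)\subseteq Z$.
\end{itemize}
Setting $P_n\colonequals Q_n$, the telescope triangle
\begin{displaymath}
    \bigoplus_{n\geq 1} Q_n\xrightarrow{1-\mathrm{shift}}\bigoplus_{n\geq 1} Q_n\to \operatorname{hocolim}_n Q_n\to (\bigoplus_{n\geq 1} Q_n)[1]
\end{displaymath}
then gives the claim once we know that the induced map $\operatorname{hocolim}_n Q_n\to E$ is an isomorphism. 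This holds because cohomology sheaves commute with homotopy colimits of sequences in $D_{\operatorname{qc}}(X)$ (filtered colimits in $\operatorname{Qcoh}(X)$ are exact), and for each fixed $i$ the maps $\mathcal{H}^i(\alpha_n)$ are isomorphisms once $n>-i+1$.

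The basic input is a supported approximation statement. For $F\in D^-_{\operatorname{coh}}(X)$ with support in a closed set $Z$ and any $m$, there should be $R\in\operatorname{Perf}(X)$ with $\operatorname{supp}(R)\subseteq Z$ and a map $R\to F$ inducing isomorphisms on $\mathcal{H}^i$ for $i>m$ and a surjection on $\mathcal{H}^m$. This is the supported version of \cite[\href{https://stacks.math.columbia.edu/tag/08EL}{Tag 08EL}]{StacksProject}, already used in the proof of \Cref{prop:aisles_are_compactly_generated}.

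The inductive step proceeds as follows. Given $\alpha_n$ with cone $C_n\in D^{\leq -n}$ supported in $Z$, approximate $C_n[-1]\in D^{\leq -n+1}$ by a supported perfect $R_n\to C_n[-1]$, taking $m=-n$. Compose with the connecting map $C_n[-1]\to Q_n$ and set $Q_{n+1}\colonequals\operatorname{cone}(R_n\to Q_n)$. The map $\alpha_n$ then factors through $Q_{n+1}$, since the composite $R_n\to C_n[-1]\to Q_n\to E$ vanishes. By the octahedral axiom, $\operatorname{cone}(\alpha_{n+1})\cong\operatorname{cone}(R_n\to C_n[-1])[1]$, which lies in $D^{\leq -n-1}$ by the choice of approximation. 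Supports stay inside $Z$ at every step, since the class of objects supported in $Z$ is closed under cones. The base case is one approximation of $E$ itself.

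For the last claim, suppose $E\in D^b_{\operatorname{coh}}(X)$, so $\mathcal{H}^i(E)=0$ for $i<a$. Choose $N$ with $-N+1<a$. Then $\mathcal{H}^i(\alpha_N)$ is an isomorphism for every $i\geq a$, hence for every $i$ with $\mathcal{H}^i(E)\neq 0$. This gives
\begin{displaymath}
    Z=\bigcup_i\operatorname{supp}\mathcal{H}^i(E)\subseteq\operatorname{supp}(Q_N)\subseteq Z.
\end{displaymath}

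The main obstacle is the supported approximation on a non-affine (possibly non-separated) Noetherian scheme. If the Stacks Project statement is not directly available with supports, I would derive it from Thomason's theorem that $D_{\operatorname{qc},Z}(X)$ is compactly generated by $\operatorname{Perf}_Z(X)$. The argument would first approximate $F$ locally on an affine cover, where one can use Koszul complexes on generators of the ideal of $Z$ to impose the support condition. It would then glue via the Mayer--Vietoris mechanism of \Cref{lemma:mayer-vietoris}, losing a bounded number of degrees, which is harmless because $m$ is arbitrary.
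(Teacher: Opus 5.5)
This is the paper's approach. It runs the $(1)\Rightarrow(2)$ argument of \cite[Tag 0DJM]{StacksProject}: perfect approximations $Q_n\to E$ with increasingly connective cones, assembled via the Milnor telescope, with each approximation replaced by a supported one from \cite[Tag 08EL]{StacksProject}. Your choice of a large $N$ for the support equality is a cleaner substitute for the paper's remark about shifting $E$.

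Your closing hedge is unnecessary. Tag 08EL is already the supported statement, for a closed $T$ with quasi-compact complement, and that condition is automatic on a Noetherian scheme. The fallback you sketch would not work as stated anyway. \Cref{lemma:mayer-vietoris} only controls membership in preaisles; it produces no morphism $R\to F$ with control on the $\mathcal{H}^i$. Gluing local approximations is precisely the content of the Lipman--Neeman theorem.

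The one genuine gap is that you set $Z\colonequals\operatorname{supp}(E)$ and approximate ``with support in the closed set $Z$''. For $E\in D^-_{\operatorname{coh}}(X)$ this set need not be closed. Over $\operatorname{Spec}(\mathbb{Z})$, take $E=\bigoplus_{k\geq 1}(\mathbb{Z}/p_k)[k]$ with $p_1,p_2,\ldots$ distinct primes: it lies in $D^-_{\operatorname{coh}}$, and its support is an infinite, non-closed set of closed points.

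Tag 08EL requires a closed $T$. It only asks that the $\mathcal{H}^i$ be supported on $T$ for $i\geq m-r$, but the integer $r$ depends on $T$. The natural repair is to take, at step $n$, the closed set $T_n$ given by the union of the supports of the finitely many $\mathcal{H}^i(C_n)$ in that window. This works when $r$ can be bounded independently of $T$, as one can arrange for instance in finite Krull dimension. In general, choosing $T_n$ and $r$ is circular, and an extra argument is needed.

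The paper's sketch passes over this point as well. For $E\in D^b_{\operatorname{coh}}(X)$, where $\operatorname{supp}(E)$ is a finite union of closed sets, your argument is complete, including the support equality.
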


\begin{proof}
    This essentially follows from `approximation by perfect complexes' \`{a} la Lipman--Neeman \cite[Theorem 4.1]{Lipman/Neeman:2007}. We use the argument for the $(1)\implies (2)$ direction in \cite[\href{https://stacks.math.columbia.edu/tag/0DJM}{Tag 0DJM}]{StacksProject}. 
    In fact, the same strategy works where we can modify each step of loc.\ cit.\ to use an approximation with the support of $K_n$ contained in $\operatorname{supp}(E)$ (which is possible by \cite[\href{https://stacks.math.columbia.edu/tag/08EL}{Tag 08EL}]{StacksProject}).
    The last claim follows by shifting $E$, if needed, suitably so that $P_1 \to E$ induces an isomorphism cohomology in degrees where $E$ is nonzero (which is possible because $E$ is bounded).
\end{proof}

\begin{lemma}
    \label{lem:pullback_proxy_smallness_along_flat}
    Let $f\colon Y \to X$ be a flat morphism of Noetherian schemes. Suppose $E\in D_{\operatorname{qc}}(X)$. If $E$ is $\otimes$-proxy small, then $\mathbf{L} f^\ast E \in D_{\operatorname{qc}}(Y)$ is $\otimes$-proxy small.
\end{lemma}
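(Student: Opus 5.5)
We verify the criterion of \Cref{def:virtually_small}, transporting the witness along $\mathbf{L}f^\ast$. Fix a compact generator $P$ of $D_{\operatorname{qc}}(X)$ and a compact generator $P'$ of $D_{\operatorname{qc}}(Y)$. Since $E$ is $\otimes$-proxy small, \Cref{def:virtually_small} provides a perfect complex $C\in\langle P\otimes^{\mathbf{L}}E\rangle$ with $\operatorname{supp}(C)=\operatorname{supp}(E)$. The candidate witness on $Y$ is $P'\otimes^{\mathbf{L}}\mathbf{L}f^\ast C$, which is perfect because $\mathbf{L}f^\ast$ preserves perfect complexes and $\operatorname{Perf}(Y)$ is closed under $\otimes^{\mathbf{L}}$.

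First we check membership. The functor $\mathbf{L}f^\ast$ is exact, monoidal and preserves summands, so $\mathbf{L}f^\ast C\in\langle \mathbf{L}f^\ast P\otimes^{\mathbf{L}}\mathbf{L}f^\ast E\rangle$. Tensoring with $P'$ is also exact, hence
\[
P'\otimes^{\mathbf{L}}\mathbf{L}f^\ast C\in\langle P'\otimes^{\mathbf{L}}\mathbf{L}f^\ast P\otimes^{\mathbf{L}}\mathbf{L}f^\ast E\rangle .
\]
The object $\mathbf{L}f^\ast P$ is perfect on $Y$, so $\mathbf{L}f^\ast P\in\langle P'\rangle$. Consequently $P'\otimes^{\mathbf{L}}\mathbf{L}f^\ast P\in\langle P'\otimes^{\mathbf{L}}P'\rangle$. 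Since $\mathcal{O}_Y\in\langle P'\rangle$, we have $P'\otimes^{\mathbf{L}}P'\in\langle P'\rangle$. Tensoring these memberships with $\mathbf{L}f^\ast E$ gives $P'\otimes^{\mathbf{L}}\mathbf{L}f^\ast C\in\langle P'\otimes^{\mathbf{L}}\mathbf{L}f^\ast E\rangle$.

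Next we check supports. The support of the derived tensor product of perfect complexes is the intersection of the supports, and $\operatorname{supp}(P')=Y$, so $\operatorname{supp}(P'\otimes^{\mathbf{L}}\mathbf{L}f^\ast C)=\operatorname{supp}(\mathbf{L}f^\ast C)=f^{-1}\operatorname{supp}(C)$. It remains to show $\operatorname{supp}(\mathbf{L}f^\ast E)=f^{-1}\operatorname{supp}(E)$, which is where flatness enters. Because $f$ is flat, $\mathcal{H}^i(\mathbf{L}f^\ast E)=f^\ast\mathcal{H}^i(E)$. For a point $y$ with $x=f(y)$, the map $\mathcal{O}_{X,x}\to\mathcal{O}_{Y,y}$ is flat and local, hence faithfully flat. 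Therefore $(f^\ast\mathcal{H}^i E)_y=\mathcal{H}^i(E)_x\otimes\mathcal{O}_{Y,y}$ vanishes if and only if $\mathcal{H}^i(E)_x$ vanishes. Taking the union over $i$ gives $\operatorname{supp}(\mathbf{L}f^\ast E)=f^{-1}\operatorname{supp}(E)=f^{-1}\operatorname{supp}(C)$.

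The main obstacle is making sense of supports when $E$ is unbounded or not coherent, since \Cref{def:virtually_small} is phrased for $D^b_{\operatorname{coh}}(X)$. Flat pullback keeps $E$ in $D^b_{\operatorname{coh}}$, so in the intended setting the pointwise argument above applies and the criterion is satisfied. If one instead wants a general $E\in D_{\operatorname{qc}}(X)$, I would use the definitional form $\overline{\langle E\rangle}_\otimes=\overline{\langle C\rangle}_\otimes$. The functor $\mathbf{L}f^\ast$ is coproduct preserving and monoidal, so a dévissage shows $\overline{\langle \mathbf{L}f^\ast E\rangle}_\otimes=\overline{\langle \mathbf{L}f^\ast C\rangle}_\otimes$. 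This holds because $D_{\operatorname{qc}}(Y)$ is generated as a $\otimes$-localizing category by $\mathcal{O}_Y$, so $\otimes$-ideals generated by pullbacks are controlled by those pullbacks. The target category is then compactly generated by a perfect complex lying in $\langle \mathbf{L}f^\ast E\rangle_\otimes$, which is what $\otimes$-proxy smallness requires.
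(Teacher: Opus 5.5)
Your proof is correct and is essentially the paper's. Like the paper, you pull back the perfect witness from \Cref{def:virtually_small}, get membership from $\mathbf{L}f^\ast P\in\langle P'\rangle$ and monoidality of $\mathbf{L}f^\ast$, and use flatness to get $\operatorname{supp}(\mathbf{L}f^\ast E)=f^{-1}\operatorname{supp}(E)$; you prove this directly via faithful flatness of $\mathcal{O}_{X,x}\to\mathcal{O}_{Y,y}$, where the paper combines one inclusion from membership with a cited lemma. One slip: $P'\otimes^{\mathbf{L}}P'\in\langle P'\rangle$ does not follow from $\mathcal{O}_Y\in\langle P'\rangle$ (tensoring that with $P'$ only gives $P'\in\langle P'\otimes^{\mathbf{L}}P'\rangle$); it follows from $\langle P'\rangle=\operatorname{Perf}(Y)$, and the extra factor $P'$ is unnecessary anyway, since $\mathbf{L}f^\ast C\in\langle \mathbf{L}f^\ast P\otimes^{\mathbf{L}}\mathbf{L}f^\ast E\rangle\subseteq\langle P'\otimes^{\mathbf{L}}\mathbf{L}f^\ast E\rangle$ already.
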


\begin{proof}
    From $f$ being flat, we know that $\mathbf{L} f^\ast D^b_{\operatorname{coh}}(X)\subseteq D^b_{\operatorname{coh}}(Y)$. Indeed, $\mathbf{L} f^\ast$ preserves pseudocoherent complexes, whereas flatness implies $f^\ast$ is exact on $\operatorname{Qcoh}$. Additionally, $\mathbf{L} f^\ast$ commutes with $\otimes^{\mathbf{L}}$. Let $P^\prime$ be a compact generator for $D_{\operatorname{qc}}(Y)$ and $P$ be a compact generator for $D_{\operatorname{qc}}(X)$. Suppose $P\otimes^{\mathbf{L}} E$ finitely builds a perfect complex $Q$ with same support as $E$ where $E\in D^b_{\operatorname{coh}}(X)$. Clearly, $\mathbf{L} f^\ast P\in \langle P^\prime \rangle$ and $\mathbf{L} f^\ast Q \in \langle \mathbf{L} f^\ast P \otimes^{\mathbf{L}} \mathbf{L} f^\ast E\rangle$, so $\mathbf{L} f^\ast Q \in \langle P^\prime \otimes^{\mathbf{L}} \mathbf{L} f^\ast E \rangle$. We need to check that $\operatorname{supp}(\mathbf{L}f^\ast(E)) = f^{-1}(\operatorname{supp}(Q))$. Observe $\mathbf{L} f^\ast Q \in \langle P^\prime \otimes^{\mathbf{L}} \mathbf{L} f^\ast E \rangle$ implies $f^{-1}(\operatorname{supp}(Q)) \subseteq \operatorname{supp}(\mathbf{L}f^\ast E)$. On the other hand, \cite[Lemma 2.1]{Lank:2026} gives us $\operatorname{supp}(\mathbf{L}f^\ast (E)) \subseteq f^{-1}(\operatorname{supp}(Q))$.
\end{proof}

\begin{reminder}
    A Noetherian local ring $(R,\mathfrak{m})$ is called a \textbf{complete intersection} if its $\mathfrak{m}$-adic completion $\widehat{R}$ is of the form $Q/(f_1,\ldots,f_c)$ where $Q$ is a regular local ring and $f_1,\ldots,f_c\in Q$ is a regular sequence. More generally, a Noetherian scheme is called a \textbf{local complete intersection} if $\mathcal{O}_{X,p}$ is a complete intersection for all $p\in X$. Often, we might say `let $X$ be a Noetherian scheme which is locally a complete intersection'.
\end{reminder}

\begin{proposition}
    \label{prop:LCI_iff_proxy_small}
    Let $X$ be a Noetherian scheme. Then every object of $D^b_{\operatorname{coh}}(X)$ is $\otimes$-proxy small if, and only if, $X$ is locally a complete intersection.
\end{proposition}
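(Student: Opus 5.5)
The plan is to reduce to the affine case and invoke the known characterization for Noetherian rings, namely \cite[Corollary 5.7]{Letz:2021}: a Noetherian ring $R$ is locally a complete intersection if and only if every object of $D^b_{\operatorname{coh}}(R)$ is proxy small. Over an affine scheme, $\otimes$-proxy smallness agrees with proxy smallness. Every localizing subcategory of $D_{\operatorname{qc}}(R)$ is automatically a $\otimes$-ideal, since $R$ classically generates $\operatorname{Perf}(R)$. Moreover, by \Cref{def:virtually_small} with compact generator $P=R$, the condition reads ``$\langle E\rangle$ contains a perfect complex with the same support as $E$''. This is precisely proxy smallness in the sense of \cite{Dwyer/Greenlees/Iyengar:2006b} via \Cref{lem:support_to_localizing_tensor_subcategories}.

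For the direction ``every object is $\otimes$-proxy small $\Rightarrow$ LCI'', fix a point $p\in X$ and an affine open $f\colon U=\operatorname{Spec}R\to X$ containing $p$. I need every $F\in D^b_{\operatorname{coh}}(U)$ to be proxy small. The plan is to extend $F$ to some $E\in D^b_{\operatorname{coh}}(X)$ with $\mathbf{L}f^\ast E\cong F\oplus F'$ for some $F'$; for a coherent sheaf this is possible by extending coherent sheaves from opens (Stacks), and for complexes I would use the fact that $D^b_{\operatorname{coh}}(U)$ is, up to summands, the Verdier localization of $D^b_{\operatorname{coh}}(X)$. By \Cref{lem:pullback_proxy_smallness_along_flat}, $\mathbf{L}f^\ast E$ is $\otimes$-proxy small. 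I would then pass to the summand $F$ by working directly with the localization $\operatorname{Spec}\mathcal{O}_{X,p}\to X$, which is also flat, so that \Cref{lem:pullback_proxy_smallness_along_flat} applies. It then suffices to show that every finitely generated $\mathcal{O}_{X,p}$-module (in particular the residue field) is proxy small, and Pollitz's local theorem \cite[Theorem 5.2]{Pollitz:2019} gives that $\mathcal{O}_{X,p}$ is a complete intersection. Since every coherent module over the local ring extends (after clearing denominators) to a coherent sheaf on $X$, the summand issue disappears in the local setting.

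For the converse, assume $X$ is LCI and let $E\in D^b_{\operatorname{coh}}(X)$. Using \Cref{def:virtually_small}, I must produce a perfect complex in $\langle P\otimes^{\mathbf{L}}E\rangle$ with support $\operatorname{supp}(E)$. Locally on each affine $U_i$ of a finite cover, \cite{Letz:2021} gives that $E|_{U_i}$ is proxy small, so $\overline{\langle E|_{U_i}\rangle}$ is compactly generated with compacts of full support. The step I expect to be the main obstacle is gluing this local information to a global perfect complex finitely built from $P\otimes^{\mathbf{L}}E$. My approach is to show that the localizing $\otimes$-ideal $\overline{\langle E\rangle}_\otimes$ coincides with $D_{\operatorname{qc},\operatorname{supp}E}(X)$; this equality can be checked affine-locally, using \Cref{lem:support_to_localizing_tensor_subcategories} and the local result, which shows that local Koszul objects lie in the local localizing subcategory. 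Then $\overline{\langle E\rangle}_\otimes$ is compactly generated by $\operatorname{Perf}_{\operatorname{supp}E}(X)$. Its compacts lie in the thick $\otimes$-ideal generated by $E$, by the $\otimes$-analogue of \Cref{prop:compact_aisle_susp} (Neeman's theorem applied to the generating set $\operatorname{Perf}(X)\otimes E$, all of whose members lie in the localizing subcategory). Finally, the thick $\otimes$-ideal generated by $E$ equals $\langle P\otimes^{\mathbf{L}}E\rangle$ because $P$ classically generates $\operatorname{Perf}(X)$. Choosing a perfect complex with support exactly $\operatorname{supp}E$, which exists by \cite[Tag 08EL]{StacksProject}, then finishes the argument.
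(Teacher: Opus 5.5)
\textbf{Gap in the converse direction.} The step ``its compacts lie in the thick $\otimes$-ideal generated by $E$, by Neeman's theorem applied to $\operatorname{Perf}(X)\otimes^{\mathbf{L}}E$'' does not follow. \cite[Lemma 2.2]{Neeman:1992b}, and likewise \Cref{prop:compact_aisle_susp}, gives $\overline{\langle\mathcal{P}\rangle}\cap\mathcal{T}^c=\langle\mathcal{P}\rangle$ only when $\mathcal{P}$ consists of \emph{compact} objects. Your set $\operatorname{Perf}(X)\otimes^{\mathbf{L}}E$ contains $E$ itself, which is not compact unless it is perfect.

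The inclusion you assert, $\operatorname{Perf}_{\operatorname{supp}E}(X)\subseteq\langle P\otimes^{\mathbf{L}}E\rangle$, is exactly the nontrivial half of $\otimes$-proxy smallness. So this step assumes the conclusion. A symptom is that the LCI hypothesis never does any work in your argument. The equality $\overline{\langle E\rangle}_\otimes=D_{\operatorname{qc},\operatorname{supp}E}(X)$ holds for every $E\in D^b_{\operatorname{coh}}(X)$ on every Noetherian scheme, by \Cref{lem:support_to_localizing_tensor_subcategories} applied to a perfect complex with the same closed support. If your argument were valid, every object of $D^b_{\operatorname{coh}}$ would be $\otimes$-proxy small on any Noetherian scheme, contradicting \cite[Theorem 5.2]{Pollitz:2019}.

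What is missing is a local-to-global principle for \emph{finite} building; localizing subcategories, which are all your argument controls, cannot detect it. This is where the paper's proof lives:
\begin{enumerate}
\item Choose $Q\in\operatorname{Perf}(X)$ with $\operatorname{supp}Q=\operatorname{supp}E$.
\item At each $p$, the CI property of $\mathcal{O}_{X,p}$ gives a perfect $B\in\langle E_p\rangle$ with $\operatorname{supp}B=\operatorname{supp}E_p$.
\item Hence $Q_p\otimes^{\mathbf{L}}K(p)\in\langle B\rangle\subseteq\langle E_p\rangle$ by \cite[Lemma 1.2]{Neeman:1992}.
\item Finally, \cite[Theorem 1.7]{BILMP:2023} assembles these local memberships into $Q\in\langle P\otimes^{\mathbf{L}}E\rangle$.
\end{enumerate}

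\textbf{Gap in the forward direction.} Your final strategy matches the paper's: pull back along the flat map $s\colon\operatorname{Spec}\mathcal{O}_{X,p}\to X$, apply \Cref{lem:pullback_proxy_smallness_along_flat} and \Cref{def:virtually_small}, then use \cite[Theorem 5.2]{Pollitz:2019}. However, reducing to finitely generated modules is not enough. Pollitz's theorem takes as input that \emph{every object} of $D^b_{\operatorname{coh}}(\mathcal{O}_{X,p})$ is proxy small, not just every module. As far as I know, whether modules alone detect the complete intersection property is settled only in special cases. The residue field in particular is proxy small over every local ring, so it tests nothing.

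Your ``up to summands'' worry would be a real issue, since proxy smallness does not pass to summands: $R\oplus N$ is proxy small for every $N$. But the issue does not arise, because $\mathbf{L}s^\ast\colon D^b_{\operatorname{coh}}(X)\to D^b_{\operatorname{coh}}(\mathcal{O}_{X,p})$ is essentially surjective on complexes. The paper obtains this from \cite[Lemma 3.9]{Letz:2021} and \cite[Theorem 4.4]{Elagin/Lunts/Schnurer:2020}. With that fact in place of the module reduction, this direction works.
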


\begin{proof}
    First we assume that every object of $D^b_{\operatorname{coh}}(X)$ is $\otimes$-proxy small. Choose $p\in X$. Let $s\colon \operatorname{Spec}(\mathcal{O}_{X,p}) \to X$ be the canonical morphism. Note that $\mathbf{L} s^\ast \colon D^b_{\operatorname{coh}}(X) \to D^b_{\operatorname{coh}}(\mathcal{O}_{X,p})$ is is essentially surjective (e.g.\ use \cite[Lemma 3.9]{Letz:2021} and \cite[Theorem 4.4]{Elagin/Lunts/Schnurer:2020}). Moreover, \Cref{lem:pullback_proxy_smallness_along_flat,def:virtually_small} imply that $\mathbf{L} s^\ast E$ is proxy small for every $E\in D^b_{\operatorname{coh}}(X)$ (of course, using our hypothesis). Tying things together, we know that every object of $D^b_{\operatorname{coh}}(\mathcal{O}_{X,p})$ is proxy small, so \cite[Theorem 5.2]{Pollitz:2019} ensures $\mathcal{O}_{X,p}$ is a local complete intersection. However, $p\in X$ was arbitrary, so $X$ is locally a complete intersection.

    Now we show the converse direction where $X$ is locally a complete intersection. Let $E\in D^b_{\operatorname{coh}}(X)$. We claim that $E$ is $\otimes$-proxy small. There is nothing to check when $E$ is the zero object, so we can impose this is not so. Choose $Q\in \operatorname{Perf}(X)$ satisfying $\operatorname{supp}(E)=\operatorname{supp}(Q)$. For each $p\in X$, we have $Q_p \in \langle E_p \rangle$. Indeed, we only need to concern ourselves with $p\in \operatorname{supp}(E)$. From $\mathcal{O}_{X,p}$ being a local complete intersection, we know that there is $B\in \operatorname{Perf}(\mathcal{O}_{X,p})$ such that $\operatorname{supp}(B)=\operatorname{supp}(E_p)$ and $B\in \langle E_p \rangle$. However, $\operatorname{supp}(Q_p) = \operatorname{supp}(E_p)$ (see e.g.\ \cite[Lemma 4.8]{Hall/Rydh:2017}), so \cite[Lemma 1.2]{Neeman:1992} ensures $Q_p \otimes^{\mathbf{L}} K(p)\in \langle B \rangle \subseteq \langle E_p \rangle$ where $K(p)$ is the Koszul complex on a minimal set of generators for the maximal ideal of $\mathcal{O}_{X,p}$. By \cite[Theorem 1.7]{BILMP:2023}, it follows that $Q\in \langle P \otimes^{\mathbf{L}} E \rangle$, so \Cref{lem:generated_odot_aisle} implies $E$ is $\otimes$-proxy small. From $E$ being an arbitrary object of $D^b_{\operatorname{coh}}(X)$, the desired claim follows.
\end{proof}

\subsection{Result}
\label{sec:characterization_result}

We give a characterization of Noetherian schemes which are locally complete intersections in terms of our notion of $t$-$\otimes$-proxy smallness.

\begin{remark}
    \label{rmk:internal-hom}
    In the proof of \Cref{lem:stalk-t-exact} that follows, we shall make use of some useful properties of the sheaf Hom complex that we recall now, the main reference being \cite[\S 3]{Murfet:2006}. The sheaf Hom complex $\operatorname{\mathcal{H}\! \mathit{om}}(-,-)$ is built using the internal sheaf $\operatorname{\mathcal{H}\! \mathit{om}}$ of $\mathcal{O}_X$-modules and we shall consider it naturally as a functor 
    $$\operatorname{\mathcal{H}\! \mathit{om}}(-,-): K(X)^{op} \times K(X) \to K(X)$$ 
    on the homotopy category $K(X)$ of $\mathcal{O}_X$-modules. Recall that $\operatorname{\mathcal{H}\! \mathit{om}}(S,E)$ belongs to $K(\operatorname{Qcoh}(X))$ whenever $S \in K(\operatorname{coh}(X))$ and $E \in K(\operatorname{Qcoh}(X))$, see e.g. \cite[01CQ, 0GMV]{StacksProject}. By \cite[Proposition 28 \& Definition 5]{Murfet:2006} or \cite[Lemma 2.4.5.1]{Lipman:2009}, we obtain the following isomorphism for any $i \in \mathbb{Z}$:
    \begin{displaymath}
        \operatorname{Hom}_{K(\operatorname{Qcoh}(X))}(S,E[i]) \cong \operatorname{Hom}_{\operatorname{Qcoh}(X)}(\mathcal{O}_X,\mathcal{H}^i\operatorname{\mathcal{H}\! \mathit{om}}(S,E)).
    \end{displaymath}
\end{remark}

\begin{lemma}
    \label{lem:stalk-t-exact}
    Let $\mathcal{T}^{\leq 0}$ be a compactly generated $\odot$-aisle on $K(\operatorname{Inj}(X))$ with associated coaisle $\mathcal{T}^{\geq 0}$. Then for any $E \in K(\operatorname{Inj}(X))$:
    \begin{enumerate}
        \item \label{lem:stalk-t-exact1}$E \in \mathcal{T}^{\geq 0}$ if, and only if, $E_x \in \mathcal{T}^{\geq 0}$ for all $x \in X$,
        \item \label{lem:stalk-t-exact2} $E \in \mathcal{T}^{\leq 0}$ if, and only if, $E_x \in \mathcal{T}^{\leq 0}$ for all $x \in X$.
    \end{enumerate}
    In particular, $(-)_x \colon K(\operatorname{Inj} X) \to K(\operatorname{Inj} X)$ is $t$-exact with respect to the $t$-structure $(\mathcal{T}^{\leq 0},\mathcal{T}^{\geq 0})$.
\end{lemma}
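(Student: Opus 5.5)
The plan is to model the stalk functor as a tensor action and reduce both statements to stalk-local vanishing of cohomology sheaves of sheaf Hom complexes, using \Cref{rmk:internal-hom}. For $x\in X$ let $j_x\colon \operatorname{Spec}(\mathcal{O}_{X,x})\to X$ be the canonical flat morphism. I interpret $E_x$ as $(j_x)_\ast j_x^\ast E$. Since $X$ is Noetherian, this is again a complex of injective quasi-coherent sheaves. It agrees with $\mathcal{O}_x\odot E$, where $\mathcal{O}_x\colonequals (j_x)_\ast\mathcal{O}_{X,x}$ is a flat quasi-coherent sheaf in degree $0$, hence lies in $D^{\leq 0}_{\operatorname{qc}}(X)=\overline{\langle \operatorname{Perf}^{\leq 0}(X)\rangle}^{(-\infty,0]}$ (\Cref{ex:standard_aisle_cpt_gen_for_schemes}). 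Checking this identification against the $K$-flat description of $\odot$ in \cite[\S 1.8]{BILMP:2023} is the first routine step.

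Given it, \Cref{lemma:cocomplete-tensor-preaisle} yields $(-)_x(\mathcal{T}^{\leq 0})\subseteq\mathcal{T}^{\leq 0}$ directly. This gives the forward implication of \eqref{lem:stalk-t-exact2}, and right $t$-exactness of $(-)_x$.

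For \eqref{lem:stalk-t-exact1}, work with $\mathcal{T}^{\geq 1}=(\mathcal{T}^{\leq 0})^{\perp}$. By \Cref{prop:suspended_aisles_to_compactly_generated_aisles_a_la_neeman}, $\mathcal{S}\colonequals\mathcal{T}^{\leq 0}\cap K^c(\operatorname{Inj}X)$ is $\odot$-suspended and generates $\mathcal{T}^{\leq 0}$. Each $S\in\mathcal{S}$ is $Q_\rho(M)$ for some $M\in D^b_{\operatorname{coh}}(X)$, and we may represent $M$ by a bounded complex of coherent sheaves. By Krause's construction, $\operatorname{Hom}_{K(\operatorname{Inj})}(Q_\rho M,E[i])\cong\operatorname{Hom}_{K(\operatorname{Qcoh})}(M,E[i])$. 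By \Cref{rmk:internal-hom}, this equals $\Gamma(X,\mathcal{H}^i\operatorname{\mathcal{H}\! \mathit{om}}(M,E))$.

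The key claim is
\begin{displaymath}
E\in\mathcal{T}^{\geq 1} \iff \mathcal{H}^i\operatorname{\mathcal{H}\! \mathit{om}}(M,E)=0 \text{ for all } i\leq 0 \text{ and all } Q_\rho M\in\mathcal{S}.
\end{displaymath}
The direction ($\Leftarrow$) is immediate from taking global sections. For ($\Rightarrow$), use that $\mathcal{S}$ is closed under $\operatorname{Perf}^{\leq 0}(X)\odot-$. For $P\in\operatorname{Perf}^{\leq 0}(X)$ we get
\begin{displaymath}
\operatorname{Hom}(P,\operatorname{\mathcal{H}\! \mathit{om}}(M,E)[i])\cong\operatorname{Hom}(P\otimes M,E[i])=0 \quad\text{for } i\leq 0.
\end{displaymath}
Since $\operatorname{Perf}^{\leq 0}(X)$ generates $D^{\leq 0}_{\operatorname{qc}}(X)$, this places $\operatorname{\mathcal{H}\! \mathit{om}}(M,E)$ in $D^{\geq 1}_{\operatorname{qc}}(X)$. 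Here one must check that $\operatorname{\mathcal{H}\! \mathit{om}}(M,E)$ computes the derived Hom in $D_{\operatorname{qc}}$, which holds since $M$ is bounded coherent and $E$ is a complex of injectives.

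Cohomology sheaves vanish if and only if their stalks do. Since $M$ is bounded with finitely presented terms, $\operatorname{\mathcal{H}\! \mathit{om}}(M,E)_x\cong\operatorname{\mathcal{H}\! \mathit{om}}(M,E_x)_x$. Moreover, $\operatorname{\mathcal{H}\! \mathit{om}}(M,E_x)$ is supported on the generizations of $x$, with stalks there computed by the same formula. Hence the criterion for $E$ holds if and only if it holds for all $E_x$, which proves \eqref{lem:stalk-t-exact1}. In particular $(-)_x$ preserves $\mathcal{T}^{\geq 0}$, so it is $t$-exact.

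For the converse of \eqref{lem:stalk-t-exact2}, suppose $E_x\in\mathcal{T}^{\leq 0}$ for all $x$. Take the truncation triangle of $E$ and apply the $t$-exact functor $(-)_x$; this gives $(\tau^{\geq 1}E)_x=\tau^{\geq 1}(E_x)=0$. Set $F\colonequals\tau^{\geq 1}E$. By the stalk computation above, every sheaf $\mathcal{H}^i\operatorname{\mathcal{H}\! \mathit{om}}(M,F)$ with $M$ bounded coherent has zero stalks, hence vanishes. So $\operatorname{Hom}(Q_\rho M,F[i])=0$ for all compacts, and $F=0$ because $K(\operatorname{Inj}X)$ is compactly generated. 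Therefore $E\in\mathcal{T}^{\leq 0}$.

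I expect the main obstacle to be the technical identifications. These are $E_x\cong\mathcal{O}_x\odot E$ inside $K(\operatorname{Inj}X)$, and the compatibility of $\operatorname{\mathcal{H}\! \mathit{om}}(M,-)$ with stalks and with the derived Hom. Both require care because $X$ need not be separated and $E$ is unbounded. I would handle them by citing \cite{Murfet:2006} and \cite{Krause:2005} and working on an affine open neighbourhood of $x$.
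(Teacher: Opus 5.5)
Your argument follows the paper's proof step for step. You use the same criterion: $E\in\mathcal{T}^{\geq 1}$ if and only if $\mathcal{H}^i\operatorname{\mathcal{H}\! \mathit{om}}(M,E)=0$ for all $i\leq 0$ and all compact generators $Q_\rho(M)$. You get ($\Rightarrow$) from closure of the generators under $\operatorname{Perf}^{\leq 0}(X)$; you phrase it through generation of $D^{\leq 0}_{\operatorname{qc}}(X)$ by $\operatorname{Perf}^{\leq 0}(X)$, where the paper builds the detecting perfect complex by hand via Tag 08EL. Then come stalkwise vanishing and the same truncation-plus-compact-generation argument for the converse of (2). What needs repair is the justification of the technical identifications. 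Three of the claims you supply are false as written.

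First, the identification $E_x=(j_x)_\ast j_x^\ast E\cong\mathcal{O}_x\odot E$, with $\mathcal{O}_x=(j_x)_\ast\mathcal{O}_{X,x}$ in degree $0$, needs $j_x$ to be affine (e.g.\ $X$ semi-separated). In general $(j_x)_\ast j_x^\ast E$ computes $\mathbf{R}(j_x)_\ast j_x^\ast E$, which is governed by $\mathbf{R}(j_x)_\ast\mathcal{O}_{X,x}$, and that object can leave $D^{\leq 0}$. Take the affine plane with doubled origin and $x=0_1$. The stalk of $\mathbf{R}^1(j_x)_\ast\mathcal{O}$ at $0_2$ is $H^1(\operatorname{Spec}\mathcal{O}_{X,x}\setminus\{x\},\mathcal{O})\cong H^2_{\mathfrak{m}}(\mathcal{O}_{X,x})\neq 0$. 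This $X$ is regular, so $K(\operatorname{Inj}(X))\simeq D_{\operatorname{qc}}(X)$. For the standard aisle, $E=\mathcal{O}_X\in\mathcal{T}^{\leq 0}$ but $E_x\notin\mathcal{T}^{\leq 0}$. So with your definition of $(-)_x$, the forward implication of (2) fails outside the semi-separated case, and so does the right $t$-exactness your converse relies on. The paper's proof takes this same step for granted. One way around it is to work with $j_x^\ast$ into $K(\operatorname{Inj}(\operatorname{Spec}\mathcal{O}_{X,x}))$ instead of $(j_x)_\ast j_x^\ast$.

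Second, $\operatorname{\mathcal{H}\! \mathit{om}}(M,E_x)$ is not supported on the generizations of $x$. On $\operatorname{Spec}\mathbb{Z}$, take $x=(p)$, $M=\mathcal{O}$ and $E$ an injective resolution of $\mathbb{Z}$. Then $\mathcal{H}^0\operatorname{\mathcal{H}\! \mathit{om}}(M,E_x)=\widetilde{\mathbb{Z}_{(p)}}$ has stalk $\mathbb{Q}$ at every $(q)\neq(p)$. The vanishing you need at such points comes instead from $\operatorname{\mathcal{H}\! \mathit{om}}(M,E_x)\cong(j_x)_\ast\operatorname{\mathcal{H}\! \mathit{om}}(j_x^\ast M,j_x^\ast E)$. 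The inner complex has flasque terms. Its stalks are those of $\operatorname{\mathcal{H}\! \mathit{om}}(M,E)$ at generizations of $x$, so its cohomology vanishes in degrees $\leq 0$ whenever that of $\operatorname{\mathcal{H}\! \mathit{om}}(M,E)$ does. Left $t$-exactness of $\mathbf{R}(j_x)_\ast$ then gives the vanishing at all points.

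Third, $\operatorname{\mathcal{H}\! \mathit{om}}(M,E)$ does not compute $\mathbf{R}\operatorname{\mathcal{H}\! \mathit{om}}(M,Q(E))$ for unbounded $E$. The latter vanishes for every acyclic $E$. The former is non-acyclic for some $M$ unless $E=0$, which is exactly why one works in $K(\operatorname{Inj}(X))$. What your ($\Rightarrow$) step actually uses is only the adjunction isomorphism $\operatorname{Hom}_{D_{\operatorname{qc}}(X)}(P,\operatorname{\mathcal{H}\! \mathit{om}}(M,E)[i])\cong\operatorname{Hom}_{K(\operatorname{Inj}(X))}(P\odot Q_\rho(M),E[i])$. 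That isomorphism rests on the terms of $\operatorname{\mathcal{H}\! \mathit{om}}(M,E)$ being flasque quasi-coherent, so that $\Gamma$ computes $\mathbf{R}\Gamma$ on it, not on any derived-Hom interpretation.
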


\begin{proof}
    Choose $\mathcal{S} \subseteq D_{\operatorname{coh}}^b(X)$ satisfying $\mathcal{T}^{\leq 0} = \overline{\langle \mathcal{P} \rangle}^{(-\infty,0]}_{\odot}$ where
    \begin{displaymath}
        \mathcal{P} = \{Q_\rho(S) \mid S \in \mathcal{S}\} \subseteq K(\operatorname{Inj}(X))^c.
    \end{displaymath}
    By \Cref{cor:compactly_generated_odot_aisle}, we may impose that $\mathcal{S}$ is closed under the $\odot$-action of $\operatorname{Perf}^{\leq 0}(X)$, i.e.\ $\mathcal{T}^{\leq 0} = \overline{\langle \mathcal{P} \rangle}^{(-\infty,0]}$. Now, for any $S \in \mathcal{S}$ and $E \in K(\operatorname{Inj}(X))$, we have 
    \begin{displaymath}
        \operatorname{Hom}_{K(\operatorname{Qcoh}(X))}(S,E) \cong \operatorname{Hom}_{K(\operatorname{Inj}(X))}(Q_\rho(S),E).
    \end{displaymath}
    This ensures that
    \begin{displaymath}
        \mathcal{T}^{\geq 0} = \{E \in K(\operatorname{Inj}(X)) \mid \forall S \in \mathcal{S}, \operatorname{Hom}_{K(\operatorname{Qcoh}(X))}(S,E) = 0\}.
    \end{displaymath}
    To see the isomorphism on hom-sets, use the observation that the cone $C$ of $S \to Q_\rho(S)$ is a bounded below acyclic complex. In particular, it implies $\operatorname{Hom}_{K(\operatorname{Qcoh}(X))}(C,E) = 0$ for any $E \in K(\operatorname{Inj}(X))$, see e.g. \cite[Proposition 6.9 \& Example 6.15]{Stovicek:2014}.

    Note that $\mathcal{T}^{\leq 0}$ is closed under the $\odot$-action of $D^{\leq 0}$. So, it follows that $E \in \mathcal{T}^{\leq 0}$ implies $E_x \in \mathcal{T}^{\leq 0}$ for any $x \in X$. We claim that $E \in \mathcal{T}^{> 0}$ if, and only if, $\mathcal{H}^i\operatorname{\mathcal{H}\! \mathit{om}}(S,E) = 0$ for all $i \leq 0$ and for all $S \in \mathcal{S}$ (see \Cref{rmk:internal-hom}). As per \Cref{rmk:internal-hom}, we have the isomorphism $\operatorname{Hom}_{K(\operatorname{Qcoh}(X))}(S,E[i]) \cong \operatorname{Hom}_{\operatorname{Qcoh}(X)}(\mathcal{O}_X,\mathcal{H}^i\operatorname{\mathcal{H}\! \mathit{om}}(S,E)),$
    and so $\mathcal{H}^i\operatorname{\mathcal{H}\! \mathit{om}}(S,E) = 0$ for all $i \leq 0$ implies $E \in \mathcal{S}^{\perp_{\leq 0}}=\mathcal{T}^{> 0}$. For the converse, let $E$ be such that $\mathcal{H}^i\operatorname{\mathcal{H}\! \mathit{om}}(S,E) \neq 0$ for some $S \in \mathcal{S}$ and $i \leq 0$. By \cite[\href{https://stacks.math.columbia.edu/tag/08EL}{Tag 08EL}]{StacksProject}, there is $C \in \operatorname{Perf}^{\leq 0}(X)$ such that 
    \begin{displaymath}
        \operatorname{Hom}_{K(\operatorname{Qcoh}(X))}(C,\operatorname{\mathcal{H}\! \mathit{om}}(S,E)) \not\cong 0.
    \end{displaymath} 
    Indeed, there is a coherent sheaf $\mathcal{F}$ and a morphism $\mathcal{F} \to  Z^i(\operatorname{\mathcal{H}\! \mathit{om}}(S,E))$ to the $i$-th cocycle sheaf of $\operatorname{\mathcal{H}\! \mathit{om}}(S,E)$ such that the induced morphism $\mathcal{F} \to \mathcal{H}^i(\operatorname{\mathcal{H}\! \mathit{om}}(S,E))$ is nonzero. Choose $C \in \operatorname{Perf}^{\leq i}$ such that $\mathcal{H}^i(C) \cong \mathcal{F}$. Then the composition of the truncation of $C \to \mathcal{F}[-i]$ with the morphism $\mathcal{F}[-i] \to Z^i(\operatorname{\mathcal{H}\! \mathit{om}}(S,E))[-i] \to \operatorname{\mathcal{H}\! \mathit{om}}(S,E)$ is nonzero on the $i$-th cohomology. By adjunction, we obtain nonvanishing
    \begin{displaymath}
        \operatorname{Hom}_{K(\operatorname{Qcoh}(X))}(C \otimes_X S,E) \neq 0.
    \end{displaymath}
    which implies $E \not\in \mathcal{T}^{>0}$ as $C \otimes_X S \in \mathcal{S}$ by the assumption. This establishes the claim. Using \cite[\S III, Proposition 6.8]{Hartshorne:2013}, we know that $\mathcal{H}^i\operatorname{\mathcal{H}\! \mathit{om}}(S,E) = 0$ if, and only if, for each $x \in X$, one has
    \begin{displaymath}
        0 = \mathcal{H}^i(\operatorname{\mathcal{H}\! \mathit{om}}(S,E))_x \cong \mathcal{H}^i(\operatorname{\mathcal{H}\! \mathit{om}}(S,E)_x) \cong \mathcal{H}^i(\operatorname{\mathcal{H}\! \mathit{om}}(S,E_x)).
    \end{displaymath}
    Thus, \eqref{lem:stalk-t-exact1} follows.

    Finally, we need to show that $E_x \in \mathcal{T}^{\leq 0}$ for all $x \in X$ implies $E \in \mathcal{T}^{\leq 0}$. Consider the truncation triangle with respect to $(\mathcal{T}^{\leq 0},\mathcal{T}^{\geq 0})$:
    \begin{displaymath}
        E^{\leq 0} \to E \to E^{\geq 0} \to (E^{\leq 0})[1].
    \end{displaymath}
    By the previous paragraph, we already know that $(-)_x$ is t-exact. This implies that
    \begin{displaymath}
        (E^{\leq 0})_x \to E_x \to (E^{\geq 0})_x \to (E^{\leq 0}_x)[1]
    \end{displaymath}
    is the truncation triangle of $E_x$. Note that $(E^{\geq 0})_x =0$ for all $x \in X$ by the assumption. We claim that this implies $Y = E^{\geq 0} = 0$. Indeed, for any $S \in D_{\operatorname{coh}}^b(X)$ and $x \in X$ we have $\operatorname{\mathcal{H}\! \mathit{om}}(S,Y)_x \cong \operatorname{\mathcal{H}\! \mathit{om}}(S,Y_x) = 0$, implying that $\mathcal{H}^0\operatorname{\mathcal{H}\! \mathit{om}}(S,Y) = 0$. Then $\operatorname{Hom}_{K(\operatorname{Qcoh}(X))}(S,Y)=0$ for any $S \in D_{\operatorname{coh}}^b(X)$, which by the above implies vanishing of $\operatorname{Hom}_{K(\operatorname{Inj}(X))}(P,Y)$ for any $P \in K(\operatorname{Inj}(X))^c$, resulting in $Y=0$.
\end{proof}

\begin{proposition}
    \label{prop:local-CI-tproxy}
	Let $R$ be a local complete intersection ring. Then every object of $D^b_{\operatorname{coh}}(R)$ is $t$-proxy small in $D_{\operatorname{qc}}(R)$.
\end{proposition}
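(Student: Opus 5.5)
The plan is to verify the two halves of $t$-proxy smallness separately for $M \in D^b_{\operatorname{coh}}(R)$, with $R$ local complete intersection.

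\textbf{Compact generation.} Since $\operatorname{Spec} R$ is affine, every aisle is a $\otimes$-aisle, so $\overline{\langle M \rangle}^{(-\infty,0]} = \overline{\langle M \rangle}^{(-\infty,0]}_\otimes$. By \Cref{thm:ALS_global_full_classification} this aisle is compactly generated and equals $\mathcal{U}_\phi$ for
\begin{displaymath}
    \phi(i)=\bigcup_{j\geq i}\operatorname{supp}\mathcal{H}^j(M).
\end{displaymath}
Thus condition (1) of the definition holds for free. Everything reduces to showing
\begin{displaymath}
    \mathcal{U}_\phi\cap \operatorname{Perf}(R)\subseteq \langle M\rangle^{(-\infty,0]}.
\end{displaymath}

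\textbf{Reduction to Koszul complexes.} By the description of $\mathcal{U}_\phi$ recalled in \Cref{sec:prelim_Thomason} together with \Cref{prop:compact_aisle_susp}, the compacts of $\mathcal{U}_\phi$ form the suspended subcategory finitely generated by objects $K(\underline{x})[-n]$. Here $K(\underline{x})$ is the Koszul complex on a finite sequence $\underline{x}$ with $V(\underline{x})\subseteq \operatorname{supp}\mathcal{H}^j(M)$ for some $j\geq n$. Since $K(\underline{x})[-n]\in\langle K(\underline{x})[-j]\rangle^{(-\infty,0]}$, it suffices to prove
\begin{displaymath}
    K(\underline{x})[-j]\in \langle M\rangle^{(-\infty,0]} \qquad\text{whenever } V(\underline{x})\subseteq \operatorname{supp}\mathcal{H}^j(M).
\end{displaymath}
Two facts help here. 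First, $K(\underline{x})$ lies in $\langle R\rangle^{(-\infty,0]}$, being built from $R[k]$ with $k\ge 0$. Hence $K(\underline{x})\otimes M\in\langle M\rangle^{(-\infty,0]}$, and we may freely replace $M$ by such tensor products. Second, conversely, $K(\underline{x})$ is built by $K(\underline{x})\otimes K(\underline{x})$ using only nonnegative shifts, up to summands and extensions.

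\textbf{The complete intersection input.} Following the proof of \cite[Theorem 5.2]{Pollitz:2019} and Bergh's argument cited in the remark after \Cref{ex:proxy-small-not-t-proxy-small}, write (after completion) $R=Q/(f_1,\dots,f_c)$ with $Q$ regular and $f_1,\dots,f_c$ a regular sequence. Use the Eisenbud operators $\chi_k\colon M\to M[2]$. The key observation is that the Koszul object $M/\!\!/\chi_k$ sits in a triangle
\begin{displaymath}
    M[1]\to M/\!\!/\chi_k\to M[2]\to M[2],
\end{displaymath}
so it lies in $\langle M\rangle^{(-\infty,0]}$, since only positive shifts are used. Iterating over $k=1,\dots,c$ produces $N\in\langle M\rangle^{(-\infty,0]}$ of finite projective dimension, that is, $N$ is perfect. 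The top cohomology of $N$ is controlled by that of $M$: it lands in degree $\max+2c$ before shifting back. Doing this after first truncating supports by tensoring with $K(\underline{x})$ and localizing at primes of $V(\underline{x})$, we get a perfect object of $\langle M\rangle^{(-\infty,0]}$ whose top cohomology in degree $\geq j$ has support exactly $V(\underline{x})$. From such an object, $K(\underline{x})[-j]$ is obtained by the Hopkins--Neeman type argument (\cite[Lemma 1.2]{Neeman:1992}) refined to suspended subcategories: tensoring with $K(\underline{x})$ and using that the top cohomology surjects.

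\textbf{Main obstacle.} The hard part is this last step: making the degree bookkeeping $t$-exact. Thick-subcategory arguments freely use negative shifts, and one must check at each stage that only nonnegative shifts occur and that the top nonzero cohomology lands in degree $\geq j$ with full support $V(\underline{x})$. A secondary difficulty is descent from $\widehat R$ to $R$. For this I would use that $R\to\widehat R$ is faithfully flat, that perfect complexes supported at $\mathfrak m$ are unchanged by completion, and \Cref{lem:t_exactness_flat_pullback}-style arguments, reducing first to the case of supports containing the closed point by localization.
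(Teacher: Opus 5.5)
Your compact-generation step and your reduction to proving $K(\underline{x})[-j]\in\langle M\rangle^{(-\infty,0]}$ are correct. The gap is at the main step, which is the real content of the proof, and the mechanism you sketch for it does not work. What is needed is the following: for every $j$ and every $\mathfrak p\in\operatorname{supp}\mathcal{H}^j(M)$, a perfect object of $\langle M\rangle^{(-\infty,0]}$ having $\mathfrak p$ in the support of some cohomology in degree $\geq j$.

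\begin{itemize}
\item Localizing at primes is not an operation available inside $\langle M\rangle^{(-\infty,0]}\subseteq D^b_{\operatorname{coh}}(R)$.
\item Tensoring with $K(\underline{x})$ shrinks supports but does not make $\mathcal{H}^j(M)$ the top cohomology. So a global statement about where ``the top cohomology of $N$'' lands says nothing about intermediate degrees.
\item There is also a shift error. With $M/\!\!/\chi$ the cone of $\chi\colon M\to M[2]$, the triangle is $M[2]\to M/\!\!/\chi\to M[1]\to M[3]$, not the one you wrote. Knowing only $M/\!\!/\chi\in\langle M\rangle^{(-\infty,0]}$ loses a degree at each step, so the resulting perfect object in general cannot $t$-build $M$.
\end{itemize}
What you need instead is that the fibre $(M/\!\!/\chi)[-1]$, an extension of $M$ by $M[1]$, lies in $\langle M\rangle^{(-\infty,0]}$. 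In the paper's notation this is $E_i[-1]\in\langle E_{i-1}\rangle^{(-\infty,0]}$, so $P\colonequals E_t[-t]\in\langle M\rangle^{(-\infty,0]}\cap\operatorname{Perf}(R)$.

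The missing idea is that this single $P$ already suffices. Localization should be applied only to cohomology, in order to compare Thomason filtrations. Let $b$ be the largest degree with $\mathcal{H}^b(E_{i-1})_{\mathfrak p}\neq 0$. Localize at $\mathfrak p$ the long exact sequence of $E_i[-1]\to E_{i-1}\to E_{i-1}[n_i]$. Since $\mathcal{H}^{b+n_i}(E_{i-1})_{\mathfrak p}=0$, the map $\mathcal{H}^b(E_i[-1])_{\mathfrak p}\to\mathcal{H}^b(E_{i-1})_{\mathfrak p}$ is onto. Hence $\phi_{E_{i-1}}\subseteq\phi_{E_i[-1]}$, and inductively $\phi_M\subseteq\phi_P$, i.e.\ $M\in\overline{\langle P\rangle}^{(-\infty,0]}$. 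This is the paper's periodicity argument in contrapositive form. Then \Cref{prop:compact_aisle_susp} gives $\overline{\langle M\rangle}^{(-\infty,0]}\cap\operatorname{Perf}(R)=\langle P\rangle^{(-\infty,0]}\subseteq\langle M\rangle^{(-\infty,0]}$. No construction tailored to each $(j,\underline{x})$ and no Hopkins--Neeman refinement is needed.

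The completion issue is also a genuine gap, not a secondary one. Eisenbud operators live over $\widehat R$. Your sketch does not explain how a perfect $\widehat R$-complex in $\langle\widehat M\rangle^{(-\infty,0]}$ yields a perfect $R$-complex in $\langle M\rangle^{(-\infty,0]}$ with the required degreewise supports. Restriction of scalars along $R\to\widehat R$ in general leaves $D^b_{\operatorname{coh}}(R)$; it behaves well only for finite-length cohomology. Passing to $R_{\mathfrak p}$ raises the same descent problem. The paper avoids this by invoking Bergh's reducible-complexity result. That result supplies the triangles $E_{i-1}\to E_{i-1}[n_i]\to E_i$, with $n_i>0$ and $E_t$ perfect, directly in $D^b_{\operatorname{coh}}(R)$.
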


\begin{proof}
	Choose $E\in D^b_{\operatorname{coh}}(R)$. By the
	proof of \cite[Theorem 3.2]{Bergh:2009}, there exist positive integers $n_1,\dots,n_t$ and $E_0, E_1,\dots, E_t\in D^b_{\operatorname{coh}}(R)$ with $E_0\cong E$ and $E_t$ is compact. These objects fit in distinguished triangles,
	\begin{displaymath}
        E_{i-1}\to E_{i-1} [n_i] \to E_{i}\to E_{i-1}[1] 
    \end{displaymath}
	for $1\leq i\leq t$. For $n_i>0$, the distinguished triangle
	\begin{displaymath}
        E_{i-1} [n_i-1] \to E_i [-1]\to E_{i-1}\to E_{i-1} [n_i]
    \end{displaymath}
	tells us that $E_i[-1]\in \langle E_{i-1} \rangle^{[-\infty,0]}$. Hence, using
	induction, we can conclude that $P\colonequals E_t [-t]\in 
	\langle E \rangle^{[-\infty,0]} \cap \operatorname{Perf}(R)$ (i.e.\ is compact). 
    
    Now, we want to show that $E\in \overline{\langle P \rangle}^{(-\infty,0]}$. Towards that end, we leverage the classification of compactly generated aisles in $D_{\operatorname{qc}}(R)$ with Thomason filtrations on $\operatorname{Spec}(R)$; see \Cref{sec:prelim_Thomason} and references therein. Denote the Thomason filtration corresponding to $\overline{\langle P \rangle}^{(-\infty,0]}$ by $\phi_P$. Recall that
	$B\in \overline{\langle P \rangle}^{(-\infty,0]}$ if, and only if, $\phi_B(i)\subseteq \phi_P(i)$ for every
	$i\in\mathbb{Z}$. We claim that $\phi_{E_{j-1}}(i)\subseteq \phi_{E_j [-1]}(i)$ for $1\leq j\leq t$. By induction, it would follow that $\phi_E(i)\subseteq \phi_P(i)$ for every $i$, and hence, $E\in \overline{\langle P \rangle}^{(-\infty,0]}$.

	So, we prove the desired claim. Let $\mathfrak{p}\not\in \phi_{E_j [-1]}(i)$. This means that $0=(\mathcal{H}^n(\Sigma^{-1}E_j))_\mathfrak{p}=(\mathcal{H}^{n-1}(E_j))_\mathfrak{p}$ for every $n\geq i$. Consider the long exact sequence of cohomology,
	\begin{displaymath}
        \mathcal{H}^{n-1}(E_j)\to \mathcal{H}^{n}(E_{j-1})\to \mathcal{H}^{n}(E_{j-1}[n_j])\to \mathcal{H}^{n}(E_j).
    \end{displaymath}
	By localizing at $\mathfrak{p}$, we obtain isomorphisms 
    \begin{displaymath}
        (\mathcal{H}^n(E_{j-1}))_\mathfrak{p}\cong (\mathcal{H}^n(E_{j-1} [n_j] ))_\mathfrak{p}\cong (\mathcal{H}^{n+n_j}(E_{j-1}))_\mathfrak{p}
    \end{displaymath}
    for every $n\geq
	i$. Since $E_{j-1}$ is a bounded complex, we have $\mathcal{H}^{n+kn_i}(E_{j-1})=0$ for
	$k\gg0$. Hence, it follows that
	$(\mathcal{H}^n(E_{j-1}))_\mathfrak{p}\cong(\mathcal{H}^{n+kn_i}(E_{j-1}))_\mathfrak{p}=0$ for every $n\geq
	i$. Consequently, we see that $\mathfrak{p}\notin\phi_{E_{j-1}}(i)$, which completes the proof.
\end{proof}

\begin{lemma}
    \label{lem:tproxy-Krecoll}
    Let $X$ be a Noetherian scheme. An object $P \in D^b_{\operatorname{coh}}(X)$ is $t$-$\otimes$-proxy small if, and only if, the following condition holds:
    \begin{enumerate}
        \item[($\dagger$)] $Q_\lambda(P) \in \overline{\langle Q_\rho(P) \rangle}^{(-\infty,0]}_\odot$.
    \end{enumerate}
\end{lemma}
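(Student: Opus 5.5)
The plan is to compare both sides inside Krause's recollement \eqref{eq:krause_recollement_schemes}, using the cocomplete $\odot$-aisle $\mathcal{Z}\colonequals\overline{\langle Q_\rho(P) \rangle}^{(-\infty,0]}_\odot$ on $K(\operatorname{Inj}(X))$. Write $\mathcal{A}\colonequals\langle P\rangle^{(-\infty,0]}_\otimes\subseteq D^b_{\operatorname{coh}}(X)$.

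By \Cref{prop:aisles_are_compactly_generated}, $\overline{\langle P\rangle}^{(-\infty,0]}_\otimes$ is always compactly generated. So $t$-$\otimes$-proxy smallness of $P$ amounts exactly to the inclusion $\operatorname{Perf}(X)\cap\overline{\langle P\rangle}^{(-\infty,0]}_\otimes\subseteq\mathcal{A}$. Here I use \Cref{lem:generated_odot_aisle} to identify the cocomplete preaisle generated by $\mathcal{A}$ with $\overline{\langle P\rangle}^{(-\infty,0]}_\otimes$.

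I will use three compatibility facts.
\begin{enumerate}
\item $Q_\rho$ restricts to an equivalence $D^b_{\operatorname{coh}}(X)\simeq K^c(\operatorname{Inj}(X))$, with inverse $Q$. It intertwines the $\operatorname{Perf}(X)$-actions, by the construction of $\odot$ in \Cref{ex:actions} together with \Cref{lem:Krause_recollement_Q_compatible}.
\item $Q_\lambda\cong Q_\rho$ on $\operatorname{Perf}(X)$.
\item $Q_\lambda(C\otimes M)\cong C\odot Q_\lambda(M)$ for $C\in\operatorname{Perf}(X)$. Both sides are coproduct-preserving exact functors in $M$ that agree on $\operatorname{Perf}(X)$ by the previous two facts, so they agree on all of $D_{\operatorname{qc}}(X)$ by the universal property of the ind-completion.
\end{enumerate}
By \Cref{cor:compactly_generated_odot_aisle} and \Cref{prop:suspended_aisles_to_compactly_generated_aisles_a_la_neeman}, $\mathcal{Z}$ is compactly generated with $\mathcal{Z}\cap K^c(\operatorname{Inj}(X))=\langle Q_\rho(P)\rangle^{(-\infty,0]}_\odot=Q_\rho(\mathcal{A})$.

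\emph{($\Rightarrow$)} Assume $P$ is $t$-$\otimes$-proxy small, and set $\mathcal{P}\colonequals\mathcal{A}\cap\operatorname{Perf}(X)$, so that $P\in\overline{\langle\mathcal{P}\rangle}^{(-\infty,0]}$. The functor $Q_\lambda$ is exact and coproduct preserving. By the dévissage from the proof of \Cref{thm:injective_mapping_for_t_proxy_tensor_aisles_scheme}(1), this gives $Q_\lambda(P)\in\overline{\langle Q_\lambda(\mathcal{P})\rangle}^{(-\infty,0]}$. Moreover $Q_\lambda(\mathcal{P})=Q_\rho(\mathcal{P})\subseteq Q_\rho(\mathcal{A})\subseteq\mathcal{Z}$, so $Q_\lambda(P)\in\mathcal{Z}$, which is $(\dagger)$.

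\emph{($\Leftarrow$)} Assume $(\dagger)$ and consider $\mathcal{W}\colonequals\{M\in D_{\operatorname{qc}}(X)\mid Q_\lambda(M)\in\mathcal{Z}\}$. Since $Q_\lambda$ is exact and coproduct preserving, $\mathcal{W}$ is a cocomplete preaisle. Since $\mathcal{Z}$ is a $\odot$-aisle and the third compatibility fact holds, $\mathcal{W}$ is closed under $\operatorname{Perf}^{\leq 0}(X)\otimes-$. By $(\dagger)$ it contains $P$, hence $\overline{\langle P\rangle}^{(-\infty,0]}_\otimes\subseteq\mathcal{W}$.

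Now take $C\in\operatorname{Perf}(X)\cap\overline{\langle P\rangle}^{(-\infty,0]}_\otimes$. Then $Q_\lambda(C)$ is compact, because $Q_\lambda$ preserves compacts, and it lies in $\mathcal{Z}$. Therefore $Q_\lambda(C)\in\mathcal{Z}\cap K^c(\operatorname{Inj}(X))=Q_\rho(\mathcal{A})$. Applying $Q$ and using $QQ_\lambda\cong\mathrm{id}$, we get $C\in QQ_\rho(\mathcal{A})=\mathcal{A}$, as required.

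The main obstacle I expect is justifying the action-compatibility of $Q_\lambda$ (the third fact). The identification $\mathcal{Z}\cap K^c(\operatorname{Inj}(X))=\langle Q_\rho(P)\rangle^{(-\infty,0]}_\odot$ should then follow cleanly from \Cref{prop:compact_aisle_susp} applied to the compact generating set $\operatorname{Perf}^{\leq 0}(X)\odot Q_\rho(P)$.
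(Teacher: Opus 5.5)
Your proof is correct and follows the same route as the paper's: in both directions you transport along $Q_\lambda$ and $Q_\rho$ in Krause's recollement, use that the compact objects of $\overline{\langle Q_\rho(P)\rangle}^{(-\infty,0]}_\odot$ are exactly $\langle Q_\rho(P)\rangle^{(-\infty,0]}_\odot$, and then apply $Q$. You are slightly more careful than the paper in two places. First, the paper uses the compatibility $Q_\lambda(C\otimes M)\cong C\odot Q_\lambda(M)$ tacitly; you state it, and it can also be obtained by Yoneda from $Q_\lambda\dashv Q$, \Cref{lem:Krause_recollement_Q_compatible} and the adjunction $C\odot-\dashv C^\vee\odot-$, without invoking ind-completions. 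Second, the paper asserts a single perfect $S$ with $S\in\langle P\rangle^{(-\infty,0]}_\otimes$ and $P\in\overline{\langle S\rangle}^{(-\infty,0]}_\otimes$, whereas you work with the whole set $\langle P\rangle^{(-\infty,0]}_\otimes\cap\operatorname{Perf}(X)$ and so need no such justification.
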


\begin{proof} 
    By \Cref{prop:aisles_are_compactly_generated}, we know that $\overline{\langle P \rangle}^{(-\infty,0]}_\otimes$ is a compactly generated aisle in $D_{\operatorname{qc}}(X)$. Define $\mathcal{S} \colonequals \overline{\langle P \rangle}^{(-\infty,0]}_\otimes \cap \operatorname{Perf}(X)$. We need to show that $\mathcal{S} \subseteq \langle P \rangle^{[-\infty,0]}_{\otimes}$. Let $\overline{\langle Q_\rho(P) \rangle}^{(-\infty,0]}_\odot$ be the $\odot$-aisle in $K(\operatorname{Inj}(X))$ generated by $Q_\rho(P)$. Fix $S \in \mathcal{S}$. Then, as $Q_\lambda$ is a left adjoint, $S \in \overline{\langle P \rangle}^{(-\infty,0]}_\otimes$ implies that $Q_\lambda(S) \in \overline{\langle Q_\lambda(P) \rangle}^{(-\infty,0]}_\odot$. By the assumption $(\dagger)$, it follows that $Q_\lambda(S) \in \overline{\langle Q_\rho(P) \rangle}^{(-\infty,0]}_\odot$. However, $Q_\lambda(S) \cong Q_\rho(S)\in K(\operatorname{Inj}(X))^c$, which ensures that $Q_\lambda(S) \in \langle Q_\rho(P) \rangle^{[-\infty,0]}_{\odot}$. Applying the localization functor $Q$, we have that $S \in \langle P \rangle^{[-\infty,0]}_{\otimes}$.

    For the converse implication, assume that $P$ is $t$-$\otimes$-proxy small. Then there is $S \in \operatorname{Perf}(X)$ such that $S \in \langle P \rangle^{(-\infty,0)}_\otimes$ and $P \in \overline{\langle S \rangle}^{(-\infty,0)}_\otimes$. Applying $Q_\lambda$ to the latter, we obtain $Q_\lambda(P) \in \overline{\langle Q_\lambda(S) \rangle}^{(-\infty,0)}_\odot = \overline{\langle Q_\rho(S) \rangle}^{(-\infty,0)}_\odot$. Applying $Q_\rho$ to the former we have $Q_\rho(S) \in \langle Q_\rho(P) \rangle^{(-\infty,0)}_\otimes$, which implies $\overline{\langle Q_\rho(S) \rangle}^{(-\infty,0)}_\odot \subseteq \overline{\langle Q_\rho(P) \rangle}^{(-\infty,0]}_\odot$.
\end{proof}

\begin{proof}
    [Proof of \Cref{prop:LCI_characterizations}]
    By \Cref{prop:LCI_iff_proxy_small}, we know that $\eqref{prop:LCI_characterizations1} \iff \eqref{prop:LCI_characterizations3}$. Moreover, it is straightforward to check that $\eqref{prop:LCI_characterizations2} \iff \eqref{prop:LCI_characterizations1}$. So, we only need to show \eqref{prop:LCI_characterizations1} implies \eqref{prop:LCI_characterizations2}. Let $P \in D^b_{\operatorname{coh}}(X)$. By \Cref{prop:LCI_iff_proxy_small}, we know that $P$ is $\otimes$-proxy small. As before, let $\mathcal{T}^{\leq 0}$ be the $\odot$-aisle in $K(\operatorname{Inj}(X))$ generated by $Q_\rho(P)$. In view of \Cref{lem:tproxy-Krecoll}, we need to check that $Q_\lambda(P) \in \mathcal{T}^{\leq 0}$. In view of \Cref{lem:stalk-t-exact}, it suffices to check $(Q_\lambda(P))_x \in \mathcal{T}^{\leq 0}$. But it is not hard to observe that $(Q_\lambda(P))_x \cong Q_\lambda(P_x)$, where $Q_\lambda$ comes from the Krause's recollement for the local ring $\mathcal{O}_{X,x}$. Since $P_x$ is $t$-proxy small over $\mathcal{O}_{X,x}$ by \Cref{prop:local-CI-tproxy}, we have $Q_\lambda(P_x) \in \mathcal{T}^{\leq 0}$ as desired.
\end{proof}

\section{Classification}
\label{sec:classification}

This section proves our main classification results. To start, we need a lemma.

\begin{lemma}
    \label{lem:local-CI-preaisle-thick}
    Let $X$ be a Noetherian scheme which is locally a complete intersection. Then $A[-1] \in \langle A \rangle^{[-\infty,0]}_{\star}$ for any $A \in D_{\operatorname{sg}}(X)$. 
\end{lemma}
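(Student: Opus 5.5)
The plan is to lift $A$ to an object of $D^b_{\operatorname{coh}}(X)$, use the $t$-$\otimes$-proxy smallness provided by \Cref{prop:LCI_characterizations}, and push the resulting statement down to the singularity category through the Verdier quotient $\pi\colon D^b_{\operatorname{coh}}(X)\to D_{\operatorname{sg}}(X)$.

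First, reduce to $A=\pi(M)$ with $M\in D^b_{\operatorname{coh}}(X)$. A general $A$ is a direct summand of such an object, since we work in the idempotent completion. Suppose the claim holds for $\pi(M)=A\oplus A'$. Then $A[-1]$ is a summand of $\pi(M)[-1]\in\langle A\oplus A'\rangle^{(-\infty,0]}_\star$. This reduction requires care: it only gives $A[-1]\in\langle A\oplus A'\rangle$, not $A[-1]\in\langle A\rangle$.

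To avoid that gap, I would instead argue in $S_{\operatorname{qc}}(X)$ directly. Set
\[
\mathcal{C}\colonequals\{B\in S_{\operatorname{qc}}(X)^c \mid B[-1]\in\langle B\rangle^{(-\infty,0]}_\star\}
\]
and show that $\mathcal{C}$ contains every object. The key computation handles objects of the form $\pi(M)$ with $M$ arbitrary; I then apply this to $M$ chosen so that $\pi(M)\cong A\oplus A[1]$. Such an $M$ exists because the class of $A\oplus A[1]$ in $K_0$ vanishes, so by Thomason's theorem it lies in the image of $D_{\operatorname{sg}}(X)$. From $A\oplus A[1]$ the statement for $A$ follows, since $\langle A\oplus A[1]\rangle^{(-\infty,0]}_\star=\langle A\rangle^{(-\infty,0]}_\star$.

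The key step is as follows. By \Cref{prop:LCI_characterizations}, $M$ is $t$-$\otimes$-proxy small. Let $\mathcal{P}=\overline{\langle M\rangle}^{(-\infty,0]}_\otimes\cap\operatorname{Perf}(X)\subseteq\langle M\rangle^{(-\infty,0]}_\otimes$. By \Cref{lem:local-CI-preaisle-thick}'s local analogue, namely the Bergh triangles in \Cref{prop:local-CI-tproxy}, the support filtration of $M[-1]$ should be controlled by that of $\mathcal{P}$ and $M$. More directly, I would aim to show that $M[-1]\in\langle M, \mathcal{P}[-1]\rangle^{(-\infty,0]}_\otimes$, using a perfect complex $C\in\mathcal{P}$ together with the triangle coming from the Koszul/Bergh periodicity $E_{i-1}\to E_{i-1}[n_i]\to E_i$. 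Globalizing this periodicity is the main obstacle, since Bergh's construction is local.

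My fallback for globalizing is \Cref{lem:stalk-t-exact}. Work in $K(\operatorname{Inj}(X))$ with the $\odot$-aisle $\mathcal{T}^{\le0}$ generated by $Q_\rho(M)$ and $Q_\lambda(\operatorname{Perf}(X))$, that is, the glued aisle of $\langle\pi(M)\rangle_\star$ and all of $D_{\operatorname{qc}}$. I would check that $Q_\rho(M)[-1]\in\mathcal{T}^{\le0}$ stalkwise. At each $x$, the local ring is a complete intersection, and Bergh's triangles show that $\pi(M_x)[-1]$ lies in the preaisle generated by $\pi(M_x)$: perfect cones vanish, and each $E_i$ is built from $E_{i-1}[n_i]$ with $n_i\ge1$, which after iterating gives $E[-1]$ up to perfects. \Cref{lem:stalk-t-exact} then gives $Q_\rho(M)[-1]\in\mathcal{T}^{\le0}$. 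Applying $I_\lambda$, which is right $t$-exact and kills $Q_\lambda(\operatorname{Perf})$, yields $\pi(M)[-1]\in\overline{\langle\pi(M)\rangle}^{(-\infty,0]}_\star$. Finally, compactness together with \Cref{prop:compact_aisle_susp} gives $\pi(M)[-1]\in\langle\pi(M)\rangle^{(-\infty,0]}_\star$.
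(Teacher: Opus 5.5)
Your operative argument is the ``fallback'' in the last paragraph, and it is essentially the paper's proof. Membership in a compactly generated aisle becomes a stalkwise question via \Cref{lem:stalk-t-exact}, and the local case is settled with Bergh's triangles over the complete intersection $\mathcal{O}_{X,x}$. The only real difference is where you localize. The paper applies stalk-locality directly to the $\star$-aisle generated by $A$ in $S_{\operatorname{qc}}(X)$. You instead apply \Cref{lem:stalk-t-exact} in $K(\operatorname{Inj}(X))$ to the $\odot$-aisle generated by $Q_\rho(M)$ and all shifts of $Q_\lambda(\operatorname{Perf}(X))$, i.e.\ the glued aisle $I_\lambda^{-1}\bigl(\overline{\langle \pi(M)\rangle}^{(-\infty,0]}_\star\bigr)$, and then return with $I_\lambda$ and \Cref{prop:compact_aisle_susp}. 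This variant is legitimate, and it has the small merit of using \Cref{lem:stalk-t-exact} exactly as stated, namely for $K(\operatorname{Inj}(X))$. Including all shifts of the perfect generators is what lets the perfect error terms of the local construction land in your aisle.

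Everything before that paragraph can be dropped. $D_{\operatorname{sg}}(X)$ is the Verdier quotient, so every $A$ is already of the form $\pi(M)$, and the $K_0$ trick with $A\oplus A[1]$ is correct but unnecessary. Likewise, the $t$-$\otimes$-proxy smallness of $M$ and the class $\mathcal{P}$ never enter.

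The step that needs repair is the local one, which carries the actual content of the lemma. As written, ``each $E_i$ is built from $E_{i-1}[n_i]$ with $n_i\geq 1$'' is the direction $E_i\in\langle E_{i-1}\rangle^{(-\infty,0]}$. Iterating it produces a perfect complex in $\langle E\rangle^{(-\infty,0]}$, which is what \Cref{prop:local-CI-tproxy} uses. In the singularity category that conclusion is vacuous, and it says nothing about $E[-1]$.

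You need the other reading of the triangles. Put $A_i=\pi(E_i)$ and use the rotations $A_i[-1]\to A_{i-1}\to A_{i-1}[n_i]\to A_i$. Prove by descending induction on $i$ that $A_i[-1]\in\langle A_i\rangle^{(-\infty,0]}$, equivalently that $\langle A_i\rangle^{(-\infty,0]}$ is closed under $[-1]$.
\begin{itemize}
\item For $i=t$ this holds because $A_t\cong 0$.
\item For the inductive step, shifting gives a triangle $A_i[-2]\to A_{i-1}[-1]\to A_{i-1}[n_i-1]\to A_i[-1]$. Since $n_i\geq 1$, the object $A_{i-1}[n_i-1]$ lies in $\langle A_{i-1}\rangle^{(-\infty,0]}$.
\item The triangle $A_{i-1}[n_i]\to A_i\to A_{i-1}[1]\to A_{i-1}[n_i+1]$ gives $A_i\in\langle A_{i-1}\rangle^{(-\infty,0]}$. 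With the induction hypothesis this yields $A_i[-2]\in\langle A_i\rangle^{(-\infty,0]}\subseteq\langle A_{i-1}\rangle^{(-\infty,0]}$, so $A_{i-1}[-1]\in\langle A_{i-1}\rangle^{(-\infty,0]}$.
\end{itemize}
With this induction in place, and the routine identification of the stalk $(Q_\rho(M))_x$ with the pushforward of the local object $Q_\rho(M_x)$, your argument goes through.
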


\begin{proof}
    By \Cref{lem:stalk-t-exact}, it suffices to prove the statement for the case of $X = \operatorname{Spec}(R)$ where $R$ is a local complete intersection. Indeed, let $\mathcal{T}^{\leq 0}$ be the $\star$-aisle in $S_{\operatorname{qc}}(X)$ (compactly) generated by $A$. Then we have $A[-1] \in \langle A \rangle^{[-\infty,0]}_{\star}$ if, and only if, $A[-1] \in \mathcal{T}^{\leq 0}$. However, the latter condition is equivalent to $A_x[-1] \in \mathcal{T}^{\leq 0}$ for any $x \in X$. Yet, this last condition is implied by $A_x[-1] \in \langle A_x \rangle^{[-\infty,0]}_{\star}$.
    
    Denote by $\pi\colon D^b_{\operatorname{coh}}(R) \to D_{\operatorname{sg}}(R)$ the natural functor. Choose $E \in D^b_{\operatorname{coh}}(R)$ such that $\pi(E)\cong A$. Consider the rotations of the distinguished triangles appearing in the proof of \Cref{prop:local-CI-tproxy},
    \begin{displaymath}
        E_{i}[-1] \to E_{i-1} \to E_{i-1} [n_i] \to E_{i},
    \end{displaymath}
    where $n_i>0$, $E_0 = E$, and $E_t\in \operatorname{Perf}(R)$ for some $t \geq 0$. Applying $\pi$, we obtain distinguished triangles in $D_{\operatorname{sg}}(R)$, 
    \begin{displaymath}
        A_{i}[-1] \to A_{i-1} \to A_{i-1} [n_i] \to A_{i}.
    \end{displaymath}
    We claim that $A_i[-1] \in \langle A_i \rangle^{[-\infty,0]}$. This can be shown by backwards induction on $i=t,t-1,\ldots,0$. The case $i=t$ is clear as $E_t$ is compact, i.e.\ $A_t\cong 0$ in $D_{\operatorname{sg}}(R)$. For the induction step, the distinguished triangle above gives us $A_{i-1}[-1] \in \langle A_{i-1},A_i[-2] \rangle^{[-\infty,0]}$. By induction, we have $A_i[-2] \in \langle A_{i} \rangle^{[-\infty,0]} \subseteq \langle A_{i-1} \rangle^{[-\infty,0]}$, so that $A_{i-1}[-1] \in \langle A_{i-1} \rangle^{[-\infty,0]}$.
\end{proof}

\begin{reminder}
    Let $X$ be a Noetherian scheme which is locally a complete intersection. In view of \Cref{prop:LCI_characterizations} and \Cref{lem:local-CI-preaisle-thick}, \Cref{thm:injective_mapping_for_t_proxy_tensor_aisles_scheme} provides the following injective map:
    \begin{displaymath}
            \begin{aligned}
                \Theta \colon & \left\{ \otimes\textrm{-suspended subcategories }\mathcal{S}\subseteq D^b_{\operatorname{coh}}(X) \right\}
                \\&\to \left\{ \star\textrm{-thick subcategories of } D_{\operatorname{sg}}(X) \right\} \times \left\{ \textrm{Thomason filtrations on } X \right\}.
            \end{aligned}
    \end{displaymath}
    Following \cite{Stevenson:2014b}, we define the \textbf{support} of an object $A \in D_{\operatorname{sg}}(X)$ by 
    \begin{displaymath}
        \operatorname{supp}_{\operatorname{sg}}(A) \colonequals \{x \in X \mid A_x \text{ is not perfect}\} \subseteq \operatorname{sing}(X).
    \end{displaymath}
    If $\mathcal{A}\subseteq D_{\operatorname{sg}}(X)$, set $\operatorname{supp}_{\operatorname{sg}}(A) = \bigcup_{A \in \mathcal{A}}\operatorname{supp}_{\operatorname{sg}}(A)$. Given a specialization closed subset $W$ of $\operatorname{sing}(X)$, we have the $\star$-thick subcategory of $D_{\operatorname{sg}}(X)$,
    \begin{displaymath}
        \operatorname{supp}^{-1}_{\operatorname{sg}}(W) = \{A \in D_{\operatorname{sg}}(X) \mid \operatorname{supp}_{\operatorname{sg}}(A) \subseteq W\}.
    \end{displaymath}
\end{reminder}

\begin{lemma}
    \label{lem:singular-lift-support_hyp}
    Let $X$ be a separated Gorenstein Noetherian scheme. Let $A \in D_{\operatorname{sg}}(X)$ and set $W = \operatorname{supp}_{\operatorname{sg}}(A)$. Then $W$ is closed in $X$ and there is $M \in D^b_{\operatorname{coh}}(X)$ representing $A$ such that $\operatorname{supp}(M) \subseteq W$.
\end{lemma}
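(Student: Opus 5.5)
Two things have to be established: that $W=\operatorname{supp}_{\operatorname{sg}}(A)$ is closed, and that $A$ has a lift $M\in D^b_{\operatorname{coh}}(X)$ with $\operatorname{supp}(M)\subseteq W$. For closedness, pick any $N\in D^b_{\operatorname{coh}}(X)$ with $\pi(N)\cong A$; up to isomorphism in $D_{\operatorname{sg}}(X)$ every object is of this form, and idempotent completion is harmless because summands of $\pi(N)$ have smaller singular support. The locus where $N_x$ is perfect is open, since perfectness of a coherent complex spreads out from a stalk to a neighbourhood (finite Tor-dimension is an open condition on a Noetherian scheme). Hence $W=\{x\mid N_x\notin\operatorname{Perf}(\mathcal{O}_{X,x})\}$ is closed, and it is independent of the choice of $N$ because $\pi(N)=\pi(N')$ forces $N_x,N'_x$ to differ by perfect pieces.

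For the lift, the first step uses the Gorenstein hypothesis to replace $N$ by a maximal Cohen--Macaulay sheaf. Shifting $A$ suitably, I would take a sufficiently high syzygy of $N$ with respect to a resolution by locally free sheaves; this exists because $X$ is separated, and I would use the resolution property or a Hartshorne-style argument, accepting finite-dimensionality as in Stevenson's setting. This yields a coherent sheaf $F$ with $\pi(F)\cong A[n]$ for some $n$, such that $F_x$ is MCM over the Gorenstein local ring $\mathcal{O}_{X,x}$ for every $x$. Then for $x\notin W$, $F_x$ is MCM of finite projective dimension, hence free by Auslander--Buchsbaum. So $F$ is locally free on the open set $U=X\setminus W$.

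The second step kills $F$ away from $W$ without changing its class in $D_{\operatorname{sg}}(X)$. I would use Stevenson's local cohomology formalism for the action $\star$: the localization triangle $\Gamma_W\mathcal{O}_X\star A\to A\to L_W\mathcal{O}_X\star A$, together with the fact that $L_W\mathcal{O}_X\star A=0$ because $A$ restricts to $0$ in $D_{\operatorname{sg}}(U)$. Concretely, with a perfect complex $K$ supported exactly on $W$ (Koszul-type, which exists by Thomason), the objects $K\otimes F$ lie in $D^b_{\operatorname{coh}}(X)$ with support in $W$. The aim is to show that $A$ lies in the thick $\star$-subcategory they generate, and in fact is a summand of $\pi(K\otimes F)$ up to shift. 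This is where the Gorenstein hypothesis matters again: by the classification of Stevenson \cite{Stevenson:2014b} for hypersurface-like or Gorenstein cases, or more directly via the isomorphism $D_{\operatorname{sg}}(X)_W\simeq$ image of $D^b_{\operatorname{coh},W}(X)$, every object of $D_{\operatorname{sg}}(X)$ with singular support in $W$ lies in the image of $D^b_{\operatorname{coh},W}(X)$. I would prove this by a Verdier localization argument: the sequence $D^b_{\operatorname{coh},W}\to D^b_{\operatorname{coh}}(X)\to D^b_{\operatorname{coh}}(U)$ and the analogous sequence for $\operatorname{Perf}$ induce an exact sequence of singularity categories (Orlov / Chen), so the kernel of $D_{\operatorname{sg}}(X)\to D_{\operatorname{sg}}(U)$ is the image of $D_{\operatorname{sg},W}$ up to summands.

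I expect the main obstacle to be this last step, specifically removing the ``up to summands'' and the closure-under-isomorphism issue, so that an actual object $M$ supported in $W$ represents $A$ itself. To handle it, I would use the MCM sheaf $F$: it satisfies $F|_U$ locally free, and modifying $F$ by the pushforward of the free data from $U$ gives a short exact sequence relating $F$ to a sheaf supported on $W$ modulo a locally free sheaf. This construction requires extending the vector bundle $F|_U$ to a perfect complex on $X$, which Thomason--Trobaugh provides after adding a summand. I would absorb that summand by replacing $F$ with $F\oplus F[1]$, whose class in $K_0$ vanishes.
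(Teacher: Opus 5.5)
\paragraph{The gap.} Your closedness argument is fine, and it works on any Noetherian scheme without appealing to Stevenson. The genuine gap is the step you flag at the end: replacing $F$ by $F\oplus F[1]$ does not absorb the Thomason--Trobaugh summand, it changes the object. At best you obtain $M$ with $\operatorname{supp}(M)\subseteq W$ and $\pi(M)\cong A[n]\oplus A[n+1]$, so $A$ is represented only up to a direct summand. This cannot be repaired. If $\pi(M)\cong\pi(F)$ with $M|_U=0$, a roof with perfect cones shows that $[F|_U]$ lies in the image of $K_0(X)\to K_0(U)$, and nothing forces the class of $F|_U$ in $K_0(U)/\operatorname{im}K_0(X)$ to vanish. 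Step 1 is also unjustified as stated: separatedness does not give locally free resolutions, and finite Krull dimension is not assumed. That is secondary, though, since the Verdier-localization argument does not need MCM approximations.

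\paragraph{The statement is false as written.} So no argument can close this gap. Let $R=k[x,y,z]_{(x,y,z)}/(xy-z^2)$, a hypersurface (hence Gorenstein) with an isolated singularity, and let $\mathfrak{p}=(x,z)$ and $A=\pi(\mathfrak{p})$. Since $\mathfrak{p}$ is maximal Cohen--Macaulay but not free, $W=\{\mathfrak{m}\}$ and $U=X\setminus W$ is regular. Suppose $\pi(M)\cong A$ with $M|_U=0$, and pick a roof $\mathfrak{p}\leftarrow N\rightarrow M$ with perfect cones $C,C'$. On $U$ this gives $[\mathfrak{p}|_U]=[C|_U]-[C'|_U]$ in $K_0(U)$. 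Taking determinants yields $\mathfrak{p}|_U\cong\det(C)|_U\otimes\det(C')|_U^{-1}\cong\mathcal{O}_U$, because line bundles on the local scheme $X$ are trivial. But $\operatorname{Pic}(U)\cong\operatorname{Cl}(R)\cong\mathbb{Z}/2$ (as $U$ is regular and $W$ has codimension two), and this group is generated by the class of $\mathfrak{p}|_U$, a contradiction.

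\paragraph{What is true.} Both your localization argument and the paper's proof actually give the weaker statement: $A$ is a direct summand of $\pi(M)$ for some $M$ with $\operatorname{supp}(M)\subseteq W$. The paper lifts $A$ along $I_\lambda$ from the compact objects of $S_{W,\operatorname{qc}}(X)$. Those compacts are only the idempotent completion of $I_\lambda Q_\rho(D^b_{\operatorname{coh},W}(X))$, so the paper's argument has the same summand issue. This weaker form is all that the proof of \Cref{thm:image_assignment_tstr_lci_general} uses, because $\mathcal{A}$ is thick and suspended subcategories are closed under direct summands.
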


\begin{proof}
    That $W$ is closed is proved in \cite[Proposition 7.5]{Stevenson:2014}. The second claim follows from \cite[Proposition 6.9]{Krause:2005} (see also the discussion following it) but we spell out some details. Let $U\colonequals X \setminus W$. The open immersion $U \to X$ induces a localizing sequence 
    \begin{displaymath}
        S_{W,\operatorname{qc}}(X) \to S_{\operatorname{qc}}(X) \to S_{\operatorname{qc}}(U)
    \end{displaymath}
    where $S_{W,\operatorname{qc}}(X)$ consists of acyclic complexes $B$ such that $\operatorname{supp}_{i \in \mathbb{Z}}B^i \subseteq W$. Since $A_x = 0$ in $S_{\operatorname{qc}}(X)$ for any $x \in U$, $A$ is sent to the zero object in $S_{\operatorname{qc}}(U)$, and thus belongs to $S_{W,\operatorname{qc}}(X)$. By the discussion following \cite[Proposition 6.9]{Krause:2005}, $S_{W,\operatorname{qc}}(X)$ is in the image under $I_\lambda$ of the subcategory $K_{W,\operatorname{qc}}(\operatorname{Inj}(X))$ of $K(\operatorname{Inj}(X))$ defined similarly. Then $A$ can be lifted along $I_\lambda$ to an injective resolution of an object of $D^b_{\operatorname{coh}}(X)$ which is supported inside $W$.
\end{proof}

\begin{theorem}
    \label{thm:image_assignment_tstr_lci_general}
    Let $X$ be a Noetherian scheme which is locally a complete intersection. Then the assignment $\Theta$ induces a bijection between:
    \begin{enumerate}
        \item $\otimes$-suspended subcategories of $D_{\operatorname{coh}}^b(X)$, and
        \item pairs $(\mathcal{A},\phi)$ of a $\star$-thick subcategory $\mathcal{A}$ of $D_{\operatorname{sg}}(X)$ and a Thomason filtration $\phi$ on $X$ such that $\operatorname{supp}_{\operatorname{sg}}(\mathcal{A}) \subseteq \bigcup_{n \in \mathbb{Z}}\phi(n)$.
    \end{enumerate}
\end{theorem}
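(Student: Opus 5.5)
We reduce to the general image criterion of \Cref{prop:image_assignment_tstr_general}, together with the results of \Cref{sec:characterization}. Since $X$ is locally a complete intersection, \Cref{prop:LCI_characterizations} makes every object of $D^b_{\operatorname{coh}}(X)$ $t$-$\otimes$-proxy small. Hence every $\otimes$-suspended subcategory of $D^b_{\operatorname{coh}}(X)$ is in the domain of $\Theta$, and $\Theta$ is injective by \Cref{thm:injective_mapping_for_t_proxy_tensor_aisles_scheme}. By \Cref{lem:local-CI-preaisle-thick}, every $\star$-suspended subcategory of $D_{\operatorname{sg}}(X)$ is closed under $[-1]$, hence is $\star$-thick, so the first component lands where claimed. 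What remains is to identify the image with the pairs satisfying the support condition.

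\emph{Necessity.} Let $(\mathcal{A},\phi)=\Theta(\mathcal{S})$ and $B\in\mathcal{A}$. By \Cref{prop:image_assignment_tstr_general} there is $M\in D^b_{\operatorname{coh}}(X)$ with $B\in\langle\pi(M)\rangle^{(-\infty,0]}_\otimes$ and $\operatorname{supp}\mathcal{H}^i(M)\subseteq\phi(i)$ for all $i$. Suppose $x\notin\bigcup_n\phi(n)$. Then $M_x=0$, so $\pi(M)_x=0$. Localization at $x$ is exact and compatible with the $\star$-action, so it kills everything in $\langle\pi(M)\rangle^{(-\infty,0]}_\otimes$. Therefore $B_x$ is perfect, i.e.\ $x\notin\operatorname{supp}_{\operatorname{sg}}(B)$. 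This gives $\operatorname{supp}_{\operatorname{sg}}(\mathcal{A})\subseteq\bigcup_n\phi(n)$.

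\emph{Sufficiency.} Suppose $(\mathcal{A},\phi)$ satisfies the support condition, and fix $B\in\mathcal{A}$. We must produce $M$ satisfying (1)--(3) of \Cref{prop:image_assignment_tstr_general}.
\begin{enumerate}
\item Set $W=\operatorname{supp}_{\operatorname{sg}}(B)$. I would like $W$ closed and a lift $M_0$ of $B$ with $\operatorname{supp}(M_0)\subseteq W$. For separated Gorenstein $X$ this is exactly \Cref{lem:singular-lift-support_hyp}. Here I would rerun its proof via Krause's localization sequence $S_{W,\operatorname{qc}}(X)\to S_{\operatorname{qc}}(X)\to S_{\operatorname{qc}}(X\setminus W)$, using that lci schemes are Gorenstein. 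If $W$ fails to be closed, I would instead replace $W$ by the closure of a lift's support meeting $\operatorname{sing}$; this is at least specialization closed.
\item Since $W\subseteq\bigcup_n\phi(n)$ and $W$ is closed, hence quasi-compact and a finite union of irreducible closures, there is an $n_0$ with $W\subseteq\phi(n_0)$. The filtration is decreasing, so $W\subseteq\phi(m)$ for every $m\le n_0$.
\item Choose $k$ so large that $M\colonequals M_0[k]$ has cohomology only in degrees $\le n_0$. Then $\operatorname{supp}\mathcal{H}^i(M)\subseteq W\subseteq\phi(i)$ for all $i$, which is condition (3).
\item Since $\pi(M)=B[k]$, we get $\pi(M)\in\mathcal{A}$, which is condition (1).
\item For condition (2), we need $B\in\langle B[k]\rangle^{(-\infty,0]}_\otimes$. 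This holds because $\mathcal{A}$ is thick, so repeated use of \Cref{lem:local-CI-preaisle-thick} gives $B=B[k][-k]\in\langle B[k]\rangle_\star^{(-\infty,0]}$.
\end{enumerate}
By \Cref{prop:image_assignment_tstr_general}, $(\mathcal{A},\phi)$ then lies in the image of $\Theta$.

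The main obstacle is step (1), the support-controlled lift, since \Cref{lem:singular-lift-support_hyp} is stated only for separated Gorenstein schemes. For general Noetherian lci $X$ I would first try to prove closedness of $W$ locally. One route is Stevenson's argument or a local argument via \Cref{lem:stalk-t-exact}, using Bergh's complexity triangles from \Cref{prop:local-CI-tproxy}. I would then use \cite[Lemma B.3]{Coupek/Stovicek:2020} to remove separatedness from Krause's localization sequence.
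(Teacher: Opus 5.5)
Your proposal follows the paper's proof essentially step for step: reduce to \Cref{prop:image_assignment_tstr_general}, get necessity from the fact that a $\star$-suspended closure cannot enlarge the singular support, and get sufficiency from a support-controlled lift (\Cref{lem:singular-lift-support_hyp}), a shift so that the cohomology meets the filtration, and \Cref{lem:local-CI-preaisle-thick} to shift back. The separatedness issue you flag in step (1) is genuine, but the paper's proof simply invokes \Cref{lem:singular-lift-support_hyp} without addressing it, so you are not missing an ingredient the paper supplies; also, in step (2) it already suffices that $\operatorname{supp}(M_0)$ is closed, which is automatic and is what the paper uses.
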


\begin{proof}
    By \Cref{prop:image_assignment_tstr_general} and the discussion above, the image of $\Theta$ consists precisely of those pairs $\mathcal{A}$ and $\phi$ such that for each $A \in \mathcal{A}$ there is $M \in D_{\operatorname{coh}}^b(X)$ with $\operatorname{supp}(\mathcal{H}^i(M)) \subseteq \phi(i)$ for all $i \in \mathbb{Z}$ such that $I_\lambda Q_\rho(M) \in \mathcal{A}$ and $A \in \langle I_\lambda Q_\rho(M) \rangle^{[-\infty,0]}_\star$. Therefore, our goal is to show that $\operatorname{supp}_{\operatorname{sg}}(\mathcal{A}) \subseteq \bigcup_{n \in \mathbb{Z}}\phi(n)$ is both a necessary and a sufficient condition for this to hold.

    The condition is clearly necessary, indeed, let $A \in \mathcal{A}$ be such that $\operatorname{supp}_{\operatorname{sg}}(A) \not\subseteq \bigcup_{n \in \mathbb{Z}}\phi(n)$. Then for any $M \in D^b_{\operatorname{coh}}(X)$ such that $\operatorname{supp}(M) \subseteq \bigcup_{n \in \mathbb{Z}}\phi(n)$ the object $A$ cannot belong to $\langle I_\lambda Q_\rho(M) \rangle^{[-\infty,0]}_\star \subseteq \operatorname{supp}^{-1}_{\operatorname{sg}}(\operatorname{supp}_{\operatorname{sg}}(I_\lambda Q_\rho(M))$.
    
    Now assume that $\operatorname{supp}_{\operatorname{sg}}(\mathcal{A}) \subseteq \bigcup_{n \in \mathbb{Z}}\phi(n)$ holds, and let $A \in \mathcal{A}$. There is $M \in D^b_{\operatorname{coh}}$ such that $M$ lifts $A$ along $D^b_{\operatorname{coh}}(X) \to D_{\operatorname{sg}}(X)$ and $\operatorname{supp}(M) \subseteq \operatorname{supp}_{\operatorname{sg}}(A)$, this uses \Cref{lem:singular-lift-support_hyp}. Then $\operatorname{supp}(M) \subseteq \bigcup_{n \in \mathbb{Z}}\phi(n)$, and since $\operatorname{supp}(M)$ is closed, there is $n \in \mathbb{Z}$ such that $\operatorname{supp}(M) \subseteq \phi(n)$. Therefore, there must be an even integer $m \in \mathbb{Z}$ such that the condition on cohomology is satisfied for $M[m]$. Clearly, $I_\lambda Q_\rho M[m] \in \mathcal{A}$. On the other hand, \Cref{lem:local-CI-preaisle-thick} yields $A = I_\lambda Q_\rho M \in\langle M[m] \rangle^{[-\infty,0]}_\star$, so the condition is satisfied for $M[m]$.
\end{proof}

\begin{remark}
    \label{rmk:special-cases-classification}
    \hfill
    \begin{enumerate}
        \item \label{rmk:special-cases-classification1} The $\otimes$-thick subcategories of $D_{\operatorname{coh}}^b(X)$ correspond under \Cref{thm:image_assignment_tstr_lci_general} precisely to pairs $(\mathcal{A},\phi)$ such that $\phi$ is constant, that is, there is a specialization closed subset $V$ of $X$ such that $\phi(n) = V$ for all $n \in \mathbb{Z}$. The last condition then simplifies to $\operatorname{supp}_{\operatorname{sg}}(\mathcal{A}) \subseteq V$.
        \item \label{rmk:special-cases-classification2} The $\otimes$-suspended subcategories of $D_{\operatorname{coh}}^b(X)$ which contain $\mathcal{O}_X$ in degree 0 correspond under \Cref{thm:image_assignment_tstr_lci_general} precisely to pairs $(\mathcal{A},\phi)$ such that $\phi(0) = X$. The last condition then becomes vacuous and thus all such pairs are in the image of $\Theta$.
    \end{enumerate}
\end{remark}

\begin{proof}
    [Proof of \Cref{thm:image_assignment_tstr_lci_hyp}]
    This follows directly by combining \Cref{thm:image_assignment_tstr_lci_general} with \cite[Corollary 7.9]{Stevenson:2014b}.
\end{proof}

\subsection{Complete intersection schemes}
\label{sec:classification_lci}

Apart from schemes with hypersurface singularities, \cite{Stevenson:2014,Stevenson:2014b} provided an explicit classification of $\star$-thick subcategories of $D_{\operatorname{sg}}(X)$ in the more general case of complete intersection schemes. We recall the precise setup in which this was done and show that \Cref{thm:image_assignment_tstr_lci_general} can be made into an explicit classification in this case as well.

\begin{setup}
    \label{setup:orlov-stevenson}
    Let $X$ satisfy the following conditions.
    \begin{enumerate}        
        \item \label{setup:orlov-stevenson1} $X$ is a complete intersection, that is, there is a separated regular Noetherian scheme $T$ of finite Krull dimension, a vector bundle $\mathcal{E}$ on $T$ of finite rank, and a global section $t \in H^0(T,\mathcal{E})$ such that $X$ is the zero subscheme of $t$ in $T$.
        \item \label{setup:orlov-stevenson2} $X$ has the resolution property.
        \item \label{setup:orlov-stevenson3} The line bundle $\mathcal{O}_\mathcal{E}(1)$ is ample.
    \end{enumerate}
    Following \cite{Orlov:2006}, see also the generalization of \cite[Appendix A]{Burke/Walker:2015} and discussion in \cite[\S 2]{Stevenson:2014}, $X$ satisfying the conditions \eqref{setup:orlov-stevenson1} and \eqref{setup:orlov-stevenson2} above admits scheme morphisms $X \xleftarrow{p} Z \xrightarrow{i} Y$ such that:
    \begin{enumerate}
        \item $Z$ is the projective bundle of the normal bundle ${\mathcal{N}}_{X/T}$ in $T$ and $p$ is the canonical projection
        \item $Y$ is a hypersurface scheme and $i$ is a closed immersion
        \item the induced functor $\mathbf{R} i_\ast \mathbf{L} p^\ast \colon D_{\operatorname{coh}}^b(X) \to D_{\operatorname{coh}}^b(Y)$ is fully faithful and further induces an equivalence $\mathbf{R}i_\ast \mathbf{L} p^\ast\colon D_{\operatorname{sg}}(X) \to D_{\operatorname{sg}}(Y)$.
    \end{enumerate}
\end{setup}

\begin{theorem}
    \label{thm:image_assignment_tstr_ci}
    Let $X$ be as in \Cref{setup:orlov-stevenson}. Then $\Theta$ induces a bijection between
    \begin{enumerate}
        \item $\otimes$-suspended subcategories of $D_{\operatorname{coh}}^b(X)$, and
        \item pairs $(W,\phi)$ of a specialization closed subset $W$ of $\operatorname{sing}(Y)$ and a Thomason filtration $\phi$ on $X$ such that $p i^{-1}(W) \subseteq \bigcup_{n \in \mathbb{Z}}\phi(n)$.
    \end{enumerate}
\end{theorem}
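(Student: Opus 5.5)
The argument reduces \Cref{thm:image_assignment_tstr_ci} to \Cref{thm:image_assignment_tstr_lci_general} and then translates both the $\star$-thick subcategories and their singular supports along the Orlov equivalence of \Cref{setup:orlov-stevenson}. First, $X$ is locally a complete intersection, since it is the zero locus of a section of a vector bundle on a regular scheme. So \Cref{thm:image_assignment_tstr_lci_general} applies: $\Theta$ is a bijection between $\otimes$-suspended subcategories of $D^b_{\operatorname{coh}}(X)$ and pairs $(\mathcal{A},\phi)$, where $\mathcal{A}\subseteq D_{\operatorname{sg}}(X)$ is $\star$-thick, $\phi$ is a Thomason filtration, and $\operatorname{supp}_{\operatorname{sg}}(\mathcal{A})\subseteq\bigcup_n\phi(n)$. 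What remains is to replace $\mathcal{A}$ by a specialization closed subset $W\subseteq\operatorname{sing}(Y)$ and to rewrite the support condition.

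For the first replacement, I would invoke Stevenson's classification in this setup \cite{Stevenson:2014,Stevenson:2014b}. Under the equivalence $\Psi\colonequals\mathbf{R}i_\ast\mathbf{L}p^\ast\colon D_{\operatorname{sg}}(X)\to D_{\operatorname{sg}}(Y)$, the $\star$-thick subcategories of $D_{\operatorname{sg}}(X)$ are in bijection with specialization closed subsets $W$ of $\operatorname{sing}(Y)$. Concretely, $\mathcal{A}_W=\{A\mid \operatorname{supp}_{\operatorname{sg}}^Y(\Psi A)\subseteq W\}$. Here one uses that $Y$ is a hypersurface, so \cite[Corollary 7.9]{Stevenson:2014b} applies on $Y$ with respect to the action of $D_{\operatorname{qc}}(Y)$, and the ampleness hypothesis on $\mathcal{O}_\mathcal{E}(1)$ lets one compare the $D(X)$-action with the $\mathcal{O}_Y$-action transported along $\Psi$. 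I would cite this directly from \cite{Stevenson:2014} rather than reprove it.

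The main obstacle is the support translation, namely showing that $\operatorname{supp}_{\operatorname{sg}}(\mathcal{A}_W)=p\,i^{-1}(W)$. I would try to prove, for each $A\in D_{\operatorname{sg}}(X)$,
\begin{displaymath}
\operatorname{supp}^X_{\operatorname{sg}}(A)=p\bigl(i^{-1}(\operatorname{supp}^Y_{\operatorname{sg}}(\Psi A))\bigr).
\end{displaymath}
The plan is to localize at a point $x\in X$ and use that the construction is compatible with flat base change to $\operatorname{Spec}\mathcal{O}_{X,x}$. The question is then whether $A_x$ is perfect. By Orlov's equivalence for the local complete intersection, this holds exactly when $\Psi(A)$ vanishes in the singularity category over the fibre $p^{-1}(x)\subseteq Z\subseteq Y$, that is, when $\operatorname{supp}^Y_{\operatorname{sg}}(\Psi A)$ avoids $i(p^{-1}(x))$. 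Taking unions over $A\in\mathcal{A}_W$ and using that $W$ is realized as such a union (each closed piece of $W$ is the singular support of some object, by the classification), this gives $\operatorname{supp}_{\operatorname{sg}}(\mathcal{A}_W)=p\,i^{-1}(W)$. The one point to check is that $W\subseteq\operatorname{sing}(Y)\subseteq i(Z)$, so that no part of $W$ is lost under $i^{-1}$. If this is true, then $p\,i^{-1}$ is injective on specialization closed subsets of $\operatorname{sing}(Y)$ in the relevant sense.

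Composing the two bijections finishes the proof. The pair $(\mathcal{A}_W,\phi)$ lies in the image of \Cref{thm:image_assignment_tstr_lci_general} if and only if $p\,i^{-1}(W)\subseteq\bigcup_n\phi(n)$, which is exactly condition (2) of \Cref{thm:image_assignment_tstr_ci}.
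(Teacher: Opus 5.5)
Your proof is correct, and its skeleton is the paper's. You reduce to \Cref{thm:image_assignment_tstr_lci_general}, trade $\mathcal{A}$ for $W$ via Stevenson's classification (the paper cites \cite[Theorem 4.9]{Stevenson:2014} for exactly your assignment $W\mapsto\mathcal{A}_W$), and then prove $\operatorname{supp}^X_{\operatorname{sg}}(A)=p\,i^{-1}(\operatorname{supp}^Y_{\operatorname{sg}}(\Psi A))$.

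The genuine difference is how you prove this identity. The paper argues pointwise. Faithful flatness of $\mathcal{O}_{X,x}\to\mathcal{O}_{Z,z}$ shows that $A_x$ is perfect if and only if $(\mathbf{L}p^\ast A)_z$ is, and finite Tor dimension of $i$ is then used to pass to $(\Psi A)_z$. This is elementary and proves $p\,i^{-1}(\operatorname{supp}^Y_{\operatorname{sg}}(\Psi A))\subseteq\operatorname{supp}^X_{\operatorname{sg}}(A)$, because $\mathbf{R}i_\ast$ preserves perfection. It does not reflect perfection, however, and the pointwise equivalence can fail. Take $T=\mathbb{A}^2_k$ and $t=(u,v^2)$. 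The point $[1:0]$ over the origin is a regular point of $Y=V(ux_1+v^2x_2)$, while $k$ is not perfect on $X=\operatorname{Spec} k[v]/(v^2)$. So the other inclusion, $\operatorname{supp}^X_{\operatorname{sg}}(A)\subseteq p\,i^{-1}(\operatorname{supp}^Y_{\operatorname{sg}}(\Psi A))$, needs $\Psi$ to detect zero objects of $D_{\operatorname{sg}}$, which is Orlov's theorem. Your route supplies exactly this. You base change flatly to $\operatorname{Spec}\mathcal{O}_{T,x}$, where Orlov's hypotheses persist (regular local base, trivial bundle, regular sequence, $\mathcal{O}(1)$ ample), and apply Orlov's equivalence there, which gives both inclusions at once.

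To complete your argument, add the step you glossed. The localized statement concerns all of $Y\times_T\operatorname{Spec}\mathcal{O}_{T,x}$, i.e.\ points over generizations of $x$, not just $i(p^{-1}(x))$. You reach the fibre because non-perfection persists under specialization and $Y\to T$ is proper. Hence every non-perfect point over a generization of $x$ specializes to a non-perfect point over $x$.

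Two side remarks in your write-up are wrong, though neither is used.
\begin{itemize}
\item $X$ is locally a complete intersection only because $t$ is a regular section, which is implicit in ``complete intersection''. The zero locus of an arbitrary section need not be lci; for example, take $t=(u^2,uv)$ on $\mathbb{A}^2_k$.
\item $p\,i^{-1}$ is not injective on specialization closed subsets of $\operatorname{sing}(Y)$. For $t=(u^2,v^2)$, every point of the fibre $\mathbb{P}^1_k$ over the origin is singular on $Y$, so distinct closed points of that fibre give different $W$ with the same image. Injectivity on the first factor comes from Stevenson's classification, and only $\operatorname{supp}_{\operatorname{sg}}(\mathcal{A}_W)=p\,i^{-1}(W)$ enters your final step, so drop that sentence.
\end{itemize}
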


\begin{proof}
    This follows by combining \Cref{thm:image_assignment_tstr_lci_general} with \cite[Theorem 4.9]{Stevenson:2014}. Indeed, the specialization closed subsets $W$ of $\operatorname{sing}(Y)$ correspond to $\star$-thick subcategories of $D_{\operatorname{sg}}(X)$ via the assignment $W \mapsto \{A \in D_{\operatorname{sg}}(X) \mid \operatorname{supp}_{\operatorname{sg,Y}}(\mathbf{R}i_\ast \mathbf{L} p^\ast A) \subseteq W\}$. It remains to check that
    \begin{displaymath}
        p i^{-1}(\operatorname{supp}_{\operatorname{sg,Y}}(\mathbf{R}i_\ast \mathbf{L} p^\ast A)) = \operatorname{supp}_{\operatorname{sg,X}}(A).
    \end{displaymath} 
    In another words, we need to check for any $x \in X$ and $y \in Y$ such that $x = pi^{-1}(y)$ that $A_x$ is perfect over $\mathcal{O}_{X,x}$ if, and only if, $(\mathbf{R}i_\ast \mathbf{L} p^\ast A)_y$ is perfect over $\mathcal{O}_{Y,y}$. Since $p$ is surjective, it is easy to see that $A_x \in \operatorname{Perf}(\mathcal{O}_{X,x})$ if, and only if, $( \mathbf{L} p^\ast A)_z \cong A_x \otimes_{\mathcal{O}_{X,x}} {O}_{Z,z} \in \operatorname{Perf}(\mathcal{O}_{Z,z})$ for all $z \in p^{-1}(x)$. Next, by the construction of \cite{Orlov:2006} the morphism $i\colon Z \to Y$ is a closed immersion of finite Tor dimension, so $\mathbf{L} p^\ast A)_z \in \operatorname{Perf}(\mathcal{O}_{Z,z})$ if, and only if, $(\mathbf{R}i_\ast \mathbf{L} p^\ast A)_z \in \operatorname{Perf}(\mathcal{O}_{Y,z})$.
\end{proof}

\begin{setup}
    \label{setup:takahashi}
    Let $(R,V)$ be one of the following pairs of a Noetherian ring $R$ and a topological space $V$:
    \begin{enumerate}
        \item \label{setup:takahashi1} $R$ is locally a hypersurface and $V = \operatorname{Sing}(R)$.
        \item \label{setup:takahashi2} $R = S/(\mathbf{a})$ for a regular sequence $\mathbf{a} = (a_1,\ldots,a_n)$ in a regular ring $S$ of finite Krull dimension and $V$ is the singular locus of the zero subscheme $Y$ of the section $a_1 x_1 + \cdots + a_n x_n \in \Gamma(\mathbb{P}^{n-1}_S,\mathcal{O}_{\mathbb{P}^{n-1}_S}(1))$.
    \end{enumerate}
    In the situation (2), set $Z = \mathbb{P}^{n-1}_R$ and denote by $p\colon Z \to \operatorname{Spec}(R)$ the canonical projection. Following Stevenson \cite[\S 10]{Stevenson:2014b}, there is an induced closed immersion $i\colon Z \to Y$ and the morphisms $X \xleftarrow{p} Z \xrightarrow{i} Y$ coincide with the ones of \Cref{setup:orlov-stevenson} for $X = \operatorname{Spec}(R)$. In situation \eqref{setup:takahashi1}, we set both $p$ and $i$ to be the identity morphisms on $\operatorname{Spec}(R)$.
\end{setup}

\begin{corollary}\label{cor:affine_classification}
    Let $R$ be as in \Cref{setup:takahashi}. Then $\Theta$ induces a bijection between
    \begin{enumerate}
        \item suspended subcategories of $D_{\operatorname{coh}}^b(R)$, and
        \item pairs $(W,\phi)$ of a specialization closed subset $W$ of $V$ and a Thomason filtration $\phi$ on $\operatorname{Spec}(R)$ such that $p i^{-1}(W) \subseteq \cup_{n \in \mathbb{Z}}\phi(n)$.
    \end{enumerate}
\end{corollary}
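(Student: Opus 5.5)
The plan is to reduce \Cref{cor:affine_classification} to the two global classification statements already proven: \Cref{thm:image_assignment_tstr_lci_hyp} in situation \eqref{setup:takahashi1} and \Cref{thm:image_assignment_tstr_ci} in situation \eqref{setup:takahashi2}. Both apply to $X=\operatorname{Spec}(R)$, which is separated and Noetherian.

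The first step is to observe that on an affine scheme the tensor condition is automatic. Since $\operatorname{Perf}^{\leq 0}(R)=\langle R\rangle^{(-\infty,0]}$ (as noted in the proof of \Cref{lem:t_exactness_flat_pullback}), and $-\otimes^{\mathbf{L}}_R M$ is exact with $R\otimes^{\mathbf{L}}_R M\cong M$, a d\'evissage argument works: the class of $C$ with $C\otimes^{\mathbf{L}}_R\mathcal{S}\subseteq\mathcal{S}$ is suspended and contains $R$. Hence every suspended subcategory of $D^b_{\operatorname{coh}}(R)$ is $\otimes$-suspended. So side (1) of the corollary agrees with side (1) of the theorems.

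The second step is to verify the hypotheses in each case.

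In situation \eqref{setup:takahashi1}, $X$ has only hypersurface singularities. Then \Cref{thm:image_assignment_tstr_lci_hyp} gives exactly the stated bijection with $V=\operatorname{sing}(X)$ and $p=i=\mathrm{id}$, so that $pi^{-1}(W)=W$.

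In situation \eqref{setup:takahashi2}, I would check the conditions of \Cref{setup:orlov-stevenson} with $T=\operatorname{Spec}(S)$, $\mathcal{E}=\mathcal{O}_T^{n}$ and $t=(a_1,\dots,a_n)$:
\begin{itemize}
\item $T$ is separated, regular and of finite Krull dimension by hypothesis.
\item The zero subscheme of $t$ is $\operatorname{Spec}(R)$.
\item An affine Noetherian scheme has the resolution property.
\item $\mathcal{O}_{\mathcal{E}}(1)$ on $\mathbb{P}(\mathcal{E})=\mathbb{P}^{n-1}_S$ is $\mathcal{O}(1)$, which is ample over an affine base.
\end{itemize}
Following \cite[\S 10]{Stevenson:2014b}, the Orlov morphisms $X\xleftarrow{p}Z\xrightarrow{i}Y$ coincide with those recorded in \Cref{setup:takahashi}, with $Y$ the zero locus of $\sum a_ix_i$ and $V=\operatorname{sing}(Y)$. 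Then \Cref{thm:image_assignment_tstr_ci} yields the claim.

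The main obstacle I expect is this identification in case \eqref{setup:takahashi2}. One must be sure that the normal bundle $\mathcal{N}_{X/T}\cong\mathcal{E}|_X$ is trivial, so that $Z=\mathbb{P}^{n-1}_R$, and that Stevenson's hypersurface $Y$ and closed immersion $i$ are the ones from \Cref{setup:orlov-stevenson}. I would settle this by citing Stevenson's explicit construction rather than recomputing it.
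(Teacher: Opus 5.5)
Your proposal is correct and follows essentially the same route as the paper: it specializes \Cref{thm:image_assignment_tstr_lci_hyp} and \Cref{thm:image_assignment_tstr_ci} to $X=\operatorname{Spec}(R)$. It uses that suspended subcategories of $D^b_{\operatorname{coh}}(R)$ are automatically $\otimes$-suspended on an affine scheme, and that situation (2) of \Cref{setup:takahashi} satisfies \Cref{setup:orlov-stevenson}, which the paper simply cites from Stevenson. Your explicit hypothesis checks (trivial bundle $\mathcal{E}=\mathcal{O}_T^{n}$, resolution property and ampleness over an affine base) and your d\'evissage argument for the tensor closure only spell out what the paper leaves to references.
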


\begin{proof}
    This is a special case of \Cref{thm:image_assignment_tstr_lci_hyp} and \Cref{thm:image_assignment_tstr_ci} applied to $X = \operatorname{Spec}(R)$. Indeed, if $R$ is locally a hypersurface then the claim is just \Cref{thm:image_assignment_tstr_lci_hyp} restricted to affine schemes. In the situation \eqref{setup:takahashi2} of \Cref{setup:takahashi}, $X$ satisfies \Cref{setup:orlov-stevenson}, see \cite[\S 2]{Stevenson:2014}. Furthermore, as $X$ is affine, any suspended subcategory of $D_{\operatorname{coh}}^b(R)$ is automatically $\odot$-suspended, and so the claim is a special case of \Cref{thm:image_assignment_tstr_ci}.
\end{proof}

We remark that \Cref{cor:affine_classification} specializes to the following recent result of Takahashi. 

\begin{corollary}
    [cf.\ {\cite[Theorem 1.4]{Takahashi:2025}}]
    \label{rmk:generalize_takahashi}
     Let $R$ be as in \Cref{setup:takahashi}. Then $\Theta$ induces a bijection between
    \begin{enumerate}
        \item suspended subcategories of $D_{\operatorname{coh}}^b(R)$ containing $R$, and
        \item pairs $(W,\phi)$ of a specialization closed subset $W$ of $V$ and a Thomason filtration $\phi$ on $\operatorname{Spec}(R)$ such that $\phi(0) = \operatorname{Spec}(R)$.
    \end{enumerate}
\end{corollary}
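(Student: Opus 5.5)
The plan is to specialize \Cref{cor:affine_classification} to suspended subcategories containing $R$, in the same way as \Cref{rmk:special-cases-classification}\eqref{rmk:special-cases-classification2} specializes \Cref{thm:image_assignment_tstr_lci_general}. By \Cref{cor:affine_classification}, $\Theta$ is a bijection between all suspended subcategories $\mathcal{S}$ of $D^b_{\operatorname{coh}}(R)$ and pairs $(W,\phi)$ with $W\subseteq V$ specialization closed and $pi^{-1}(W)\subseteq\bigcup_n\phi(n)$. Since $X=\operatorname{Spec}(R)$ is affine, every suspended subcategory is automatically $\otimes$-suspended, so no tensor hypothesis is lost. It then suffices to show two things: $R\in\mathcal{S}$ holds exactly when $\phi(0)=\operatorname{Spec}(R)$, and under this condition the support constraint $pi^{-1}(W)\subseteq\bigcup_n\phi(n)$ is automatic.

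For the first claim, recall that the second component of $\Theta(\mathcal{S})$ is the Thomason filtration $\phi$ of $\overline{\langle\mathcal{S}\rangle}^{(-\infty,0]}_\otimes$. By the description of $\mathcal{U}_\phi$ in \Cref{sec:prelim_Thomason}, $R\in\mathcal{U}_\phi$ if and only if $\operatorname{supp}(R)=\operatorname{Spec}(R)\subseteq\phi(0)$.

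If $R\in\mathcal{S}$, then $R\in\mathcal{U}_\phi$, so $\phi(0)=\operatorname{Spec}(R)$. Conversely, if $\phi(0)=\operatorname{Spec}(R)$, then $R\in\overline{\langle\mathcal{S}\rangle}^{(-\infty,0]}_\otimes\cap\operatorname{Perf}(R)$. Since $R$ is locally a complete intersection, \Cref{prop:LCI_characterizations} shows that $\mathcal{S}$ is $t$-proxy small. Hence the compact objects of the generated aisle lie in $\langle\mathcal{S}\rangle^{(-\infty,0]}_\otimes=\mathcal{S}$, and therefore $R\in\mathcal{S}$.

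For the second claim, if $\phi(0)=\operatorname{Spec}(R)$ then $\bigcup_n\phi(n)=\operatorname{Spec}(R)$, so the support condition in \Cref{cor:affine_classification} is vacuous. Every pair $(W,\phi)$ with $\phi(0)=\operatorname{Spec}(R)$ therefore lies in the image, and restricting the bijection gives the statement.

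The main obstacle is the converse step $\phi(0)=\operatorname{Spec}(R)\Rightarrow R\in\mathcal{S}$. This step needs $t$-proxy smallness, obtained here from \Cref{prop:LCI_characterizations}, and not merely membership in the generated aisle. I would check it carefully via \Cref{prop:suspended_aisles_to_compactly_generated_aisles_a_la_neeman}: the intersection of the aisle with $\operatorname{Perf}(R)$ is the first component under \Cref{thm:injective_mapping_for_t_proxy_tensor_aisles_scheme}(3), and that intersection is contained in $\mathcal{S}$.
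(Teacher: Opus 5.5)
Your proposal is correct and follows the paper's route. The paper likewise restricts \Cref{cor:affine_classification} via \Cref{rmk:special-cases-classification}\eqref{rmk:special-cases-classification2} and notes that the support condition becomes vacuous once $\phi(0)=\operatorname{Spec}(R)$; your $t$-proxy smallness argument for $\phi(0)=\operatorname{Spec}(R)\Rightarrow R\in\mathcal{S}$ supplies the justification that the remark leaves implicit. One small slip: $\overline{\langle\mathcal{S}\rangle}^{(-\infty,0]}\cap\operatorname{Perf}(R)$ is the \emph{second} component in \Cref{thm:injective_mapping_for_t_proxy_tensor_aisles_scheme}(3), not the first.
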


\begin{proof}
    In view of \eqref{rmk:special-cases-classification2} in \Cref{rmk:special-cases-classification}, suspended subcategories of $D_{\operatorname{coh}}^b(R)$ containing $R$ correspond via \Cref{cor:affine_classification} to pairs $(W,\phi)$ as in \eqref{rmk:special-cases-classification2} of \Cref{rmk:special-cases-classification} such that $\phi(0)= \operatorname{Spec}(R)$. Note that in this situation, the extra condition `$p i^{-1}(W) \subseteq \cup_{n \in \mathbb{Z}}\phi(n)$' of \eqref{rmk:special-cases-classification2} in \Cref{rmk:special-cases-classification} becomes vacuous.
\end{proof}

\begin{remark}
    In \cite[Theorem 1.4]{Takahashi:2025}, the classification is formulated slightly differently, using order preserving maps from $\operatorname{Spec}(R)$ to $\mathbb{N} \cup \{\infty\}$. It is not difficult to match this with our formulation using Thomason filtrations, see e.g. the discussion \cite[\S 2.4]{Hrbek/Nakamura/Stovicek:2024}.
\end{remark}

\bibliographystyle{alpha}
\bibliography{mainbib}

\end{document}